\def\thetitle{{ . }}
\newtheorem{THM}{Theorem}[section]
\newtheorem*{THM*}{Theorem}
\newtheorem{LEM}[THM]{Lemma}
\newtheorem{prop}[THM]{Proposition}
\newtheorem{que}{Question}
\newtheorem{defn}[THM]{Definition}
\newtheorem{conv}[THM]{Convention}
\newtheorem{nota}[THM]{Notation}
\newtheorem{cons}[THM]{Construction}
\newtheorem{assume}[THM]{Assumption}
\newtheorem{fact}[THM]{Fact}
\newtheorem{case}{Case}
\newtheorem{case1}{Case}
\newtheorem*{orientable double cover}{\textbf{The orientable double cover}}
\theoremstyle{remark}
\newtheorem{exmp}[THM]{Example}
\newtheorem{rmk}[THM]{\textbf{Remark}}
\theoremstyle{definition}
\newtheorem*{defn*}{Definition}
\newcommand\N{\mathbb{N}}
\newcommand\Q{\mathbb{Q}}
\newcommand\R{\mathbb{R}}
\newcommand\Z{\mathbb{Z}}
\newcommand\B{\mathcal{B}}
\newcommand\PP{\mathcal{P}}
\newcommand{\F}{\mathcal{F}}
\newcommand{\Fa}{\mathcal{F}^{+}}
\newcommand{\Fb}{\mathcal{F}^{-}}
\newcommand{\Fs}{\mathcal{F}^{s}}
\newcommand{\Fu}{\mathcal{F}^{u}}
\newcommand{\Ea}{\mathcal{E}^{+}}
\newcommand{\Eb}{\mathcal{E}^{-}}
\newcommand{\Ors}{\mathcal{O}^{s}}
\newcommand{\Oru}{\mathcal{O}^{u}}
\newcommand{\La}{\Lambda^{+}}
\newcommand{\Lb}{\Lambda^{-}}
\newcommand{\Ia}{I^{+}}
\newcommand{\Ib}{I^{-}}
\newcommand{\Ir}{I^{r}}
\newcommand{\lame}{Ends}
\newcommand{\DD}{\mathcal{D}}
\newcommand{\RR}{\mathcal{R}}
\newcommand{\Or}{\mathcal{O}}
\newcommand{\w}{\widetilde}
\begin{document}
\title{Reconstruction of Anosov flows from infinity }


\author{Hyungryul Baik}
\address{Department of Mathematical Sciences, KAIST,  
	291 Daehak-ro, Yuseong-gu, Daejeon 34141, South Korea }
\email{hrbaik@kaist.ac.kr}
\author{Chenxi Wu}
\address{Department of Mathematics, University of Wisconsin at Madison, Madison WI 53703 }
\email{cwu367@wisc.edu}
\author{Bojun Zhao}
\address{D\'epartement de math\'ematiques, Universit\'e du Qu\'ebec \`a Montr\'eal,
201 President Kennedy Avenue, Montr\'eal, QC, Canada H2X 3Y7}
\email{bojunzha@buffalo.edu}

\maketitle
\begin{abstract}

    
    Every pseudo-Anosov flow $\phi$ in a closed $3$-manifold $M$ gives rise to an action of $\pi_1(M)$ on a circle $S^{1}_{\infty}(\phi)$ from infinity \cite{Fen12},
    with a pair of invariant \emph{almost} laminations.
    From certain actions on $S^{1}$ with invariant almost laminations,
    we reconstruct flows and manifolds realizing these actions,
    including all orientable transitive pseudo-Anosov flows in closed $3$-manifolds.
    Our construction provides a geometry model for such flows and manifolds 
    induced from $\DD \times \DD$,
    where $\DD$ is the Poincar\'e disk with $\partial \DD$ identified with $S^{1}_{\infty}(\phi)$.
    In addition, our result applies to Cannon’s conjecture under the assumption that certain group-equivariant sphere-filling Peano curve exists, which offers a description of orientable quasigeodesic pseudo-Anosov flows in hyperbolic $3$-manifolds in terms of group actions on $\partial \mathbb{H}^{3} \times \partial \mathbb{H}^{3} \times \partial \mathbb{H}^{3}$.
    \end{abstract}

\section{Introduction}
In the fields of geometric topology and low-dimensional topology, the construction of the space at infinity and the analysis of the extended group action at infinity have proven to be highly fruitful when a group action is present on a given space. One of the most notable examples of this approach was Thurston's compactification of Teichm\"uller space via the space of projective measured foliations on the surface. The detail can be found in \cite{fathi2021thurston}. 

By pursuing this line of thought further, one can ask how much information about the space is retained in the space at infinity and whether the space can be reconstructed from infinity. For instance, a seminal work of Tukia \cite{tukia1988homeomorphic}, Gabai \cite{gabai1992convergence}, Casson-Jungreis \cite{casson1994convergence} tells us that if one has a convergence group action on the circle, this group is a Fuchsian group, and the circle is group-equivariantly identified with the ideal boundary of the hyperbolic plane. \cite{baik2015fuchsian}, \cite{baik2021characterization} are other examples of characterization of the Fuchsian group actions on the ideal boundary (via invariant circle laminations). More recently, \cite{baik2022groups} provided an example of this in dimension 3. Namely, if one has a group action on the circle with a veering pair of laminations with no polygonal gaps, then the group must be the fundamental group of a 3-manifold with veering triangulation where the circle is identified with the space of ideal vertices of the triangulation in the universal cover. In this paper, we achieve this goal for 3-manifolds with Anosov or pseudo-Anosov flows. 

Here are some conventions that we use throughout the paper. Firstly, all $3$-manifolds are assumed to be orientable,
and all pseudo-Anosov flows are assumed to be smooth pseudo-Anosov flows, unless explicitly stated otherwise (see Subsection \ref{subsec: settings} for more information).
The stable and unstable foliations of a (pseudo-)Anosov flow in a $3$-manifold always refer to the weak stable and unstable foliations.
A (pseudo-)Anosov flow in a closed orientable $3$-manifold is called \emph{orientable} if its stable foliation is orientable.

Let $\phi$ be a pseudo-Anosov flow in a closed $3$-manifold $M$. Then $\phi$ lifted to the universal cover has a well-defined orbit space $\mathcal{O}(\phi) \cong \mathbb{R}^{2}$ with a pair of singular $1$-dimensional foliations $\Ors(\phi), \Oru(\phi)$ \cite{Fen94}, \cite{FM01}. Moreover, 
the orbit space $\mathcal{O}(\phi)$ has a canonical ideal boundary $S^{1}_{\infty}(\phi) \cong S^{1}$ \cite{Fen12}.
$\Ors(\phi), \Oru(\phi)$ can be canonically identified with
a pair of laminations on $S^{1}_{\infty}(\phi)$ with certain leaves removed, which we call \emph{almost} laminations (see Section \ref{sec:BBM} for relevant definitions and background).
We denote by $\Lambda^{s}_{\infty}(\phi), \Lambda^{u}_{\infty}(\phi)$ this pair of
almost laminations on $S^{1}_{\infty}(\phi)$.

The deck transformations of the universal cover of $M$ induce
an action of $\pi_1(M)$ on $\mathcal{O}(\phi)$ that preserves $\Ors(\phi), \Oru(\phi)$,
and they also induce
an action of $\pi_1(M)$ on $S^{1}_{\infty}(\phi)$ that preserves
$\Lambda^{s}_{\infty}(\phi), \Lambda^{u}_{\infty}(\phi)$.
These actions are referred to as the $\pi_1$-actions on
$\mathcal{O}(\phi)$ and $S^{1}_{\infty}(\phi)$ respectively.

Let $G$ be a finitely generated group, and let $\rho: G \to \text{Homeo}(S^{1})$ be a faithful action of $G$ on $S^{1}$ 
via homeomorphisms that preserves a pair of almost laminations $\Lambda^{+}, \Lambda^{-}$.
Suppose that $M$ is a closed orientable $3$-manifold with $\pi_1(M) = G$ and
$M$ admits a pseudo-Anosov flow $\phi$.
We say $(M,\phi)$ \emph{realizes} $\rho$ if $\pi_1(M)$ is isomorphic to $G$, and
there is a homeomorphism $h: S^{1} \to S^{1}_{\infty}(\phi)$ such that
the $\pi_1$-action on $S^{1}_{\infty}(\phi)$ is conjugate to $\rho$ by $h$ and
$h(\Lambda^{+}) = \Lambda^{s}_{\infty}(\phi)$,
$h(\Lambda^{-}) = \Lambda^{u}_{\infty}(\phi)$.

In \cite{BFM22},
Barthelm\'e, Frankel and Mann introduced the notion of a \emph{bifoliated plane}
$(\PP, \Fa, \Fb)$ to
describe the structure of $(\mathcal{O}(\phi), \Ors(\phi), \Oru(\phi))$,
and asked when
an Anosov-like action (Definition \ref{Anosov-like}) on $(\PP, \Fa, \Fb)$ is conjugate to the $\pi_1$-action on $(\mathcal{O}(\phi), \Ors(\phi), \Oru(\phi))$ for
certain pseudo-Anosov flow $\phi$ in a closed $3$-manifold $M$
\cite[Question 1]{BFM22}.
It is natural to ask 

\begin{que}\label{question 1}
Given an action of a group on $S^{1}$ with a pair of invariant almost laminations,
when can this action be realized by a closed $3$-manifold with a pseudo-Anosov flow, and how can we reconstruct this manifold and flow from the action on $S^{1}$?
\end{que}

Pseudo-Anosov flows have been extensively studied as dynamic objects, but a purely hyperbolic geometric description remains elusive. 
For a quasigeodesic pseudo-Anosov flow in a hyperbolic $3$-manifold, $S^{1}_{\infty}$ can be canonically identified with a sphere-filling Peano curve on $\partial \mathbb{H}^{3}$. 
Hence a straightforward reconstruction from $S^{1}_{\infty}$ could provide a geometric description by expressing the flow in terms of $\partial \mathbb{H}^{3}$.


In this paper,
we focus on Question \ref{question 1} for orientable pseudo-Anosov flows.
An almost lamination on $S^{1}$ is said to be orientable if 
its geometric realization has no ideal polygon with odd number of sides.
We offer a precise description for orientations on 
(the geometric realization of) an almost lamination below.

Let $\Lambda^{+}, \Lambda^{-}$ be a pair of almost laminations on $S^{1}$.
The pair $(\La, \Lb)$ is called \emph{bifoliar} if
there is a sufficiently nice bifoliated plane $(\PP,\Fa,\Fb)$ so that
$(S^{1},\La,\Lb)$ is the ideal boundary of $(\PP,\Fa,\Fb)$.
\cite{BBM} gave a characterization of bifoliar pairs of almost laminations that is suitable for the study of (pseudo-)Anosov flows in 3-manifolds. More precisely, following \cite{BBM}, we define a bifoliar pair of almost laminations as a pair of almost laminations satisfying all conditions of Theorem \ref{thm:BBM}. 

In Section \ref{sec:circle-lamination} we also include a method to characterize bifoliar pair of almost laminations by generalizing the method in \cite{baik2022groups}. 
We note that if $\La,\Lb$ are bifoliar, then any group acting on $S^{1}$ preserving $\La, \Lb$
can extend to a group action on the bifoliated plane completed from $(S^{1},\La,\Lb)$. 

For torsion-free groups acting on $S^{1}$ that preserve
a bifoliar pair of almost laminations on $S^{1}$,
we introduce a condition called \emph{flowable} 
so that all orientable transitive pseudo-Anosov flows in closed $3$-manifolds are realized from flowable group action on $S^{1}$ preserving 
a bifoliar pair of almost laminations on $S^{1}$.
Roughly speaking, an action of a group $G$ on $S^{1}$ with 
an invariant bifoliar pair of almost laminations is called \emph{flowable} if 
\begin{itemize}
\item if $g \in G - \{1\}$ fixes a leaf of $\La$ and a leaf of $\Lb$, then $g$ topologically stretches one of them and compresses along the other. The same behavior occurs if $g$ fixes a pair of ideal polygon gaps of $\La, \Lb$,
\item the stabilizer of an intersection between leaves of the laminations is either trivial or infinite cyclic, and 
\item for any two points on a leaf of one lamination, there does not exist an element $g \in G$ that moves both points by an arbitrarily small amount.
\end{itemize}
For a precise definition of a flowable action in terms of almost laminations, see Section \ref{sec:circle-lamination} (also Section \ref{sec:realizingtopflow} for a definition in terms of bifoliations).

For concreteness, we would like to describe our construction in terms of 
reduced pseudo-Anosov flows,
where a flow $\phi$ in 
a (possibly non-compact) $3$-manifold $M$ is a \emph{reduced pseudo-Anosov flow} if
$\phi$ satisfies all properties of smooth pseudo-Anosov flows except the restrictions on the metric of $M$ (see Definition \ref{reduced pseudo-Anosov flow}).
Similar to smooth pseudo-Anosov flows,
the reduced pseudo-Anosov flow $\phi$ also has
a well-defined orbit space $\Or(\phi)$ with invariant singular foliations 
$\Ors(\phi), \Oru(\phi)$,
as well as a well-defined ideal boundary $S^{1}_{\infty}(\phi)$ of $\Or(\phi)$ with
a pair of invariant almost laminations 
$\Lambda^{s}_{\infty}(\phi), \Lambda^{u}_{\infty}(\phi)$
induced from $\Ors(\phi),\Oru(\phi)$.

\begin{THM}\label{main theorem}
Let $\La, \Lb$ be a bifoliar pair of orientable almost laminations. 
Let $\rho: G \to \text{Homeo}(S^{1})$ be a faithful action of 
a torsion-free group $G$ on $S^{1}$ that
preserves $\La, \Lb$ and their orientations.

\begin{enumerate}[(a)]
\item Assume $\rho$ is flowable. Now we can construct a (possibly non-compact) orientable $3$-manifold $M(\rho)$ with 
a reduced (orientable) pseudo-Anosov flow $\phi(\rho)$ so that
$(M(\rho), \phi(\rho))$ realizes $\rho$.
\item If $\rho$ is conjugate to the $\pi_1$-action on $S^{1}_{\infty}(\phi_0)$ for some orientable transitive pseudo-Anosov flow $\phi_0$ in a closed $3$-manifold $M_0$,
then $M(\rho)$ is homeomorphic to $M_0$ and $\phi(\rho)$ is orbitally equivalent to $\phi_0$.
\end{enumerate}
\end{THM}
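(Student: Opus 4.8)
The plan is to build $M(\rho)$ and $\phi(\rho)$ directly from the combinatorial/dynamical data of $\rho$ acting on the bifoliated plane, and then recognize the output as a flow manifold by checking the defining properties of a reduced pseudo-Anosov flow. First I would pass from the circle data $(S^1,\La,\Lb)$ to the bifoliated plane: since $(\La,\Lb)$ is bifoliar, Theorem \ref{thm:BBM} supplies a sufficiently nice bifoliated plane $(\PP,\Fa,\Fb)$ whose ideal boundary is $(S^1,\La,\Lb)$, and the remark following it gives an extension of $\rho$ to an action of $G$ on $(\PP,\Fa,\Fb)$ preserving $\Fa,\Fb$ and their orientations. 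The flowability hypotheses on $\rho$ translate into the corresponding flowability conditions for this action on the bifoliated plane (the stretch/compress dichotomy for elements fixing a pair of leaves or a pair of ideal polygon gaps, infinite-cyclic-or-trivial point stabilizers, and the no-arbitrarily-small-common-displacement condition). So the construction reduces to: from a flowable, orientation-preserving, Anosov-like action of a torsion-free group $G$ on a sufficiently nice bifoliated plane, produce a reduced pseudo-Anosov flow realizing it. That is exactly the content promised for Section \ref{sec:realizingtopflow}, so for part (a) I would carry out that construction there and cite it here.

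The core of the construction is the geometric model induced from $\DD\times\DD$ advertised in the abstract. Concretely: embed $\PP$ into $\DD\times\DD$ using the two foliations — a point $p\in\PP$ goes to the pair (endpoint data of the leaf of $\Fa$ through $p$, endpoint data of the leaf of $\Fb$ through $p$) suitably interpreted in $\partial\DD\times\partial\DD\cong S^1\times S^1$, with leaves of $\Fa$ and $\Fb$ becoming the two rulings. The flow direction is the third coordinate: one builds a suspension-like mapping-torus in the product of $\PP$ with a flow line, where the flow is the trivial vertical flow on the universal cover and $G$ acts by the extended action; the flowability stretch/compress condition is precisely what ensures that periodic orbits (fixed leaf pairs) get genuine hyperbolic stable/unstable behavior and that ideal polygon gaps become the singular orbits with the right prong count (even, by orientability of $\La,\Lb$). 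Taking the quotient $M(\rho)$ by $G$ and checking the axioms of a reduced pseudo-Anosov flow — local product structure away from singularities, finitely-many-prong singular orbits, no metric hypothesis — gives a reduced pseudo-Anosov flow $\phi(\rho)$; that its orbit space is $G$-equivariantly $(\PP,\Fa,\Fb)$ and its boundary at infinity is $(S^1,\La,\Lb)$ with $\La\mapsto\Lambda^s_\infty,\ \Lb\mapsto\Lambda^u_\infty$ is built into the construction, which proves realization.

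For part (b), suppose $\rho$ is conjugate to the $\pi_1$-action on $S^1_\infty(\phi_0)$ for an orientable transitive pseudo-Anosov flow $\phi_0$ on a closed $3$-manifold $M_0$. I would first verify that this hypothesis forces $\rho$ to satisfy the hypotheses of part (a): orientability of $\Lambda^s_\infty(\phi_0),\Lambda^u_\infty(\phi_0)$ is the definition of $\phi_0$ orientable, bifoliarity holds because $(\Or(\phi_0),\Ors,\Oru)$ is a sufficiently nice bifoliated plane with ideal boundary $S^1_\infty(\phi_0)$, torsion-freeness of $\pi_1(M_0)$ is automatic, and flowability of the $\pi_1$-action is a known structural feature of pseudo-Anosov flows (elements fixing a stable and an unstable leaf are, up to power, along a periodic orbit and act by the expansion/contraction of the flow; stabilizers of intersection points are the cyclic groups generated by periodic orbits; the last condition is a consequence of the flow being non-degenerate/transitive). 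Hence $M(\rho),\phi(\rho)$ are defined. Then I would argue that the construction is essentially inverse to "pass to the orbit space and boundary at infinity": by Fenley's and related results, a transitive pseudo-Anosov flow on a closed $3$-manifold is determined up to orbit equivalence by the $\pi_1$-action on its orbit space together with its ideal boundary — equivalently, $\phi_0$ is orbitally equivalent to the topological flow built from $(\Or(\phi_0),\Ors(\phi_0),\Oru(\phi_0))$ by the $\DD\times\DD$ construction. Since the construction applied to $\rho$ (via the conjugacy) reproduces $G$-equivariantly the pair $(\Or(\phi_0),\Ors(\phi_0),\Oru(\phi_0))$ and hence the same flow up to orbit equivalence, we get a homeomorphism $M(\rho)\cong M_0$ carrying $\phi(\rho)$ to a flow orbitally equivalent to $\phi_0$; in particular $M(\rho)$ is compact in this case even though the general construction only yields a possibly non-compact manifold.

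The main obstacle I expect is part (a): showing the object produced by the $\DD\times\DD$ construction is genuinely a (reduced) pseudo-Anosov flow — that the quotient $M(\rho)$ is a manifold (properness and freeness of the $G$-action on the total space, which is exactly where flowability, especially the no-arbitrarily-small-common-displacement condition and the triviality/cyclicity of stabilizers, is needed to rule out non-Hausdorff or non-free quotients), that the flow has the local product structure of a pseudo-Anosov flow away from finitely many singular orbits, and that the singular orbits have the correct even prong number coming from the ideal polygons of $\La,\Lb$. A secondary difficulty in part (b) is the rigidity input: one must know (or re-prove in this setting) that an orientable transitive pseudo-Anosov flow on a closed $3$-manifold is reconstructed, up to orbit equivalence and homeomorphism of the ambient manifold, from its boundary-at-infinity data — this is where transitivity is essential and where one leans on the structure theory of pseudo-Anosov flows and their orbit spaces.
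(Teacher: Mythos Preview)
Your architectural plan is right: pass from circle to bifoliated plane via Theorem~\ref{thm:BBM}, build a $3$-manifold from the plane action, and for part (b) invoke a rigidity statement that the $\pi_1$-action on the orbit space determines the flow up to orbit equivalence. But your description of the core construction is confused in a way that matters.

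You describe embedding $\PP$ into $\DD\times\DD$ by sending $p$ to the pair of endpoint data of the $\Fa$- and $\Fb$-leaves through $p$. That map lands in something at most $2$-dimensional (essentially the product of leaf spaces), and then you try to recover a third ``flow'' coordinate by a vague suspension. This is not what the paper does, and it is not clear your description can be made to work. The actual construction is: $N=\{(x,y)\in\PP\times\PP : y\in r_x\}$, where $r_x$ is the positive half of the $\Fa$-leaf through $x$. This is genuinely $3$-dimensional inside the $4$-dimensional $\PP\times\PP$, the flow line through $x$ is $\{x\}\times r_x$, and $G$ acts diagonally. The point is that the flow direction is \emph{parameterized by the stable leaf itself}, not by an abstract $\R$-factor. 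Property (A7) then says exactly that two distinct points on the same leaf cannot both be moved a small amount by a nontrivial $g$, which is precisely what makes the diagonal action on $N$ free and properly discontinuous. You correctly identified (A7) as the mechanism, but you attached it to the wrong space.

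You also skip the singular case entirely. When $\Fa$ has prong singularities, $r_x$ is not a single ray and $N$ as defined above is not a manifold near singular leaves; the paper handles this (Section~\ref{sec:frombifoliatedplane}) by choosing equivariant monotone maps $h_\lambda$ on each singular leaf and passing to a quotient $N_2/\!\sim$ that glues the prongs consistently. This is the technical heart of part (a) in the pseudo-Anosov (as opposed to Anosov) case, and your proposal does not address it.

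For part (b), the rigidity input you want is Proposition~\ref{equivalent}, which the paper proves not via Fenley but via Fried's surgery theorem (Theorem~\ref{Fried's surgery theorem}): drill closed orbits to reduce to a fibered manifold, where the orbit space \emph{is} the universal cover of the fiber and the convergence group theorem identifies the fiber; then match monodromies. Your sketch is in the right spirit but misses this reduction, which is where transitivity is actually used.
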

 
We note that for an orientable pseudo-Anosov flow $\phi_0$,
the flowable property holds for the $\pi_1$-action on $S^{1}_{\infty}(\phi_0)$ without the requirement of the transitivity of $\phi_0$.

\begin{rmk}
(a)
Let $\phi_0$ be any orientable smooth pseudo-Anosov flow or
an orientable topological Anosov flow in a closed orientable $3$-manifold.
If $\rho$ is conjugate to the $\pi_1$-action on $S^{1}_{\infty}(\phi_0)$,
then $\rho$ is flowable.

(b)
Under the assumption of Theorem \ref{main theorem},
we have a necessary and sufficient condition for 
$\rho$ being conjugate to the $\pi_1$-action of some orientable transitive pseudo-Anosov flow. See Remark \ref{sufficient and necessary condition} for details.
\end{rmk}

As the paper was being written, the authors were told that Barthelm\'e-Fenley-Mann had similar results in a forthcoming paper. 

\subsection{Construction of $M(\rho),\phi(\rho)$} \label{subsec:fromcircle}

In this section, we describe a construction of a 3-manifold with a flow from a flowable action on the circle. The construction can be obtained by combining Section \ref{sec:frombifoliatedplane} and Section \ref{sec:circle-lamination}, which we will outline below for the convenience of the reader.

Let $\Lambda$ be an almost lamination on $S^{1}$.
Recall that $\Lambda$ has a geometric realization in the Poincar\'e disk 
$\DD \cong \mathbb{H}^{2} \cup \partial \mathbb{H}^{2}$,
as a collection of disjoint geodesics of $\mathbb{H}^{2}$ 
(which is not necessarily a closed subset) 
whose restriction to  $\partial \mathbb{H}^{2}$ is exactly $\Lambda$. 
Let $|\Lambda|$ be the geometric realization of $\Lambda$. 
For a leaf $\lambda$ of $\Lambda$,
its realization, denoted as $|\lambda|$, 
is the leaf of $|\Lambda|$ whose endpoints are exactly $\lambda$. 
Now assume that $\Lambda$ is bifoliar.

Recall that $\Lambda$ is orientable if 
the leaves of $|\Lambda|$ have continuously varying orientations,
or equivalently, if
all ideal polygons of $|\Lambda|$ have even number of sides.
An orientation on $\Lambda$ induces continuously varying orientations on 
all leaves of $|\Lambda|$.

Let $\La, \Lb$ be a bifoliated pair of almost laminations on $S^{1}$.
Then under both of the approaches of \cite{BBM} and ours,
there is a canonical quotient map $\pi: \DD \to \PP$,
where $\PP$ is the bifoliated plane completed from $(S^{1},\La,\Lb)$.
We fix an orientation on $\La$.
For each $x \in |\lambda|$,
we define the \emph{positive half-thread} of $|\lambda|$ with respect to $x$
(denoted as $|\lambda_+(x)|$) as follows.
Let $|\lambda^{'}_{+}(x)|$ be the component of $|\lambda| - \{x\}$ in the positive side of $x$,
and let $|\lambda_+(x)| = 
|\lambda^{'}_{+}(x)| - \pi^{-1}(\pi(x))$.

Let $\rho: G \to \text{Homeo}(S^{1})$ be a faithful action of 
a torsion-free group $G$ on $S^{1}$ that 
preserves $\La, \Lb$ and their orientations,
and we further assume that $\rho$ is flowable.
Let $\mathcal{G}(|\La|)$ be the set of ideal polygons of $|\La|$.
Let $P \in \mathcal{G}(|\La|)$ consists of leaves $l_1,\ldots,l_{2n}$.
We note that the flowable condition on $\rho$ implies that
the stabilizer of $P$ is an infinite cyclic group which preserves the orientations of leaves of $P$.
We can assign to the boundary leaves of $P$ a monotone map $h_P: \bigcup_{i=1}^{2n} l_i \to \R$,
equivariant under the stabilizer of $P$ \cite{TZZ},
such that for all $i \in \{1,\ldots,2n\}$ and $x,y \in l_i$,
$h_P(x) \ne h_P(y)$ if and only if $\pi(x) \ne \pi(y)$.
In addition,
we can assign to each $P \in \mathcal{G}(|\La|)$ such a monotone map $h_P$ so that
$h_{f(P)}(f(x)) = h_{f(P)}(f(y))$ if and only if 
$h_P(x) = h_P(y)$ for all $P \in \mathcal{G}(|\La|)$, $x, y \in P$ and $f \in G$.

Let 
\[N_1(\rho) = \{(x,t) \mid \exists \lambda \in |\La| \text{ s.t. }
x \in \lambda, t \in |\lambda_+(x)|\}\]
endowed with the subspace topology of $\DD \times \DD$,
let
$N_2(\rho)$ be the image of $N_1(\rho)$ under the quotient map
$\pi \cdot \pi: \DD \times \DD \to \PP \times \PP$.
Let $\sim$ be the equivalence relation on $N_2(\rho)$ such that,
for any $(x_1,t_1), (x_2,t_2) \in N_2(\rho)$,
$(x_1,t_1) \sim (x_2,t_2)$ if and only if:
\begin{enumerate}[(1)]
\item $x_1 = x_2$,
\item if $x_1$ is not contained in a singular leaf of $\PP$,
then $t_1 = t_2$,
\item if $x_1$ is contained in a singular leaf $\lambda$ of $\PP$,
let $P \in \mathcal{G}(|\La|)$ so that $\pi(P) = \lambda$,
then there are $q_1, q_2 \in P$ with
$t_1 = \pi(q_1)$, $t_2 = \pi(q_2)$ and $h_P(q_1) = h_P(q_2)$.
\end{enumerate}

Let $N(\rho)$ be the quotient space $N_2(\rho) / \sim$,
endowed with the quotient topology.
We note that $N(\rho)$ is still an $\R$-bundle over $\PP$,
and we denote by $\w{\phi}(\rho)$ the space of these $\R$-fibers.
We note that $N_1(\rho), N_2(\rho), N(\rho),\w{\phi}(\rho)$ depend on
the choice of maps $h_P: P \to \R$ as above.
$M(\rho)$ and $\phi(\rho)$ are constructed via the following proposition:

\begin{prop}\label{pseudo-Anosov construction}
$\rho$ induces a free and discrete action of $G$ on $N(\rho)$,
and $\w{\phi}(\rho)$ is equivariant under this action.
\end{prop}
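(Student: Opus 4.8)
The plan is to verify the three assertions in turn — that $G$ acts on $N(\rho)$, that this action is free, and that it is properly discontinuous — and then to observe that the fiber structure $\w{\phi}(\rho)$ is respected. First I would set up the $G$-action: $\rho$ acts on $\DD$ through $G \le \Homeo(S^1)$ by extending circle homeomorphisms to the disk (e.g.\ via the barycentric/Douady--Earle extension, or simply by the induced action on the completed bifoliated plane pulled back through $\pi$), and since $\rho$ preserves $\La,\Lb$ together with the orientation on $\La$, it permutes the geodesics of $|\La|$, preserves positive half-threads (here the equivariance $h_{f(P)}(f(x))=h_{f(P)}(f(y)) \iff h_P(x)=h_P(y)$ guaranteed by the construction is exactly what is needed so that the diagonal action on $\DD\times\DD$ descends to $N_2(\rho)$ and then to the quotient $N(\rho)=N_2(\rho)/{\sim}$), and it carries $\R$-fibers to $\R$-fibers because the defining relation $\sim$ only records the data of $x$ together with the level $h_P$, both of which are $G$-equivariant. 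This step is mostly bookkeeping once the compatibility of the $h_P$'s is in hand.

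Next, freeness. Suppose $f\in G-\{1\}$ fixes a point of $N(\rho)$; its image downstairs in $\PP$ is a point $p$ fixed by the induced action on the bifoliated plane. Because $G$ is torsion-free and $\rho$ is faithful, $f$ acts nontrivially, so $p$ is a fixed point of a nontrivial element. I would then run through the possibilities for $p$ using the flowable hypotheses. If $p$ lies on a regular leaf of each foliation (or on an ideal polygon gap), flowability says $f$ stretches along one direction and contracts along the other, so on the fiber over $p$, which is modeled on a half-thread parametrized compatibly with $h_P$ (or with the half-leaf itself in the regular case), $f$ acts as a nontrivial translation-like map with no fixed point — contradiction. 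If $p$ lies on the interior of a single leaf, the stabilizer is trivial or infinite cyclic by flowability, and in the infinite-cyclic case the generator again moves points along the leaf, hence acts without fixed points on the corresponding part of the fiber. The case $p$ a regular point fixed by $f$ would force $f$ to fix a whole product neighborhood and contradict faithfulness together with the third flowable bullet (no element moves two points of a leaf by arbitrarily small amounts), which rules out ``parabolic-type'' fixed behavior. Assembling these cases gives that no nontrivial $f$ has a fixed point in $N(\rho)$.

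For discreteness (proper discontinuity), the main point is to transfer the properly discontinuous action of $G$ on $\Or(\phi)=\PP$ — which holds because $\rho$ preserves a bifoliar pair and the induced action on $\PP$ is Anosov-like, hence acts properly discontinuously on the bifoliated plane — up to $N(\rho)$. Since $N(\rho)\to\PP$ is a fiber bundle with contractible $\R$-fibers and $G$ preserves the bundle structure, a compact set $K\subset N(\rho)$ projects to a compact $\bar K\subset\PP$; only finitely many $f$ satisfy $f\bar K\cap\bar K\ne\varnothing$, and for each such $f$ the behavior on the fibers over $\bar K$ (controlled by the stretch/contract dichotomy and the equivariant family $h_P$, which varies continuously) moves $K$ off itself for all but finitely many iterates. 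I expect the genuinely delicate step to be this last one: one must check that the $\R$-coordinate does not ``accumulate,'' i.e.\ that along leaves where $f$ acts hyperbolically the translation length stays bounded below on compact sets and that the bundle $N(\rho)$ — obtained by deleting $\pi^{-1}(\pi(x))$ from half-leaves and then gluing via the $h_P$ — is genuinely Hausdorff with the $\R$-fibers properly embedded, so that the quotient topology on $N_2(\rho)/{\sim}$ behaves as claimed. Once properness and freeness are established, the statement that $\w{\phi}(\rho)$ is equivariant is immediate from the $G$-equivariance of the fiber partition already noted in the first step.
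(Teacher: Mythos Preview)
Your discreteness argument has a genuine gap. You write that the action of $G$ on $\PP$ is properly discontinuous (``the main point is to transfer the properly discontinuous action of $G$ on $\Or(\phi)=\PP$\ldots''), but this is false: an Anosov-like action on the bifoliated plane is never properly discontinuous --- periodic points are dense in $\PP$ by (A3), and for a genuine pseudo-Anosov flow the $\pi_1$-orbit of any nonperiodic point is dense. So you cannot bound the number of $f$ with $f\bar K\cap\bar K\ne\varnothing$ by projecting to $\PP$; typically infinitely many group elements move $\bar K$ back over itself. Your proposed remedy (translation length on fibers bounded below) then has to do all the work, and as stated it does not: an element $f$ with $f\bar K\cap\bar K\ne\varnothing$ need not stabilize any leaf meeting $\bar K$, so there is no ``fiber translation'' to invoke.

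The paper's route is different and uses the third flowable bullet (Property (A7)) in an essential way --- you mention it only in passing during your freeness step, but this is exactly the ingredient that replaces proper discontinuity on $\PP$. A point of $N(\rho)$ is (the class of) a pair $(x,t)$ with $x$ and $t$ on the same leaf of $\Fa$; Property (A7) says precisely that such a pair has product neighborhoods $U\times V$ in $\PP\times\PP$ meeting no nontrivial $G$-translate of $(x,t)$. The paper then checks, case by case according to the local models (regular leaf, positive half-leaf, negative half-leaf through a singularity, and the singular point itself), that this product neighborhood descends through the quotient $N_2(\rho)\to N(\rho)$ to give the desired open set; the singular cases require a small extra argument using discreteness of the orbit of a singularity and Assumption~\ref{further assumption} on the $h_P$'s. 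Your freeness discussion and equivariance bookkeeping are fine, but for discreteness you should abandon the projection-to-$\PP$ strategy and work directly with (A7) on pairs.
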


\begin{cons}
    $M(\rho) = N(\rho)/ G$,
$\phi(\rho) = \w{\phi}(\rho)/ G$.
\end{cons}

\begin{rmk}
\begin{enumerate}[(a)]
\item Suppose $\rho$ is conjugate to the $\pi_1$-action on $S^{1}_{\infty}(\phi)$
for some transitive pseudo-Anosov flow $\phi$.
Combining Theorem \ref{main theorem} (b) with \cite[Theorem 3.4]{Bar95},
$M(\rho)$ is independent of the choice of $\{h_P: P \to \R \mid P \in \mathcal{G}(|\La|)\}$ 
up to homeomorphisms,
and $\phi(\rho)$ is independent of the choice of $\{h_P: P \to \R \mid P \in \mathcal{G}(|\La|)\}$
up to orbit equivalence.
\item If $\rho$ is conjugate to the $\pi_1$-action on $S^{1}_{\infty}(\phi)$
for some Anosov flow $\phi$,
we have $\mathcal{G}(\La) = \emptyset$,
and therefore
$N(\rho) = N_2(\rho)$ and $M(\rho) = N_2(\rho) / G$.
\end{enumerate}
\end{rmk}



\subsection{Taut foliations transverse to pseudo-Anosov flows}\label{subsec 1.2}
The construction of $(M(\rho),\phi(\rho))$ (from $\DD \times \DD$)
reveals some new geometric and topological structures.
Here we provide such an example:
there is a canonical foliation of $M(\rho)$ transverse to $\phi(\rho)$ that
is taut when $M(\rho)$ is a closed $3$-manifold.
We summarize the properties of this foliation in the case where $\phi(\rho)$ is
a transitive pseudo-Anosov flow in the following theorem.

\begin{THM}\label{transverse foliation}
Let $\phi$ be an orientable transitive pseudo-Anosov flow in
a closed orientable $3$-manifold $M$.
Then there exists a co-orientable taut foliation $\F$ in $M$ transverse to $\phi$ such that
all orbits of $\w{\phi}$ are asymptotic to some leaf of $\w{\F}$,
where $\w{\phi}, \w{\F}$ are the pull-backs of $\phi, \F$ in the universal cover of $\w{M}$.
\end{THM}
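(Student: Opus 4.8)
The plan is to produce the foliation $\F$ directly from the $\DD \times \DD$ model, using the second factor. Recall from Subsection \ref{subsec:fromcircle} that $M(\rho)$ is built as a quotient of the set $N(\rho)$, where a point lies over $(x,t) \in \PP \times \PP$ with $t$ in a ``positive half-thread'' of the stable leaf through $x$; the flow direction $\w{\phi}(\rho)$ is the fiber where $x$ is fixed and $t$ varies. The point is that there is a \emph{complementary} fibration: fix the second coordinate $t$ (more precisely, fix a leaf of $\Oru(\phi)$ in the second factor, or a point of $S^1_\infty(\phi)$ at infinity in the second factor) and let $x$ vary over those stable leaves whose positive half-thread contains $t$. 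Concretely, I would first use Theorem \ref{main theorem}(b) to identify $(M,\phi)$ with $(M(\rho),\phi(\rho))$ for $\rho$ the $\pi_1$-action on $S^1_\infty(\phi)$, which is flowable by the Remark following Theorem \ref{main theorem}. Then in $\DD \times \DD$, for each point $\xi$ in the second-factor boundary circle $\partial\DD = S^1_\infty(\phi)$, consider the set of pairs $(x,t)$ in $N_1(\rho)$ such that $t$ lies on the geodesic ray in the second $\DD$ factor landing at $\xi$ and on the positive side; pushing this forward through $\pi \cdot \pi$ and the gluing $\sim$, and intersecting with $N(\rho)$, gives a family of $2$-dimensional subsets of $N(\rho)$ indexed by $\xi \in S^1$. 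The first step is to check these subsets are injectively immersed surfaces, fit together into a foliation $\w\F$ of $N(\rho)$, and are transverse to the $\w\phi(\rho)$-fibers (transversality is essentially automatic: one fiber fixes $x$ and moves $t$, the leaf fixes the landing point of $t$ and moves $x$).

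Next I would verify $G$-equivariance of this family, so it descends to a foliation $\F$ of $M = M(\rho)/G$ transverse to $\phi$: since $G$ acts on $S^1_\infty(\phi)$ and the construction of $\w\F$ is natural in the data $(S^1, \La, \Lb, \text{orientations}, \{h_P\})$ already used to build $N(\rho)$, the leaf indexed by $\xi$ is carried by $g \in G$ to the leaf indexed by $g\xi$, which is exactly the equivariance needed. Co-orientability follows because $\La = \Lambda^s_\infty(\phi)$ is orientable by hypothesis and $G$ preserves the orientation; the transverse orientation to $\F$ is read off from a chosen side of $\xi$ on $S^1$, and the whole group preserving the circle orientation and the lamination orientation means this side-choice is coherent and $G$-invariant. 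The asymptotic statement — every orbit of $\w\phi$ is asymptotic to some leaf of $\w\F$ — is built into the indexing: the orbit through a point over $(x,\cdot)$ is the fiber $\{x\} \times |\lambda_+(x)|$, and as $t$ runs to the positive end of the half-thread it converges to a boundary point $\xi \in S^1_\infty$; the leaf of $\w\F$ indexed by that $\xi$ is precisely the one this orbit spirals into. So this requires only unwinding the definition of ``positive half-thread'' and the identification of its endpoint at infinity.

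The main work is showing $\F$ is \emph{taut} when $M$ is closed. Here I expect the key obstacle, and the plan is twofold. First, tautness will follow from the general principle (Goodman--Thurston, or the criterion that a foliation transverse to a flow with a closed transversal through every leaf is taut) once we know $\F$ has no Reeb components and that $\phi$ provides a closed curve transverse to $\F$ meeting every leaf — but here $\phi$ itself need \emph{not} have a closed orbit through every leaf of $\F$, so I would instead argue directly: transitivity of $\phi$ gives a dense orbit, its closure meets every leaf of $\F$, and approximating by a long nearly-closed orbit segment and closing it up (using the orbit space picture and the fact that $\F$ pulls back to a product foliation $\R \times \R^2$-style structure on $N(\rho)$) yields a closed transversal hitting any prescribed leaf; combined with transitivity, some single closed transversal meets all leaves. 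Second — and this is the genuinely delicate point — one must rule out that $\w\F$ has a non-Hausdorff or pathological leaf space that would obstruct tautness; here the control is that $\w\F$ is by construction a foliation of $N(\rho) \subset \DD \times \DD$ whose leaf space injects into the second-factor circle $S^1_\infty(\phi)$, hence is a (possibly non-Hausdorff, but) one-dimensional manifold, and the transversality to $\w\phi$ together with the $\R$-bundle structure of $N(\rho)$ over $\PP$ forces each leaf to be a plane, ruling out vanishing cycles. Assembling these, the absence of Reeb components plus a closed transversal through every leaf gives tautness by Novikov–Rosenberg. The single step I expect to fight with is the closed-transversal construction when $\phi$ lacks closed orbits through some leaves: extracting it cleanly from transitivity and the $\DD \times \DD$ coordinates, rather than from the flow's periodic orbits, is where the argument needs care.
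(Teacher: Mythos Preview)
Your first parenthetical --- ``fix a leaf of $\Oru(\phi)$ in the second factor'' --- is in fact the correct local description, and is what the paper does: the leaf of $\w\F$ through a point over $(x,y)$ consists locally of those $(x',y')$ with $y'$ on the same $\Fb$-leaf $\mu_y$ as $y$ (see the definition of $l^{\mu}_{(C,D)}$ in Subsection~\ref{subsec: foliation}).  But you then abandon this in favour of indexing leaves by boundary points $\xi \in S^1_\infty$, and this is where the argument goes wrong.  A leaf of $\Fb$ has two endpoints on $S^1_\infty$; distinct leaves may share an endpoint via perfect fits; and (Proposition~\ref{asymptotic}) a single $2n$-prong singular leaf of $\Fb$ corresponds to $n$ distinct leaves of $\w\F$.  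So the leaf space of $\w\F$ does \emph{not} inject into $S^1_\infty$, and your asymptotic statement --- that the orbit $\{x\}\times r_x$ spirals into the leaf indexed by the positive endpoint of $\lambda_x$ --- is not well-posed.  The paper instead shows (after Lemma~\ref{l_y}) that the orbit $\rho_y$ is asymptotic to the leaf $l_y$, which depends on the $\Fb$-leaf through $y$, not on any endpoint at infinity.

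The tautness argument is also quite different and your plan has a gap.  You want to close up a dense $\phi$-orbit to produce a closed transversal to $\F$ through every leaf, but this is not how the paper proceeds and it is not clear your approach works: a closed $\phi$-transversal to $\F$ need not hit every leaf even if its lift is dense.  The paper's argument is: since $\Fu$ has no compact leaf (Property~(A4)), Gabai's splitting gives a taut foliation $\Fu_*$ of $M$, hence a closed curve $\eta$ positively transverse to $\Fu$.  Lift $\eta$ to a path in $\PP$ transverse to $\Fb$, and then use the key Lemma~\ref{transversal} --- that a $\Fb$-transversal in $\PP$ (in the second coordinate) can be lifted to a $\w\F$-transversal in $N$ --- to build a closed curve in $M$ transverse to $\F$ meeting any prescribed product chart.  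The whole point is that the local parametrization of $\w\F$ by $\Fb$-leaves in the second factor is what lets you transport tautness from $\Fu_*$ to $\F$; the boundary-point parametrization you chose loses exactly this link.
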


See Remark \ref{comparison} for a comparison between $\F$ and
the foliation constructed in \cite{Gab92b}.
When $\phi$ is non-orientable,
it's possible that $M$ admits no co-orientable taut foliation 
(see Example \ref{pretzel example}).
Hence Theorem \ref{transverse foliation} may not hold when $\phi$ is non-orientable.

\subsection{Reconstruction from convergence group actions on $S^{2}$}

For a quasigeodesic topological pseudo-Anosov flow $\phi$ in a hyperbolic $3$-manifold $M$,
$S^{1}_{\infty}(\phi)$ can be canonically identified with a sphere-filling Peano curve in the ideal boundary of $\mathbb{H}^{3}$.
This sphere-filling curve is studied by Cannon-Thurston \cite{CT07},
Fenley \cite{Fen12, Fen16},
and is further generalized to general quasigeodesic flows by Frankel \cite{Fra15}.
In this case,
our reconstruction of $(M,\phi)$ from $S^{1}_{\infty}(\phi)$ is directly achieved from $\partial \mathbb{H}^{3}$.
A conjecture of Cannon \cite{CS98} says that,
all uniform convergence group actions on $S^{2}$ are 
conjugate to cocompact Klein groups.
We focus on the case where this $S^{2}$ is an ideal boundary coming from
certain almost lamination or bifoliated plane in the sense of Fenley \cite{Fen12, Fen16},
and from this perspective we will provide some information on
``how a hyperbolic $3$-manifold comes from a convergence group action''.

Fenley \cite{Fen12, Fen16} defined a flow ideal boundary for 
any quasigeodesic topological pseudo-Anosov flow in a closed $3$-manifold with Gromov hyperbolic group,
which is homeomorphic to a $2$-sphere, and the $\pi_1$-action on its orbit space induces
a uniform convergence group action.
The flow ideal boundary is completely determined by the orbit space and thus
can be generalized to the \emph{abstract flow ideal boundary} for
an almost lamination or bifoliated plane with certain properties.

Let $\La, \Lb$ be a bifoliar pair of almost laminations on $S^{1}$.
Under certain \emph{basic properties} satisfied by
the orbit space of all quasigeodesic topological pseudo-Anosov flows (see Assumption \ref{assume: no infinite chain}),
there always exists a well-defined $2$-sphere abstract flow ideal boundary, 
denoted $\RR(\La, \Lb)$.

Let $\rho: G \to \text{Homeo}(S^{1})$ be a faithful action of 
a torsion-free group $G$ on $S^{1}$ that preserves $\La, \Lb$.
We prove that

\begin{THM}\label{thm: convergence}
    Suppose that $\La, \Lb$ are orientable and $G$ preserves the orientations on $\La, \Lb$,
    and that the basic properties for $\RR(\La, \Lb)$ hold.
\begin{enumerate}[(a)]
 \item If the induced action of $G$ on $\RR(\La, \Lb)$ is a convergence group action,
then $\rho$ is realized by a $3$-manifold $M$ with a reduced pseudo-Anosov flow $\phi$.
\item Suppose, furthermore, that $G$ acts on $\RR(\La, \Lb)$ as a uniform convergence group action. Then one of the following two possibilities hold: 
\begin{enumerate}[(1)]
          \item
$M$ is a closed hyperbolic $3$-manifold and $\phi$ is a quasigeodesic topological pseudo-Anosov flow.
\item $M$ is a non-compact $3$-manifold with Gromov hyperbolic group and has no $T^{2}$ end.
\end{enumerate}
\end{enumerate}
\end{THM}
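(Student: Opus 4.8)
The plan is to leverage Theorem \ref{main theorem} as a black box and then run a dichotomy argument based on the behavior of the abstract flow ideal boundary $\RR(\La,\Lb)$. For part (a), I would first verify that the hypotheses of Theorem \ref{main theorem}(a) are met, namely that $\rho$ is flowable. The key observation is that a convergence group action on $\RR(\La,\Lb)$ forces strong dynamical constraints back on the $S^1$-action: elements fixing a pair of leaves (or a pair of polygon gaps) of $\La,\Lb$ must act on $\RR(\La,\Lb)$ with north-south dynamics, which translates — via the identification of points of $\RR(\La,\Lb)$ with "corners" of the bifoliated plane $\PP$ completed from $(S^1,\La,\Lb)$ — into the required topological stretch/compression dichotomy on the fixed leaves. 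Similarly, the convergence property rules out non-cyclic stabilizers of leaf intersections (a non-elementary subgroup cannot fix a point of $S^2$ in a convergence action, and a $\Z^2$ would violate the convergence property), and the absence of "small" simultaneous motions of two points on a leaf follows from discreteness of the convergence action at the corresponding point of $\RR(\La,\Lb)$. Once $\rho$ is shown to be flowable, Theorem \ref{main theorem}(a) directly produces the $3$-manifold $M = M(\rho)$ and the reduced pseudo-Anosov flow $\phi = \phi(\rho)$ realizing $\rho$, giving (a).

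For part (b), I would analyze the convergence action more carefully under the uniformity hypothesis. A uniform convergence group action on $S^2 = \RR(\La,\Lb)$ makes $G$ a hyperbolic group with Gromov boundary $\partial G \cong S^2$ (by Bowditch's characterization of hyperbolic groups via convergence actions). The first step is to transfer this hyperbolicity to $M(\rho)$: since $\pi_1(M(\rho)) = G$ and $M(\rho)$ is aspherical (being foliated by lines — the $\R$-bundle structure of $N(\rho)$ over $\PP \cong \R^2$ makes $N(\rho)$ contractible), $M(\rho)$ is a $K(G,1)$, so its fundamental group is Gromov hyperbolic. Now the dichotomy: either $M(\rho)$ is closed or it is non-compact. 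In the closed case, $M(\rho)$ is an aspherical closed $3$-manifold with hyperbolic fundamental group; by geometrization (Perelman) and the fact that a closed aspherical $3$-manifold with word-hyperbolic $\pi_1$ must be hyperbolic (its $\pi_1$ contains no $\Z^2$, ruling out the other geometric and graph-manifold pieces), $M(\rho)$ is hyperbolic. The flow $\phi(\rho)$ is then automatically quasigeodesic: the Cannon–Thurston-type sphere-filling curve identification of $S^1_\infty(\phi(\rho))$ with a Peano curve on $\partial \bH^3$ is encoded precisely by the fact that $\RR(\La,\Lb)$ carries the uniform convergence action and equals $\partial G = \partial \bH^3$, which is Frankel's criterion \cite{Fra15} for quasigeodesity; this gives case (b)(1). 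In the non-compact case, $G$ is Gromov hyperbolic by the same Bowditch argument, and hyperbolicity of $G$ already forbids $\Z^2 \le G$, hence $M(\rho)$ can have no $T^2$ end (a torus end would contribute a $\Z^2$ to $\pi_1$); this is case (b)(2).

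The main obstacle I anticipate is the closed-versus-noncompact dichotomy itself and, within the closed case, cleanly deducing hyperbolicity of $M(\rho)$ rather than merely of $G$ — one must know that the manifold $M(\rho)$ produced by the construction is genuinely a closed manifold exactly when the action is "cocompact enough," and then invoke geometrization correctly for aspherical $3$-manifolds with word-hyperbolic $\pi_1$ containing no $\Z \oplus \Z$. A secondary subtlety is verifying the flowable condition in part (a) purely from the convergence hypothesis on $\RR(\La,\Lb)$: the translation between "north-south dynamics on $S^2$" and "stretch/compress on a fixed leaf of $\La$ or $\Lb$" requires a careful look at how the collapsing map $\PP \to \RR(\La,\Lb)$ (and the identification of $\partial\PP = S^1$ points with sphere points) interacts with the action, and at how polygon gaps of $\La$ correspond to singular orbits whose prongs are separated on $\RR(\La,\Lb)$. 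I expect this to be where the bulk of the genuinely new work lies, with the rest being an assembly of Theorem \ref{main theorem}, Bowditch's theorem, Frankel's quasigeodesic criterion, and geometrization.
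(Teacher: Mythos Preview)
Your overall architecture for (b) is correct and matches the paper: Bowditch's characterization gives Gromov hyperbolicity of $G$ with $\partial G \cong S^2$, which rules out $\Z^2$ subgroups (hence no $T^2$ ends), and in the closed case geometrization yields hyperbolicity of $M$.

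However, there are two genuine problems.

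\textbf{Part (a): you misidentify where the work lies.} Properties (A1) and (A4) are part of the standing hypotheses in Section~\ref{sec: convergence group action}; you do not need to derive them from the convergence action. The entire content of (a) is verifying (A7). Your one-line justification --- ``discreteness of the convergence action at the corresponding point of $\RR(\La,\Lb)$'' --- is wrong on two counts. First, a convergence group acts properly discontinuously on the space of \emph{distinct triples} $\Theta(\RR)$, not on $\RR$ or on pairs. Two points $x,y$ on a leaf $\lambda$ of $\Fa$ give the triple $(q_+(\lambda), q_-(\mu_x), q_-(\mu_y))$, and discreteness on $\Theta(\RR)$ yields (A7) only when this is a \emph{distinct} triple. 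Second, and this is the real difficulty you miss entirely: if $\mu_x$ and $\mu_y$ lie in the same chain of perfect fits then $q_-(\mu_x) = q_-(\mu_y)$ and the triple degenerates. The paper handles this by a case analysis (singular leaf, non-separated pair, or perfect-fit chain) using Assumption~\ref{assume: no infinite chain} (no product regions, no ideal polygons in a chain, no infinite chains) to produce an auxiliary leaf $\lambda_z$ with discrete orbit that any offending $g$ would have to stabilize. This is where the ``basic properties for $\RR(\La,\Lb)$'' hypothesis actually gets used, and your proposal makes no contact with it.

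\textbf{Part (b)(1): the quasigeodesic conclusion.} Your appeal to ``Frankel's criterion \cite{Fra15}'' is too vague to be a proof; Frankel's paper constructs sphere-filling curves \emph{from} quasigeodesic flows, not the other way around. The paper's argument is substantially different: it builds the compactification $N \cup S^2_\infty$ explicitly, identifies $S^2_\infty$ with $\RR$, and then proves directly that $\phi$ is \emph{expansive} by showing that distinct flowlines $\rho_x, \rho_y$ eventually separate (using the triple structure on $\RR$ again, including the degenerate case). Expansivity plus the results of \cite{inaba1990nonsingular, paternain1993expansive} then forces $\phi$ to be a topological pseudo-Anosov flow with invariant foliations, which one checks coincide with $\Fs, \Fu$. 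The quasigeodesic property comes from the flowlines having well-defined distinct endpoints on $S^2_\infty = \partial \mathbb{H}^3$. None of this is captured by citing Frankel.
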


The scenario (2) can be ruled out under the additional assumption that the $\rho$ satisfies the \emph{compactness condition}, a condition met by $\pi_1$-actions of all smooth pseudo-Anosov flows in closed $3$-manifolds. 
See Section \ref{sec: convergence group action} for definitions and more detailed discussions. 

Note that all non-$\R$-covered Anosov flows in 
closed hyperbolic $3$-manifolds are quasigeodesic \cite{Fen22} and thus 
can be reconstructed from the above theorem up to double covers.

Our construction provides a description for the resulting hyperbolic $3$-manifolds
arising from uniform convergence group actions.
Theorem \ref{thm: convergence} implicitly offers a description of
$(M, \phi)$ in terms of $(\RR(\La, \Lb))^{3}$.
If we further assume that $\La, \Lb$ make no perfect fit,
then $M$ is a quotient space of a subspace of $\Theta(\RR(\La, \Lb)) / G$,
where $\Theta(\RR(\La, \Lb))$ is the distinct (ordered) triple of $\RR(\La, \Lb)$.
See Remark \ref{rem: unordered triple} for an explanation.


\subsection{The reconstruction of non-orientable pseudo-Anosov flows}

Here we briefly explain the reconstruction in the non-orientable case: it is reduced to the problem of reconstructing a $3$-manifold (possibly with cusps) with a flow from a regular double cover.

Let $\rho: G \to \text{Homeo}(S^{1})$ be a faithful action of a torsion-free group $G$ on
$S^{1}$ that preserves a bifoliar pair of almost laminations.
We aim to construct a $3$-manifold $M$ with reduced pseudo-Anosov flow $\phi$ such that
$(M,\phi)$ realizes $\rho$.
$\rho$ has an \emph{orientable doubling} $\rho_*: G_* \to \text{Homeo}(S^{1})$ satisfying the assumptions in Theorem \ref{main theorem},
such that the orientable double cover of $(M,\phi)$ realizes $\rho_*$.
See the end of Subsection \ref{sec:orbitspace} and Remark \ref{rmk: orientable cover on bifoliated plane} for explanations.

By theorem \ref{main theorem}, if $\rho_*$ is flowable, then $\rho_*$ is realized by $(M_*, \phi_*)$, as constructed in Subsection \ref{subsec 1.2}.
In addition, by theorem \ref{main theorem} (b), $(M_*, \phi_*)$ is exactly the orientable double cover of $(M,\phi)$ when $\rho$ is conjugate to the $\pi_1$-action of certain transitive pseudo-Anosov flow in closed $3$-manifold.
In this case, the reconstruction of $(M,\phi)$ is reduced to reconstructing $(M,\phi)$ from
$(M_*,\phi_*)$.
Let $M^{\text{c}}$ (resp. $M^{\text{c}}_{*}$) denote the complements of the odd-indexed singular orbits of $\phi$ (resp. $\phi_*$) in $M$ (resp. $M_*$). Then $(M^{\text{c}}_{*}, \phi_* \mid_{M^{\text{c}}_{*}})$ is exactly a regular double cover of $(M^{\text{c}}, \phi \mid_{M^{\text{c}}})$. Hence, the problem of reconstructing $(M,\phi)$ is reduced to the reconstruction of  $(M^{\text{c}}, \phi \mid_{M^{\text{c}}})$ from the double cover $(M^{\text{c}}_{*}, \phi_* \mid_{M^{\text{c}}_{*}})$. 

It is conceivable that $(M^{\text{c}}, \phi \mid_{M^{\text{c}}})$ can be further reconstructed from its double cover, under additional processes. In the case that the invariant almost laminations of $\rho$ have no ideal polygon with at least three sides, it is confirmed that $(M,\phi)$ can be reconstructed based on our work, in a personal communication between the last named author and Jonathan Zung \cite{ZZ}. See Remark \ref{personal communication}.

\subsection{Organization of the paper}
The paper is organized as follow. In Section \ref{sec:prelim}, we collect all the preliminaries for the paper. In particular, we provide the definition of topological/smooth Anosov and pseudo-Anosov flows, explain the orbit space of pseudo-Anosov flow and its compactification, introduce the notion of bifoliated plane and Anosov-like action, and finally almost laminations on the circle. 
In Section \ref{sec:realizingtopflow}, we first introduce a new property which is not listed in the notion of an Anosov-like action but is always satisfied by the $\pi_1$-action on the orbit space of a transitive pseudo-Anosov flow. Then, when we have a flowable group action (weaker assumption than the Anosov-like action but satisfying the new property we introduce) on a bifoliated plane without singularities, we construct a 3-manifold which has the given group as the fundamental group and also construct a flow in the manifold.  
In Section \ref{sec:char_anosov}, the Compactness, Convergence, and Divergence properties of the flows are introduced, and using these properties, we give a necessary and sufficient condition for the flow constructed in Section \ref{sec:realizingtopflow} is a topological Anosov flow. 
In Section \ref{sec:frombifoliatedplane}, we strongly generalize the construction in Section \ref{sec:realizingtopflow} and construct a 3-manifold with a reduced pseudo-Anosov flow from a flowable action on the bifoliated plane with even-prong singularities. 
In Section \ref{sec:circle-lamination}, we show how an Anosov-like/flowable action on bifoliated planes can be interpreted as an action on the circle with a pair of invariant almost laminations. Then we explain how this perspective can be combined with the results in the previous sections to give a proof of our main theorem. 
In Section \ref{sec: convergence group action}, we investigate the situation when we get an induced action on the 2-sphere from an action on the bifoliated plane. This case can be considered as a special case of the Cannon's conjecture. 
Finally, in Section \ref{sec:questions}, we collect some questions that arise naturally from our work. 

\subsection{Settings of the paper}\label{subsec: settings}
For Anosov and pseudo-Anosov flows, here are two versions commonly studied and used in various works, \emph{smooth} (Definitions \ref{Anosov flow}, \ref{smooth pseudo-Anosov flow}) and \emph{topological} (Definitions \ref{topological Anosov flow}, \ref{topological pseudo-Anosov flow}), with the smooth version always implying the topological version. In this paper, unless explicitly stated otherwise, all pseudo-Anosov flows are assumed to be smooth pseudo-Anosov flows. However, in several instances, we will discuss the setting of topological pseudo-Anosov flows, with special emphasis on the relevant contexts. For the reader's convenience, we summarize this distinction as follows.

In Subsection \ref{subsec: ideal boundary}, we review some basic concepts in the context of topological pseudo-Anosov flows as introduced in \cite{Fen12, Fen16},
and we prove Theorem \ref{thm: convergence} in Section \ref{sec: convergence group action}.
Therefore, in these sections, we focus on topological pseudo-Anosov flows.

\subsection*{Acknowledgement} 
Zhao specially thanks his Ph.D. thesis advisor, Xingru Zhang, for his guidance and helpful conversations.

Lemma \ref{equivalence class} is known from the personal communication between
Chi Cheuk Tsang, Jonathan Zung and Zhao.
Zhao thanks them for their helpful communications prior to this work.

Baik thanks Shanghai Center for Mathematical Sciences for hospitality where the last part of the project was done and he had many fruitful conversations with participants. Baik especially thanks Chris Connell, Koji Fujiwara, Ying Hu, Junzhi Huang, Robert Tang, and Samuel Taylor for inspiring comments.

Baik was supported by the National Research Foundation of Korea (NRF) grant funded by the Korea government (MSIT) (No. 2020R1C1C1A01006912).

\section{Preliminary} \label{sec:prelim}
\subsection{Pseudo-Anosov flows in $3$-manifolds} \label{subsec:pAflows}

For Anosov and pseudo-Anosov flows, at least two different versions exist and have been studied in the literature - smooth and topological. We recall these definitions and collect some relevant facts. The following definition makes sense when $M$ has an auxiliary (Riemannian) metric. 

\begin{defn}[Smooth Anosov flow]\rm\label{Anosov flow}\cite[Definition 1.1]{B17} A $C^1$ flow $\phi^t$ on a closed manifold $M$ is called {\em Anosov}, if there is a splitting $TM=\mathbb{R}X\oplus E^{ss}\oplus E^{uu}$, which is preserved by $\phi^t$, $a, b>0$, and a Riemannian (or Finsler) metric $\|\cdot\|$ on $TM$, such that
\begin{enumerate}
    \item $X$ is the vector field that generates the flow $\phi^t$.
    \item For any $v\in E^{ss}$, $t>0$, 
    \[\|(D\phi^t)(v)\|\leq be^{-at}\|v\|\]
    \item For any $v\in E^{uu}$, $t>0$, 
    \[\|(D\phi^{-t})(v)\|\leq be^{-at}\|v\|\]
\end{enumerate}
\end{defn}

Mosher introduces the concept of topological Anosov flows in
\cite{Mos92a}, \cite{Mos92b}.
The following definition basically follows from \cite[Definition 1.5]{B17}.

\begin{defn}[Topological Anosov flow]\rm\label{topological Anosov flow}
Let $M$ be a closed $3$-manifold with 
a flow $\phi^{t}: M \to M$ ($t \in \R$),
and we choose a metric $d$ on $M$.
$\phi^{t}$ is called 
a \emph{topological Anosov flow} if the following conditions hold:
\begin{enumerate}[(a)]
    \item $\phi$ has a $C^{0}$ tangent (non-singular) vector field,
and each orbit $t \to \phi^{t}(x)$ ($x \in M$) is $C^{1}$.
\item There are a pair of transverse foliations $\Fs$
(called the \emph{stable} foliation of $\phi^{t}$) and 
$\Fu$ (called the \emph{unstable} foliations) of $M$,
such that the orbits of $\phi^{t}$ are exactly the intersections of $\Fs, \Fu$.
\item If $x,y$ are two points in the same leaf of $\Fs$,
then there exists an orientation-preserving homeomorphism 
$h: \R \to \R$ such that
$$\lim_{t \to +\infty}d(\phi^{t}(x),\phi^{h(t)}(y)) = 0.$$
If $x,y$ are two points in the same leaf of $\Fu$,
then there exists an orientation-preserving homeomorphism 
$h: \R \to \R$ such that
$$\lim_{t \to -\infty}d(\phi^{t}(x),\phi^{h(t)}(y)) = 0.$$
\item Let $\lambda$ be a leaf of $\Fs$ and let $\mu$ be a leaf of $\Fu$.
We denote by $d_{\lambda}$ (resp. $d_{\mu}$) 
the induced path metric on $\lambda$ (resp. $\mu$).
For any $x,y \in \lambda$ contained in distinct orbits,
$$\lim_{t \to -\infty}d_\lambda(\phi^{t}(x),\phi^{t}(y)) = +\infty.$$
For any $x,y \in \mu$ contained in distinct orbits,
$$\lim_{t \to +\infty}d_\mu(\phi^{t}(x),\phi^{t}(y)) = +\infty.$$
\end{enumerate}
\end{defn}

\begin{defn}[Orbit equivalence]\rm
For $i = 1,2$,
let $M_i$ be a closed orientable $3$-manifold that admits a flow $\phi_i$.
$(M_1, \phi_1)$ is said to be \emph{orbitally equivalent} to $(M_2, \phi_2)$ if
there exists a homeomorphism $$h: M_1 \to M_2$$ 
such that $h$ sends $\{\text{orbits of } \phi_1\}$ to
$\{\text{orbits of } \phi_2\}$, and preserves the orientation.
\end{defn}

\begin{THM}[Shannon; {\cite{Sha21}}]\label{Shannon}
	Any transitive topological Anosov flow is orbit equivalent to a (transitive) smooth Anosov flow.
\end{THM}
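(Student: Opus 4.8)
The plan is not to smooth the given flow directly, but to extract a finite combinatorial model from it, to build a \emph{smooth} Anosov flow realizing that model, and then to identify the two by a rigidity theorem for the orbit space.

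First I would set up the topological infrastructure. From Definition \ref{topological Anosov flow}, conditions (c) and (d) give forward (resp.\ backward) contraction along the leaves of $\Fs$ (resp.\ $\Fu$), while (b) identifies the orbits with the intersections $\Fs \cap \Fu$; together these make $\phi$ an \emph{expansive} flow with a local product structure (each flow box $\cong$ (stable arc) $\times$ (unstable arc) $\times$ (time interval)) and a shadowing property. This is enough to construct a \emph{Markov partition}: finitely many flow boxes $R_1,\dots,R_k$ whose first-return dynamics is a suspension of a subshift of finite type, which is irreducible because $\phi$ is transitive. Equivalently, $\phi$ is carried by an embedded branched surface with semiflow (a template) $\Gamma \subset M$, and $(M,\phi)$ is reconstructed from a regular neighborhood of $\Gamma$ by a finite amount of gluing data along boundary tori. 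Passing to the universal cover, this combinatorial type also determines the orbit space $(\mathcal{O}(\phi), \Ors(\phi), \Oru(\phi))$ together with its $\pi_1$-action. (One could phrase this step equally in terms of the induced circle action of Section \ref{sec:circle-lamination}, but that by itself produces only a topological model, so I would keep the template picture.)

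Next I would invoke rigidity: two transitive Anosov flows on closed $3$-manifolds of the same combinatorial type — equivalently, with equivariantly homeomorphic orbit spaces and conjugate $\pi_1$-actions — are orbit equivalent. This is the Barbot--Fenley characterization of Anosov flows by their weak foliations / orbit space, and it does not distinguish the smooth from the topological setting; it is exactly the phenomenon the present paper generalizes. Hence it suffices to exhibit one smooth Anosov flow of the combinatorial type found above. For that I would run the B\'eguin--Bonatti--Yu construction of Anosov flows from hyperbolic plugs (or Fried's Dehn-surgery realization): realize $\Gamma$ and its semiflow smoothly, thicken it to a hyperbolic building block, glue the boundary tori according to the prescribed data, and check that the resulting splitting $TM' = \R X \oplus E^{ss} \oplus E^{uu}$ is genuinely hyperbolic, that the flow is transitive, and that the reconstruction returns $M' \cong M$ with the same orbit space action. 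Applying the rigidity statement to $\phi$ and this smooth flow finishes the proof.

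The main obstacle is the realization step: one needs a genuinely smooth Anosov flow — not merely an abstract template — whose combinatorics reproduces the \emph{same} manifold $M$ and the \emph{same} $\pi_1$-action on the orbit space, and one must verify that the gluings preserve hyperbolicity rather than merely the topological transversality of $\Fs$ and $\Fu$. A secondary difficulty is the first step: constructing Markov partitions for a flow that is only $C^0$-hyperbolic (the bundles $E^{ss}, E^{uu}$ need not be continuous sub-bundles of a smooth structure), so the expansiveness and local-product-structure arguments must be carried out by purely topological means.
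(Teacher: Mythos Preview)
The paper does not prove this statement; Theorem \ref{Shannon} is simply quoted as a result of Shannon \cite{Sha21} and used as a black box. There is therefore no ``paper's own proof'' to compare against.

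That said, your outline is broadly faithful to how Shannon's argument actually runs: build a Markov partition for the topological flow (this is the delicate step, since the hyperbolicity is only topological), realize the resulting combinatorial data by a genuine smooth Anosov flow via the B\'eguin--Bonatti--Yu gluing of hyperbolic plugs, and then conclude by orbit-space rigidity. You have correctly identified the two real difficulties --- constructing the Markov partition in the $C^0$ setting, and showing that the smooth realization lands on the same manifold with the same $\pi_1$-action on the orbit space. One caution: the Barbot rigidity you invoke (\cite[Theorem 3.4]{Bar95}, cf.\ Proposition \ref{equivalent} here) is stated for flows already known to live on closed $3$-manifolds with the expected orbit-space structure, so you must first verify that the smooth flow you build reproduces $M$ itself and not merely an abstract orbit space; this is where the bulk of Shannon's work lies, and your sketch acknowledges but does not resolve it.
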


Now we introduce pseudo-Anosov flows. The following two definitions are from \cite[Definition 5.9]{AT22}:

\begin{defn}[Smooth pseudo-Anosov flow]\rm\label{smooth pseudo-Anosov flow}
Let $M$ be a closed smooth $3$-manifold. A continuous flow $\phi^t$ on $M$ is called a {\em smooth pseudo-Anosov flow}, if:
\begin{enumerate}
    \item $M$ has a finite collection of closed orbits $\{\gamma_1, \dots, \gamma_s\}$, called the {\em singular orbits}, and $\phi^t$ is smooth outside the singular orbits.
    \item There is a Riemannian metric on $M\backslash\bigcup_i\gamma_i$, which gives a path metric on $M$, $a, b>0$, and a $\phi^t$-invariant splitting of $T(M\backslash\bigcup_i\gamma_i)$ into $\mathbb{R}X\oplus E^{ss}\oplus E^{uu}$, such that:
    
\begin{enumerate}
    \item $X$ is the vector field that generates the flow $\phi^t$.
    \item For any $v\in E^{ss}$, $t>0$, 
    \[\|(D\phi^t)(v)\|\leq be^{-at}\|v\|\]
    \item For any $v\in E^{uu}$, $t>0$, 
    \[\|(D\phi^{-t})(v)\|\leq be^{-at}\|v\|\]
\end{enumerate}
\item Consider a map $\mathbb{R}^2\rightarrow\mathbb{R}^2$ defined by $(x, y)\mapsto (\lambda x, \lambda^{-1}y)$ for some $\lambda>0$. Lifting it to a branched cover branching at $(0, 0)$, the monodromy flow in the resulting mapping torus, restricted to a neighborhood of the orbit of $(0, 0)$, is called a {\em neighborhood of a pseudo hyperbolic orbit}.  For each singular orbit $\gamma_i$, there is a neighborhood $N_i$ of $\gamma_i$, a bijection $f_i$ from this neighborhood to a neighborhood of a pseudo hyperbolic orbit, such that:
\begin{enumerate}
    \item $f_i$ is bi-Lipschitz on $N_i$, smooth outside $\gamma_i$.
    \item $f_i$ sends orbits to orbits.
    \item $f_i$ sends $E^{uu}$ and $E^{ss}$ to the horizontal and vertical subbundle after composing with the branched cover.
\end{enumerate}
\end{enumerate}

\end{defn}

The following definition generalizes topological Anosov flows to
the pseudo-Anosov setting.

\begin{defn}[Topological pseudo-Anosov flow]\rm\label{topological pseudo-Anosov flow}
Let $M$ be a closed $3$-manifold with a flow $\phi^{t}: M \to M$ ($t \in \R$).
We choose a metric $d$ on $M$.
$\phi^{t}$ is called 
a \emph{topological pseudo-Anosov flow} if:
\begin{enumerate}[(a)]

\item There is a pair of singular foliations $\Fs, \Fu$ of $M$ such that
      \begin{enumerate}
          \item[(a1)] 
each orbit of $\phi^{t}$ is contained in a leaf of $\Fs$ and a leaf of $\Fu$,
\item[(a2)]
there is a finite collection of orbits $\gamma_1,\ldots,\gamma_n$ of $M$ such that
$\Fs, \Fu$ are regular foliations when restricted to $M - \bigcup_{i=1}^{n} \gamma_n$.
      \end{enumerate}
\item 
For any point $x \in M - \bigcup_{i=1}^{n} \gamma_n$,
$x$ has a closed neighborhood homeomorphic to $I_s \times I_u \times I_\phi$
(where $I_s, I_u, I_\phi$ are homeomorphic to a closed interval),
where each $(t_1,t_2,I_\phi)$ ($t_1 \in I_s, t_2 \in I_u$) 
is contained in an orbit of $\phi^{t}$,
each $(t, I_u, I_\phi)$ is contained in a leaf of $\Fu$,
and each $(I_s, t, I_\phi)$ is contained in a leaf of $\Fs$.
We call such a closed neighborhood a \emph{flow box}.
\item If $x,y$ are two points in the same leaf of $\Fs$,
then there exists an orientation-preserving homeomorphism 
$h: \R \to \R$ such that
$$\lim_{t \to +\infty}d(\phi^{t}(x),\phi^{h(t)}(y)) = 0.$$
If $x,y$ are two points in the same leaf of $\Fu$,
then there exists an orientation-preserving homeomorphism 
$h: \R \to \R$ such that
$$\lim_{t \to -\infty}d(\phi^{t}(x),\phi^{h(t)}(y)) = 0.$$
\item 
Let $\lambda$ be a leaf of $\Fs$, let $\mu$ be a leaf of $\Fu$,
and let $d_{\lambda}$ (resp. $d_{\mu}$) refer to
the induced path metric on $\lambda$ (resp. $\mu$).
For any $x,y \in \lambda$ contained in distinct orbits,
$$\lim_{t \to -\infty}d_\lambda(\phi^{t}(x),\phi^{t}(y)) = +\infty.$$
For any $x,y \in \mu$ contained in distinct orbits,
$$\lim_{t \to +\infty}d_\mu(\phi^{t}(x),\phi^{t}(y)) = +\infty.$$
\item Let $i \in \{\gamma_1,\ldots,\gamma_n\}$,
and let $\lambda_i$ denote the leaf of $\Fs$ containing $\gamma_i$.
Then $\lambda_i \cong P \times I / \stackrel{f}{\sim}$,
where $P$ is a $k_i$-prong on the $2$-plane for some $k_i \geqslant 3$,
and $f: P \times \{0\} \to P \times \{1\}$ is a homeomorphism such that,
\begin{enumerate}[(1)]
    \item $f$ preserves the circular order of the prongs of $P$,
\item $f$ either topologically contracts each prong of $P$ or
topologically expands each prong of $P$.
\end{enumerate}
\end{enumerate}
\end{defn}

Each $\gamma_i$ is called a \emph{singular orbit} of $\phi^{t}$.

\begin{rmk}\label{AT}
    Definition \ref{topological pseudo-Anosov flow} is
    an adaptation of Definition \ref{topological Anosov flow} in
    the pseudo-Anosov setting,
    which is also compactified with the setting of Fenley's works in
    \cite{Fen12, Fen16}.
    
    In \cite[Definition 5.9]{AT22},
    Agol and Tsang consider a different version of topological pseudo-Anosov flow and
    prove that any such flow is orbit equivalent to 
    a smooth Anosov flows when it is transitive \cite[Theorem 5.11]{AT22}.
    The only difference between Definition \ref{topological pseudo-Anosov flow} and 
    their definition is that,
    Definition \ref{topological pseudo-Anosov flow} (d) is
    replaced by the existence of a Markov partition.
    As the topological Anosov flows in our setting might not 
    satisfy \cite[Definition 5.9]{AT22} when they are non-transitive, we do not use this definition for the pseudo-Anosov setting.
\end{rmk}

\subsection{Surgeries on pseudo-Anosov flows}\label{subsec: surgeries on pA flows}

In this subsection, we review some background on Dehn surgeries along closed orbits of pseudo-Anosov flows.

Each closed orbit of a pseudo-Anosov flow is equipped with a local system,
called its degeneracy locus:

\begin{conv}[Degeneracy locus]\rm
Let $M$ be a closed orientable $3$-manifold that admits a pseudo-Anosov flow $\phi$.
Let $\Fs$ denote the stable foliation of $\phi$.
Let $\gamma$ be a closed orbit of $\phi$,
and let $N(\gamma)$ be a (closed) regular neighborhood of $\gamma$ in $M$.
Finally, let $\lambda$ be the leaf of $\Fs$ containing $\gamma$. 
Then $\lambda \cap \partial N(\gamma)$ is
a finite union of (parallel) essential simple closed curves
$\{s_1, \ldots, s_k\}$ on $\partial N(\gamma)$.
The slopes of $s_i$ on $\partial N(\gamma)$ depend only on $\gamma$,
which is called the \emph{degeneracy slope} of $\gamma$ and
denoted as $\delta(\gamma)$.
The \emph{degeneracy locus} of $\gamma$ is defined as
a multiple of the degeneracy slope with the coefficient $k$,
denoted $d(\gamma) = k\delta(\gamma)$.
And we call $k$ the \emph{multiplicity of} $d(\gamma)$.
\end{conv} 

\begin{nota}\rm
Let $\alpha, \beta$ be two loops in the complement of a knot $K$ in a closed $3$-manifold.
The \emph{distance} between $\alpha, \beta$,
denoted as $\Delta(\alpha,\beta)$, is defined as the minimal geometric intersection number of $\alpha, \beta$.
To make this notation compatible with the concept of degeneracy locus,
we also define
$\Delta(\alpha,k\beta) = \Delta(k\beta,\alpha) = k \Delta(\alpha,\beta)$ 
for any $k \in \N$.
\end{nota}

For a closed orbit $\gamma$ of a pseudo-Anosov flow $\phi$,
$\gamma$ has a well-defined meridian in the ambient $3$-manifold, denoted $m(\gamma)$.
We always have $\Delta(d(\gamma), m(\gamma)) \geqslant 2$.
In addition,
$\Delta(d(\gamma), m(\gamma)) = 2$ when $\gamma$ is a non-singular orbit and
$\Delta(d(\gamma), m(\gamma)) > 2$ when $\gamma$ is a singular orbit.

Fried's surgery \cite{Fri83} produces new pseudo-Anosov flows from
a suspension pseudo-Anosov flow,
by performing Dehn surgeries along a collection of closed orbits:

\begin{defn}[Fried's surgery, {\cite{Fri83}}]\rm\label{Fried's surgery}
Let $\Sigma$ be a closed orientable surface, and let $h: \Sigma \to \Sigma$
be an orientation-preserving pseudo-Anosov homeomorphism.
Let $M = \Sigma \times I / \stackrel{h}{\sim}$ be
the mapping torus of $h$ and let $\phi$ denote the suspension pseudo-Anosov flow.
Let $\gamma_1, \ldots, \gamma_n$ be a finite collection of closed orbits of $\phi$,
and let $s_i$ be a slope on each $\gamma_i$ such that
$\Delta(s_i, d(\gamma_i)) \geqslant 2$.
We perform the Dehn surgery of $M$ along $\bigcup_{i=1}^{n} \gamma_i$, with the slope $s_i$ on each $\gamma_i$.
Let $N$ denote the resulting $3$-manifold.
Note that $M - \bigcup_{i=1}^{n} \gamma_i$ is canonically identified with a link complement of $N$, and thus $\phi \mid_{M - \bigcup_{i=1}^{n} \gamma_i}$ is canonically identified with
a flow of this link complement of $N$,
which extends to a flow $\varphi$ of $N$.
We call this operation \emph{Fried's surgery} on $(M,\phi)$.
\end{defn}

Let $\Fs, \Fu$ denote the stable and unstable foliations of $\phi$.
Clearly,
Fried's surgery produces a pair of singular foliations
$\mathcal{E}^{+}, \mathcal{E}^{-}$ from $\Fs, \Fu$ respectively.
Note that Fried's surgery takes each $\gamma_i$ to a closed orbit that is contained in a leaf of the stable foliation with
$\Delta(s_i,d(\gamma_i))$ prongs.

\begin{THM}[{\cite{Fri83, brunella1995surfaces}}]\label{Fried's surgery theorem}
All transitive pseudo-Anosov flows in closed $3$-manifolds can be
produced by Fried's surgery from a suspension pseudo-Anosov flow on a hyperbolic fibered $3$-manifold.
\end{THM}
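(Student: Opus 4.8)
\textbf{Proof proposal for Theorem \ref{Fried's surgery theorem}.}

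The plan is to combine a fibration/pseudo-Anosov-homeomorphism characterization of pseudo-Anosov flows with a desingularization argument for singular orbits of negative ``index contribution.'' First I would recall Fried's dynamic characterization \cite{Fri83}: given a transitive pseudo-Anosov flow $\phi$ on a closed $3$-manifold $N$, the flow admits a cross-section (an embedded surface transverse to $\phi$ meeting every orbit) if and only if there is a closed $1$-form that is positive on the flow direction; equivalently, if and only if a certain cohomology class in $H^1(N;\R)$ lies in the interior of the cone dual to the cone generated by invariant transverse measures. So the first genuine step is: enlarge $N$ by drilling out finitely many closed orbits so that the resulting flow on a link complement \emph{does} admit a cross-section. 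Concretely, one drills out at least the singular orbits (to kill the obstruction coming from singularities) together with enough regular orbits to move the relevant Euler class into the open cone; transitivity is what guarantees the cone has nonempty interior and that such a finite collection exists. This is essentially Fried's observation that every transitive flow is ``homologically fillable'' after drilling.

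Next I would analyze the drilled manifold $N' = N \setminus \bigcup_i \gamma_i$ with the restricted flow $\phi' = \phi\mid_{N'}$. By the previous step $\phi'$ has a cross-section $\Sigma'$, so $N'$ fibers over $S^1$ with monodromy the first-return map $h' : \Sigma' \to \Sigma'$, and $h'$ is a pseudo-Anosov homeomorphism of the (punctured) surface $\Sigma'$ whose invariant foliations are the traces of $\Fs, \Fu$. Here the key bookkeeping is to track the prong data: each boundary component of $\Sigma'$ around a drilled orbit $\gamma_i$ inherits a fixed number of prongs, and the degeneracy slope $\delta(\gamma_i)$ is exactly the slope determined by a leaf of $\Fs$ meeting $\partial N(\gamma_i)$. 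Now one wants to ``fill back in'' in a way that recovers $(N,\phi)$ as Fried's surgery of Definition \ref{Fried's surgery}. The point is that filling the puncture of $\Sigma'$ corresponding to a $k$-prong boundary by a disk with a $k$-prong singularity produces a pseudo-Anosov homeomorphism of a \emph{closed} surface $\Sigma$ (this is standard for pseudo-Anosov maps on punctured surfaces — removing punctures extends the invariant foliations, possibly creating $1$-prong or regular points which one absorbs), and the suspension of this closed pseudo-Anosov map is a suspension flow on a \emph{hyperbolic fibered} $3$-manifold $M$ by Thurston's fibered-face theory (pseudo-Anosov monodromy forces the mapping torus to be hyperbolic). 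Reinserting the filling tori with the slopes dictated by $N$ — which by construction satisfy $\Delta(s_i, d(\gamma_i)) \ge 2$ since $d(\gamma_i)$ is realized by the stable-foliation intersection curves — is precisely a Fried surgery on $(M,\phi_{\mathrm{susp}})$, and by Definition \ref{Fried's surgery} it returns the flow $\phi$ on $N$.

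The main obstacle is the second step: verifying that after drilling and passing to the monodromy, filling the punctures really yields a pseudo-Anosov homeomorphism of a closed surface (rather than one with spurious $1$-prong or $2$-prong singular points that would spoil the pseudo-Anosov condition), and that the resulting suspension is genuinely a \emph{hyperbolic fibered} $3$-manifold. The delicate cases are punctures that were $1$-prong boundaries of $\Sigma'$ (which happen precisely for non-singular drilled orbits, where $\Delta(d(\gamma_i),m(\gamma_i))=2$) — filling these does \emph{not} create a singularity and the map stays pseudo-Anosov — versus $k$-prong boundaries with $k\ge 3$, where the filled map has an honest $k$-prong singularity; in all cases the key fact is that a pseudo-Anosov map of a closed surface of genus $\ge 2$ has hyperbolic mapping torus, so one also needs to rule out the sporadic low-complexity surfaces, which is handled by noting that a genuinely pseudo-Anosov monodromy cannot live on the torus or sphere-with-few-punctures. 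The reference \cite{brunella1995surfaces} supplies the careful treatment of this desingularization and of the fibered hyperbolicity, so in the write-up I would cite it for these points rather than reproving them, and devote the proof to the drilling/cohomology step and the identification of the refilling with Fried's surgery.
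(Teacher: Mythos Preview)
The paper does not prove this theorem; it states it as a known result with citation to \cite{Fri83, brunella1995surfaces} and gives no argument of its own. So there is no ``paper's own proof'' to compare your proposal against.

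That said, your outline is broadly the correct strategy from the cited works: drill a finite collection of closed orbits (using transitivity and Fried's cross-section criterion) so that the complement fibers with pseudo-Anosov monodromy, then fill the punctures to obtain a closed-surface pseudo-Anosov whose suspension is the desired hyperbolic fibered manifold, and observe that refilling with the original meridians is exactly Fried's surgery. One technical point is muddled: your prong bookkeeping around the drilled boundary components conflates the ambient degeneracy data with the prong count on the cross-section $\Sigma'$. The number of prongs at a puncture of $\Sigma'$ is governed by how many stable half-leaves the return map cycles through, not directly by $\Delta(d(\gamma_i),m(\gamma_i))$; in particular a non-singular drilled orbit can give a $1$-prong puncture (when the local monodromy swaps the two half-leaves), and filling a $1$-prong puncture does \emph{not} produce a pseudo-Anosov on the closed surface --- a $1$-prong is a genuine obstruction. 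The remedy, which you gesture at by deferring to \cite{brunella1995surfaces}, is to arrange the drilling (or pass to a power/branched cover) so that all punctures have at least two prongs after filling; this is where the real work in Brunella's argument lies, and you should be explicit that this step is nontrivial rather than a bookkeeping formality.
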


\subsection{Ideal boundary of the orbit space}\label{subsec: ideal boundary}

Let $M$ be a closed orientable $3$-manifold that admits 
a topological pseudo-Anosov flow $\phi$.
We denote by $\Fs, \Fu$ the stable and unstable foliations of $\phi$.

Let $\w{M}$ denote the universal cover of $M$ and
let $\w{\phi}$ denote the pull-back flow of $\phi$ in $\w{M}$.
Let $\w{\Fs}, \w{\Fu}$ denote 
the pull-backs of $\Fs, \Fu$ in $\w{M}$.
The orbit space of $\w{\phi}$,
denoted $\Or(\phi)$,
is the quotient space of $\w{M}$ by collapsing each orbit of $\w{\phi}$ to a single point,
endowed with the quotient topology.
Note that $\Or(\phi)$ is always homeomorphic to $\R^{2}$ 
\cite[Proposition 4.2]{FM01}.

Let $\pi_\phi: \w{\phi} \to \Or(\phi)$ denote the canonical projection that
sends each orbit of $\w{\phi}$ to the point in $\Or(\phi)$ canonically identified with it.
Then $\pi_{\phi}(\w{\Fs}),\pi_{\phi}(\w{\Fu})$ are 
a pair of singular $1$-dimensional foliations in $\Or(\phi)$.
We denote them by $\Ors(\phi), \Oru(\phi)$.
$\pi_\phi$ descends the deck transformations in $\w{M}$ to 
an action on $\Or(\phi)$ that preserves $\Ors(\phi), \Oru(\phi)$,
which is referred to as the \emph{$\pi_1$-action} on $\Or(\phi)$.
Because $M$ is orientable and the orbits of $\w{\phi}$ have continuously varying orientations,
the $\pi_1$-action on $\Or(\phi)$ is an orientation-preserving action on $\Or(\phi)$.

In \cite{Fen12},
Fenley constructs an ideal boundary for $\Or(\phi)$,
denoted $S^{1}_\infty(\phi)$, 
so that the $\pi_1$-action on $\Or(\phi)$ extends to $S^{1}_\infty(\phi)$.

\begin{THM}[Fenley, 2012]
$\Or(\phi)$ has a canonical compactification $\Or(\phi) \cup S^{1}_\infty(\phi)$,
homeomorphic to a $2$-disk,
such that the $\pi_1$-action on $\Or(\phi)$ extends to $\Or(\phi) \cup S^{1}_\infty(\phi)$.
\end{THM}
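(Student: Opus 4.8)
The plan is to construct the compactification directly from the combinatorial structure of the two singular foliations $\Ors(\phi), \Oru(\phi)$ on $\Or(\phi) \cong \R^2$, following Fenley's approach. The starting point is that each non-singular leaf $\ell$ of $\Ors(\phi)$ separates $\Or(\phi)$ into two halves, and each half is further swept out by the leaves of $\Oru(\phi)$ it meets; this gives a notion of a \emph{ray} or a \emph{sector} direction in $\Or(\phi)$. First I would define an \emph{ideal point} as an equivalence class of properly embedded rays that eventually lie in a common nested sequence of ``quadrants'' cut out by pairs of leaves (one stable, one unstable), where two rays are equivalent if they are eventually trapped in the same such nested sequence. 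One shows, using the product structure of flow boxes (Definition \ref{topological pseudo-Anosov flow}(b)) and the fact that leaves of $\Ors(\phi)$ and $\Oru(\phi)$ intersect in at most one point, that this equivalence relation is well-behaved and that the set $S^1_\infty(\phi)$ of ideal points carries a natural circular order inherited from the circular order of sector directions around any basepoint.

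Next I would put a topology on $\Or(\phi) \cup S^1_\infty(\phi)$: a neighborhood basis of an ideal point $\xi$ is given by the ``half-plane'' regions bounded by a single leaf through a large compact set together with the arc of $S^1_\infty(\phi)$ they subtend. The key technical steps are: (i) verifying this is a topology and that $S^1_\infty(\phi)$ is closed with empty interior; (ii) showing $S^1_\infty(\phi)$ is homeomorphic to $S^1$ — this uses the circular order together with a second-countability/separability argument coming from the fact that $\Or(\phi)$ is a separable surface and the foliations have only finitely many singular orbits' worth of prongs locally; (iii) showing the compactified space is homeomorphic to a closed $2$-disk, which follows from the Schoenflies-type fact that a simply connected surface with a circle added ``at infinity'' in this leafwise-tame manner is a disk. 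Step (ii)/(iii) is where one must rule out pathologies such as ``branching at infinity'' — the argument here invokes the non-existence of perfect-fit obstructions and the comparison of leaf spaces, and is the same analysis that makes $\Or(\phi)$ itself a plane rather than a non-Hausdorff surface.

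Finally, equivariance: any deck transformation $g$ acts on $\w M$ preserving $\w\phi$, hence descends to a homeomorphism of $\Or(\phi)$ preserving $\Ors(\phi), \Oru(\phi)$; since the ideal points and their topology were defined purely in terms of the leaf structure, $g$ automatically permutes the nested-quadrant data and therefore extends continuously to $S^1_\infty(\phi)$, giving a homeomorphism of the closed disk $\Or(\phi) \cup S^1_\infty(\phi)$. Functoriality of the construction makes this extension a group action with no further choices.

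\textbf{Main obstacle.} The hard part will be step (ii)--(iii): proving that the ideal boundary is genuinely a circle and that the compactification is a disk, rather than something with an uncountable or locally disconnected set of ends or with non-metrizable local structure at infinity. This requires a careful finiteness/tameness input about how the two transverse singular foliations interact near infinity — essentially controlling chains of leaves making perfect fits — and is exactly the content that Fenley's construction supplies; I would isolate this as the technical core and reduce everything else to it.
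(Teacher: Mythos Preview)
The paper does not prove this theorem; it is stated as a background result and attributed to Fenley \cite{Fen12} without proof. So there is no ``paper's own proof'' to compare against --- the paper simply cites the original source.

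That said, your sketch has a substantive misconception worth flagging. You invoke ``the non-existence of perfect-fit obstructions'' as the tameness input in step (ii)--(iii), but perfect fits \emph{do} occur for general pseudo-Anosov flows (see Definition~\ref{perfect fit} in the paper), and Fenley's construction must accommodate them: two leaves making a perfect fit share an ideal point in $S^1_\infty(\phi)$. Likewise, the leaf spaces of $\Ors(\phi)$ and $\Oru(\phi)$ are typically non-Hausdorff (non-separated leaves are exactly the cataclysms discussed in \S\ref{sec:BBM}), so the analogy you draw with ``the analysis that makes $\Or(\phi)$ itself a plane rather than a non-Hausdorff surface'' is misleading --- the orbit space is a plane, but the individual leaf spaces are not trees in general. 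The actual finiteness input Fenley uses is more delicate: it concerns how chains of perfect fits and non-separated leaves are organized, not their absence. Your overall architecture (ideal points from nested sectors, circular order, neighborhood basis from half-planes, equivariance from naturality) is the right shape, but the technical core you identify is mischaracterized.
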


$\Ors(\phi), \Oru(\phi)$ can be canonically identified with
a pair of laminations on $S^{1}_{\infty}(\phi)$ with certain leaves removed.
We call such pair
\emph{almost} laminations (see the following Subsection \ref{sec:BBM}).
We denote by $\Lambda^{s}_{\infty}(\phi), \Lambda^{u}_{\infty}(\phi)$ this pair of
almost laminations on $S^{1}_{\infty}(\phi)$.
The action of $\pi_1(M)$ on $S^{1}_{\infty}(\phi)$ preserves
$\Lambda^{s}_{\infty}(\phi), \Lambda^{u}_{\infty}(\phi)$.
We also refer to this action as the $\pi_1$-action on
$S^{1}_{\infty}(\phi)$.

For a leaf $\lambda$ of $\Ors(\phi)$ and a leaf $\mu$ of $\Oru(\phi)$,
$\lambda, \mu$ share an ideal endpoint in $S^{1}_{\infty}(\phi)$ exactly
when they make a perfect fit:

\begin{defn}[Perfect fit, {\cite{Fen12}}]\rm\label{perfect fit}
   Two leaves $\lambda$, $\mu$ that share an ideal endpoint are said to make a {\em perfect fit}, if one can pick half leaves from them, which together with some intervals on other leaves bound a closed product rectangle with one corner removed. 
   Here, by a half leaf on $\lambda$, we mean the closure of a component of $\lambda - \{x\}$ for certain $x \in \lambda$.
\end{defn}

Fenley \cite{Fen12, Fen16} defines
the \emph{flow ideal boundary} $\RR(\phi)$ for $\phi$ when
$\phi$ is \emph{bounded} \cite[Definition 1.1]{Fen16},
that is,
$\phi$ is not topologically conjugate to a suspension Anosov flow,
any closed orbit of $\phi$ is not nontrivially freely homotopic to itself,
and the cardinality of free homotopy classes has an upper bound.

\begin{defn}[Flow ideal boundary]\rm\label{flow ideal boundary}
    Suppose that $\phi$ is bounded.
    Let $\sim$ be the closed equivalence relation on $S^{1}_{\infty}(\phi)$ 
    generated by the following relation:
    $x \sim y$ when $(x,y)$ is a leaf of $\Lambda^{s}_{\infty}(\phi)$ or
    $\Lambda^{u}_{\infty}(\phi)$,
    i.e. $x, y$ are the endpoints of some leaf of $\Ors(\phi)$, $\Oru(\phi)$.
    Let $\RR(\phi)$ be the quotient of $S^{1}_{\infty}(\phi)$ by $\sim$.
\end{defn}

The main result of \cite{Fen16} is

\begin{THM}[Fenley, 2016]\label{bounded to convergence}
\hspace{0em}
\begin{enumerate}[(a)]
    \item Suppose that $\phi$ is bounded. 
    Then $\RR(\phi)$ is homeomorphic to a $2$-sphere,
    and the $\pi_1$-action on $\Or(\phi)$ induces a uniform convergence group action
    on $\RR(\phi)$.
    \item Suppose that $\pi_1(M)$ is Gromov hyperbolic.
    Then $\phi$ is quasigeodesic if and only if $\phi$ is bounded.
\end{enumerate}    
\end{THM}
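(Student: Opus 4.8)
The final statement is Theorem~\ref{bounded to convergence} of Fenley (2016). Let me sketch a proof plan.

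\medskip

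The plan is to follow Fenley's strategy of constructing the flow ideal boundary $\RR(\phi)$ as a quotient of $S^1_\infty(\phi)$ and verifying directly that it carries a uniform convergence group action when $\phi$ is bounded, and then deducing the quasigeodesic characterization from the Cannon--Thurston/uniform convergence machinery. First I would establish that the relation $\sim$ generating $\RR(\phi)$ is indeed a \emph{closed} equivalence relation with connected, point-like (cellular) classes: the boundedness hypothesis is exactly what is needed to bound the complexity of chains of leaves sharing ideal points (through non-separated leaves and lozenges), so that no equivalence class can ``wrap around'' or be too large. By Moore's theorem, the quotient of $S^1$ (or rather $\Or(\phi) \cup S^1_\infty(\phi) \cong D^2$ after also collapsing the corresponding rays/leaves in the interior) by a cellular upper semicontinuous decomposition is again a $2$-sphere; this gives the topological statement in (a). The $\pi_1$-action on $\Or(\phi)$ preserves $\Ors(\phi), \Oru(\phi)$ and hence preserves $\sim$, so it descends to an action on $\RR(\phi) \cong S^2$.

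\medskip

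Next I would verify the uniform convergence property. The key geometric input is that for a bounded pseudo-Anosov flow, the ideal points of stable and unstable leaves, together with the ``ideal points of the flow'' (limits of orbit rays), organize $S^1_\infty(\phi)$ in a way that, after the quotient, every point of $\RR(\phi)$ is the image either of a single point of $S^1_\infty$, of the two endpoints of a leaf, or of a perfect-fit chain — and perfect fits are controlled by boundedness. One then shows: (i) the action on the space of distinct pairs of $\RR(\phi)$ is properly discontinuous (convergence action), using that distinct pairs correspond to data in $\Or(\phi)$ on which $\pi_1(M)$ acts properly; and (ii) cocompactness on distinct triples, using that every orbit of $\w\phi$ (every point of $\Or(\phi)$) determines, via its stable and unstable leaves and their ideal points, a bounded-complexity configuration in $\RR(\phi)$, and the flow is ``coarsely transitive'' enough that $\pi_1(M)$ moves these around cocompactly. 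The main obstacle is precisely step (ii): controlling the quotient of the triple space requires showing that the map from $\Or(\phi)$ (or from $\w M$) to configurations in $\RR(\phi)$ is coarsely surjective and proper — this is where boundedness is used most essentially, to prevent degenerations where triples collapse, and it is the technical heart of Fenley's argument.

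\medskip

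For part (b), assume $\pi_1(M)$ is Gromov hyperbolic. If $\phi$ is bounded, then by part (a) we have a uniform convergence group action of $\pi_1(M)$ on $\RR(\phi) \cong S^2$; since $\pi_1(M)$ is hyperbolic, $S^2 \cong \partial \pi_1(M)$ equivariantly (uniform convergence actions on $S^2$ by hyperbolic groups are the boundary action — this uses Bowditch's topological characterization of $\partial G$). Tracing through the construction, the natural $\pi_1$-equivariant map $S^1_\infty(\phi) \to \RR(\phi) \cong \partial \bH^3$ restricted to orbit rays of $\w\phi$ exhibits each flow line as having well-defined, uniformly-controlled endpoints on $\partial\bH^3$, which is the definition of quasigeodesic. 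Conversely, if $\phi$ is quasigeodesic, then each flow line has distinct endpoints on $\partial\bH^3$ and the Cannon--Thurston map collapses $S^1_\infty(\phi)$ onto $\partial\bH^3$ via exactly the relation $\sim$; finiteness of freely-homotopic-orbit classes and the no-self-freely-homotopic condition (hence boundedness) follow because in a hyperbolic manifold a quasigeodesic flow cannot have an orbit freely homotopic to itself and the quasigeodesic constant bounds the size of free homotopy classes. I would cite the relevant parts of \cite{Fen12, Fen16, CT07} for the Cannon--Thurston input and Bowditch for the boundary characterization, and note that the equivalence is really a dictionary between ``$\sim$ has point-like classes after which the quotient is $S^2$ with a uniform convergence action'' and ``the Cannon--Thurston map exists with the expected fibers''.
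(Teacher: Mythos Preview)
The paper does not prove Theorem~\ref{bounded to convergence}; it is stated as ``The main result of \cite{Fen16}'' and cited without proof, so there is no argument in the paper to compare your proposal against. Your sketch is a reasonable outline of Fenley's actual strategy in \cite{Fen16} (Moore's theorem for the $S^2$ topology, then verifying convergence and cocompactness on triples directly, then the Bowditch/Cannon--Thurston dictionary for part (b)), but since the paper treats this as a black-box input rather than something to be established, no proof is expected here.
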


We will give some remarks on the differences between our Definition \ref{flow ideal boundary} and 
Fenley's definition \cite{Fen12, Fen16}.

\begin{rmk}
\begin{enumerate}[(a)]
    \item Note that $\w{\phi}$ is homeomorphic to $\Or(\phi) \times (0,1)$ and 
has a natural compactification 
$X(\phi) = (\Or(\phi) \cup S^{1}_{\infty}(\phi)) \times [0,1]$.
In \cite[Theorem 4.3]{Fen12} and \cite[Definition 6.2]{Fen16},
$\RR(\phi)$ is defined as a quotient of 
\[\partial X(\phi) \cong (\Or(\phi) \cup S^{1}_{\infty}(\phi)) \times \{0,1\} \cup
S^{1}_{\infty}(\phi)) \times [0,1]\]
so that $\w{M} \cup \RR(\phi)$ is a compactification of $\w{M}$.
In particular,
$\pi_1(M)$ is Gromov hyperbolic and the action of $\pi_1(M)$ on $\R(\phi)$ is
topologically conjugate to the action on the Gromov ideal boundary of $\w{M}$.
\item For convenience, 
we define $\RR(\phi)$ as an abstract space homeomorphic to the flow ideal boundary
described above,
and we omit the connection between $\RR(\phi)$ and $\w{M}, \w{\phi}$.
We note from the proof of \cite[Theorem 6.5]{Fen16} that
$\RR(\phi)$ can be defined in terms of the almost laminations $\Lambda^{s}_{\infty}(\phi), \Lambda^{u}_{\infty}(\phi)$ 
or bifoliated planes $\Ors(\phi), \Oru(\phi)$,
independent of further information of the flow $\phi$.
Furthermore,
to make $\RR(\phi)$ homeomorphic to a $2$-sphere,
we only need that $\Or(\phi)$ satisfies certain properties.
See Section \ref{sec: convergence group action} for more details.
\end{enumerate}
\end{rmk}

\subsection{$\pi_1$-actions on orbit spaces} \label{sec:orbitspace}

We begin by introducing the notion of a reduced pseudo-Anosov flow which is a slightly weaker notion than a topological pseudo-Anosov flow. In general, our flowable actions on the bifoliated plane produces a reduced pseudo-Anosov flow in the 3-manifold. 

\begin{defn}\rm\label{reduced pseudo-Anosov flow}
    Let $M$ be a (possibly non-compact) orientable $3$-manifold.
    We call $\phi$ a \emph{reduced pseudo-Anosov flow} of $M$ if 
    the following conditions hold:
\begin{itemize}
    \item $\phi$ satisfies Definition \ref{topological pseudo-Anosov flow} (a), (b), (e).
We still denote by $\Fs, \Fu$ the pair of transverse foliations for $\phi$,
as in Definition \ref{topological pseudo-Anosov flow} (b).
\item Each non-singular leaf of $\Fs$ or $\Fu$ is either homeomorphic to $\R^{2}$ or
homeomorphic to $S^{1} \times \R$;
in the second case,
we still call this leaf a \emph{periodic leaf}.
\item For each periodic leaf $\lambda$ of $\Fs$ (resp. $\Fu$),
there is an orientation-preserving homeomorphism $h: \R \to \R$ such that
\begin{enumerate}[(1)]
\item $h(0) = 0$,
\item $h$ topologically contracts (resp. expands) 
both of the half-lines $[0,+\infty)$ and $(-\infty,0]$.
\item Let $\stackrel{h}{\sim}$ be the equivalence relation on $\R \times I$ such that
$(x,1) \sim (h(x),0)$ for each $x \in \R$.
Let $\mathcal{I}$ be the vertical foliation $\{\{t\} \times I \mid t \in \R\}$ of
$\R \times I$.
Then $(\lambda, \phi \mid_{\lambda})$ is homeomorphic to
$(\R \times I / \stackrel{h}{\sim}, \mathcal{I} / \stackrel{h}{\sim})$.
\end{enumerate}

    \item Let $\w{\phi}$ be the pull-back of $\phi$ in the universal cover of $M$.
Then the orbit space of $\w{\phi}$ is homeomorphic to $\R^{2}$.
\end{itemize}
\end{defn}

For a reduced pseudo-Anosov flow $\phi$,
we will adopt the same conventions as pseudo-Anosov flows:
we still denote by $\Or(\phi)$ the orbit space of $\w{\phi}$ and
denote by $\Ors(\phi), \Oru(\phi)$ the pair of singular $1$-dimensional foliations on
$\Or(\phi)$.

Barbot proves that two Anosov flows in closed $3$-manifolds are orbitally equivalent
if and only if 
the $\pi_1$-actions on their orbit spaces are conjugate \cite[Theorem 3.4]{Bar95}.
The purpose of this section is to prove the following. 

\begin{prop}\label{equivalent}
Let $M_1$ be a closed orientable $3$-manifold that admits
a transitive pseudo-Anosov flow $\phi_1$.
Let $G = \pi_1(M_1)$.
Suppose that $M_2$ is a closed orientable $3$-manifold with
$\pi_1(M_2) = G$, 
and $M_2$ admits a reduced pseudo-Anosov flow $\phi_2$.
Suppose further that the $\pi_1$-action on $\Or(\phi_2)$ is conjugate to
the $\pi_1$-action on $\Or(\phi_1)$ by 
an orientation-preserving homeomorphism between $\Or(\phi_1), \Or(\phi_2)$,
then $M_1 \cong M_2$, and $\phi_1$ is orbitally equivalent to $\phi_2$.
\end{prop}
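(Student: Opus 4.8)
The plan is to reduce the statement to the construction of a single $G$-equivariant homeomorphism between the universal covers, in the spirit of Barbot's argument for Anosov flows \cite{Bar95}, now adapted to the (reduced) pseudo-Anosov setting. Write $\w{M_i}$ for the universal cover of $M_i$ and $\w{\phi_i}$ for the lifted flow, with orbit space $\Or(\phi_i)$; the deck action of $G$ on $\w{M_i}$ commutes with $\w{\phi_i}$, because $\phi_i$ is homotopic to the identity and hence acts trivially on $\pi_1$, and it covers the $\pi_1$-action on $\Or(\phi_i)$. Since orbit equivalence concerns only the orbits, not the weak foliations, it will be enough to produce an orientation-preserving $G$-equivariant homeomorphism $H\colon\w{M_1}\to\w{M_2}$ covering the given conjugacy $h\colon\Or(\phi_1)\to\Or(\phi_2)$ and carrying orbits of $\w{\phi_1}$ onto orbits of $\w{\phi_2}$: such an $H$ descends to a homeomorphism $M_1\to M_2$ realizing an orbit equivalence with $\phi_2$. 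Here I take $h$ to respect the co-oriented singular foliations $\Ors(\phi_i),\Oru(\phi_i)$, which for transitive $\phi_1$ costs nothing, so that the two flow co-orientations are matched.

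Next I would reconstruct $\w{M_i}$ from its orbit space as a bundle. The map $\w{M_i}\to\Or(\phi_i)$ is a locally trivial oriented $\R$-bundle: flow boxes (Definition \ref{topological pseudo-Anosov flow}(b)) give local triviality away from the singular orbits, the model neighborhoods near a singular orbit (Definition \ref{reduced pseudo-Anosov flow}) handle the remaining points, and the flow orients the fibres, each a complete, fixed-point-free line. Because $\Or(\phi_i)\cong\R^2$ the bundle is trivial; choosing a section and reparametrising along the flow, I identify $\w{M_i}\cong\Or(\phi_i)\times\R$ with $\w{\phi_i}$ the translation flow. As $G$ commutes with the flow, its action becomes a skew product $g\cdot(x,t)=(g\cdot x,\,t+c^i_g(x))$ for a continuous cocycle $c^i\colon G\times\Or(\phi_i)\to\R$. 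Using $h$ to identify the two base spaces, constructing $H$ reduces to finding increasing homeomorphisms $\psi_x\colon\R\to\R$, varying continuously with $x\in\Or(\phi_1)$, such that
\[ \psi_{g\cdot x}\bigl(t+c^1_g(x)\bigr)=\psi_x(t)+c^2_g\bigl(h(x)\bigr)\qquad(g\in G,\ x\in\Or(\phi_1),\ t\in\R); \]
then $H(x,t):=(h(x),\psi_x(t))$ does the job.

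Solving this twisted equivariance equation is the main step, and the hard part, because $G$ acts on $\Or(\phi_1)$ neither freely — periodic and singular orbits have infinite cyclic stabilisers — nor properly discontinuously, so there is no fundamental domain to work over. Two facts make it manageable. First, $h$ sends each point to one with the same $G$-stabiliser, so a periodic (resp.\ singular) orbit of $\phi_1$ corresponds to one of $\phi_2$ of the same type; over such a point the two $\R$-fibres carry the actions $t\mapsto t+p_1$ and $t\mapsto t+p_2$ with $p_1,p_2>0$ (the periods, resp.\ the contraction/expansion data of Definition \ref{reduced pseudo-Anosov flow}), and an increasing homeomorphism conjugating one to the other always exists, so reparametrisation absorbs all the stabiliser constraints. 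Second, transitivity of $\phi_1$ forces its periodic points to be dense in $\Or(\phi_1)$, hence, via $h$, those of $\phi_2$ dense in $\Or(\phi_2)$. I would then build $\psi$ by fixing its values on one representative per $G$-orbit of periodic points — compatibly with the stabiliser conditions and so as to vary continuously — propagating via the displayed identity over each $G$-orbit (consistency being forced by the cocycle relation), and extending continuously over the remaining free orbits using density together with the product structure. The delicate issue, and where most of the work lies, is keeping the family $\{\psi_x\}$ jointly continuous while remaining $G$-equivariant; once that is in place, $H$ descends as in the first paragraph and yields $M_1\cong M_2$ with $\phi_1$ orbitally equivalent to $\phi_2$.
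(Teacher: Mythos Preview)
Your reduction to a cocycle problem is sound up to the displayed equation, but the construction of the family $\{\psi_x\}$ is where the proof actually lives, and your sketch does not carry it out. Fixing $\psi_x$ independently ``on one representative per $G$-orbit of periodic points'' cannot yield continuity: periodic points are dense, so the values on nearby periodic orbits are \emph{not} free choices but are forced on you by the global cocycle identities together with whatever continuity you hope to get, and you give no mechanism that makes these forced values compatible. The phrase ``extending continuously over the remaining free orbits using density together with the product structure'' likewise hides the entire difficulty: for an extension by density to exist you would need a uniform continuity or at least a Cauchy-type estimate on the already-defined values, and nothing in your outline provides one. In short, you have correctly identified the equivariant-trivialisation problem but not solved it; this is exactly the step Barbot's argument for Anosov flows has to work for, and it does not transfer for free to the reduced pseudo-Anosov setting (where $M_2$ is not even assumed to carry a metric with the contraction/expansion estimates that drive Liv\v{s}ic-type arguments).

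The paper sidesteps this analytic difficulty entirely by exploiting transitivity in a different way. By Fried--Brunella (Theorem~\ref{Fried's surgery theorem}) there are finitely many closed orbits $\gamma_1,\dots,\gamma_n$ of $\phi_1$ whose complement fibres over $S^1$ with pseudo-Anosov monodromy. The conjugacy $h$ carries their images in $\Or(\phi_1)$ to $G$-invariant discrete sets in $\Or(\phi_2)$, hence to closed orbits $\eta_1,\dots,\eta_n$ of $\phi_2$. Passing to the universal covers of the punctured orbit spaces, the normal surface subgroup $\pi_1(S)\lhd\pi_1(M_1')$ acts as a convergence group on both, so $M_2' = M_2\setminus\bigcup\eta_j$ is also fibred with the \emph{same} fibre and monodromy (via $\w h_*$), giving a homeomorphism $M_1'\to M_2'$ taking $\phi_1'$ to $\phi_2'$, which then extends over the removed orbits. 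This buys you the equivariant lift for free from the mapping-torus structure, rather than having to manufacture it by hand.
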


We denote by $p_i: \w{M_i} \to M_i$ the universal cover of $M_i$ and
denote by $\w{\phi_i}$ the pull-back of $\phi_i$ in $\w{M_i}$.
For convenience,
we denote $\Or(\phi_i), \Ors(\phi), \Oru(\phi)$ by $\Or_i, \Ors_i, \Oru_i$.
Let $\omega_i: \w{M_i} \to \Or_i$ denote the projection maps.

Let $\tau_g: \Or_1 \to \Or_1$ for $g\in G$ be the $\pi_1$-action on $\Or_1$,
and let
$\sigma_g: \Or_2 \to \Or_2$ for $g \in G$ be the $\pi_1$-action on $\Or_2$.
There is a homeomorphism $h: \Or_1 \to \Or_2$ such that
$\tau_g = h^{-1} \circ \sigma_g \circ h$ for all $g\in G$.
We assign orientations on $\Or_1, \Or_2$ so that 
$h$ preserves their orientations.

By Theorem \ref{Fried's surgery theorem},
there is a collection of
closed orbits $\gamma_1, \gamma_2, \ldots, \gamma_n$ of $\phi_1$ such that
$M_1 - \bigcup_{i=1}^{n} \gamma_n$ is 
a fibration of punctured surfaces over $S^{1}$.
Let $\Upsilon_1 = p^{-1}_{1}(\bigcup_{j=1}^{n} \gamma_j)$ and
let $\Omega_1 = \omega_1(\Upsilon_1) \subseteq \Or_1$.
We note that $\Omega_1$ is a discrete subset of $\Or_1$ 
invariant under the $\pi_1$-action on $\Or_1$.

Set $\Omega_2 = h(\Omega_1)$. Then $\Omega_2$ is a $\pi_1$-equivariant discrete subset of $\Or_2$.
Let $\Upsilon_2 = \omega^{-1}_{2}(\Omega_2)$.
Then $p_2(\Upsilon_2)$ is a finite union of closed orbits of $\phi_2$,
and $p_2 \circ \omega^{-1}_{2} \circ  h \circ \omega_1 \circ p^{-1}_{1}$ sends
$\gamma_1, \ldots, \gamma_n$ to these closed orbits of $\phi_2$.
We denote by $\eta_j$ the image of $\gamma_j$ under 
$p_2 \circ \omega^{-1}_{2} \circ  h \circ \omega_1 \circ p^{-1}_{1}$.

Let $M^{'}_{1} = M_1 - \bigcup_{j=1}^{n} \gamma_j$,
and let $M^{'}_{2} = M_2 - \bigcup_{j=1}^{n} \eta_j$.
Let $\phi^{'}_{i}$ be the restriction of $\phi_i$ to $M^{'}_{i}$.
We denote by $\w{M^{'}_{i}}$ the universal cover of $M^{'}_{i}$,
$\w{\phi^{'}_{i}}$ the pull-back of $\phi^{'}_{i}$ in $\w{M^{'}_{i}}$,
$\w{\Or_i}$ the orbit space of $\w{\phi^{'}_{i}}$.
Since $\w{M_i} - \Upsilon_i$ is a cover of $M^{'}_{i}$,
$\w{M^{'}_{i}}$ is also the universal cover of $\w{M_i} - \Upsilon_i$.
Because $\w{M^{'}_{i}} \cong \w{\Or_i} \times \R$ and
$\w{M_i} - \Upsilon_i \cong (\Or_i - \Omega_i) \times \R$,
we can ensure that $\w{\Or_i}$ is the universal cover of $\Or_i - \Omega_i$.
We let $q_i: \w{\Or_i} \to \Or_i - \Omega_i$ be the covering map, let
 $\mathcal{D}(\w{\Or_i})$ be the group of deck transformations for this covering,
let $\mathcal{A}(\w{\Or_i})$ be the $\pi_1(M_i')$-action on $\w{\Or_i}$ 
(induced from the deck transformations of the covering
$\w{M^{'}_{i}} \to M^{'}_{i}$), 
and let $\mathcal{A}(\Or_i - \Omega_i)$ be the $\pi_1(M_i)$-action on $\Or_i - \Omega_i$
(induced from the deck transformations of the covering
$\w{M_i} - \Upsilon_i \to M^{'}_{i}$).
We now have a long exact sequence

\[1 \longrightarrow \mathcal{D}(\w{\Or_i})
\longrightarrow \mathcal{A}(\w{\Or_i})\cong \pi_1(M'_i)
\longrightarrow \mathcal{A}(\Or_i - \Omega_i)\cong \pi_1(M_i)
\longrightarrow 1.\]

Let $h: \Or_1 \to \Or_2$ be the homeomorphism that gives a conjugation between the two $\pi_1(M_i)$ actions. $h$ induces 
a homeomorphism $h^{'}: \Or_1 - \Omega_1 \to \Or_2 - \Omega_2$.
Let $\w{h}: \w{\Or_1} \to \w{\Or_2}$ be a lift of 
$h^{'}: \Or_1 - \Omega_1 \to \Or_2 - \Omega_2$.
For any $f \in \mathcal{A}(\w{\Or_1})$,
$\w{h} \circ f \circ \w{h}^{-1}$ is an element of $\mathcal{A}(\w{\Or_2})$.
We can see that
$$\w{h} \circ fg \circ \w{h}^{-1} =
(\w{h} \circ f \circ \w{h}^{-1}) \circ (\w{h} \circ g \circ \w{h}^{-1})$$
for all $f, g \in \mathcal{A}(\w{\Or_1})$.
Hence, $\w{h}$ induces an isomorphism
$$\w{h}_*: \mathcal{A}(\w{\Or_1}) \to \mathcal{A}(\w{\Or_2}).$$

If $S$ is a fiber of $M^{'}_{1}$ with monodromy $\psi$ whose suspension flow is $\varphi_1$, then $\pi_1(S)$ is a normal subgroup of $\pi_1(M^{'}_{1})$ (or of $\mathcal{A}(\w{\Or_1})$, viewed from the perspective of the action on $\w{\Or_1}$) which acts on $\w{\Or_1}$ as a convergence group. Furthermore, $\psi$ is the same map as the first return map of the flow $\phi^{'}_{1}$. 

This implies that $\w{h}_*(\pi_1(S)) \subset \mathcal{A}(\w{\Or_2})$ acts on $\w{\Or_2}$ as a convergence group as well. Hence, $\w{\Or_2}/\w{h}_*(\pi_1(S))$ is homeomorphic to $S$. This gives us a family of homeomorphic copies of $S$ in $M^{'}_{2}$ transverse to the flow and intersect every flow line. 

Furthermore, $\w{h}_*$ preserves the semi-direct product structure $\pi_1(S) \rtimes \langle \psi \rangle$ and $\w{h}_*(\psi)$ (which equals $\w{h} \circ \psi \circ \w{h}^{-1}$) would be the first return map of the flow $\phi^{'}_{2}$. Hence $M^{'}_{2}$ is also fibered and there exists a homeomorphism from $M^{'}_{1}$ to $M^{'}_{2}$ which maps $\phi^{'}_{1}$ to $\phi^{'}_{2}$. It is clear that we can extend this homeomorphism over the closed orbits $\gamma_1, \ldots, \gamma_n$. Proposition \ref{equivalent} is now proved. 

\begin{orientable double cover}\rm\label{orientable double cover}
The manifold $M_1$ has a well-defined double cover $M_*$,
branched over the set of odd-indexed singular orbits of $\phi_1$,
so that the stable foliation of $\phi_1$ pulls back to an orientable foliation in $M_*$.
Let $\phi_*$ denote the pull-back of $\phi_1$ in $M_*$.
Then Definition \ref{smooth pseudo-Anosov flow} still holds for $\phi_*$ and thus
$\phi_*$ is a pseudo-Anosov flow of $M_*$.
The orientation of $\Ors(\phi_*)$ determines a well-defined transverse orientation on 
$\Oru(\phi_*)$,
hence the unstable foliation of $\phi_*$ is also orientable.
$(M_*,\phi_*)$ is said to be the \emph{orientable double cover} of $(M_1,\phi_1)$.

We note that the $\pi_1$-action on $\Or(\phi_*)$ can be constructed from
the $\pi_1$-action on $\Or(\phi_1)$ directly.
For simplicity,
we use the same notations as in the proof of Proposition \ref{equivalent},
and suppose further that 
\[\Omega_1 = \{\text{odd-indexed singularities of } \Or_1\}.\] 
Then $\Ors_1 \mid_{\Or_1 - \Omega_1}, \Oru_1 \mid_{\Or_1 - \Omega_1}$ are a pair of orientable foliations with possible even-indexed singularities, and they lift to a pair of orientable foliations of $\w{\Or_1}$. We denote by $\w{\Ors_1}, \w{\Oru_1}$ this pair of foliations on $\w{\Or_1}$.
$\mathcal{D}(\w{\Or_1})$ has an index $2$ subgroup $F$ preserving the orientation on $\w{\Ors_1}$, and $F$ induces a covering $\w{\Or_1} \to \Or^{'}_{*}$, where $\Or^{'}_{*}$ is a double cover of $\Or_1 - \Omega_1$. Note that $\Or^{'}_{*}$ is still homeomorphic to a punctured $2$-plane. Let $\Or_*$ denote the $2$-plane obtained from filling the punctures of $\Or^{'}_{*}$.
$\mathcal{A}(\w{\Or_1})$ also has an index $2$ subgroup preserving the orientation on $\w{\Ors_1}$, which descends to an action on $\Or_*$. Let $\mathcal{A}(\Or_*)$ denote this induced action on $\Or_*$. $\mathcal{A}(\Or_*)$ is exactly the $\pi_1$-action of the orientable double cover of $\phi_1$. We call $\mathcal{A}(\Or_*)$ the \emph{orientable doubling} of the $\pi_1$-action on $\Or_1$.

We note that any reduced pseudo-Anosov flow $\varphi$ also has a orientable double cover $\varphi_*$, where the $\pi_1$-action of $\Or(\phi_*)$ is still the orientable doubling of the $\pi_1$-action on $\Or(\phi)$.
\end{orientable double cover}

\subsection{Bifoliated planes and Anosov-like actions}\label{subsec: bifoliated plane}

The concept of bifoliated plane was introduced by
Barthelm\'e, Frankel and Mann in \cite{BFM22},
to describe the orbit spaces of pseudo-Anosov flows.
They also introduced Anosov-like actions to describe the behavior of 
$\pi_1$-actions on the orbit space.

\begin{defn}\rm
$(\mathcal{P}, \mathcal{F}^{+}, \mathcal{F}^{-})$ is called a \emph{bifoliated plane}, if
$\mathcal{P}$ is a topological $2$-plane,
$\mathcal{F}^{+}, \mathcal{F}^{-}$ is a pair of $1$-dimensional (possibly singular) foliations whose leaves 
are properly embedded $n$-prongs ($n \geqslant 2$),
$\mathcal{F}^{+}, \mathcal{F}^{-}$ have the same set of $n$-prong ($n \geqslant 3$) singularities,
and the leaves of $\mathcal{F}^{+}, \mathcal{F}^{-}$ are transverse except at those
$n$-prong ($n \geqslant 3$) singularities.    
\end{defn}

Various properties of orbit spaces of pseudo-Anosov flows can be interpreted 
within the framework of bifoliated planes, see \cite[Section 2]{BFM22}.
In \cite[Section 3]{BFM22},
the ideal boundary for a bifoliated plane is 
constructed by generalizing the approach in \cite{Fen12}.

Motivated by the dynamics of $\pi_1$-actions on the orbit spaces for transitive pseudo-Anosov flows in closed $3$-manifolds,
Barthelm\'e-Frankel-Mann introduce the Anosov-like actions on bifoliated planes 
\cite{BFM22}:

\begin{defn}[Anosov-like action; {\cite[Definition 2.3]{BFM22}}]\rm\label{Anosov-like}
	Let $G$ be a group acting on a bifoliated plane $(\mathcal{P}, \mathcal{F}^{+}, \mathcal{F}^{-})$.
	$G$ is called an \emph{Anosov-like action} if
	\begin{enumerate}
	    \item[(A1)] If $g \in G - \{1\}$ fixes a leaf $\lambda \in \mathcal{F}^{\pm}$,
	then $g$ fixes exactly one point $x \in \lambda$,
	topologically expands one leaf of $\mathcal{F}^{\pm}$ containing $x$ and
	topologically contracts the other.
	\item[(A2)] $G$ has a dense orbit.
	\item[(A3)] $\{t \in \mathcal{P} \mid g(t) = t \text{ for some } g \in G - \{1\}\}$ is dense in $\mathcal{P}$.
	\item[(A4)]
	For any $x \in \mathcal{P}$ fixed by some nontrivial element of $G$,
	$\{g \in G \mid g(x) = x\}$ is an infinite cyclic group. Moreover, every singular point in $\mathcal{P}$ has nontrivial stabilizer. 
	\item[(A5)]
	If $\lambda_1, \lambda_2$ are (regular) leaves in $\mathcal{F}^{+}$ or $\mathcal{F}^{-}$ which
	are non-separated in its leaf space,
	then some $g \in G - \{1\}$ fixes $\lambda_1, \lambda_2$ simultaneously.
	\item[(A6)]
	$\mathcal{P}$ contains no totally ideal quadrilateral.
 \end{enumerate}
\end{defn}

We remark that the nontriviality of stabilzers of singular points in (A4) is not part of the definition of Anosov-like action in \cite{BFM22}. But in the case that $\mathcal{P}$ is the orbit space of a pseudo-Anosov flow in a closed 3-manifold, the condition is always satisfied. However, if the manifold is not compact, this condition may not be satisfied even in the case that $\mathcal{P}$ is the orbit space of a pseudo-Anosov flow in a 3-manifold.

Conditions (A2), (A3) describe the transitivity of pseudo-Anosov flows in closed $3$-manifolds: they ensure that a dense orbit exists and the closed orbits form a dense set.
For the background on condition (A5),
we refer the reader to \cite{Fen98, Fen99}; , and for condition (A6), see
\cite[Section 4]{Fen16}. Conditions (A1) and (A4) are standard; in fact, $\pi_1$-actions for the reduced pseudo-Anosov flows also satisfy (A1), (A4).
Thus, (A1) and (A4) play a crucial role in our future discussions.

\begin{rmk}\label{rmk: orientable cover on bifoliated plane}
Recall from the last subsection,
we define the orientable doubling of the $\pi_1$-action for a pseudo-Anosov flow or a reduced pseudo-Anosov flow in a $3$-manifold, to describe the $\pi_1$-action for the orientable double cover.
This notion generalizes to all actions on bifoliated planes with properties (A1), (A4) directly.
\end{rmk}

\begin{que}[Barthelm\'e-Frankel-Mann; {\cite[Question 1]{BFM22}}]\label{question for plane}
	Let $G$ be a group acting on a bifoliated plane $(\mathcal{P}, \mathcal{F}^{+}, \mathcal{F}^{-})$ faithfully,
	which is an Anosov-like action (Definition \ref{Anosov-like}).
	\begin{enumerate}[(a)]
	    \item 	Assume that $(\mathcal{P}, \mathcal{F}^{+}, \mathcal{F}^{-})$ is neither trivial nor skew.
	Is there a closed $3$-manifold $M$ that realizes the action of $G$ on $\mathcal{P}$ in the following way:
	$G = \pi_1(M)$ and 
	the action of $G$ on $\mathcal{P}$ is conjugate to
	the $\pi_1$-action on the orbit space of a pseudo-Anosov flow of $M$?
 \item If the answer to (a) is negative, how can we characterize the actions of $G$ on $\mathcal{P}$ which can not be realized by such a closed $3$-manifold? Which additional conditions are required to make the conclusion of (a) true?
	\end{enumerate}
\end{que}


The idea of constructing a bifoliated plane from an action on the circle with circle laminations, and constructing a 3-manifold from the bifoliated plane has been popularized recently. One of the earliest work in this direction is Thurston's extended convergence group \cite{thurston1997three}. Another systematic study can be found in a series of work of Schleimer-Segerman: in particular, \cite{schleimer2024loom} constructs a class of bifoliated planes called \emph{loom spaces} which is in some sense a flattened version of a veering triangulation of $\mathbb{R}^3$. Baik-Jung-Kim \cite{baik2022groups} follows this theme and gave a sufficient condition for pairs of circle laminations to produce loom spaces, hence produce 3-manifolds with veering triangulations. More recently, Barthelm\'e-Bonatti-Mann \cite{BBM} generalizes this greatly and gives a necessary and sufficient condition for pairs of prelaminations to produce bifoliated planes with isolated prong singularities. We will review the main result of \cite{BBM} in the next section. In this paper, we (and also an upcoming paper of Barthelm\'e-Fenley-Mann) build 3-manifolds from bifoliated planes in this general setting. 

\subsection{(Almost) laminations on $S^{1}$ and laminar groups}\label{sec:BBM}
We will characterize the pairs of circle (almost) laminations that induce a bifoliation on the plane which arise from pseudo-Anosov flows of 3-manifolds. In this section, we collect preliminaries for this. 

Recall that a circle lamination is a closed subset of $\mathcal{M} = S^1 \times S^1 - \Delta/(x,y) \sim (y,x)$ consisting of unlinked pairs of points. Any circle lamination has a geometric realization which is a geodesic lamination on $\mathbb{H}^2$ such that the set of endpoints of leaves is precisely the given circle lamination under a suitable identification between the circle and the ideal boundary of $\mathbb{H}^2$. A gap of a circle lamination is a connected component of the complement (i.e., a complementary region) of its geometric realization in $\mathbb{H}^2$. A leaf of the lamination which is part of the boundary of a gap (i.e., a geodesic contained in the closure of the gap) is called a side of the gap. 

If we only want bifoliated planes coming from pseudo-Anosov flows without perfect fits, the notion of circle lamination is enough and this is already done by Baik-Jung-Kim \cite{baik2022groups}. Recall that a bifoliated plane admits a natural compactification into a closed disk by a circle boundary. If the endpoints of the leaves of a bifoliation on the circle boundary produce a pair of laminations $\Lambda^\pm$, then we say $\Lambda^\pm$ is induced by the bifoliation. One of the main results of \cite{baik2022groups} is to give a sufficient condition for $\Lambda^\pm$ to be induced by a pseudo-Anosov flow without perfect fits and without cataclysms (as we introduce below). 
   
On the other hand, we would like to include the case of pseudo-Anosov flows with perfect fits and cataclysms. In order to do this, we need to generalize the definition of lamination a little bit. In fact, a recent preprint of Barthelm\'e-Bonatti-Mann \cite{BBM} already handles this. We develop our own language here, but we also review and compare their conventions. Whenever appropriate, we borrow their terminologies and results with explicit remarks. 

   A subset of $\mathcal{M}$ consisting of unlinked pairs of points is called an almost lamination if it can be obtained from a circle lamination by removing at most one side of each gap. An almost lamination may not be closed in $\mathcal{M}$. Hence, this is a special case of what is called a prelamination in Barthelm\'e-Bonatti-Mann \cite{BBM}. 
   
    A gap with one side removed is called a \emph{cataclysm} and the removed side (which is a geodesic in the closure of the cataclysm but not an element of the almost lamination) is called the pivot of the cataclysm. Note that our cataclysm does not have exactly the same meaning as the cataclysm in the sense of Calegari-Dunfield \cite{CD03}. Their cataclysm means a collection of non-separated leaves. Our cataclysm also includes the case that these leaves have a well-defined limit but the limit is not a leaf (such as a half-plane gap with boundary removed). 
   
   We also recall a few commonly used definitions / notations (for instance, as in \cite{baik2022groups}). For a pair of laminations or almost laminations, they are said to be \emph{transverse} if there is no common leaf. For an almost lamination $\Lambda$, we use $\lame(\Lambda)$ to denote the set $\{a \in S^1 : \{a, b\} \in \Lambda\}$ i.e., the set of endpoints of leaves. When we have two laminations $\Lambda^+, \Lambda^-$, we say a gap $P^+$ in $\Lambda^+$ and a gap $P^-$ in $\Lambda^-$ \emph{interleave} if their vertices (endpoints of the sides) appear in an alternating way on $S^1$ (\cite{BBM} uses the term \emph{coupled} instead of interleaving).   
Following Barthelm\'e-Bonatti-Mann \cite{BBM}, a transverse pair of foliations on the plane is called a \emph{pA-bifoliation} if they have only isolated prong singularities and no leaf contains more than one singularities. Here is the main theorem of Barthelm\'e-Bonatti-Mann:
\begin{THM}[Barthelm\'e-Bonatti-Mann \cite{BBM}] \label{thm:BBM}
    A transverse pair of almost laminations $(\Lambda^+, \Lambda^-)$ is induced by a pA-bifoliation on the plane if and only if  the pair $(\Lambda^+, \Lambda^-)$ is bifoliar, i.e., the following conditions hold:
    \begin{itemize}
        \item[(i)] $\lame(\Lambda^+) \cup \lame(\Lambda^-)$ is dense in $S^1$.
        \item[(ii)] For any $\alpha, \beta \in \Lambda^+ \cup \Lambda^-$, there exists a sequence $\alpha = \alpha_1, \ldots, \alpha_k = \beta$ such that $\alpha_i$ and $\alpha_{i+1}$ are linked for all $i = 1, \ldots, k-1$. 
        \item[(iii)] At most countably many leaves in $\Lambda^+ \cup \Lambda^-$ can share a common endpoint. 
        \item[(iv)] All gaps of $\Lambda^+, \Lambda^-$ are either finite-sided ideal polygons or cataclysms. 
        \item[(v)] For any ideal polygon gap $P$ of one of $\Lambda^+, \Lambda^-$, there exists an ideal polygon gap $P'$ in another lamination so that $P$ and $P'$ interleave. 
        \item[(vi)] No two ideal polygon gaps share a side. 
        \item[(vii)] For any cataclysm $C$ of one of $\Lambda^+, \Lambda^-$, leaves of another lamination intersecting $C$ must be linked with the pivot of $C$. 
    \end{itemize}
    In this case, the pA-bifoliation is unique up to a homeomorphism. Hence, a group action on $S^1$ preserving such a pair of almost laminations extends to an action on the closed disk preserving the associated pA-bifoliation. 
\end{THM}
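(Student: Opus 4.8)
The plan is to prove the biconditional in two directions, then uniqueness, and deduce the last sentence as a formal consequence of the construction's naturality.

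\textbf{Necessity.} Suppose $(\Lambda^+,\Lambda^-)$ is induced by a pA-bifoliation $(\mathcal{P},\mathcal{F}^+,\mathcal{F}^-)$, with disk compactification $\mathcal{P}\cup S^1$. Condition (i) follows because through every point of $\mathcal{P}$ pass leaves of both foliations, so a sequence of leaves escaping to infinity in any sector forces $\lame(\Lambda^+)\cup\lame(\Lambda^-)$ to accumulate on all of $S^1$. Condition (ii) is connectivity: $\mathcal{P}$ is a plane, so two leaves $\alpha,\beta$ can be joined by a path which (after perturbation) crosses leaves of the laminations transversally; a crossing is exactly a linking of the corresponding boundary leaves, and a compactness argument in $\mathcal{P}$ makes a finite chain suffice. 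Condition (iii) follows from second countability of $\mathcal{P}$ applied to a fan of leaves sharing an endpoint. Conditions (iv)--(vi) are the singularity analysis: a gap of $\Lambda^+$ that is not a finite ideal polygon must come from a non-separated family of $\mathcal{F}^+$-leaves limiting on a half-plane leaf with boundary deleted --- a cataclysm; a finite ideal polygon gap is the ideal boundary of the singular leaf through a $k$-prong point $p$, which is simultaneously a $k$-prong point of $\mathcal{F}^-$ with the two stars of prongs alternating around $p$, giving the interleaving polygon of (v), while isolation of singular points gives (vi). Condition (vii) records that a leaf of $\mathcal{F}^-$ meeting the region over a cataclysm of $\mathcal{F}^+$ must cross its pivot, i.e.\ link it on $S^1$.

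\textbf{Sufficiency.} This is the constructive heart. Given $(\Lambda^+,\Lambda^-)$ satisfying (i)--(vii), let $\mathcal{X}$ be the set of linked pairs $(\ell^+,\ell^-)$ with $\ell^\pm\in\Lambda^\pm$, together with one point for each pair $(P^+,P^-)$ of interleaving ideal polygon gaps; topologize $\mathcal{X}$ by a basis of ``rectangles'' $R(a_1,a_2;b_1,b_2)$ of all crossing pairs whose four endpoints separate the marked points in the prescribed cyclic pattern. Using (i) for non-emptiness of enough rectangles and (ii)--(iii) for Hausdorffness and for the rectangles nesting down to points, one checks $\mathcal{X}$ is a second-countable, connected, locally Euclidean $2$-manifold; simply connected (again by (ii)) and non-compact, hence $\mathcal{X}\cong\mathbb{R}^2$. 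Let $\mathcal{F}^+$ (resp.\ $\mathcal{F}^-$) partition $\mathcal{X}$ by fixed first (resp.\ second) coordinate: away from polygon points these are properly embedded transverse lines, while at the point of interleaving polygons with $2k$ sides, the $2k$ half-leaves of $\mathcal{F}^+$ assemble into one $k$-prong singular leaf, and likewise for $\mathcal{F}^-$ --- this is where (iv)--(vi) give the local $k$-prong model and its isolation. Finally, by (vii) the leaves of $\Lambda^-$ crossing a cataclysm $C$ of $\Lambda^+$ all link its pivot, so they sweep a half-open rectangle in $\mathcal{X}$ whose missing boundary line is the pivot, confirming that no $\mathcal{F}^+$-leaf is non-separated beyond what a cataclysm allows; thus $(\mathcal{F}^+,\mathcal{F}^-)$ is a pA-bifoliation, and reconstructing its Fenley/BFM ideal boundary returns $S^1$ and $(\Lambda^+,\Lambda^-)$.

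\textbf{Uniqueness and group action.} If $(\mathcal{P}_i,\mathcal{F}_i^+,\mathcal{F}_i^-)$, $i=1,2$, both induce the given pair, then each $\mathcal{F}_i^+$-leaf is recovered from its boundary endpoints plus cataclysm data and each point from its crossing pair; so the tautological map $\mathcal{X}\to\mathcal{P}_i$ is a foliation-preserving bijection, continuous both ways by comparing the rectangle basis with flow boxes in $\mathcal{P}_i$, and composing gives a foliation-preserving homeomorphism $\mathcal{P}_1\to\mathcal{P}_2$ fixing $S^1$. For the last sentence, a homeomorphism $g$ of $S^1$ preserving $\Lambda^+,\Lambda^-$ permutes linked pairs and interleaving polygon pairs and carries the rectangle basis to itself, hence acts on $\mathcal{X}$ by a foliation-preserving homeomorphism agreeing with $g$ on the boundary of $\mathcal{X}\cup S^1$; functoriality of the construction makes this an action. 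The main obstacle is the sufficiency direction --- showing $\mathcal{X}$ is genuinely Hausdorff and locally $\mathbb{R}^2$, and that the two partitions are honest transverse singular foliations with only isolated prongs --- where all seven conditions are used together and the cataclysm case governed by (vii) is the subtlest, since one must exclude pathological non-separated leaf behavior while still permitting the perfect-fit configurations that cataclysms encode.
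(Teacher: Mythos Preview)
The paper does not prove this theorem at all: it is stated as the main result of \cite{BBM} and cited without proof. So there is no ``paper's own proof'' to compare against for the full biconditional.

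That said, the paper does offer its own construction for the sufficiency direction in the special case of \emph{quite full} almost laminations (Section~\ref{sec:circle-lamination}), and that construction is genuinely different from yours. The paper works inside the Poincar\'e disk $\DD$ with the geometric realizations $|\Lambda^\pm|$, defines an explicit equivalence relation whose classes (``slits'') are the closures of complementary pieces cut out by the two laminations, and then applies Moore's theorem to collapse slits and obtain the plane $\PP$; the foliations are the images of the leaves of $|\Lambda^\pm|$ under the quotient map. Your approach instead builds the plane synthetically as the set $\mathcal{X}$ of linked pairs $(\ell^+,\ell^-)$ together with distinguished points for interleaving polygon pairs, with a rectangle topology. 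Your route has the advantage of being intrinsic to the circle data and making the group-equivariance and uniqueness transparent (the points of $\mathcal{X}$ are literally the data a homeomorphism of $S^1$ permutes); the paper's Moore-collapse route has the advantage that the plane topology and the foliation structure come for free from the ambient $\DD$, so one does not have to verify Hausdorffness, local Euclideanness, or simple connectivity by hand. In your sketch those verifications are exactly the steps you flag as ``the main obstacle,'' and indeed they carry essentially all the work --- in particular, showing that rectangles nest down to single points (not intervals) and that cataclysms produce only the expected non-separated behavior requires using conditions (iii), (iv), and (vii) in a way your outline gestures at but does not carry out.
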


We make a remark about condition (vi) in the above theorem. Even though a side cannot be shared by two polygons,  two cataclysms, or one cataclysm and one polygon, can still share a side.

\section{Realizing topological flow} \label{sec:realizingtopflow}

In this section, we will construct $3$-manifolds with flows from group action on bifoliated planes.
In addition to the properties (A1)-(A6) in the definition of an Anosov-like action on the bifoliated plane, we introduce another property which is always satisfied by the $\pi_1$-action on the orbit space of a transitive pseudo-Anosov flow. 

An action of a group $G$ on a bifoliated plane $(\PP, \Fa, \Fb)$ has Property (A7) if 

\begin{defn}[Property (A7)]\rm\label{A.7}
	Let $\lambda$ be a leaf of $\mathcal{F}^{+}$ or $\mathcal{F}^{-}$,
	and let $x, y$ be distinct points in $\lambda$.
	Then there are neighborhoods $U, V$ of $x, y$ respectively such that
	there is no $g \in G - \{1\}$ with
	$g(x) \in U, g(y) \in V$.
\end{defn}

The following lemma shows that indeed we can consider Property (A7) as the seventh property of the action on the orbit space. 

\begin{LEM}\label{property 1}
	Let $\phi$ be a transitive pseudo-Anosov flow in a closed $3$-manifold $M$.
	Then the $\pi_1$-action on the orbit space satisfies Property (A7).
\end{LEM}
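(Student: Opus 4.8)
The plan is to argue by contradiction, using the transitivity of $\phi$ (equivalently, the density of the orbit of the $\pi_1$-action and the density of periodic points) together with the hyperbolic contraction/expansion along the weak stable and unstable foliations. Suppose Property (A7) fails. Then there exist a leaf $\lambda$ of $\mathcal{F}^{+}$ (say the stable foliation; the unstable case is symmetric), distinct points $x,y \in \lambda$, and a sequence $g_n \in G - \{1\}$ with $g_n(x) \to x$ and $g_n(y) \to y$ in $\Or(\phi)$. Passing to a subsequence we may assume the $g_n$ are pairwise distinct (if some $g$ occurred infinitely often, then $g$ would fix both $x$ and $y$, forcing $g$ to fix $\lambda$, contradicting (A1), which says a nontrivial element fixing a leaf fixes exactly one point of it). Lifting to $\w M$, choose orbits $\tilde x, \tilde y$ of $\w\phi$ projecting to $x,y$ and deck transformations $\tilde g_n$ covering $g_n$; then $\tilde g_n \tilde x$ and $\tilde g_n \tilde y$ converge (up to reparametrization of the flow) to $\tilde x, \tilde y$ respectively, i.e. they stay in a fixed compact part of $\w M$ modulo the flow. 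Since $\tilde x, \tilde y$ lie on a common leaf $\tilde\lambda$ of $\w{\mathcal F^s}$, so do $\tilde g_n\tilde x, \tilde g_n\tilde y$, lying on $\tilde g_n \tilde\lambda$.

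The heart of the argument is to play the contraction along stable leaves against the expansion of the path metric within a stable leaf (Definition \ref{topological Anosov flow}(c),(d), equivalently Definition \ref{topological pseudo-Anosov flow}(c),(d)). Here is the mechanism I would make precise. Fix a flow box decomposition of $M$; because the orbits $\gamma_x = \phi^{\mathbb R}(x')$ and $\gamma_y=\phi^{\mathbb R}(y')$ (where $x',y'$ are the projections to $M$) are distinct and lie on the common stable leaf of $\phi$ in $M$, the leafwise distance $d_\lambda(\phi^t(\tilde x),\phi^t(\tilde y))$ tends to $+\infty$ as $t\to -\infty$ and the two orbits are forward-asymptotic. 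Now $g_n$ acts on $\Or(\phi)$ by a homeomorphism preserving $\Ors(\phi)$; the condition $g_n(x)\to x$, $g_n(y)\to y$ says that in the orbit space $g_n$ moves a bounded pair of points a small amount. I would translate this into a statement about holonomy: consider the holonomy of $\mathcal F^s$ along a path in the $\w\phi$-orbit-space from $\tilde x$ to $\tilde y$ inside $\tilde\lambda$; the maps $\tilde g_n$ conjugate this holonomy data to itself up to small error, and one extracts from the sequence $\{\tilde g_n\}$ a nontrivial element (or a nontrivial "limit" germ) fixing both $\tilde x$ and $\tilde y$ — equivalently fixing $\tilde\lambda$ and two points on it — which violates (A1). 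Concretely: lift everything to $\w M$, use properness of the $\pi_1$-action and the fact that $\{\tilde g_n\tilde x\}$, $\{\tilde g_n\tilde y\}$ lie in compact sets to get, after passing to a subsequence, $\tilde g_{n}^{-1}\tilde g_{m}$ (for suitable $n,m$) fixing a point of $\tilde\lambda$; but a single periodic orbit in a transitive pseudo-Anosov flow has infinite cyclic stabilizer acting on its stable leaf with a single fixed point in the orbit space, so iterating yields that $\tilde g_n^{-1}\tilde g_m$ fixes $\tilde\lambda$ and hence at most one point of it, and one checks both $\tilde x$ and $\tilde y$ are fixed — a contradiction since $x\ne y$.

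An alternative and perhaps cleaner route, which I would actually carry out, is via periodic orbits and the no-perfect-fit/contraction dichotomy. By (A3) (density of periodic points) and a standard approximation, I can reduce to the case where $x$ (and $y$) lie on periodic orbits, or more usefully choose the leaf $\lambda$ and perturb; but the essential point is the following rigidity: if $g_n(x)\to x$ with $g_n\ne 1$, then since the stabilizer of a point fixed by a nontrivial element is infinite cyclic and discrete (A4), $x$ cannot be fixed — so the $g_n(x)$ are genuinely moving, and the direction in which they move, measured in the transverse leaf $\mathcal F^u(x)$, is controlled by the expansion rate. Meanwhile $g_n(y)\to y$ forces the same element to nearly fix $y$. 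Using that $x,y$ are on a common stable leaf and that stable holonomy between $\mathcal F^u(x)$ and $\mathcal F^u(y)$ is a well-defined germ, the near-fixing at $x$ and at $y$ are linked by this holonomy, and a compactness/limiting argument in $\mathrm{Homeo}(\Or(\phi))$ (using that the $\pi_1$-action is a convergence-type action on a bounded region because of discreteness from (A4)) produces a nontrivial element of $G$ fixing both $x$ and $y$, contradicting (A1) exactly as above.

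The main obstacle I anticipate is making the "limit germ" or compactness step rigorous: the $\pi_1$-action on $\Or(\phi)$ is not a convergence group action globally, so one cannot directly extract a limiting homeomorphism fixing $x$ and $y$. I expect the fix is to work upstairs in $\w M$, where properness of the deck action is available, and to use that the orbits through $x$ and $y$ being distinct forces a definite separation that the flow contraction cannot close up unless some $\tilde g_n^{-1}\tilde g_m$ genuinely stabilizes a flowline; one then invokes (A1)/(A4) (valid for transitive pseudo-Anosov flows — indeed these are precisely the properties established in the background material) to get the contradiction. Transitivity enters to guarantee (A2)–(A6) hold so that the ambient structural results (and in particular the rigidity of leaf stabilizers) are available, though as the paper remarks the relevant properties (A1), (A4) already hold for reduced pseudo-Anosov flows, so the same proof should give Property (A7) in that generality.
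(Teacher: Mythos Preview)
Your proposal has a genuine gap at the crucial step. You want to argue by contradiction and, from a sequence $g_n$ with $g_n(x)\to x$, $g_n(y)\to y$, extract a nontrivial element of $G$ fixing both $x$ and $y$, then invoke (A1). But there is no mechanism to produce such an element. You suggest using ``properness of the $\pi_1$-action'' upstairs in $\w M$, but properness of the deck action on $\w M$ does \emph{not} descend to the orbit space: the flow direction is non-compact, so orbits $\tilde g_n \tilde x$ can converge in $\Or(\phi)$ while the actual points escape every compact set of $\w M$. Indeed, properness of the $G$-action on pairs of points in $\Or(\phi)$ on the same leaf is exactly what (A7) asserts --- you are assuming the conclusion. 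The suggestion that $\tilde g_n^{-1}\tilde g_m$ fixes a point of $\tilde\lambda$ is likewise unjustified: $g_n(x)$ close to $g_m(x)$ only gives $g_n^{-1}g_m(x)$ close to $x$, not fixed. You yourself identify this obstacle (``the $\pi_1$-action on $\Or(\phi)$ is not a convergence group action''), and the proposed fix does not resolve it.

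What is missing is hard dynamical input, which is how both of the paper's proofs proceed. The first proof uses Fried's surgery theorem to realize $M$ as a Dehn filling of a fibered manifold $N$ with pseudo-Anosov monodromy $\varphi$; then the transverse measure $\mu^u$ gives $d_{\mu^u}(\varphi^n(p),\varphi^n(q))=\lambda^n d_{\mu^u}(p,q)$, so any $g$ moving both $p$ and $q$ a small amount must lie in the fiber group $\pi_1(S)$, whose action is genuinely discrete on the orbit space. The second proof works directly with the leafwise path metric $d_\lambda$ in $\w M$ and the forward-convergence/backward-divergence of orbits on a stable leaf, choosing specific parameters and a compact segment of an orbit to force a metric contradiction. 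Your phrase ``the direction in which they move \dots\ is controlled by the expansion rate'' gestures at the right idea, but you do not develop it; the argument cannot be completed at the level of (A1)--(A6) alone without invoking either the transverse measure or the leafwise metric estimates.
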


\begin{proof}\rm
Because $\phi$ is transitive, by Theorem \ref{Fried's surgery theorem},
there is an oriented punctured surface $S$, a pseudo-Anosov homeomorphism $\varphi: S\rightarrow S$ such that $M$ is a Dehn filling of the mapping torus of $\varphi$, which we denote as $N$, and $\phi$ is induced by the suspension flow. We let $\widetilde{N}$ be the universal cover of $N$.

Let $(\mathcal{L}^{s}, \mu^{s}), (\mathcal{L}^{u}, \mu^{u})$ be the stable and unstable foliations of $\varphi$ with the corresponding transverse measures, and let $\lambda$ be the stretch factor of $\varphi$.
For any two distinct points $p, q$ on the same leaf $l$ of $\mathcal{L}^{s}$, and any integer $n$, we have
\[d_{\mu^{u}}(\varphi^{n}(p), \varphi^{n}(q)) = \lambda^{n} d_{\mu^{u}}(p,q)\]


Let $\phi_0$ be the suspension flow of $N$,
and let $\w{\phi_0}, \w{\mathcal{L}^{s}}, \w{\mathcal{L}^{u}}$ 
be the pull-back of $\phi_0, \mathcal{L}^{s}, \mathcal{L}^{u}$ in $\w{N}$.
The metric $\mu^{u}$ on $\mathcal{L}^{u}$ pulls-back to
a metric on the $\w{\mathcal{L}^{u}}$, induced by a preimage of $S$ in $\w{N}$,
which we still denote by $\mu^{u}$.
Let $\w{p}, \w{q}$ be two points in the orbit space of $\w{\phi_0}$ contained 
in the same leaf of $\w{\mathcal{L}^{s}}$.
Suppose $g\in \pi_1(N)$ moves both $\w{p}$ and $\w{q}$ only slightly, $d_{\mu^u}(g(\w{p}), g(\w{q}))$ must be very close to $d_{\mu^u}(\w{p}, \w{q})$ (so that the difference is smaller than the amount of stretching by $\w{\phi_0}$), which means that $g$ must be an element of $\pi_1(S)$ but since the action of $\pi_1(S)$ is discrete, this  implies $g=1$. The case when $\w{p}$ and $\w{q}$ are on the same leaf of $\w{\mathcal{L}^{u}}$ is similar. This shows that Property (A7) holds for the action of $\pi_1(N)$ on the orbit space of $\widetilde{N}$.

Because $M$ is a Dehn filling of $N$,
we can canonically identify $N$ with a subspace of $M$. Let $\widetilde{M}$ be the universal covering of $M$, $P: \widetilde{M}\rightarrow M$ be the covering map, and let $\widetilde{N_*} = P^{-1}(N)$, then $\widetilde{N_*}$ is a cover of $N$,
and $\widetilde{N}$ is the universal cover of $\widetilde{N_*}$.

Let $\widetilde{\phi_1}$ be the pull-back of $\phi_0$ to $\widetilde{N_*}$, and let $\mathcal{O}_0$ and $\mathcal{O}_1$ be the orbit space of $\widetilde{\phi_0}$ and $\widetilde{\phi_1}$ respectively, then $\mathcal{O}_1$ is homeomorphic to a punctured plane, and $\mathcal{O}_0$ its universal cover. Let $p': \widetilde{N}\rightarrow \widetilde{N}_*$ and $p'': \mathcal{O}_0 \to \mathcal{O}_1$ be the covering maps.

Let $\Or$ denote the orbit space of $\phi$.
As $\w{N_*}$ is a subspace of $\w{M}$,
we can canonically identify $\Or_1$ as a subspace of $\Or$.
Let $\Omega = \Or - \Or_1$,
which is a discrete set.
Let $p, q$ be two points on 
the same leaf of either of the two foliations in $\mathcal{O}$.
Let $[p,q]$ denote the closed interval on this leaf with endpoints $p, q$.
We first assume that
$[p,q]$ contains no element of $\Omega$.
Then there exists a (simply connected) compact set $U$ of $\Or_1$ such that
$p,q \in Int(U)$.
$U$ can be lifted from $\mathcal{O}_1$ to 
a compact set $\w{U}$ in $\mathcal{O}_0$.
Let $\w{p}, \w{q}$ be the corresponding lifts of $\w{p}, \w{q}$.
The argument above implies that,
there are sufficiently small neighborhoods $U_p, U_q$ of $\w{p}, \w{q}$ such that
there is no nontrivial element of the $\pi_1$-action on $\Or_0$ that
sends $\w{p}, \w{q}$ into $U_p, U_q$.
Therefore,
if $g\in\pi_1(N)$ sends both $p$ and $q$ to a sufficiently small neighborhood of them, 
then $g=1$. 

Now suppose that $[p,q]$ contains a point $x \in \Omega$.
As $x$ is a periodic point of $\Or$,
the orbit of $x$ is a discrete set.
For any compact $V$ of $\Or$ with $p,q \in Int(V)$,
$V$ contains only finitely many points in the orbit of $x$.
This implies that there are finitely many $g \in G$ that
takes $p,q$ to some neighborhoods of them. By transitivity, shrinking the neighborhoods further we can get $g=1$.




\end{proof}

We provide an alternative proof below, as this second proof works under a slightly more general assumption, which we will discuss after the proof.

\begin{proof}
Let $p: \w{M} \to M$ be the universal cover of $M$ and
let $\w{\phi}$ denote the pull-back of $\phi$ in $\w{M}$.
Let $\Or$ denote the orbit space $\Or(\phi)$,
let $\pi: \w{\phi} \to \Or$ denote the projection map,
and let $\Ors, \Oru$ denote the pair of 
invariant singular foliations on $\Or$ 
(induced from the stable and unstable foliations of $\phi$ respectively).
Let $d$ be a metric of $M$,
which pulls-back to a metric of $\w{M}$.
For convenience,
we still denote this pulled-back metric by $d$.
For any leaf $\lambda$ of $\Ors$,
$d$ induces a path metric on $p^{-1}(\lambda)$ (which we denote by $d_\lambda$),
where this metric is a Finsler metric when $\lambda$ is a singular leaf.
We refer the reader to \cite[Pages 627-628]{Fen99}.
For any $t \in \w{M}$ and $\epsilon > 0$,
we denote by $N_\epsilon(t)$ the closed $\epsilon$-neighborhood of $t$ in $\w{M}$.
For each $x \in \Or$,
we denote by $\gamma_x$ the orbit $\pi^{-1}(x)$ of $\w{\phi}$.

    Let $\lambda$ be a leaf of $\Ors$ and let $x,y$ be distinct points in $\lambda$.
    If $x$ is a periodic point,
    then the orbit of $x$ under $G$ is a discrete set in $\PP$,
    and the orbit of $y$ under the stabilizer of $x$ is also a discrete set in 
    $\PP - \{x\}$.
    Therefore, there exist neighborhoods $V_x$ and $V_y$ of $x$ and $y$, respectively,
    such that there is no $g \in G - \{1\}$ with $g(x) \in V_x$, $g(y) = V_y$.
    Similarly, the same holds if $y$ is a periodic point.

    Now suppose that both of $x,y$ are not periodic points.
    Let $\{\gamma_x(t)\}_{t \in \R}, \{\gamma_y(s)\}_{s \in \R}$ be
any parameterizations of $\gamma_x, \gamma_y$, respectively, such that 
    the increasing orientations on $\R$ are consistent with the positive orientations on
    $\gamma_x, \gamma_y$.
    We choose $t \in \R$,
    let $k_0 = d_\lambda(\gamma_x(t),\gamma_y)$,
    and let $s \in \R$ for which
    $d_\lambda(\gamma_x(t),\gamma_y(s)) = k_0$.
    We choose $0 < \epsilon << \frac{1}{3}k_0$ so that
    the images of $N_{2\epsilon}(\gamma_x(t)), N_{2\epsilon}(\gamma_y(s))$ in $\PP$ under
    $\pi: \w{\phi} \to \Or$ have empty intersection.

    We choose $K > k_0 + 2\epsilon$.
    There are $m,n > 0$ such that
    $d_\lambda(\gamma_x(u),\gamma_y) > K$ for all $u < -m$,
    and $d_\lambda(\gamma_x(u),\gamma_y) < \epsilon$ for all $u > n$
    \cite[Lemma 2.3]{Fen99}.    
    Note that $p(\gamma_x([-m,n]))$ is a compact segment in $M$,
    and thus $p(\gamma_x(t))$ has a neighborhood $U$ in $M$ such that
    $U \cap p(\gamma_x([-m,n]))$ is 
    a connected segment containing $p(\gamma_x(t))$.
    Let $\w{U}$ denote the lift of $U$ in $\w{M}$ with $\gamma_x(t) \in \w{U}$.
    Then \[\w{U} \cap \{g(\gamma_x([-m,n])) \mid g \in G\} = \w{U} \cap \gamma_x([-m,n]).\]
    There is a neighborhood $\w{V}$ of $\gamma_y(s)$ in $\w{M}$ such that,
    if $u \in \w{U}$, $v \in \w{V}$ and
    $\pi(u), \pi(v)$ are contained in the same leaf $\lambda^{'}$ of
    $\Ors$,
    then $d_{\lambda^{'}}(u,v) \in (k_0 - 2\epsilon, k_0 + 2\epsilon)$.
    
    There are neighborhoods $V_x, V_y$ of $x, y$ in $\Or$ such that
    for any $x_1 \in V_x, y_1 \in V_y$,
    we have $\gamma_{x_1} \cap (\w{U} \cap N_\epsilon(\gamma_x(t))) \ne \emptyset$ and
    $\gamma_{y_1} \cap (\w{V} \cap N_\epsilon(\gamma_y(s))) \ne \emptyset$.    
    Now suppose that there is $g \in G - \{1\}$ such that
    $g(x) \in V_x$, $g(y) \in V_y$.
    We choose $t_0, s_0 \in \R$ for which 
    $g(\gamma_x(t_0)) \in \w{U} \cap N_\epsilon(\gamma_x(t))$,
    $g(\gamma_y(s_0)) \in \w{V} \cap N_\epsilon(\gamma_x(t))$.
    Then \[|d_{g(\lambda)}(g(\gamma_x(t_0)),g(\gamma_y(s_0))) - k_0| < 2\epsilon.\]

    As $\w{U} \cap \{h(\gamma_x([-m,n])) \mid h \in G - \{1\}\} = \emptyset$,
    we have $t_0 \notin [-m,n]$.
    It follows that $d_{g(\lambda)}(g(\gamma_x(t_0)),g(\gamma_y)) \notin [\epsilon,K]$.
    As $d_{g(\lambda)}(g(\gamma_x(t_0)),g(\gamma_y(s_0))) < k_0 + 2\epsilon < K$,
    we have $d_{g(\lambda)}(g(\gamma_x(t_0)),g(\gamma_y)) < \epsilon$.

    Because $g(\gamma_x(t_0)) \in N_\epsilon(\gamma_x(t))$,
    we have 
    \[d(\gamma_x(t),g(\gamma_y)) < d(\gamma_x(t),g(\gamma_x(t_0))) +
    d(g(\gamma_x(t_0)),g(\gamma_y))
    < \epsilon + \epsilon = 2\epsilon.\]
    However,
    $d(\gamma_y(s),g(\gamma_y)) < \epsilon$. 
    Recall that the images of $N_{2\epsilon}(\gamma_x(t)), N_{2\epsilon}(\gamma_y(s))$
    under $\pi: \w{\phi} \to \Or$ have empty intersection.  
    This contradicts 
    \[\gamma_x \cap N_{2\epsilon}(\gamma_x(t)), 
    \gamma_y \cap N_\epsilon(\gamma_y(s)) \ne \emptyset.\]

    Hence,
    if $g \in G$ satisfies $g(x) \in V_x, g(y) \in V_y$,
    then $g = 1$.
\end{proof}

\begin{rmk}\rm\label{non-transitive}
    In the second proof of Lemma \ref{A.7},
    we do not need the condition that $\phi$ is transitive.
    Thus Lemma \ref{A.7} also holds for all non-transitive pseudo-Anosov flows.

    Suppose that $\phi$ is a non-transitive topological Anosov flow of $M$.
    Then the second proof of Lemma \ref{A.7} still holds for $\phi$.
    The only place that does not follow directly from Definition \ref{topological Anosov flow} is the existence of $m > 0$ for which $d_\lambda(\gamma_x(u),\gamma_y) > K$ when $u < -m$,
    where this follows from \cite[Lemma 5.12, Proposition 5.14]{BFP23}.
\end{rmk}

\begin{THM}\label{topological flow}
	Assume that a group $G$ acts on bifoliated plane $\mathcal{P}$ that sends leaves to leaves, and is {\rm orientation-preserving}, namely preserves the given leafwise orientation on $\mathcal{F}^{+}$.
	Suppose $\mathcal{P}$ is a bifoliated plane without singular leaves which satisfies Property (A7). 
	Then there is a (possibly non-compact) $3$-manifold with fundamental group $G$ that admits a flow $\phi$ realizing this action.
\end{THM}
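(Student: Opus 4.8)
The plan is to build the $3$-manifold and flow directly out of the bifoliated plane $(\PP, \Fa, \Fb)$ by equipping $\PP$ with a flow-like $\R$-direction, then quotienting by the $G$-action. Since there are no singular leaves, $\PP$ is genuinely foliated by two transverse regular $1$-dimensional foliations, so a flow box decomposition of $\PP$ is available and the structure is that of a (leafwise-oriented) product in each chart. I would first construct a candidate universal cover: take the trivial bundle $\w M := \PP \times \R$ and declare the flow $\w\phi$ to be the vertical foliation $\{pt\} \times \R$, the (lifted) stable foliation $\w\Fs$ to be the product $\Fs^{\PP} \times \R$ (where $\Fs^{\PP}$ is one of the foliations on $\PP$), and similarly for $\w\Fu$. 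Then $\w\phi$ has orbit space exactly $\PP$ with the induced pair of singular (here regular) foliations, so if $G$ acts freely, properly discontinuously and cocompactly—err, freely and properly discontinuously—on $\w M$ preserving all this structure, the quotient $M := \w M / G$ has $\pi_1(M) = G$ and carries the descended flow $\phi$ realizing the action. The bulk of the work is (i) producing a $G$-equivariant $\R$-coordinate so that the action extends from $\PP$ to $\PP \times \R$, (ii) arranging that this extended action is free and properly discontinuous, and (iii) checking that the descended foliations $\Fs, \Fu$ satisfy the required axioms (Definition \ref{topological pseudo-Anosov flow} (a), (b), (e), plus the metric/expansion conditions, or at least the reduced pseudo-Anosov flow axioms).

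For step (i), the idea is to choose, $G$-equivariantly, a ``time parametrization'' along each stable leaf: on each leaf $\lambda$ of $\Fa$ pick a homeomorphism to $\R$ (an ordering of the leaf compatible with the leafwise orientation), do this measurably/continuously in $\lambda$, and then average or correct over the $G$-action using a cocycle construction—this is the standard device for turning a leafwise order into a global $\R$-bundle structure, and is precisely the sort of thing the later sections of the paper (the maps $h_P$ and the construction of $N(\rho)$) generalize. Concretely I would build a $G$-invariant ``height function'' by integrating a $G$-invariant positive transverse measure-like object along $\Fb$; existence of such is where one cannot be cavalier, but cocompactness-free arguments à la Barbot/Fenley for orbit spaces of Anosov flows apply, and one only needs it up to equivariant homotopy. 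Once the $\R$-coordinate $H: \PP \times \R \to \R$ exists and is $G$-equivariant, the action on $\PP \times \R$ is $g \cdot (x, s) = (g x, s + c_g(x))$ for a continuous cocycle $c_g$, which is automatically free once the flow direction is genuinely transverse to the $G$-orbits.

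For step (ii)—freeness and proper discontinuity—this is where Property (A7) does the real work, and I expect it to be the main obstacle. The action of $G$ on $\PP$ need not be free (elements can fix leaves and points in general Anosov-like actions, but here there are no singular leaves and we are only assuming (A7), not all of (A1)–(A6)), so we must use the extra $\R$-coordinate to ``unfold'' the fixed-point set. The key point is that if $g \in G - \{1\}$ fixed a point $(x, s) \in \PP \times \R$, then in particular $g$ would fix $x \in \PP$ and fix the flow line through it setwise with $c_g(x) = 0$; Property (A7) is exactly the statement that prevents an element from nearly fixing two distinct points on a common leaf, which, combined with the leafwise-order-preserving behavior, forces $c_g$ to have no zero on a fixed leaf unless $g$ acts trivially on a neighborhood—and then faithfulness kills $g$. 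Making ``free $\Rightarrow$ properly discontinuous'' precise requires a local compactness / no-accumulation argument: given a compact $K \subset \PP \times \R$, one shows $\{g : gK \cap K \ne \emptyset\}$ is finite by projecting to $\PP$, using that the $\PP$-action has closed orbits forming a discrete set (needed near would-be periodic points—this is where I'd invoke the dichotomy in the proof of Lemma \ref{property 1}) and using Property (A7) to control the non-periodic strata. Finally, step (iii) is mostly bookkeeping: the descended $\Fs, \Fu$ are transverse regular foliations by construction, each orbit of $\phi$ is a leaf-intersection, flow boxes come from flow boxes of $\PP$ crossed with an interval of the $\R$-coordinate, and the contraction/expansion condition (e) (for periodic leaves, condition (3) of Definition \ref{reduced pseudo-Anosov flow}) follows from choosing, on each periodic leaf of $\Fa$, the homeomorphism $h$ to be the return map of the chosen parametrization, which exists because the stabilizer of such a leaf is infinite cyclic—again a consequence of the fixed-point analysis above.
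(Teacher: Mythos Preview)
Your step (i) is where the approach goes wrong. You propose to build a $G$-equivariant $\R$-coordinate by ``integrating a $G$-invariant positive transverse measure-like object along $\Fb$'' and then correcting via a cocycle/averaging procedure. There is no reason such a $G$-invariant transverse measure exists in this generality---you are not assuming amenability, cocompactness, or any of the Anosov-like axioms beyond (A7)---and you yourself flag that ``one cannot be cavalier'' here but then do not actually supply a construction. Appealing to Barbot/Fenley is circular: their arguments start from a flow. Without this step, you have no $G$-action on $\PP \times \R$ at all, so steps (ii) and (iii) never get off the ground. Your argument in (ii) also relies on unstated hypotheses (e.g.\ ``closed orbits forming a discrete set'') that are not part of the theorem's assumptions.

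The paper's proof avoids this entirely with a much simpler device: instead of an abstract $\R$-bundle, take $N = \{(x,y) \in \PP \times \PP : y \in r_x\}$, where $r_x$ is the open positive half-leaf of $\Fa$ through $x$. The flow line through $x$ is $\{x\} \times r_x$, the $G$-action is the diagonal one $g \cdot (x,y) = (gx, gy)$, and equivariance is automatic since $G$ preserves the leafwise orientation. No invariant measure, no cocycle, no averaging. The point you missed is that Property (A7), as stated, is \emph{literally} the assertion that this diagonal action on pairs of distinct points on a common leaf is free and properly discontinuous: given $(x,y) \in N$, both $x$ and $y$ lie on $\lambda_x$, and (A7) hands you neighborhoods $U \ni x$, $V \ni y$ in $\PP$ with no nontrivial $g$ sending $(x,y)$ into $U \times V$. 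That is the whole proof.
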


\begin{proof}
	For each $x \in \mathcal{P}$,
	let $l_x$ be the leaf of $\mathcal{F}^{+}$ that contains $x$, let $\gamma^{'}_{x}: [0, +\infty) \to l_x$ be a parameterization of the positive side of $x$ in $l_x$ which we denote as $r'_x$,
	and let $\gamma_x: (0, +\infty) \to l_x$ be a parametrization of the interior of $r^{'}_{x}$, which we denote as $r_x$.
    Note that, because $G$ is orientation-preserving,
	if $g(x) = y$ ($g \in G$),
	then $g(r_x) = r_y$.
 
	Let $N =\{(x, y)\in\mathcal{P}^2: y\in r_x\}$. It is easy to see that $N$ is homeomorphic to $\mathbb{R}^3$. Also note that the product pairs of the type as in Figure \ref{flow box figure} (a) can be used as charts of $N$ (see definitions in Section \ref{sec:frombifoliatedplane}). Hence $N$ naturally admits a 1-dimensional foliation whose leaves are of the form $\{x\} \times r_x$. The $G$-action on $\mathcal{P}$ induces a $G$-action on $N$ by $g(x, y)=(gx, gy)$, and the action preserves the 1-dimensional foliation. We give $N$ the subspace topology of $\mathcal{P}\times\mathcal{P}$.
	Property (A7) guarantees that the action of $G$ on $N$ is free and discrete; hence $N / G$ is a $3$-manifold and $\{\{x\} \times r_x\}$ descends to a foliation in $N / G$, which we can then use to define a flow. 
\end{proof}

    We call actions that satisfy (A1), (A4) and (A7) \emph{flowable}. Also note that we translate the definition of flowable action using the language of almost laminations in Section \ref{sec:circle-lamination}.

\section{Characterizations of the Anosov flows}\label{sec:char_anosov}

In this section, we give some necessary and sufficient conditions on the bifoliated plane with group action such that the flow constructed in the previous section is topologically Anosov. In fact, we give two characterizations of Anosov flows in this framework. The first one is given in Section \ref{subsec:firstchar} in terms of convergence and divergence properties. These properties correspond to topological stretching and compressing behavior along the leaves of the bifoliation. The second one is given in Section \ref{subsec:secondchar} in terms of expansive property. If the action on the bifoliated plane is expansive, this would give us an Anosov flow via work of either \cite{inaba1990nonsingular} or \cite{paternain1993expansive}. See 
Section \ref{subsec:secondchar} for details. 

Throughout this section, we will use the same notation as in the previous section.  
Let $(\PP, \Fa, \Fb)$ be a bifoliated plane without singular leaves, fix an orientation on $\mathcal{F}^{\pm}$, and let $G$ be a group acting on $\PP$ that preserves the orientation on $\Fa$. 

\begin{conv}\rm
\begin{enumerate}[(a)]
    \item Suppose that $\lambda$ is a leaf of $\Fa$,
$\mu$ is a leaf of $\Fb$,
and $\lambda$ intersects $\mu$.
We will use $(\lambda,\mu)$ to denote the point $\lambda \cap \mu \in \PP$. 
\item For any point $x \in \PP$, we will always denote by $\lambda_x, \mu_x$ the leaves of $\mathcal{F}^{\pm}$ for which $x = (\lambda_x,\mu_x)$. We will always use $r_x$ to denote the component of $\lambda_x - \{x\}$ lying on the positive side of $x$.
\end{enumerate}
\end{conv}

Let $N = \bigcup_{x \in \PP}(\{x\} \times r_x)$ and
let foliation $\w{\phi} = \{\{x\} \times r_x \mid x \in \mathcal{P}\}$.
By Theorem \ref{topological flow},
$G$ acts on $N$ freely and discretely.
Let $M = N / G$,
which is a $3$-manifold.
Let $\phi = \w{\phi} / G$,
which is a flow of $M$.
We let 
$$\rho_x = \{x\} \times r_x,$$
$$\rho_x(t) = (x, t) \in \{x\} \times r_x$$
Here $t\in r_x$. Hence, we can now write $\rho_x(t)$, where $x=(\lambda, \mu)$, as $\rho_x(\lambda_t,\mu_t)$, where $t=(\lambda_t, \mu_t)$. 
We assign to each flow line $\gamma=\{x\}\times r_x$ of $\w{\phi}$ 
a canonical orientation consistent with
the positive orientation on $r_x$.
This orientation on $\w{\phi}$ induces an orientation on the flow lines of $\phi$.

Note that 
$$\{\bigcup_{x \in \lambda} \rho_x \mid \lambda \text{ is a leaf of } \Fa\},$$
$$\{\bigcup_{x \in \mu} \rho_x \mid \mu \text{ is a leaf of } \Fb\}$$
are a pair of foliations in $N$,
and they descend to a pair of foliations in $M$.
We denote them by $\Ea, \Eb$ respectively.

\begin{prop}
    Suppose that $\phi$ is an Anosov flow of $M$. 
    Then $\Ea$ is the stable foliation of $\phi$ and
    $\Eb$ is the unstable foliation of $\phi$.
\end{prop}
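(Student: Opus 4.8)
The plan is to identify $\Ea$ and $\Eb$ with the weak stable and weak unstable foliations of the Anosov flow $\phi$ by checking, leaf by leaf, the defining contraction/expansion properties of Definition \ref{topological Anosov flow} (c), (d). Since an Anosov flow has a \emph{unique} pair of transverse foliations whose intersections are the orbits (the stable and unstable foliations are canonically determined by the hyperbolic splitting), it suffices to show that $\Ea, \Eb$ are transverse, that each contains the flow lines of $\phi$ (i.e. each $\bigcup_{x\in\lambda}\rho_x$ and $\bigcup_{x\in\mu}\rho_x$ is a union of orbits), and that $\Ea$ satisfies the ``points in the same leaf converge forward in time'' condition while $\Eb$ satisfies the corresponding backward condition; uniqueness of the invariant foliations then forces $\Ea=\Fs$ and $\Eb=\Fu$. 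The first two of these are immediate from the construction: $\Ea, \Eb$ are descended from the foliations $\{\bigcup_{x\in\lambda}\rho_x\}$, $\{\bigcup_{x\in\mu}\rho_x\}$ of $N$, which plainly contain each flow line $\{x\}\times r_x$, and they are transverse in $N$ because at each point $(x,t)$ the three foliations (flow, $\Ea$-direction, $\Eb$-direction) correspond to the three coordinate directions in the product chart of Figure \ref{flow box figure}(a), coming respectively from moving $t$ along $r_x$, moving $x$ along $\lambda_x$, and moving $x$ along $\mu_x$.

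The substantive step is the convergence/divergence estimate, and here I would argue by lifting everything to the universal cover $N=\w{M}$, where the orbit space is exactly the bifoliated plane $\PP$. Fix a leaf $L$ of $\Ea$; its image in $\PP$ under the orbit-space projection $\pi$ is a single leaf $\lambda$ of $\Fa$, and two orbits $\gamma_{x_1},\gamma_{x_2}\subset L$ project to two points $x_1,x_2\in\lambda$. The claim to establish is that for any lift of the metric $d$ from $M$ (hence $G$-invariant on $\w{M}$), $d(\w{\phi}^t(p_1),\w{\phi}^{h(t)}(p_2))\to 0$ as $t\to+\infty$ for suitable reparametrization $h$, where $p_i\in\gamma_{x_i}$; and dually along $\Eb$-leaves for $t\to-\infty$. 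Because $\phi$ is assumed Anosov with splitting $\R X\oplus E^{ss}\oplus E^{uu}$, the general structure theory of Anosov flows applies: the weak stable foliation $\Fs$ is the unique foliation tangent to $\R X\oplus E^{ss}$, the weak unstable $\Fu$ the unique one tangent to $\R X\oplus E^{uu}$, and each orbit lies in a unique leaf of each. Lifting to $\w{M}$ and projecting to the orbit space, $\w{\Fs},\w{\Fu}$ descend to the two singular one-dimensional foliations $\Ors(\phi),\Oru(\phi)$ on $\Or(\phi)$, and by construction $\Or(\phi)$ is canonically the bifoliated plane $\PP$ with $(\Ors,\Oru)=(\Fa,\Fb)$ (up to the chosen conjugacy). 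Since $\Ea$ projects to $\Fa$ and $\Fs$ projects to $\Ors$, and both are unions of orbits of $\phi$ projecting onto the \emph{same} foliation of the orbit space, a flow line lies in an $\Ea$-leaf iff it lies in an $\Fs$-leaf; hence $\Ea=\Fs$ as partitions of $M$ into $2$-dimensional leaves, and likewise $\Eb=\Fu$.

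Thus the real content reduces to the assertion that the orbit-space foliations $(\Ors(\phi),\Oru(\phi))$ produced intrinsically by the Anosov flow $\phi$ coincide with the bifoliation $(\Fa,\Fb)$ we started from — which is essentially built into the construction, since $\w{\phi}(\rho)$ was defined so that $\PP$ is its orbit space and the leaves $\bigcup_{x\in\lambda}\rho_x$ are precisely the preimages under $\pi$ of the leaves of $\Fa$. The one point that needs care, and which I expect to be the main obstacle, is the orientation/which-is-which question: a priori the hyperbolic splitting could pair $E^{ss}$ with the $\Fb$-direction rather than the $\Fa$-direction, making $\Ea$ the unstable rather than the stable foliation. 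This is resolved by examining the return-map dynamics: by Property (A1) (which holds for $G$ acting on $\PP$, being part of flowability), a nontrivial $g\in G$ fixing a leaf $\lambda\in\Fa$ expands one transverse leaf and contracts the other at the fixed point, and the orientation convention ($r_x$ = positive side of $x$ along $\lambda_x$, flow positively oriented consistently) pins down that flowing forward corresponds to the contracting transverse direction $\Fa$; tracing this through the quotient $M=N/G$ and comparing with the sign conventions in Definition \ref{topological Anosov flow}(d) (leaves of $\Fs$ contract under forward flow in the $d_\mu$-metric — wait, rather expand backward, i.e. $d_\lambda(\phi^t x,\phi^t y)\to\infty$ as $t\to-\infty$ for $x,y$ on a common stable leaf) shows that it is indeed $\Ea$ that plays the role of $\Fs$. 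I would phrase the final step as: by uniqueness of the stable and unstable foliations of an Anosov flow and the above identification of orbit-space projections, together with the orientation bookkeeping fixing which foliation contracts forward, $\Ea=\Fs$ and $\Eb=\Fu$.
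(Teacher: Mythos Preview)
Your approach differs from the paper's and has a real gap. The paper's argument is short and direct: take a periodic point $x\in\PP$ (these exist since $\phi$ is Anosov on a closed $M$) and let $g$ generate its stabilizer with $g$ expanding $\lambda_x$ and contracting $\mu_x$ (such a choice exists by (A1)). Then $g$ expands $r_x\subset\lambda_x$, so $g$ acts on the flow line $\rho_x=\{x\}\times r_x$ as a forward-time translation. In the lifted $\Ea$-leaf $L=\bigcup_{t\in\lambda_x}\rho_t$, going once forward around the resulting closed orbit corresponds to applying $g^{-1}$ on the transversal $\lambda_x$, which contracts toward $x$; hence the flow lines $\rho_t$ in $L$ are forward asymptotic to $\rho_x$. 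That is the stable behavior, so $\Ea=\Fs$.

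Your second paragraph is circular: you assert ``by construction $\Or(\phi)$ is canonically the bifoliated plane $\PP$ with $(\Ors,\Oru)=(\Fa,\Fb)$'', but $\Ors(\phi)$ is \emph{defined} as the projection of the stable foliation $\Fs$ of $\phi$, and $\Fs=\Ea$ is precisely the statement to be proved. The construction identifies the orbit space with $\PP$, but nothing in it tells you which of $\Fa,\Fb$ is the image of $\Fs$ --- that is the content of the proposition. Your first paragraph's route via ``an Anosov flow has a unique pair of transverse foliations whose intersections are the orbits'' is correct in spirit but unsubstantiated: $\Ea,\Eb$ are a priori only $C^0$ foliations, so the phrase ``canonically determined by the hyperbolic splitting'' does not apply to them without further argument. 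Your final paragraph, invoking (A1) at a fixed point for the orientation bookkeeping, is in fact the germ of the paper's proof; but you deploy it only to break the $\{\Ea,\Eb\}\leftrightarrow\{\Fs,\Fu\}$ symmetry after already assuming the unproved uniqueness, whereas the paper uses exactly that periodic-orbit computation as the entire argument.
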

\begin{proof}
    Suppose that $x$ is a periodic point of $\PP$,
    and let $g$ be the generator of the stabilizer of $x$ such that
    $g$ topologically expands $\lambda_x$ and topologically contracts $\mu_x$.
    Then $g$ topologically expands $r_x \subseteq \lambda_x$.

    Let $L = \bigcup_{t \in \lambda_x} \rho_t$,
    which is a leaf of the pulled-back foliation of $\Ea$ in $N$.
    As $g$ translates $L$ along the positive orientation on $\rho_x$,
    all $\rho_t$ with $t \in r_x$ topologically contract along 
    the positive orientation on $\rho_x$ and thus are forward asymptotic to $\rho_x$.
    Hence $\Ea$ is the stable foliation of $\phi$.
\end{proof}

We denote by $L(\Fa), L(\Fb)$ the leaf spaces of $\Fa, \Fb$.

\begin{defn}[Flow box]\rm\label{flow box}
A \emph{flow box} $B$ is is a subspace 
$$B = (\Ia \times \Ib) \times \Ir$$ of $N$,
where $\Ia$, $\Ib$, and $\Ir \cong [0,1]$ satisfy:
\begin{itemize}
    \item $\Ia$ (resp. $\Ib$) is a closed interval in $L(\Fa)$ (resp. $L(\Fb)$),
and any leaf of $\Ia$ intersects all leaves of $\Ib$.
Then
$\Ia \times \Ib$ is canonically identified a product chart of $\PP$,
where $\Ia \times \{t\}$ is a closed interval in a leaf of $\Fb$ for all $t \in \Ib$,
$\{s\} \times \Ia$ is a closed interval in a leaf of $\Fa$ for all $s \in \Ia$.
\item $\Ir$ is a closed interval in $L(\Fb)$ such that,
for any $(x,y) \in \Ia \times \Ib \subseteq \PP$,
all leaves in $\Ir$ intersect $r_{(x,y)}$.
\end{itemize}

For any $x \in \Ia, y \in \Ib, z \in \Ir$,
the point $((x,y),z)$ in this flow box will always refer to the point 
$\rho_{(x,y)}(x,z) \in N$ (where $(x,y) \in \PP$, and $(x,z) \in r_{(x,y)}$).
\end{defn}

\begin{defn}[Compactness property]\label{cptprop}\rm	
There are finitely many flow box $B_i = (\Ia_i \times \Ib_i) \times \Ir_i$
($i = 1,\ldots,n$) such that,
for any $p \in \PP$ and $t \in r_p$,
there is $i \in \{1,\ldots,n\}$, 
$x \in \Ia_i, y \in \Ib_i, z \in \Ir_i$, $g \in G$ such that
$g(\rho_p(t)) = ((x,y),z)$.
Equivalently,
$\{g(B_i) \mid g \in G, i \in \{1,\ldots,n\}\}$ covers $N$.
\end{defn}
 
Note that if the action of $G$ on $\PP$ satisfies the Compactness property, then $M$ is a closed $3$-manifold. This is simply because any topological space that can be covered by finitely many compact subsets is compact. 

\subsection{First Characterization} \label{subsec:firstchar}

In this section, we introduce the Convergence and Divergence properties of an action on the bifoliated plane that correspond to \ref{topological Anosov flow} (c) and (d) for the associated flow. 

\begin{defn}[Convergence property]\label{convprop}\rm	
Let $B_i = (\Ia_i \times \Ib_i) \times \Ir_i$ ($i = 1,\ldots,n$) be a collection of flow boxes such that
$\{g(Int(B_i)) \mid g \in G, i \in \{1,\ldots,n\}\}$ covers $N$. We say the {\rm Convergence Property} holds, if:
\begin{enumerate}[(I)]
    \item Let $\lambda$ be a leaf of $\Fa$ and 
let $a = (\lambda,\mu_a)$, $b = (\lambda,\mu_b)$ be
two points on $\lambda$.
Given an arbitrary finite collection of open connected intervals
$\{U_\alpha \mid \alpha \in \Psi\}$ (where $\Psi$ is an index set) that 
covers $\bigcup_{i=1}^{n}\Ib_i$,
there exists $u \in r_a \cap r_b$ such that,
for all $t = (\lambda,\mu_t) \in r_u$,
there is $g \in G$ and $i \in \{1,\ldots,n\}$ such that
$g(\rho_a(t)), g(\rho_b(t)) \in B_i$ and 
$g(\mu_a), g(\mu_b) \in U_\alpha$ for some $\alpha \in \Psi$.
\item Let $\mu$ be a leaf of $\Fb$ and 
let $a = (\lambda_a,\mu)$, $b = (\lambda_b,\mu)$ be
two points on $\mu$.
For any finite collection of open connected intervals
$\{U_\alpha\}_{\alpha \in \Psi}$ (where $\Psi$ is an index set) that covers 
$\bigcup_{i=1}^{n}\Ia_i$,
there exists $u \in r_a$, $v \in r_b$ such that
$u, v$ are contained in the same leaf of $\Fb$ and the following properties hold:
For all $t_1 \in r_a\backslash r_u$, $t_2 \in r_b\backslash r_v$ that 
are contained in the same leaf $\mu_t$ of $\Fb$,
there is $g \in G$ and $i \in \{1,\ldots,n\}$ such that
$g(\rho_a(t_1)), g(\rho_b(t_2)) \in B_i$,
and $g(\lambda_a), g(\lambda_b) \in U_\alpha$ for some $\alpha \in \Psi$.
\end{enumerate}
\end{defn}

\begin{prop} \label{prop:def2.2c}
If the action of $G$ on $\PP$ satisfies the Convergence property,
then the flowlines on the same leaf of $\Ea$ converge in the positive direction,
the flowlines on the same leaf of $\Eb$ converge in the negative direction.
\end{prop}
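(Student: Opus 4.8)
The statement asserts that the Convergence property on $(\PP,\Fa,\Fb)$ forces the flow $\phi$ on $M = N/G$ to satisfy the convergence half of Definition \ref{topological Anosov flow}(c): two flowlines in the same leaf of $\Ea$ converge forward, and two flowlines in the same leaf of $\Eb$ converge backward. The plan is to unwind the definitions: by the correspondence between $\Ea$-leaves and $\Fa$-leaves (a leaf of $\widetilde{\Ea}$ in $N$ is $\bigcup_{x\in\lambda}\rho_x$ for a leaf $\lambda$ of $\Fa$), picking two flowlines in the same $\Ea$-leaf of $M$ amounts to lifting to $N$ and choosing two points $a=(\lambda,\mu_a)$, $b=(\lambda,\mu_b)$ on a common $\Fa$-leaf $\lambda$, with the flowlines $\rho_a,\rho_b$. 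I would first fix once and for all a finite family of flow boxes $B_1,\dots,B_n$ witnessing the Compactness property (which holds because $M$ is closed — this is where compactness of $M$ enters, giving the finite cover needed to state the Convergence property) and an auxiliary metric $d$ on $M$ lifted to $N$. The key observation is that each flow box $B_i$ is compact, so there is a uniform $\epsilon_i>0$ (take $\epsilon=\min_i\epsilon_i$) such that any two points of $B_i$ lying in the same leaf of $\widetilde{\Ea}$ whose $\Fb$-coordinates lie in a sufficiently fine subinterval of $\Ib_i$ are within distance $\epsilon$ in $M$.

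Next I would convert "$d(\phi^t(\cdot),\phi^{h(t)}(\cdot))\to 0$" into the parametrization-free language the Convergence property speaks in. Because flowlines are just the fibers $\rho_x$, the existence of the reparametrizing homeomorphism $h\colon\R\to\R$ in Definition \ref{topological Anosov flow}(c) is automatic once we know that for every $\delta>0$ there is a "height" $u\in r_a\cap r_b$ past which $\rho_a(t)$ and $\rho_b(t)$ project into $M$ within distance $\delta$ of each other; one matches up the two flows by arclength/parameter and the reparametrization is the identification of the two height parameters. So it suffices to show: for every $\delta>0$ there is $u\in r_a\cap r_b$ with $d_M(\pi(\rho_a(t)),\pi(\rho_b(t)))<\delta$ for all $t\in r_u$, where $\pi\colon N\to M$ is the covering projection. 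Now choose, via compactness of $\bigcup_i\Ib_i$, a finite cover $\{U_\alpha\}_{\alpha\in\Psi}$ by open connected intervals so fine that the uniform-closeness property of the previous paragraph applies on each $U_\alpha$ — i.e. any two points of a flow box $B_i$ in the same $\widetilde{\Ea}$-leaf whose $\Fb$-leaves both lie in one $U_\alpha$ have $M$-images within $\delta$. Feed this cover into part (I) of the Convergence property: it produces $u\in r_a\cap r_b$ such that for every $t\in r_u$ there are $g\in G$ and $i$ with $g(\rho_a(t)),g(\rho_b(t))\in B_i$ and $g(\mu_a),g(\mu_b)\in U_\alpha$ for a common $\alpha$. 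Since $g\in G$ and $\pi$ is $G$-invariant, $d_M(\pi(\rho_a(t)),\pi(\rho_b(t))) = d_M(\pi(g\rho_a(t)),\pi(g\rho_b(t))) < \delta$. As $\delta$ was arbitrary, the forward flowlines in an $\Ea$-leaf converge; part (II) of the Convergence property gives the backward statement for $\Eb$-leaves by the identical argument with roles of $\Fa,\Fb$ (and positive/negative directions) swapped, noting that the slightly more elaborate "$t_1\in r_a\setminus r_u$, $t_2\in r_b\setminus r_v$ in the same $\Fb$-leaf" formulation is exactly what is needed because along an $\Eb$-leaf the two flowlines are synchronized by $\Fb$-leaves rather than by a common $\Fa$-leaf.

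The main obstacle — and the step deserving the most care — is the uniform-constant extraction: making precise that the finitely many compact flow boxes yield a single modulus of continuity controlling $d_M$-distance between $\widetilde{\Ea}$-related points in terms of the fineness of the $\Fb$-interval containing them, and checking this survives the translation by $g\in G$ (it does, since $d_M$ is defined downstairs and $\pi\circ g=\pi$). A secondary subtlety is bookkeeping the reparametrization $h$: one must confirm the identification of heights is genuinely an orientation-preserving homeomorphism of $\R$, which follows since along each $\rho_x$ the flow is a monotone reparametrization of the interval $r_x$, but it should be stated rather than assumed. Everything else is a direct substitution of the hypotheses of Definition \ref{convprop} into Definition \ref{topological Anosov flow}(c).
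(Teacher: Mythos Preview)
Your proposal is correct and follows essentially the same approach as the paper's own proof, which is the one-line sketch ``Pick the $U_\alpha$ to be sufficiently small, such that when restricted to the flow boxes they represent flow lines that are really close to one another, then one can see the conclusion.'' You have simply filled in the details the paper omits. One small presentational point: the finite family $B_1,\dots,B_n$ is part of the hypothesis of the Convergence property itself (Definition \ref{convprop} begins by positing such a cover), so you need not invoke the Compactness property separately to obtain it.
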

\begin{proof} Pick the $U_\alpha$ to be sufficiently small, such that when restricted to the flow boxes they represent flow lines that are really close to one another, then one can see the conclusion.
\end{proof}

\begin{defn}[Divergence property]\rm\label{divprop}
We say the {\rm Divergence Property} holds, if
for any connected compact region $R$  of $N$,
let $C$ be the image of $R$ under the projection 
$\pi: N \to \PP$, we have: \begin{enumerate}[(I)]
    \item Let $\lambda$ be a leaf of $\Fa$ and 
let $a = (\lambda,\mu_a), b = (\lambda,\mu_b)$ be distinct points on $\lambda$.
Then there is $u \in r_a$ such that,
for any $t = (\lambda, \mu_t) \in r_a - r_u$,
there is no $g \in G$ such that 
$g(\rho_a(t)) \in R$ and
$g(b) \in C$.
\item Let $\mu$ be a leaf of $\Fb$ and 
let $c = (\lambda_c,\mu), d = (\lambda_d,\mu)$ be distinct points on $\mu$.
Then there is $u \in r_c$ such that,
for any $t = (\lambda_c, \mu_t) \in r_u$,
there is no $g \in G$ such that 
$g(\rho_c(t)) \in R$ and
$g(b) \in C$.
\end{enumerate}
\end{defn}

\begin{prop} \label{prop:def2.2d}
If the action of $G$ on $\PP$ satisfies the Divergence Property,
then 

(a)
For any two flowlines $\alpha, \beta: \mathbb{R} \to M$
on the same leaf $\lambda$ of $\Ea$, 
$\lim_{t \to -\infty}d_\lambda(\alpha(t), \beta) = +\infty$ 
(where $d_\lambda$ is the path metric on $\lambda$).

(b)
For any two flowlines $\gamma, \delta: \mathbb{R} \to M$
on the same leaf $\mu$ of $\Eb$, 
$\lim_{t \to +\infty}d_\mu(\gamma(t), \delta) = +\infty$ 
(where $d_\mu$ is the path metric on $\mu$).
\end{prop}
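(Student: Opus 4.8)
The plan is to translate the Divergence Property, which is stated purely in terms of the $G$-action on the bifoliated plane $\PP$, into a statement about flow lines in $M = N/G$ and their lifts in the universal cover. The key observation is that $\w M = N$ is the universal cover of $M$, and a flow line $\alpha \colon \R \to M$ on a leaf of $\Ea$ lifts to a flow line $\w\alpha$ in $N$ lying on a leaf of the pulled-back foliation, namely on some $L = \bigcup_{t \in \lambda} \rho_t$ where $\lambda$ is a leaf of $\Fa$. Two flow lines $\alpha, \beta$ on the same leaf $\lambda$ of $\Ea$ in $M$ lift to flow lines $\w\alpha, \w\beta$ on leaves $L_a, L_b$ of the pulled-back foliation; these may or may not lie on the same leaf of $\Fa$ upstairs, but after applying a deck transformation we may arrange $\w\alpha$ to lie on a fixed leaf $L$, and $\w\beta$ to lie on $L$ as well or on a $G$-translate. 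Since the path-metric $d_\lambda$ on a leaf of $\Ea$ in $M$ is covered by the path metric on the corresponding leaf in $N$, and the covering map is a local isometry, the statement $\lim_{t\to-\infty} d_\lambda(\alpha(t),\beta) = +\infty$ in $M$ is implied by the corresponding divergence of lengths upstairs together with the properness of the covering restricted to the leaf (a path of bounded length downstairs lifts to a path of the same length upstairs).

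Concretely, for part (a) I would argue by contraposition. Suppose $d_\lambda(\alpha(t),\beta)$ does not go to $+\infty$ as $t \to -\infty$; then there is a constant $D$ and a sequence $t_n \to -\infty$ with $d_\lambda(\alpha(t_n),\beta) \le D$. Fix lifts so that $\w\alpha$ runs along $r_a$-type segments of the leaf $L = \bigcup_{s\in\lambda}\rho_s$ for a fixed leaf $\lambda$ of $\Fa$, and let $b$ be the point of $\lambda$ with $\w\beta = \rho_b$; here $a=(\lambda,\mu_a)$, $b=(\lambda,\mu_b)$ are distinct points on $\lambda$ (distinct because $\alpha \ne \beta$). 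The bound $d_\lambda(\alpha(t_n),\beta)\le D$ means that, upstairs, the points $g_n(\rho_a(t_n))$ (for suitable $g_n\in G$ realizing the lift of the bounded path to $M$) stay within distance $D$, in the leafwise metric, of $g_n(\rho_b)$, hence of a compact region. More precisely, one sets $R$ to be a compact region of $N$ large enough to contain $\w\beta$ restricted to a fundamental domain together with a $D$-neighborhood of it in the leaf, and $C = \pi(R) \subset \PP$; then the failure of divergence produces, for arbitrarily negative $t$ (i.e.\ for $t = (\lambda,\mu_t)$ lying arbitrarily far out along $r_a$ in the negative direction — note that $t \to -\infty$ along the flow corresponds to $\rho_a(t)$ moving toward the past, which on the leaf $\lambda$ of $\Fa$ translates to points $t \in r_a$ of a specific type), elements $g \in G$ with $g(\rho_a(t))\in R$ and $g(b)\in C$, contradicting the Divergence Property (I). Part (b) is symmetric, using Divergence Property (II) and the fact that on a leaf of $\Eb$ divergence is in the positive time direction.

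The main obstacle I expect is the bookkeeping of how "time going to $-\infty$ along $\alpha$" corresponds to a point moving out along the leaf $\lambda$ of $\Fa$ in the bifoliated plane, and in which direction — in the construction $\rho_x = \{x\} \times r_x$ parametrizes the \emph{unstable} direction of the flow (the leaf $l_x$ of $\Fa$ through $x$, positive side), while the foliation $\Ea$ collects $\rho_t$ for $t$ ranging over a leaf $\lambda$ of $\Fa$; so a flow line on a leaf of $\Ea$ corresponds to moving in the flow (i.e.\ $t$-) direction, and "the other flow line on the same $\Ea$-leaf" corresponds to varying $t$ across $\lambda$. One must check carefully that the Proposition's $d_\lambda(\alpha(t),\beta)\to+\infty$ as $t\to-\infty$ is exactly what Divergence (I) encodes — that is, that the negative flow direction downstairs is the direction in which, after lifting, the base point $\rho_a(t)$ travels out to the non-compact end of $r_a$ relative to the fixed transversal $b$. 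Once this correspondence is pinned down, the argument is a routine compactness/properness translation: a bounded-length path downstairs lifts to a bounded-length path upstairs, landing in a $G$-translate of a fixed compact set, which is precisely the situation the Divergence Property forbids for points far out along the leaf.
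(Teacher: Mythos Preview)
Your approach is essentially the same as the paper's: lift to the universal cover $N$, observe that a bound on $d_\lambda(\alpha(t),\beta)$ downstairs lifts to a bound on leafwise distance upstairs, and then exhibit a compact region $R\subset N$ (together with its projection $C=\pi(R)\subset\PP$) in which the relevant translates must land, contradicting the Divergence Property for $t$ close enough to $a$. The paper's proof is a one-sentence sketch---it simply says to lift the leafwise path metrics and take $R$ to be the set of points reachable from a base point by concatenated leafwise paths of total length at most some large $C$---so your proposal is a more detailed version of the same idea, and your identification of the time-direction bookkeeping (that flow time $-\infty$ on $\rho_a$ corresponds to the parameter $t\in r_a$ approaching $a$, i.e.\ $t\in r_a\setminus r_u$) as the point requiring care is apt.
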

\begin{proof}
Lift the path metrics to the leaves of the lifted foliations in the universal cover of $M$, and pick $R$ to be the set of points reachable by concatenated paths on these foliations whose total length, under the path metric, is bounded by some $C>>1$.
\end{proof}

\begin{prop}
If the Compactness, Convergence, Divergence properties hold,
then $\phi$ is an Anosov flow.
\end{prop}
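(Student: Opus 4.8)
The plan is to assemble the conclusion from the propositions already established in this section. Given that the Compactness property holds, $M = N/G$ is a closed $3$-manifold, so the only thing left to establish is that $\phi = \w{\phi}/G$ is a topological Anosov flow of $M$ in the sense of Definition \ref{topological Anosov flow}, and then invoke Theorem \ref{Shannon} if a smooth model is desired (though the statement as written only claims ``Anosov flow'', so the topological version is the target). I would first check Definition \ref{topological Anosov flow}(a), (b): the foliation $\w{\phi}$ by the arcs $\{x\}\times r_x$ descends to a non-singular flow on $M$ whose orbits are $C^{1}$, and the pair $\Ea, \Eb$ constructed above is a pair of transverse two-dimensional foliations whose leafwise intersections are exactly the flow lines of $\phi$ — this is immediate from the product-chart description of $N$ and the fact that $\PP$ has no singular leaves. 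So (a) and (b) are structural and follow from the construction in Section \ref{sec:realizingtopflow}.

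Next I would verify the two asymptotic conditions. Condition (c) of Definition \ref{topological Anosov flow} — that two points on the same leaf of $\Fs$ (resp. $\Fu$) are forward (resp. backward) asymptotic under a suitable reparametrization — is exactly what Proposition \ref{prop:def2.2c} delivers: the Convergence property forces flowlines on a common leaf of $\Ea$ to converge in the positive direction and flowlines on a common leaf of $\Eb$ to converge in the negative direction. Here I would need to be slightly careful that ``converge'' in the statement of Proposition \ref{prop:def2.2c} is upgraded to the metric statement $\lim_{t\to+\infty} d(\phi^{t}(x),\phi^{h(t)}(y)) = 0$ for an orientation-preserving homeomorphism $h$ of $\R$; this comes from the compactness of $M$ (finitely many flow boxes up to the $G$-action), which lets one convert ``eventually inside arbitrarily small common flow boxes'' into a genuine metric estimate, and from choosing $h$ to match up the flow-box crossing times of the two orbits. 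Condition (d) — the divergence of the induced path metric in the leaves of $\Fs$ in backward time and of $\Fu$ in forward time — is precisely Proposition \ref{prop:def2.2d}(a),(b). Once (a)–(d) are checked, $\phi$ is a topological Anosov flow by definition, and by Shannon's theorem (Theorem \ref{Shannon}) it is orbit equivalent to a smooth Anosov flow when it is transitive; transitivity here follows from the existence of a dense orbit, which is part of the Anosov-like hypotheses carried along implicitly, though if we are only promised the flowable conditions one should phrase the conclusion as ``topological Anosov flow''.

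The main obstacle, I expect, is the bookkeeping in the metric upgrade of the Convergence property, i.e. producing the single reparametrizing homeomorphism $h\colon\R\to\R$ in Definition \ref{topological Anosov flow}(c) and showing the distance genuinely tends to zero rather than merely staying small. The Convergence property as stated gives, for each finite open cover of the relevant interval factors of the flow boxes, a point beyond which the two orbits stay in a common translated flow box with transverse coordinates in a common cover element; by taking a sequence of finer and finer covers and using a diagonal argument together with the finiteness of the flow-box family, one extracts the quantitative decay. The reparametrization $h$ is then built by declaring $h(t)$ to be the time at which $\phi^{h(t)}(y)$ sits in the ``same'' flow box as $\phi^{t}(x)$; monotonicity and surjectivity of $h$ follow because the flow is non-singular and the orbits are properly embedded. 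The divergence direction (d) is comparatively routine once Proposition \ref{prop:def2.2d} is in hand, since that proposition already phrases things in terms of the lifted path metrics. So the heart of the argument is Convergence $\Rightarrow$ (c) with all the metric details, and the rest is a matter of citing the propositions of this section in the right order.
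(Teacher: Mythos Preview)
Your proposal is correct and follows essentially the same approach as the paper: the paper's proof simply cites Proposition \ref{prop:def2.2c} and Proposition \ref{prop:def2.2d} to obtain Definition \ref{topological Anosov flow} (c) and (d), leaving (a) and (b) implicit from the construction. Your additional discussion of Shannon's theorem and transitivity is extraneous here (the conclusion is only the topological Anosov condition, and transitivity is not assumed in this section), and the metric bookkeeping you worry about for the Convergence property is exactly the content that the paper sweeps into the one-line proof of Proposition \ref{prop:def2.2c}; but your outline of how to handle it is sound.
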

\begin{proof} From Proposition \ref{prop:def2.2c} and Proposition \ref{prop:def2.2d}, we know that the Convergence property implies Definition \ref{topological Anosov flow} (c),
and the Divergence property implies Definition \ref{topological Anosov flow} (d).
\end{proof}

Now we prove

\begin{THM}
Let $\phi_0$ be an Anosov flow of a closed $3$-manifold $M_0$.
Suppose that the action of $G$ on $(\PP,\Fa,\Fb)$ is
the $\pi_1$-action of $\Or(\phi_0)$,
then the action of $G$ on $(\PP,\Fa,\Fb)$ satisfies
the Compactness, Convergence, Divergence properties.
\end{THM}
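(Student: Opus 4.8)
The plan is to verify the three Properties by transporting them across a flow- and bifoliation-preserving identification of $N$ with the universal cover of $M_0$. Write $\PP=\Or(\phi_0)$, $\Fa=\Ors(\phi_0)$, $\Fb=\Oru(\phi_0)$, and let $G=\pi_1(M_0)$ act by deck transformations; this action satisfies (A1), (A4), and (by Lemma \ref{property 1}, together with Remark \ref{non-transitive} in the non-transitive case) Property (A7), so it is flowable and Theorem \ref{topological flow} produces $N$, $\w\phi$, $M=N/G$, $\phi=\w\phi/G$. The first and main step is to build a $G$-equivariant homeomorphism $\Phi\colon N\to\w{M_0}$ carrying the foliation $\w\phi$ (whose leaves are the fibres $\{x\}\times r_x$) to the orbit foliation of $\w{\phi_0}$, and carrying $\Ea,\Eb$ to the lifted weak stable and unstable foliations $\w{\Fs},\w{\Fu}$ of $\phi_0$: one constructs $\Phi$ by choosing, compatibly and $G$-equivariantly, a continuous family of local product charts realising $\w{M_0}$ as the $\R$-bundle of orbits over $\Or(\phi_0)$ and matching the flow-position coordinate of $\w{M_0}$ with the position along $r_x$ used in the definition of $N$ --- that is, by checking that the construction of Section \ref{sec:realizingtopflow} inverts the passage to the orbit space. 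Granting $\Phi$, we get $N/G\cong M_0$ with $\phi$ orbitally equivalent to $\phi_0$, and the three Properties become reformulations of standard facts about $\phi_0$.

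With $\Phi$ in hand, the \emph{Compactness property} (Definition \ref{cptprop}) is immediate: $M_0$ is closed, hence covered by finitely many flow boxes of $\phi_0$; their lifts to $\w{M_0}$ include finitely many $B_1',\dots,B_n'$ with $\{gB_i':g\in G,\ i\le n\}$ covering $\w{M_0}$, and $B_i:=\Phi^{-1}(B_i')$ are flow boxes of $N$ in the sense of Definition \ref{flow box} whose $G$-translates cover $N$. For the \emph{Convergence property} (Definition \ref{convprop}), take a leaf $\lambda$ of $\Fa$ and points $a,b$ on it. Under $\Phi$ the $\w\phi$-flowlines over $a$ and $b$ become orbits of $\w{\phi_0}$ on a common weak stable leaf; by Definition \ref{topological Anosov flow}(c), after synchronising their parametrisations, they are forward asymptotic, so in $M_0$ they eventually enter and stay inside one flow box of the finite cover, with transverse separation tending to $0$. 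Pushing this back through $\Phi$ and running a Lebesgue-number argument for the prescribed finite cover $\{U_\alpha\}$ of $\bigcup_{i=1}^{n}\Ib_i$ yields the required $u\in r_a\cap r_b$ and, for every later parameter $t$, an element $g\in G$ and index $i$ with $g\rho_a(t),g\rho_b(t)\in B_i$ and $g\mu_a,g\mu_b$ in a common $U_\alpha$. Case (II) is the same argument for leaves of $\Fb$, using backward asymptoticity on unstable leaves.

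For the \emph{Divergence property} (Definition \ref{divprop}), let $R\subset N$ be connected compact and $C=\pi(R)\subset\PP$. For distinct $a,b$ on a leaf $\lambda$ of $\Fa$, transport to $\w{M_0}$: the orbits $\gamma_a,\gamma_b$ lie on a common weak stable leaf, and by Definition \ref{topological Anosov flow}(d) --- automatic for a smooth Anosov flow from uniform backward expansion of $E^{ss}$, and in the topological case given by \cite[Lemma 2.3]{Fen99} (cf.\ \cite[Lemma 5.12, Proposition 5.14]{BFP23}) --- their leafwise distance, and with it their ambient distance (uniformly, as in \cite[Lemma 2.3]{Fen99}), tends to $\infty$ in the backward direction. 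Since $\Phi(R)$ has bounded diameter in $\w{M_0}$, once $t$ is close enough to the $a$-end of $r_a$ no $G$-translate of $\rho_a(t)$ can lie in $R$ while simultaneously $g(b)\in C$, since the latter forces $g\gamma_b$ to meet $\Phi(R)$ and hence to come uniformly close to $g(\Phi(\rho_a(t)))$; this is Definition \ref{divprop}(I), and (II) is the symmetric statement on unstable leaves, using forward divergence.

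The main obstacle is precisely the first step: pinning down $\Phi$ and, in particular, matching a flow box of $\phi_0$ in $M_0$ with a flow box of $N$ in the sense of Definition \ref{flow box}, together with reconciling the ``for every finite cover $\{U_\alpha\}$ there is a threshold'' shape of Definitions \ref{convprop} and \ref{divprop} with the uniform asymptoticity and divergence estimates on $\w{M_0}$ (where the second proof of Lemma \ref{property 1} already illustrates the relevant control). Once that dictionary is in place, the three verifications are routine translations.
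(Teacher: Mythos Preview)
Your approach is essentially the same as the paper's, and the verifications of the three Properties proceed just as you outline. The one difference worth noting is how the identification $N\cong\w{M_0}$ is obtained: rather than constructing $\Phi$ by hand from ``a continuous family of local product charts'' (which you correctly flag as the main obstacle, since the $G$-equivariance is the nontrivial part), the paper simply invokes Proposition \ref{equivalent} (Barbot's theorem \cite[Theorem 3.4]{Bar95} in the Anosov case) to obtain an orbit equivalence $h\colon M_0\to M$, then pulls back the metric along $h$ so that $\phi$ itself becomes a topological Anosov flow on $M$. This bypasses the construction of $\Phi$ entirely and places you directly in the situation where Definition \ref{topological Anosov flow}(c),(d) apply to $\phi$, after which your Lebesgue-number and bounded-diameter arguments are exactly the ones the paper uses.
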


By Proposition \ref{equivalent},
there is a homeomorphism $h: M_0 \to M$ with $h(\phi_0) = \phi$.
We assign $M$ a metric $d$ defined by
$d_M(x,y) = d_{M_0}(h^{-1}(x), h^{-1}(y))$ for all $x,y \in M$.
Replacing $(\phi_0,d_{M_0})$ by $(\phi,d_M)$,
we can ensure that $\phi$ is a topological pseudo-Anosov flow.
The metric $d_M$ on $M$ pulls-back to a metric on $N$.
We denote by $d$ the pull-back metric of $d_M$ on $N$. 

\begin{proof}[The proof of the Compactness property]
A set in $M$ is called a \emph{flow box} in $M$ if 
it can be lifted to a flow box in $N$.

For each point $x$ of $M$,
there is a flow box $B_x$ of $M$ such that $x \in Int(B_x)$.
    So there exist finitely many flow boxes in $M$ such that
    their interiors cover $M$.
\end{proof}

\begin{proof}[The proof of the Convergence property (I)]
Let $B_i = (\Ia_i \times \Ib_i) \times \Ir_i$ ($i = 1,\ldots,n$) be 
a collection of flow boxes such that
$\{g(Int(B_i)) \mid g \in G, i \in \{1,\ldots,n\}\}$ covers $N$.
Given an arbitrary finite collection of open connected intervals
$\{U_\alpha \mid \alpha \in \Psi\}$ (where $\Psi$ is an index set) that 
covers $\bigcup_{i=1}^{n}\Ib_i$.

There exists $\epsilon > 0$ such that,

$\bullet$
If $d(x,y) < \epsilon$ for $x,y \in N$,
then there is $g \in G$ such that $g(x), g(y) \in B_i$ for some $i$.

$\bullet$
Given $x = (\lambda,\mu_x) ,y = (\lambda,\mu_y) \in B_i$ contained in the same leaf
$\lambda$ of $\Fa$,
if $d(\rho_x(t),\rho_y) < \epsilon$ for some $t \in \Ir_i$,
then there is 
$\alpha \in \Psi$ such that
$\mu_x, \mu_y \in U_\alpha$.

Let $\lambda$ be a leaf of $\Fa$ and 
let $a = (\lambda,\mu_a)$, $b = (\lambda,\mu_b)$ be two points on $\lambda$.
Now we prove that the condition of Convergence property (I) holds for $a,b$.
Because $\phi$ is a topological Anosov flow,
there is $u \in r_a \cap r_b$ such that
    for any $t = (\lambda,\mu_t) \in r_u$,
    $d(\rho_a(t),\rho_b) < \epsilon$.
    
We choose $t \in r_u$.
Then there is $s \in r_b$ with
$d(\rho_a(t),\rho_b(s)) < \epsilon$.
Then there is $g \in G$ such that
$g(\rho_a(t)), g(\rho_b(s)) \in B_i$ for some $i \in \{1,\ldots,n\}$.
And this implies that
there is $\alpha \in \Psi$ such that $g(\mu_a), g(\mu_b) \in U_\alpha$.

From $g(\rho_a(t)) \in B_i$,
we can know that $g(t) \in \Ir_i$,
and therefore $g(\rho_b(t)) \in B_i$.
This completes the proof.
\end{proof}

\begin{proof}[The proof of the Convergence property (II)]
Let $B_i = (\Ia_i \times \Ib_i) \times \Ir_i$ ($i = 1,\ldots,n$) be 
a collection of flow boxes such that
$\{g(Int(B_i)) \mid g \in G, i \in \{1,\ldots,n\}\}$ covers $N$.
Given an arbitrary finite collection of open connected intervals
$\{U_\alpha \mid \alpha \in \Psi\}$ (where $\Psi$ is an index set) that 
covers $\bigcup_{i=1}^{n}\Ia_i$.

There exists $\epsilon > 0$ such that,

$\bullet$
If $d(x,y) < \epsilon$ for $x,y \in N$,
then there is $g \in G$ such that $g(x), g(y) \in B_i$ for some $i$.

$\bullet$
Given $x = (\lambda_x,\mu) ,y = (\lambda_y,\mu) \in B_i$ contained in the same leaf
$\mu$ of $\Fb$,
if $d(\rho_x(t),\rho_y) < \epsilon$ for some $t \in \Ir_i$,
then there is 
$\alpha \in \Psi$ such that
$\mu_x, \mu_y \in U_\alpha$.

Let $\mu$ be a leaf of $\Fa$ and 
let $a = (\lambda_a,\mu)$, $b = (\lambda_b,\mu)$ be two points on $\mu$.
Because $\phi$ is a topological Anosov flow,
there is $u \in r_a$ such that
    for any $t = (\lambda,\mu_t) \in r_a - r_u$,
    $d(\rho_a(t),\rho_b) < \epsilon$.
    
We choose $t \in r_a - r_u$.
Then there is $s \in r_b$ with
$d(\rho_a(t),\rho_b(s)) < \epsilon$,
and so there is $g \in G$ such that
$g(\rho_a(t)), g(\rho_b(s)) \in B_i$ for some $i \in \{1,\ldots,n\}$.
This implies that
there is $\alpha \in \Psi$ such that $g(\lambda_a), g(\lambda_b) \in U_\alpha$.

Hence $g(t) \in \Ir_i$.
Let $t_0$ be the intersection of $\lambda_b$ and $\mu_t$.
Then both of $g(\rho_a(t)), g(\rho_b(t_0))$ are contained in $B_i$.
This completes the proof.
\end{proof}

\begin{proof}[The proof of the Divergence property (I)]
Let $R$ be a connected compact region of $N$,
and let $C$ be the image of $R$ under the projection 
$\pi: N \to \PP$.

Recall that $\Ea = \{\bigcup_{x \in \lambda} \rho_x(t) \mid 
\lambda \text{ is a leaf of } \Fa\}$ is a foliation of $N$.
There is a constant $K > 0$ such that,
if $l$ is a leaf of $\Ea$ and $u,v \in l \cap R$,
then $d_l(u,v) < K$,
where $d_l$ denotes the path metric on $l$ induced from the metric $d$ of $N$.

Let $\lambda$ be a leaf of $\Fa$ and 
let $a = (\lambda,\mu_a), b = (\lambda,\mu_b)$ be distinct points in $\lambda$.
Let $l$ denote the leaf $\bigcup_{x \in \lambda} \rho_x(t)$ of $\Ea$,
and let $d_l$ denote the path metric on $l$.

There is $u \in r_a$ such that,
for any $t = (\lambda, \mu_t) \in r_a - r_u$,
$d_l(t,\rho_b) > K$.
Suppose that there is $g \in G$ such that
$g(\rho_a(t)) \in R$ and
$g(b) \in C$.
Then there is $s \in r_b$ such that $g(\rho_b(s)) \in R$,
and thus
$d_l(\rho_a(t),\rho_b(s)) < K$.
This is a contradiction.
\end{proof}

\begin{proof}[The proof of the Divergence property (II)]
This is completely analogous to the proof of (I) above.
\end{proof}

We finish this section by observing that the Convergence Property can be stated in terms of the metrics on leaf spaces as well. Recall that the maximal Hausdorff quotients of $L(\Fa), L(\Fb)$ 
are underlying spaces of $\R$-trees \cite[Section 4]{RSS03}.
Let $T_+$ (resp. $T_-$) denote the maximal Hausdorff quotient of 
$L(\Fa)$ (resp. $L(\Fb)$),
and we assign a metric $d_+$ (resp. $d_-$) to $T_+$ (resp. $T_-$).
Then $L(\Fa)$ (resp. $L(\Fb)$) has a metric induced from
$d_+$ (resp. $d_-$).
For simplicity,
we still denote by $d_+$ (resp. $d_-$) the metric on $L(\Fa)$ (resp. $L(\Fb)$).

Then we can restate the convergence property as follows:

\begin{defn}[Convergence property]\rm	
Let $B_i = (\Ia_i \times \Ib_i) \times \Ir_i$ ($i = 1,\ldots,n$) be 
a collection of flow boxes such that
$\{g(Int(B_i)) \mid g \in G, i \in \{1,\ldots,n\}\}$ covers $N$.

(I)
Let $\lambda$ be a leaf of $\Fa$ and 
let $a = (\lambda,\mu_a)$, $b = (\lambda,\mu_b)$ be
two points on $\lambda$.
Given an arbitrary $\epsilon > 0$,
there exists $u \in r_a \cap r_b$ such that,
for all $t = (\lambda,\mu_t) \in r_u$,
there is $g \in G$ and $i \in \{1,\ldots,n\}$ such that
$g(\rho_a(t)), g(\rho_b(t)) \in B_i$ and 
$d_-(g(\mu_a),g(\mu_b)) < \epsilon$.

(II)
Let $\mu$ be a leaf of $\Fb$ and 
let $a = (\lambda_a,\mu)$, $b = (\lambda_b,\mu)$ be
two points on $\mu$.
Given an arbitrary $\epsilon > 0$,
there exists $u \in r_a$, $v \in r_b$ such that
$u, v$ are contained in the same leaf of $\Fb$ and the following properties hold:
For all $t_1 \in r_a - r_u$, $t_2 \in r_b - r_v$ that 
are contained in the same leaf $\mu_t$ of $\Fb$,
there is $g \in G$ and $i \in \{1,\ldots,n\}$ such that
$g(\rho_a(t_1)), g(\rho_b(t_2)) \in B_i$,
and $d_+(g(\lambda_a),g(\lambda_b)) < \epsilon$.
\end{defn}

\subsection{Second Characterization} \label{subsec:secondchar}

Throughout this section, we always assume that the Compactness property holds. We make use of \cite[Definition 5.7]{BFP23} as follows:

\begin{defn}[Expansive property]\rm
Let $B_i = (\Ia_i \times \Ib_i) \times \Ir_i$ ($i = 1,\ldots,n$) be 
a collection of flow boxes such that
$\{g(Int(B_i)) \mid g \in G, i \in \{1,\ldots,n\}\}$ covers $N$.
We say the {\em Expansive Property} holds, if
there is some $\epsilon > 0$ such that:\\

Let
$a = (\lambda_a, \mu_a), b = (\lambda_b,\mu_b) \in \PP$ be
a pair of distinct points.
There exists $g \in G$ such that
$g(a) \in B_i$ for some $i \in \{1,\ldots,n\}$ and
$d_+(g(\lambda_a), g(\lambda_b)) + d_-(g(\mu_a),g(\mu_b)) > \epsilon$.
\end{defn}

\begin{LEM}
    The Expansive Property implies that $\phi$ is an Anosov flow.
\end{LEM}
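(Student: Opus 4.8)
The plan is to show that the Expansive Property, together with the standing Compactness assumption, lets us invoke the known dichotomy for expansive flows on closed $3$-manifolds (as developed in \cite{inaba1990nonsingular}, \cite{paternain1993expansive}, and in the pseudo-Anosov form of \cite{BFP23}), and then use the transversality of $\Ea, \Eb$ to rule out the degenerate cases, leaving only the Anosov conclusion. First I would record that, since the Compactness property holds, $M = N/G$ is a closed $3$-manifold carrying the flow $\phi$ with the pair of transverse foliations $\Ea,\Eb$ whose intersection is the orbit foliation; this is already established in Section \ref{subsec:firstchar}. The orbit space of $\w\phi$ is exactly $\PP$, and the $\pi_1$-action is the given $G$-action.

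Next I would translate the Expansive Property into expansiveness of $\phi$ in the sense of \cite[Definition 5.7]{BFP23}. The key step is the following: two orbits of $\w\phi$ that stay within bounded Hausdorff distance forever (in both time directions) correspond to two points $a=(\lambda_a,\mu_a)$, $b=(\lambda_b,\mu_b)$ of $\PP$ whose $G$-orbits remain within a fixed small product-chart distance for all time. Using the flow boxes $B_1,\dots,B_n$ covering $N$ and the metrics $d_+,d_-$ on the leaf spaces $L(\Fa), L(\Fb)$, one checks that two orbits $\epsilon_0$-fellow-travel (for suitable $\epsilon_0$ depending on the $B_i$ and the chosen $\epsilon$ from the Expansive Property) precisely when $d_+(g(\lambda_a),g(\lambda_b)) + d_-(g(\mu_a),g(\mu_b)) \le \epsilon$ for every $g\in G$ carrying the relevant piece into some $B_i$. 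Hence the Expansive Property says exactly that no such pair of distinct points exists, i.e. $\phi$ has no two distinct $\epsilon_0$-fellow-traveling orbits, which is the definition of an expansive (pseudo-Anosov-type) flow. Since $\phi$ is fixed-point free (its orbit space is the plane $\PP$ and all flow lines are lines $r_x$), it is an expansive flow without singular orbits.

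Then I would apply the structural results for expansive flows: by \cite{inaba1990nonsingular} and \cite{paternain1993expansive}, a non-singular expansive flow on a closed $3$-manifold is (orbit equivalent to) an Anosov flow; alternatively, in the framework of \cite{BFP23}, an expansive flow with the transverse bifoliation $\Ea,\Eb$ already present is a (topological) pseudo-Anosov flow, and the absence of prong singularities — which we have, since $\PP$ has no singular leaves by the standing hypothesis of Section \ref{sec:char_anosov} — upgrades it to a topological Anosov flow. Combined with Shannon's theorem (Theorem \ref{Shannon}) in the transitive case, or directly with the cited expansiveness results, this yields that $\phi$ is an Anosov flow in the required sense. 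I would also note that one must check the stable/unstable foliations produced by expansiveness agree with $\Ea,\Eb$; this follows from the earlier Proposition identifying $\Ea$ as the stable and $\Eb$ as the unstable foliation once $\phi$ is known to be Anosov, by contraction/expansion along periodic orbits.

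The main obstacle I anticipate is the bookkeeping in the second step: one has to carefully match up ``bounded Hausdorff distance between orbits in $\w M$'' with ``bounded $d_+ \!+\! d_-$ distance between the corresponding plane points, measured after translating into a fixed finite family of flow boxes.'' This requires a uniform-continuity/compactness argument on the finitely many flow boxes $B_i$ — relating the ambient metric $d$ on $N$ to the product-chart coordinates and to the leaf-space metrics $d_+, d_-$ — so that the two notions of ``$\epsilon$-close'' are comparable up to fixed multiplicative and additive constants. Once that comparison is in hand, the logical structure (Expansive Property $\Rightarrow$ no fellow-traveling orbits $\Rightarrow$ expansive flow $\Rightarrow$ Anosov) is immediate from the cited literature, so I expect the proof to be short modulo that translation lemma.
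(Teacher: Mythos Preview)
Your proposal is correct and follows essentially the same route as the paper: translate the Expansive Property into expansiveness of $\phi$ in the sense of \cite[Definition 5.7]{BFP23} via a uniform-continuity/compactness comparison on the finitely many flow boxes, then invoke \cite[Theorem 5.9]{BFP23} (i.e.\ \cite{inaba1990nonsingular}, \cite{paternain1993expansive}) together with the non-singularity of $\Ea,\Eb$. The paper's proof adds one concrete ingredient you should make explicit in your ``bookkeeping'' step: to lift fellow-traveling orbits in $M$ to fellow-traveling orbits in $N$, one takes $k$ smaller than a third of the systole of $M$, so that a $k$-close pair in $M$ lifts to a $k$-close pair in $N$ for all time; without this you cannot pass from metric closeness in $M$ to the leaf-space estimate in $\PP$.
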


\begin{proof}
    For any $C>0$, we shall prove that there is $k>0$ such that: for any distinct $x, y\in M$, let $\alpha, \beta$ be flow lines starting at $x$ and $y$, (i.e. $\alpha, \beta: \mathbb{R}\rightarrow M$ are flow lines such that $\alpha(0)=x$, $\beta(0)=y$), such that if there is some increasing homomorphism $h: \mathbb{R}\rightarrow\mathbb{R}$ such that $d(\alpha(t), \beta(h(t)))<k$ for all $t$ then $y\in\alpha([-C, C])$. 

    Let $K>0$ be a positive number smaller than the systole of $M$. Now we show that for any $\epsilon>0$, there is some $k_2 > 0$ such that, for any $i$, any $c = (\lambda_c, \mu_c) \in \Ia_i \times \Ib_i$ and any
    $d \in \PP$ with $d_+(\lambda_c,\lambda_d) + d_-(\mu_c,\mu_d) > \epsilon$,
    we have $d(\rho_c(t),\rho_d) > k_2$ for some $t \in \Ir_i$. Here $\rho_d$ is the flow line passing through $d$ as a set. To show this,
    for any $c \in \Ia_i \times \Ib_i$,
    let
    \[V_\epsilon(c) = 
    \{p \in \PP \mid d_+(\lambda_c,\lambda_p) + d_-(\mu_c,\mu_p) \leqslant \epsilon\}\]
    then there is $v_c > 0$ such that,
    if $d(\rho_c(t),\rho_p) \leqslant v_c$ for all $t \in \Ir_i$, 
    then $p \in V_\epsilon(c)$.
    As $\bigcup_{i=1}^{n} (\Ia_i \times \Ib_i)$ is a compact set,
    there is $k_2 > 0$ such that $k_2 < v_c$ for all $c \in \Ia_i \times \Ib_i$.

    Now we let $k = \min\{K/3,k_2\}$.
    Let $x, y \in M$ such that the flow lines $\alpha$, $\beta$ passing through them have $d(\alpha(t), \beta(h(t)))<k$ for all $t\in\mathbb{R}$, then the paths can be lifted to 
    $\w{\alpha}, \w{\beta}: \R \to N$ such that
    $d(\w{\alpha}(0), \w{\beta}(0) < k$. We shall show that $d(\w{\alpha}(t), \w{\beta}(h(t)) < k$ for all $t$. If otherwise,
    then there must be some minimum $t_1\not=0$ with $d(\w{\alpha}(t_1), \w{\beta}(h(t_1))) = k$. But we have $d(\alpha(t_1),\beta(t_1)) < k$, so there must be a non trivial loop in $M$ of length no more than $2k<K$ which is impossible.

    Now apply the Expansive condition, we know that
    if $\w{\alpha}$ and $\w{\beta}$ correspond to distinct points $a, b\in\PP$, there is $g \in G$ such that $g(a) \in B_i$ for some $i \in \{1,\ldots,n\}$ and
    $d_+(g(\lambda_a), g(\lambda_b)) + d_-(g(\mu_a),g(\mu_b)) > \epsilon$.
    This implies that $d(g(\rho_a(t)), g(\rho_b)) > k$ for 
    some $t \in r_a$, which contradicts the above conclusion. Hence $\w{\alpha} = \w{\beta}$.
    Pick a smaller $k$ if necessary,
    we can ensure that 
    $y \in \alpha([-C,C])$.
    Therefore,
    $\phi$ satisfies the conditions of expansive flow in \cite[Definition 5.7]{BFP23}.
    As $\phi$ has non-singular invariant foliations,
    $\phi$ is an Anosov flow by \cite[Theorem 5.9]{BFP23}, which is based on \cite[Theorem 1.5]{inaba1990nonsingular} or \cite[Lemma 7]{paternain1993expansive}.
\end{proof}

\begin{LEM}
    Anosov flows satisfy the Expansive property.
\end{LEM}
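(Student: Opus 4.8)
The plan is to run the argument contrastively: assume the Expansive Property fails and derive a contradiction with the expansiveness of the Anosov flow $\phi$ (which follows from the smooth structure, or from the well-known fact that Anosov flows are expansive in the sense of \cite[Definition 5.7]{BFP23}). So suppose that for every $\epsilon>0$ there is a pair of distinct points $a=(\lambda_a,\mu_a)$, $b=(\lambda_b,\mu_b)\in\PP$ such that for all $g\in G$ with $g(a)$ lying in some $B_i$ we have $d_+(g(\lambda_a),g(\lambda_b))+d_-(g(\mu_a),g(\mu_b))\le\epsilon$. Taking $\epsilon=1/m$ produces a sequence of such bad pairs $a_m,b_m$. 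By the Compactness Property I can translate each $a_m$ by some $g_m\in G$ into the fixed compact set $\bigcup_i(\Ia_i\times\Ib_i)$, so after replacing $a_m,b_m$ by $g_ma_m,g_mb_m$ I may assume $a_m\in\bigcup_i(\Ia_i\times\Ib_i)$; passing to a subsequence, $a_m\to a_\infty$.

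The key step is to promote the smallness of $d_+(g(\lambda_{a_m}),g(\lambda_{b_m}))+d_-(g(\mu_{a_m}),g(\mu_{b_m}))$ along the \emph{whole} orbit into a statement that the two flow lines $\rho_{a_m}$ and $\rho_{b_m}$ shadow each other forever in $M$. Concretely: the hypothesis says that whenever a $G$-translate of $a_m$ enters a flow box, the corresponding translates of the two leaves through it are $1/m$-close in the leaf-space metrics; by the local product structure of a flow box (Definition \ref{flow box}) this forces the two flow lines to be within some $\delta(1/m)\to 0$ of each other \emph{at that point of $M$}. Since, by the Compactness Property, every point of every flow line of $\w\phi$ has a $G$-translate in some $B_i$, this gives: in $M$, the projected flow lines $\bar\alpha_m,\bar\beta_m$ through the images of $a_m,b_m$ satisfy $d(\bar\alpha_m(t),\bar\beta_m(\mathbb R))<\delta(1/m)$ for all $t$, hence (reparametrizing, as in the proof of the previous lemma, using that $\delta(1/m)$ is below the systole of $M$) there is an increasing homeomorphism $h_m\colon\mathbb R\to\mathbb R$ with $d(\bar\alpha_m(t),\bar\beta_m(h_m(t)))<\delta(1/m)$ for all $t$.

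Now invoke expansiveness of the Anosov flow $\phi$: for the constant $\delta(1/m)$ small enough (which holds once $m$ is large), the shadowing inequality $d(\bar\alpha_m(t),\bar\beta_m(h_m(t)))<\delta(1/m)$ forces $\bar\beta_m(0)$ to lie on the flow line of $\bar\alpha_m(0)$ within a bounded time window; lifting back to $N$ and then to $\PP$, this means $a_m$ and $b_m$ lie on a common orbit of $\w\phi$ and in fact, after the time-shift is absorbed, $a_m=b_m$ in $\PP$ (here I use that distinct points of $\PP$ correspond to distinct orbits of $\w\phi$, so a point of $\PP$ cannot ``flow onto'' another). This contradicts $a_m\ne b_m$. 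The main obstacle I anticipate is the careful bookkeeping in the middle step — turning the leaf-space closeness at countably many flow-box visits into a genuine uniform $C^0$-shadowing of the two flow lines over all of $\mathbb R$, and keeping the reparametrization $h_m$ controlled so that the systole argument (as in the preceding lemma) applies verbatim. Everything else is either the Compactness Property, compactness of $\bigcup_i(\Ia_i\times\Ib_i)$, or a direct appeal to expansiveness of Anosov flows.
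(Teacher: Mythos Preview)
Your approach differs from the paper's, which gives a direct argument: for distinct $a,b\in\PP$ the divergence axioms of a topological Anosov flow (Definition~\ref{topological Anosov flow}(c),(d)) produce a time $t$ with $d(\rho_a(t),\rho_b)>\delta$; translating $\rho_a(t)$ into a flow box and using compactness of the finitely many boxes converts this ambient separation into the required leaf-space separation. No contradiction argument and no appeal to expansiveness are needed.

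Your contradiction route has a genuine gap at the final step. Expansiveness of $\phi$ is a statement about orbits in $M$: from the shadowing $d(\bar\alpha_m(t),\bar\beta_m(h_m(t)))<\delta(1/m)$ you conclude only that the \emph{projected} orbits agree, i.e.\ $b_m=g_m(a_m)$ for some $g_m\in G$. Your sentence ``lifting back to $N$\ldots\ $a_m$ and $b_m$ lie on a common orbit of $\w\phi$'' is exactly where this breaks --- what you actually obtain is $\rho_{b_m}=g_m(\rho_{a_m})$, and ruling out $g_m\ne 1$ is not covered by the parenthetical you give (distinct points of $\PP$ corresponding to distinct $\w\phi$-orbits says nothing about the deck-transformation ambiguity). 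The gap \emph{can} be closed: expansiveness gives $\bar\beta_m(h_m(0))=\bar\alpha_m(\tau)$ with $|\tau|$ small (the expansiveness window shrinks with $\delta$), so $\rho_{b_m}(h_m(0))$ and $\rho_{a_m}(\tau)$ are two lifts of the same point of $M$, both within the systole of $\rho_{a_m}(0)$, hence equal --- forcing $\rho_{a_m}=\rho_{b_m}$. But you invoke the systole only for the reparametrization step and treat the lifting as immediate. Incidentally, the obstacle you \emph{do} flag (turning flow-box-by-flow-box leaf closeness into a uniform $C^0$-shadowing) is the routine compactness part; the lifting from $M$ back to $N$ is where the real care is needed.
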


\begin{proof}
By Proposition 6.2, an Anosov flow
$\phi$ is also a topological Anosov flow. Compactness of $M$ implies that it can be covered by finitely many flow boxes, lift them to $N$ we get the $B_i$s. 

Let $a=(\lambda_a, \mu_a), b=(\lambda_b, \mu_b)\in\PP$ be a pair of distinct points. By 
Definition \ref{topological Anosov flow} (c), (d), we see that there is some $t \in r_a$ such that
$d(\rho_a(t),\rho_b) > \delta$.

There is $g \in G$ such that
$g(\rho_a(t)) \in B_i$ for some $B_i$, because $\{g(Int(B_i)) \mid g \in G, i \in \{1,\ldots,n\}\}$ covers $N$.

Furthermore, compactness implies that for any $K>0$, there is some $\delta>0$, such that any two points $u, v\in N$ in the same leaf $\lambda$ of $\Ea$, if $d(u,v) > \delta$, then $d_\lambda(u,v) > K$. Similarly for $\Eb$. Combining with this fact, we complete the proof.
\end{proof}







\section{The construction when the bifoliated plane has even-index singularities} \label{sec:frombifoliatedplane}

In this section,
we aim to reconstruct a $3$-manifold $M$ with a reduced pseudo-Anosov flow $\phi$ from
a flowable action on the bifoliated plane.

Let $G$ be a flowable action on a bifoliated plane $(\PP, \Fa, \Fb)$ such that $\Fa$ is orientable and the action of $G$ on $\PP$ preserves the orientations on $\Fa$. Note that each orientation on $\Fa$ induces a transverse orientation on it and therefore induces an orientation on $\Fb$. Hence $\Fb$ is also orientable, and the action of $G$ on $\PP$ also preserves the orientations on $\Fb$. We note that $\Fa, \Fb$ only have singularities with even number of prongs as they are orientable. We fix an orientation on $\Fa$ and an orientation on $\Fb$.

For any point $x \in \PP$,
we still denote by $\lambda_x$ the leaf of $\mathcal{F}^{+}$ containing $x$ and
denote by $\mu_x$ the leaf of $\mathcal{F}^{-}$ containing $x$.

\begin{conv}\rm\label{conv: leaves and segments}
(a)
Let $\lambda$ be a singular leaf of $\Fa$,
and let $s$ denote the singularity on $\lambda$.
Let $r$ be a component of $\lambda - \{s\}$.
We call $\overline{r}$ a \emph{positive half-leaf} if $r$ is in the positive side of $s$,
and call $\overline{r}$ a \emph{negative half-leaf} if $r$ is in the negative side of $s$.
Note that there exists $m \in \N$ with $n \geqslant 2$ such that
$\lambda$ has $2m$ half-leaves,
$m$ of them are positive and the other $m$ are negative.

(b)
Let $\lambda$ be a leaf of $\Fa$.
For any distinct points $x,y \in \lambda$,
we denote by $[x,y]_\lambda$ the set of points in $\lambda$ that separates $x,y$
(where $x,y \in [x,y]_\lambda$).
Note that $[x,y]_\lambda$ is always a path in $\lambda$ between $x, y$,
and thus we can assign it an orientation which goes from $x$ to $y$.
Similarly,
for a leaf $\mu$ of $\Fb$ and distinct points $u,v \in \mu$,
we denote by $[u,v]_\mu$ the set of points in $\mu$ that separate $u,v$.
\end{conv}

Let $\lambda$ be a singular leaf of $\Fa$,
and let $s$ denote the singularity on $\lambda$.
Let $p_1, q_1, p_2, q_2, \ldots, p_m, q_m$ denote all of the half-leaves of $\lambda$,
up to the clockwise sequence.
Let $\langle g \rangle$ denote the stabilizer of $\lambda$,
which is also the stabilizer of $s$.
Then $\langle g \rangle$ acts on $\{p_1, q_1, p_2, q_2, \ldots, p_m, q_m\}$ by
rotation.
Let $d \in \mathbb{N}$ be the minimum positive number such that
$g^{d}(p_i) = p_i$, $g^{d}(q_j) = q_j$.
Note that $d \mid m$.
Let $n \in \Z \mod m$ such that
$g(p_i) = p_{i+n}$.

By Property (A1),
either of the following two cases holds:
(1)
$g$ topologically expands all $p_i$ and topologically contracts all $q_j$,
(2) 
$g$ topologically contracts all $p_i$ and topologically expands all $q_j$.
Without loss of generality,
we may assume that (1) holds.

\begin{LEM}[{\cite{TZZ}}]\label{equivalence class}
There is a map $h_\lambda: \lambda \to \R$ such that

(i)
$h_\lambda(0) = 0$.

(ii)
The restriction of $h_\lambda$ to each $p_i$ is 
an orientation-preserving homeomorphism to $[0,+\infty)$,
and the restriction of $h_\lambda$ to each $q_i$ is 
an orientation-preserving homeomorphism to $(-\infty,0]$.

(iii)
If $h_\lambda(x) = h_\lambda(y)$ for $x,y \in \lambda$,
then $h_\lambda(g(x)) = h_\lambda(g(y))$.
\end{LEM}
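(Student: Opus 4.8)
The plan is to build $h_\lambda$ by first constructing it on a single positive half-leaf using the standard one-dimensional dynamics, then transporting it around $\lambda$ by the rotation action of $\langle g\rangle$, and finally checking that the three required properties survive the transport. Throughout I will work with the combinatorics of the half-leaves $p_1,q_1,\dots,p_m,q_m$ and the integers $d\mid m$ and $n$ already fixed before the statement.

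First I would handle the positive half-leaf $p_1$. Since $g$ topologically expands each $p_i$ and $g^d$ is the minimal power fixing $p_1$ (as a set), $g^d$ restricts to $p_1$ as an orientation-preserving homeomorphism fixing only the endpoint $s$ (the fixed point on $\lambda$) and moving every other point strictly outward. Parametrize $p_1\cong[0,\infty)$ with $0\leftrightarrow s$; then $g^d|_{p_1}$ is conjugate to an expanding homeomorphism of $[0,\infty)$ fixing $0$. A standard fundamental-domain construction (pick $x_0\in p_1$, declare $h_\lambda(x_0)=1$, extend to the fundamental domain $[x_0,g^d x_0]$ by any orientation-preserving homeomorphism onto $[1,\mu]$ where $\mu=\mu(g^d)>1$ is a chosen ``multiplier'', then extend equivariantly by $h_\lambda(g^{d}x)=\mu\,h_\lambda(x)$) gives an orientation-preserving homeomorphism $h_\lambda\colon p_1\to[0,\infty)$ with $h_\lambda(s)=0$ and $h_\lambda(g^d x)=\mu\,h_\lambda(x)$. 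This is exactly the classical model from \cite{TZZ}; I would simply cite it.

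Next I would spread this around $\lambda$. The half-leaves are permuted by $g$ via $p_i\mapsto p_{i+n}$ (indices mod $m$), and the orbit of $p_1$ under $\langle g\rangle$ consists of the $d$ half-leaves $p_1,p_{1+n},p_{1+2n},\dots$; choose one representative $p_{i}$ from each of the (possibly several, if $d<m$) $\langle g\rangle$-orbits of positive half-leaves and run the $p_1$-construction on each, with the \emph{same} multiplier $\mu$. For a negative half-leaf $q_j$: note $g^d$ also fixes $q_j$ and \emph{contracts} it, so $g^{-d}$ expands it; run the same construction using $g^{-d}$, landing the homeomorphism in $(-\infty,0]$ with $h_\lambda(g^{-d}x)=\mu\,h_\lambda(x)$, i.e. $h_\lambda(g^d x)=\mu^{-1}h_\lambda(x)$ on negative half-leaves — but we want a single clean relation, so instead I would define $h_\lambda$ on a chosen negative representative $q_j$ as an orientation-preserving homeomorphism onto $(-\infty,0]$ satisfying $h_\lambda(g^d x)=\mu^{-1}h_\lambda(x)$ directly (again a fundamental-domain argument for the expanding map $g^{-d}$). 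Having fixed $h_\lambda$ on one representative per $\langle g\rangle$-orbit of half-leaves, define it on the remaining half-leaves by $h_\lambda(g^k x):=h_\lambda(x)$ whenever $g^k$ carries a chosen representative to that half-leaf — this is well-defined precisely because the stabilizer of each representative half-leaf inside $\langle g\rangle$ is $\langle g^d\rangle$, and on the representative $h_\lambda\circ g^d$ has been arranged to equal $\mu^{\pm1}h_\lambda$ while $h_\lambda\circ g^d$ applied via the gluing would demand $h_\lambda$ — so I must instead define the transport rule as $h_\lambda(g^k x):=\mu^{\lfloor \text{(sign)}\cdot k/d\rfloor}\cdot(\text{stuff})$. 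This bookkeeping is the one genuinely fiddly point.

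Finally, the verification. Property (i), $h_\lambda(0)=0$: immediate, since $s$ (the point I am calling $0$) is the common endpoint of all half-leaves and every piece sends it to $0$. Property (ii), orientation-preserving homeomorphism of each $p_i$ onto $[0,\infty)$ and each $q_j$ onto $(-\infty,0]$: immediate on representatives by construction, and preserved under the transport because $g$ preserves the leafwise orientation (this is where the orientation hypothesis on $\Fa$, standing in this section, is used) and the multipliers were chosen positive. Property (iii), $h_\lambda(x)=h_\lambda(y)\Rightarrow h_\lambda(gx)=h_\lambda(gy)$: this is the crux. If $x,y$ lie on half-leaves in the same $\langle g\rangle$-orbit, write $y=g^{k}x'$ with $x'$ on $x$'s half-leaf; equality of $h_\lambda$ values together with the uniform-multiplier normalization forces $h_\lambda(y)=\mu^{(\text{sign})k/d}h_\lambda(x)$ to be consistent only in a way that is manifestly $g$-equivariant — applying $g$ multiplies both sides by the same power of $\mu^{1/d}$ (interpreting via the integer-power relation, not literally fractional). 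If $x,y$ are on half-leaves in \emph{different} $\langle g\rangle$-orbits, then since $h_\lambda$ is a bijection onto $[0,\infty)$ (resp. $(-\infty,0]$) on each half-leaf, $h_\lambda(x)=h_\lambda(y)$ with $x\ne y$ can only happen at the value $0$, i.e. $x=y=s$, trivial; or at a common positive value, in which case both sides transform the same way under $g$ by the representative normalization. I expect the main obstacle to be exactly this equivariance bookkeeping — making the transport rule on half-leaves unambiguous and checking that the single multiplier $\mu$ used across all positive half-leaves (and $\mu^{-1}$-type relation on negative ones) is genuinely compatible with $g$ cycling among them with period $d$; once the normalization ``$h_\lambda\circ g^d=\mu^{\pm1}\,h_\lambda$ on each half-leaf, same $\mu$'' is in place, (iii) follows by the case analysis above. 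Since \cite{TZZ} is cited for the statement, I would in the write-up lean on their construction for the single-half-leaf model and only spell out the equivariant assembly.
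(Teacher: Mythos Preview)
Your overall strategy—build $h_\lambda$ on one half-leaf via a fundamental-domain/multiplier construction, then transport by the $\langle g\rangle$-action—is sound and is morally what the paper does, but the transport step, which you yourself flag as ``the one genuinely fiddly point,'' is never actually carried out. The naive rule $h_\lambda(g^kx):=h_\lambda(x)$ fails (as you note) because it conflicts with $h_\lambda\circ g^d=\mu\,h_\lambda$ on $p_1$; your proposed patch $h_\lambda(g^kx):=\mu^{\lfloor(\text{sign})k/d\rfloor}\cdot(\text{stuff})$ is not a definition, and the floor function does not help: for $0\le k<d$ it returns the naive rule. A rule that would work is $h_\lambda(g^kx):=\mu^{k/d}h_\lambda(x)$, which you gesture at in the verification (``multiplies both sides by the same power of $\mu^{1/d}$'') but then disavow (``not literally fractional''). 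Without a precise rule your check of (iii) breaks concretely: take $x\in p_1$ and $y=g^{d-1}(x)$; under the naive rule $h_\lambda(y)=h_\lambda(x)$, but $g(y)=g^d(x)\in p_1$ has $h_\lambda(g(y))=\mu\, h_\lambda(x)$ while $h_\lambda(g(x))=h_\lambda(x)$, so (iii) fails.

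The paper sidesteps the fractional-power awkwardness entirely. It subdivides the fundamental domain $[t_1,g^d(t_1)]\subset p_1$ into $d$ subintervals $J_0,\dots,J_{d-1}$ (extended by $J_{kd+s}:=g^{kd}(J_s)$ for $k\in\Z$), picks \emph{any} orientation-preserving homeomorphism $h_\lambda|_{p_1}\colon p_1\to[0,\infty)$, and defines on the rest of the $g$-orbit of $p_1$ by the index-shift rule
\[
h_\lambda\bigl(g^k(J_i(t))\bigr):=h_\lambda\bigl(J_{i+k}(t)\bigr).
\]
This is automatically compatible with $g^d$ (shifting indices by $d$ agrees with $h_\lambda|_{p_1}\circ g^d$ by the definition of $J_{i+d}$), and (iii) is then a one-line check: if $h_\lambda(g^a(J_i(t)))=h_\lambda(g^b(J_j(s)))$, injectivity of $h_\lambda|_{p_1}$ gives $J_{i+a}(t)=J_{j+b}(s)$, hence $J_{i+a+1}(t)=J_{j+b+1}(s)$ and the values after applying $g$ agree. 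Your multiplier approach can be made to work once you commit to the $\mu^{k/d}$ scaling (and use the same $\mu$ on every orbit of half-leaves, positive and negative), but the subdivision trick is cleaner and requires no analytic choice beyond the initial homeomorphism on $p_1$.
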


\begin{proof}
We choose an arbitrary $t_1 \in p_1$,
and we choose $t_1 = x_0 < x_1 < x_2 < \ldots < x_{d-1} < x_d = g^{d}(t_1)$.
Let $J_i = [x_i, x_{i+1}]_\lambda$. 
We can parameterize each $J_i$ by $J_i(t)$ ($t \in I$),
where the increasing orientation on $t \in I$ is consistent with the orientation on 
$[x_i, x_{i+1}]_\lambda$.
For convenience, 
we may consider $J_{kd+s} = g^{kd}(J_s)$ for all $k \in \Z$.

Let $h^{+}_{1}: \bigcup_{i=0}^{d-1} p_{1+in} \to [0,+\infty)$ be the map such that
$h^{+}_{1}: p_1 \to [0,+\infty)$ is an an arbitrary orientation-preserving homeomorphism,
and $h^{+}_{1}(g^{k}(J_i(t))) = h_\lambda(J_{i+k}(t))$ for all $i,k \in \Z$.
Then we have $h^{+}_{1}(g(x)) = h^{+}_{1}(g(y))$ for all 
$x,y \in \bigcup_{i=0}^{d-1} p_{1+in}$.

We define $h^{+}_{k}: \bigcup_{i=0}^{d-1} p_{k+in} \to [0,+\infty)$ ($k = 2,\ldots,n-1$)
with the similar property.
Then we define 
$h^{-}_{k}: \bigcup_{i=0}^{d-1} q_{k+in} \to (-\infty,0]$ ($k = 1,\ldots,n-1$) similarly.
Let $h_\lambda: \lambda \to \R$ be the map such that
its restriction to $\bigcup_{i=0}^{d-1} p_{k+in}$ is $h^{+}_{k}$,
and its restriction to $\bigcup_{i=0}^{d-1} q_{k+in}$ is $h^{-}_{k}$.
\end{proof}

Guaranteed by the proof of Lemma \ref{equivalence class},
we can assume further that $h_\lambda$ has the following property:

\begin{assume}\label{further assumption}
    For any $x \in \lambda - \{s\}$,
    there is a neighborhood $V$ of $x$ in $\lambda$ such that
    $h_\lambda(V) \cap h_\lambda(\{g^{k}(x) \mid k \in \Z\})=\{h_\lambda(x)\}$
\end{assume}

We assign each singular leaf $\lambda$ of $\Fa$ a map $h_\lambda$ 
with properties given above,
and we may assume 

\begin{assume}\label{equivariant}
    $h_{f(\lambda)} = h_\lambda \circ f^{-1}$ for any singular leaf $\lambda$ of $\Fa$ and $f \in G$.
\end{assume}

For each singular leaf $\lambda$ of $\Fa$,
we denote by $\stackrel{\lambda}{\sim}$ the equivalence relation on $\lambda$ 
so that $u \stackrel{\lambda}{\sim} v$ if $h_\lambda(u) = h_\lambda(v)$.
For convenience,
for each non-singular leaf $\lambda$ of $\Fa$,
we also denote by $\stackrel{\lambda}{\sim}$ the trivial equivalence relation.
By Assumption \ref{equivariant},
for any leaf $\lambda$ of $\Fa$ and any $u,v \in \lambda$,
$u \stackrel{\lambda}{\sim} v$ if and only if
$f(u) \stackrel{f(\lambda)}{\sim} f(v)$.

\begin{defn}\rm
Let $x \in \PP$.
We define $R_x,r_x$ case by case:
\begin{itemize}
\item \textbf{Case I.}
$\lambda_x$ is a regular leaf.
Let $R_x$ be the component of $\lambda_x - \{x\}$ in the positive side of $x$,
and let $r_x = R_x$.
\item \textbf{Case II.}
$\lambda_x$ is a singular leaf,
$x$ is contained in a positive half-leaf of $\lambda_x$,
and $x$ is not the singularity of $\lambda_x$.
Let $R_x$ be the component of $\lambda_x - \{x\}$ in the positive side of $x$,
and let $r_x = R_x$.
\item \textbf{Case III.}
$\lambda_x$ is a singular leaf,
$x$ is contained in a negative half-leaf of $\lambda_x$,
and $x$ is not the singularity of $\lambda_x$.
Now let $s$ denote the singularity on $\lambda$,
and let $q$ denote the negative half-leaf of $\lambda_x$ that contains $x$.
Let $p^{+}, p^{-}$ be the two positive half-leaves of $\lambda_x$ that 
are adjacent to $q$,
such that $p^{+}$ is in the positive side of $q$ and
$p^{-}$ is in the negative side of $q$
(with respect to the clockwise orientation on $s$).
Define $r^{-}_{x} = ([x,s]_{\lambda_x} - \{x\}) \cup p^{-}$,
$r^{+}_{x} = ([x,s]_{\lambda_x} - \{x\}) \cup p^{+}$,
and $R_x = r^{-}_{x} \cup r^{+}_{x}$.
Define $r_x = R_x / \stackrel{\lambda_x}{\sim}$ endowed with the quotient topology.
\item\textbf{Case IV.}
$\lambda_x$ is a singular leaf and
$x$ is the singularity of $\lambda_x$.
Let $R_x$ be the union of positive half-leaves of $\lambda_x$,
and let $r_x = R_x / \stackrel{\lambda_x}{\sim}$ with the quotient topology.
\end{itemize}
\end{defn}

Be Lemma \ref{equivalence class},
$r_x$ is always homeomorphic to $(0,+\infty)$.

\subsection{The first construction}

\begin{figure}
	\centering
	\subfigure[]{
		\includegraphics[width=0.4\textwidth]{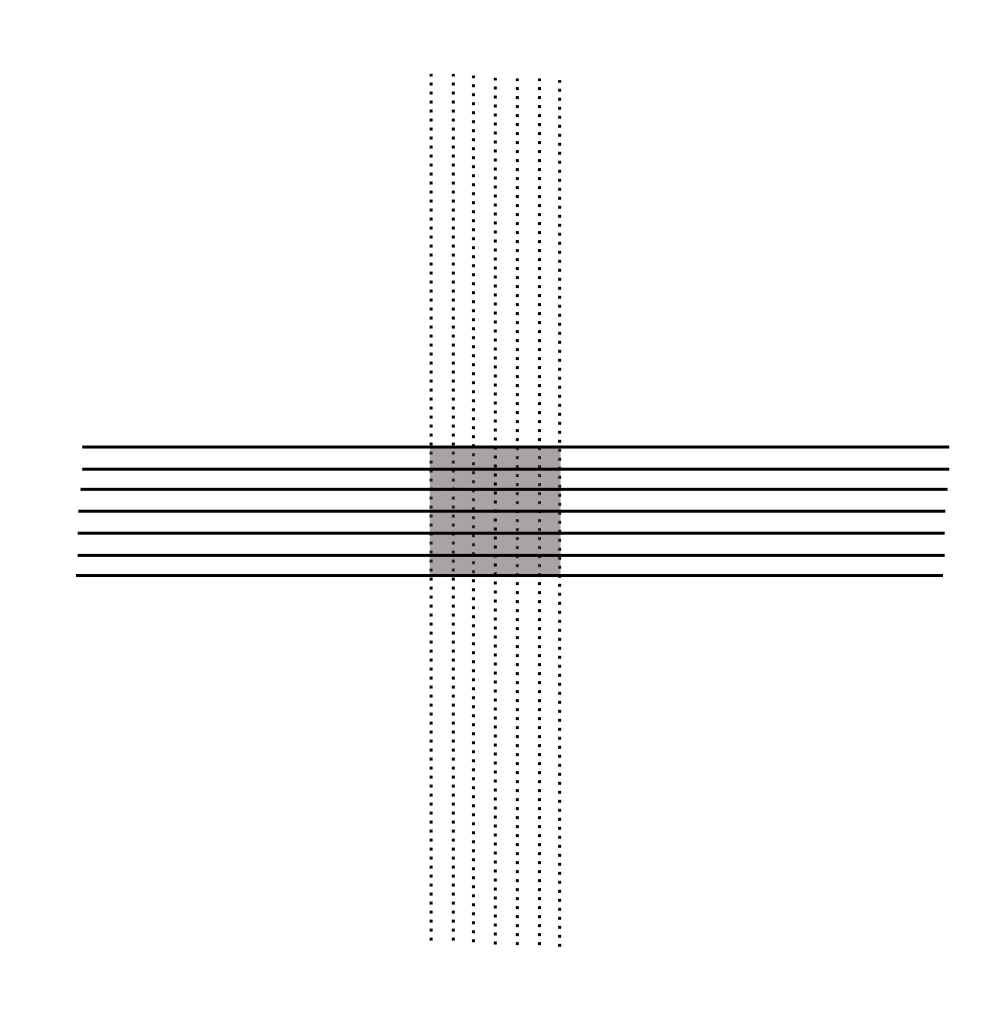}}
	\subfigure[]{
		\includegraphics[width=0.4\textwidth]{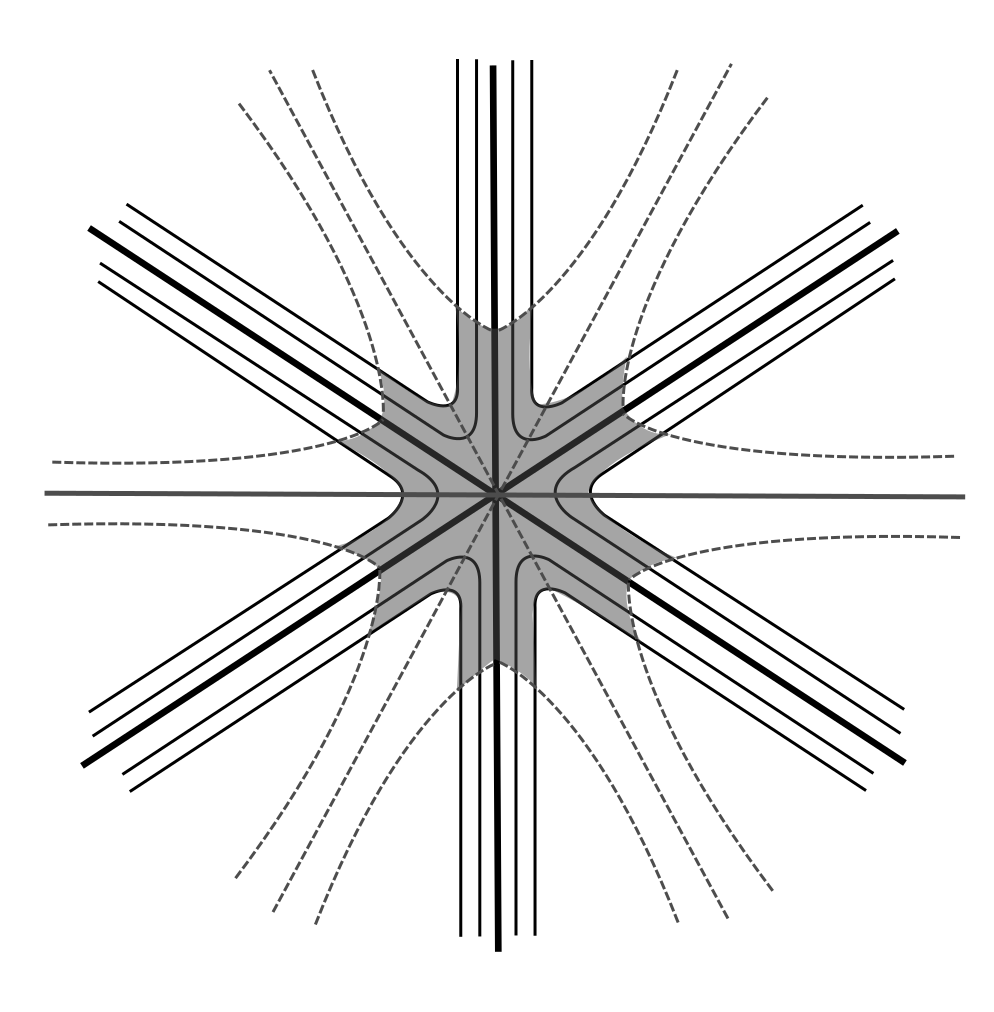}}
  \caption{(a) is the picture of a product rectangle, and (b) is the picture of a product polygon at a singularity of $\Fa$.}\label{product region}
  \end{figure}

A closed region $R$ of $\PP$ is called a \emph{product rectangle} if
there are two leaves $\lambda_1, \lambda_2$ of $\Fa$ and
two leaves $\mu_1, \mu_2$ of $\Fb$ such that
$\lambda_i \cap \mu_j \ne \emptyset$ for any $i,j \in \{1,2\}$),
and $R$ is the closed region bounded by $\lambda_1, \lambda_2,\mu_1,\mu_2$.
See Figure \ref{product region} (a).
We call $\lambda_1, \lambda_2$ the two \emph{$\Fa$-sides} of $R$ and 
call $\mu_1,\mu_2$ the two \emph{$\Fb$-sides} of $R$.

\begin{figure}
	\centering
	\subfigure[]{
		\includegraphics[width=0.3\textwidth]{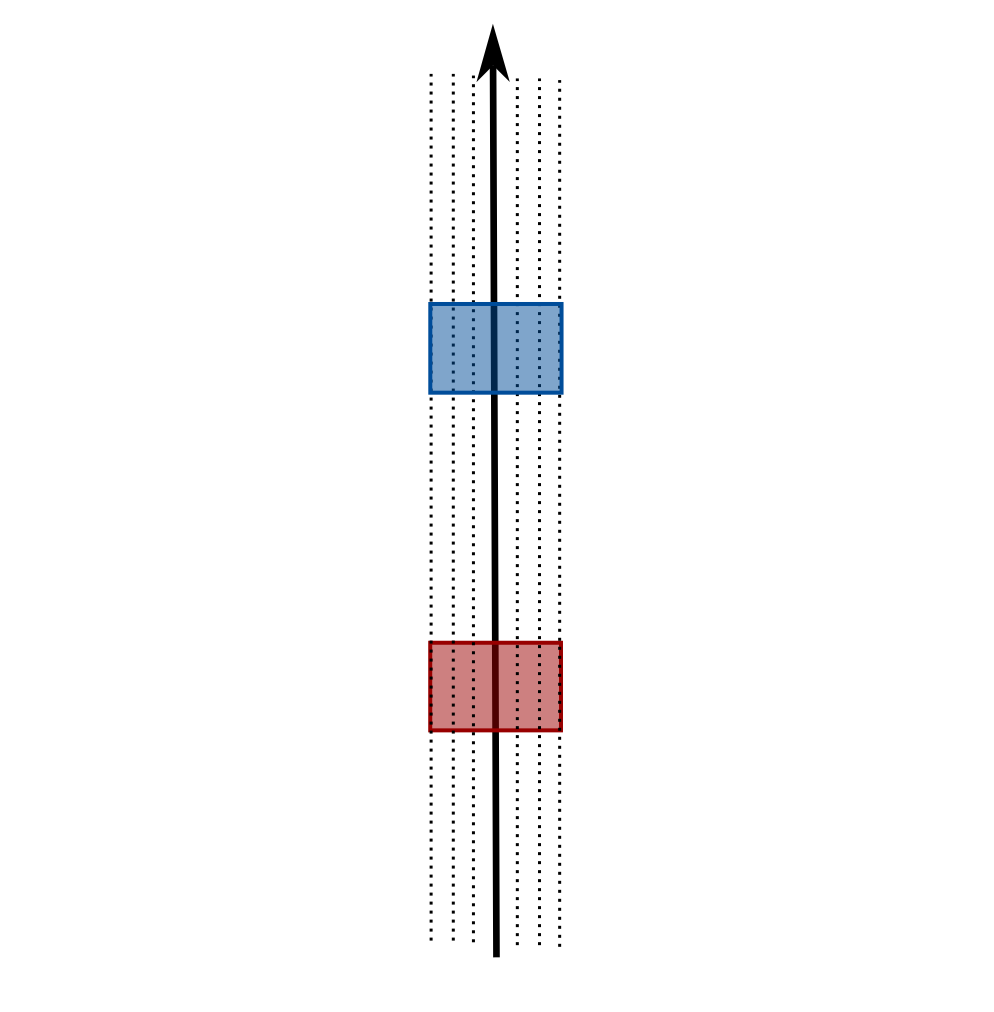}}
  \subfigure[]{
		\includegraphics[width=0.3\textwidth]{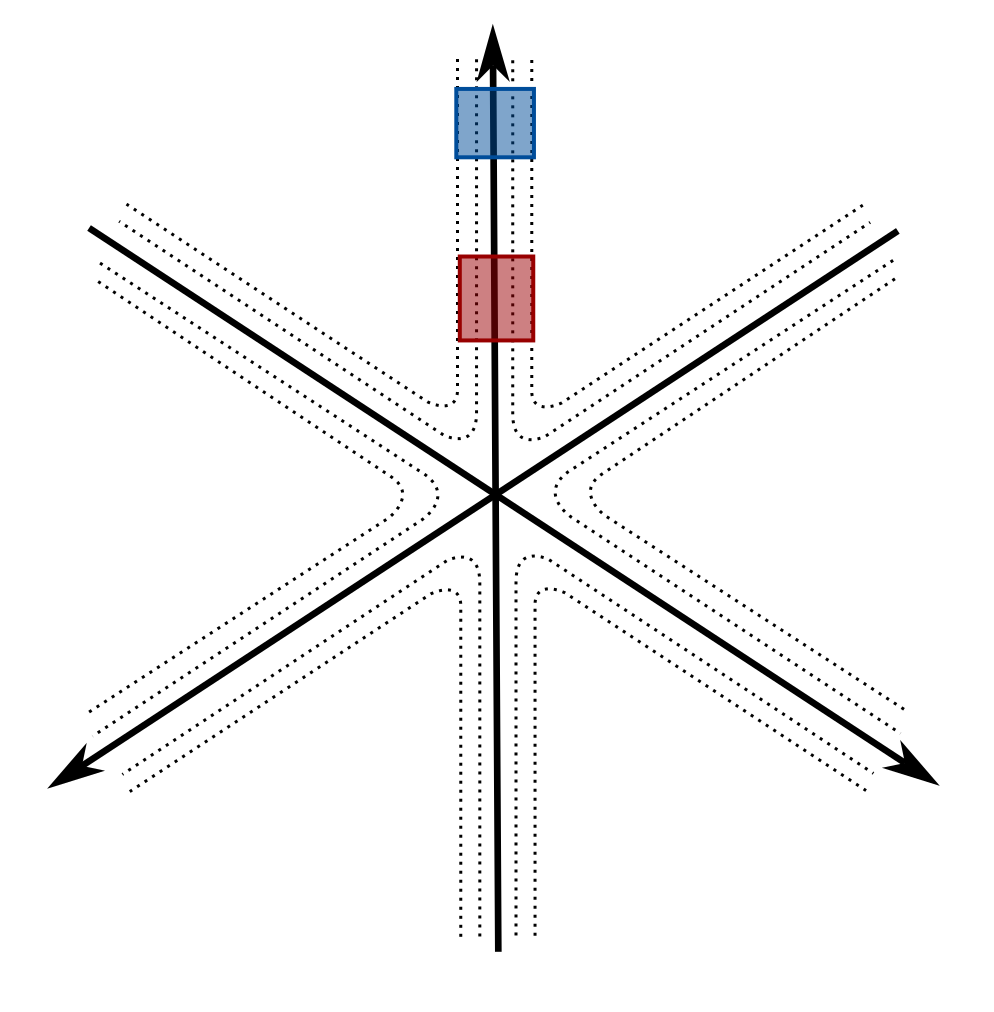}}
  \subfigure[]{
		\includegraphics[width=0.3\textwidth]{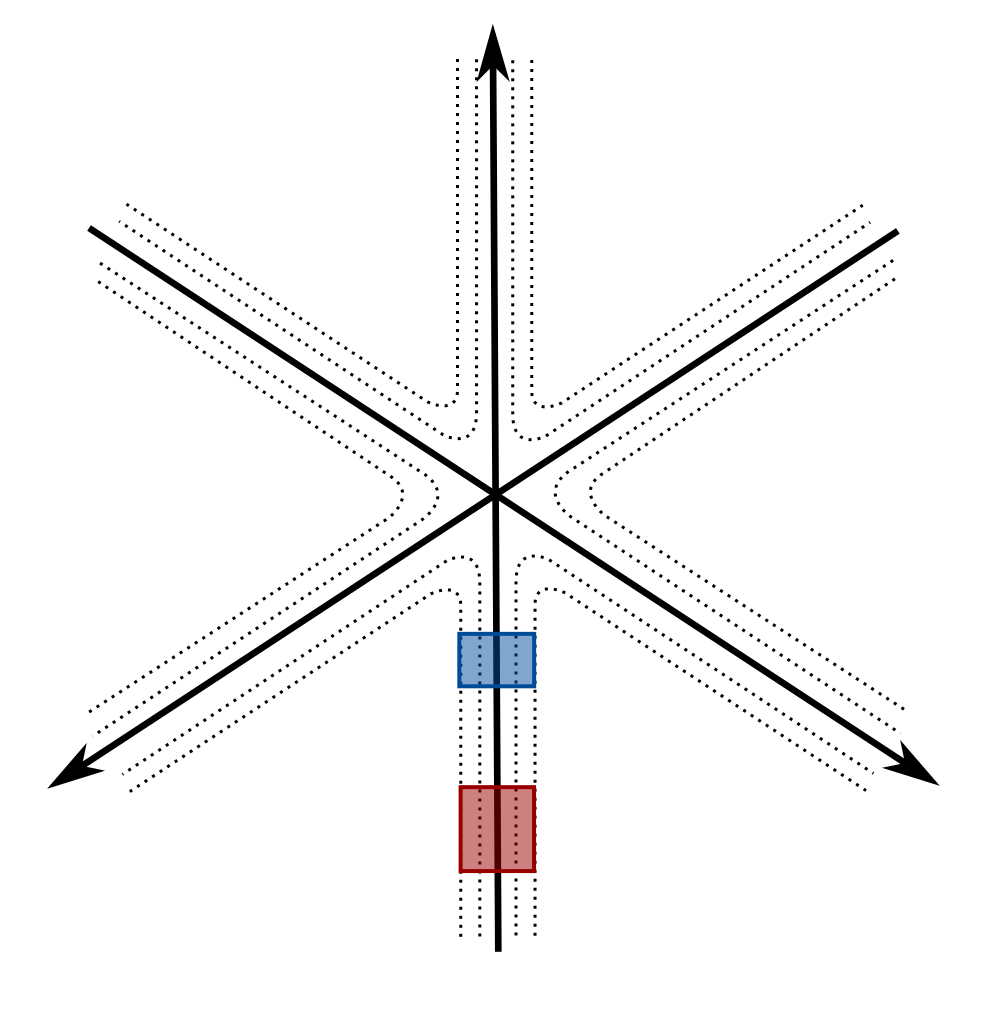}}
  \subfigure[]{
		\includegraphics[width=0.3\textwidth]{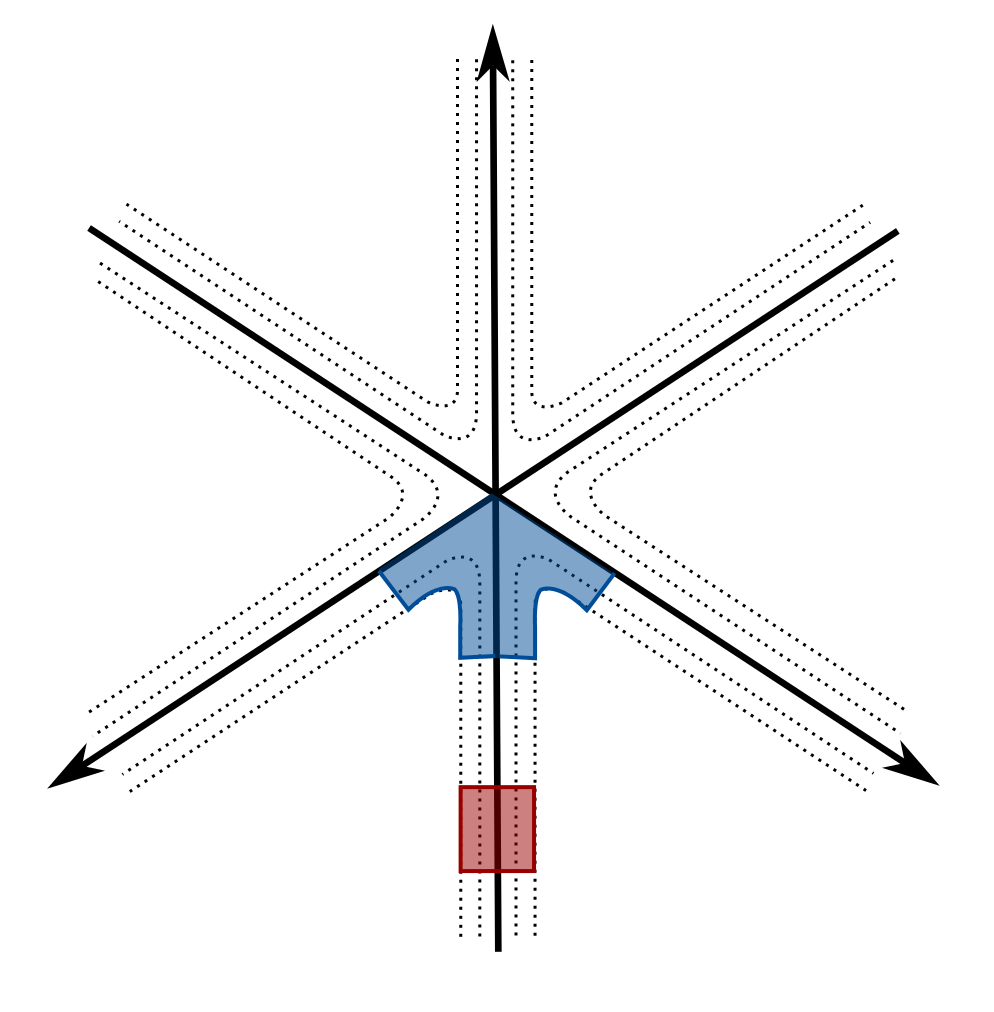}}
  \subfigure[]{
		\includegraphics[width=0.3\textwidth]{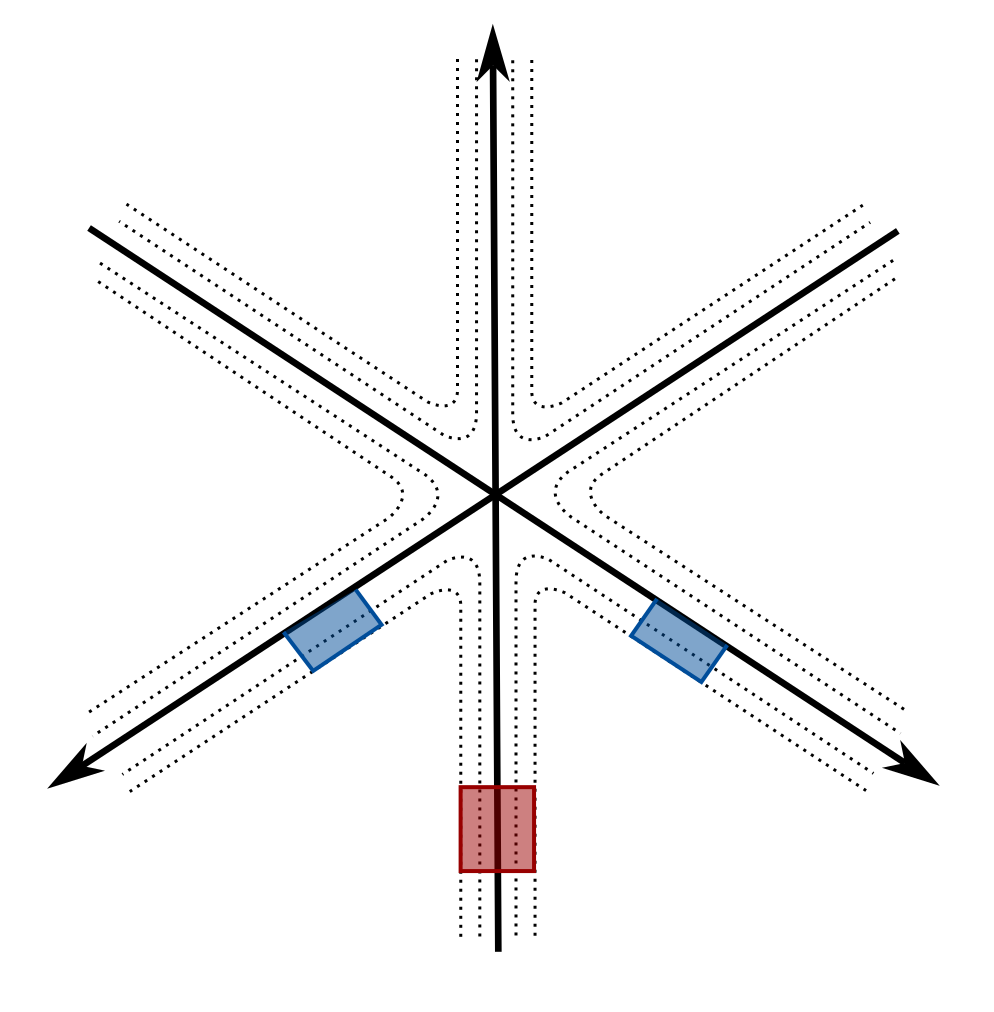}}
  \subfigure[]{
		\includegraphics[width=0.3\textwidth]{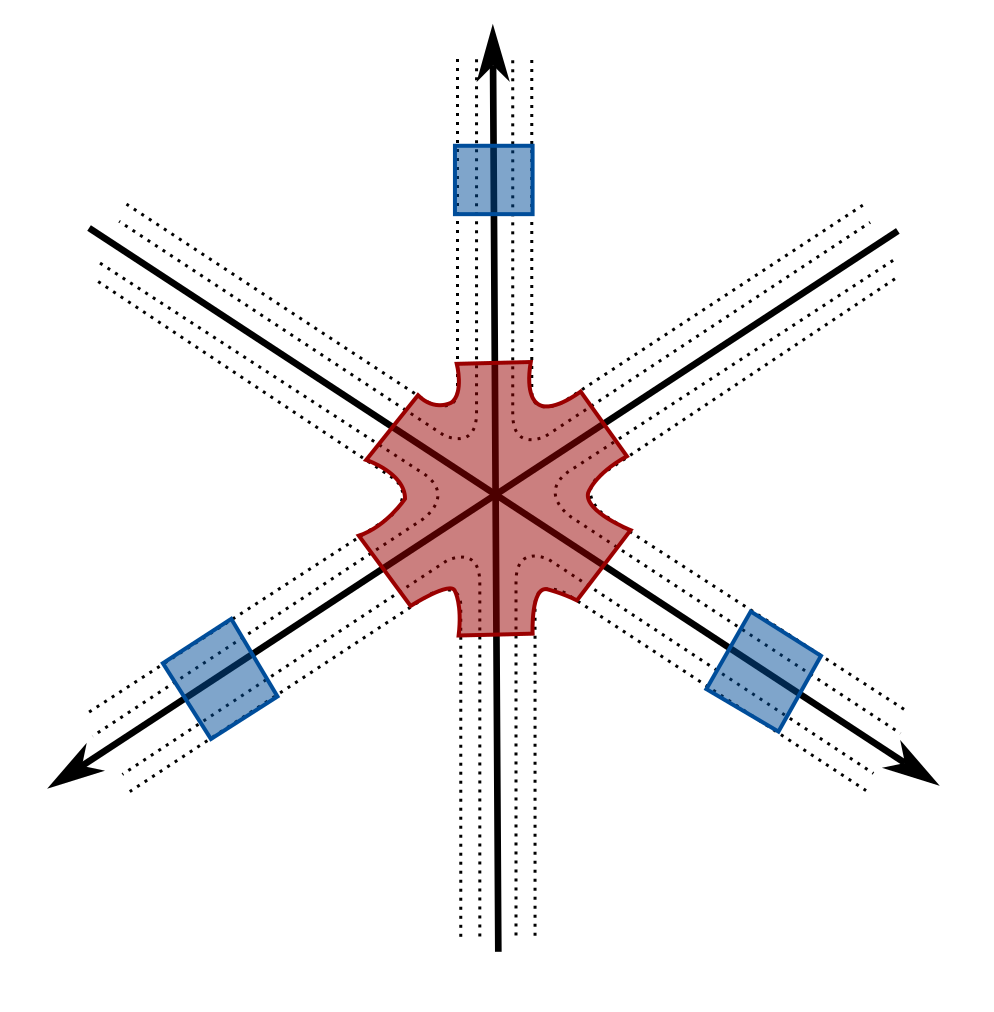}}
  \caption{In the pictures (a)$\sim$(e), we describe the local model of a product pair $(C,D)$ near $(x,\lambda_x \cap D)$ for some $x \in Int(C)$, where $\lambda_x$ is highlighted as the bolded leaf and other leaves of $\Fa$ are described by the doted leaves. The red region describes a product rectangle $N_x \subseteq C$ containing $x$, and the blue region describes the region in $D \cap \bigcup_{t \in N_x} \lambda_t$.
  In the picture (f), we describe the model of a singular product pair $(C,D)$ at a $6$-prong singularity $x$ of $\Fa$ in a similar way: the red region is the product polygon $C$ and the blue region is $D$.}\label{flow box figure}  
  \end{figure}

\begin{defn}\rm
A \emph{product pair} $(C,D)$ consists of
a pair of subsets $C,D$ of $\PP$ with the following properties:

$\bullet$
$C$ is a product rectangle,
$D$ is the union of finitely many product rectangles,
and
$C \cap D = \emptyset$.

$\bullet$
If $y \in D$,
then there is $x \in C$ such that $y \in R_x$.

$\bullet$
There are finitely many singular leaves $\lambda_1,\ldots,\lambda_n$ ($n \in \N$) 
of $\Fa$ such that,
$D$ can be decomposed into $n+1$ product rectangles $D_1,\ldots,D_{n+1}$,
and for each $i \in \{1,\ldots,n\}$,
$D_i$ has a $\Fa$-side $J^{i}_{1}$ and $D_{i+1}$ has a $\Fa$-side $J^{i+1}_2$ satisfying
$J^{i}_{1}, J^{i+1}_{2} \subseteq \lambda_i$ and
$J^{i}_{1} \stackrel{\lambda_i}{\sim} J^{i+1}_{2}$. 
\end{defn}

Here we describe the local models of $(x,y) \in (C,D)$ case by case.

\begin{rmk}
Let $(C,D)$ be a product pair and let $x \in Int(C)$.
Then $x$ is not a singularity of $\Fa$.
We choose a product rectangle $N_x \subseteq C$ with $x \in Int(N_x)$,
and let $D(N_x) = D \cap (\bigcup_{t \in N_x} \lambda_t)$.

\textbf{Case I.}
If $\lambda_x$ is non-singular leaf,
then $\lambda_x \cap D$ is a connected closed interval,
and $(N_x,D(N_x))$ is modeled as Figure \ref{flow box figure} (a) 
when $N_x$ is sufficiently small.

\textbf{Case II.}
Suppose that $\lambda_x$ is a singular leaf and 
$x$ is contained in a positive half-leaf of $\lambda_x$.
Then $\lambda_x \cap D$ is a connected closed interval and
$(N_x,D(N_x))$ is modeled as Figure \ref{flow box figure} (b) 
when $N_x$ is sufficiently small.

\textbf{Case III.}
Suppose that $\lambda_x$ is a singular leaf and 
$x$ is contained in a negative half-leaf of $\lambda_x$.
When $N_x$ is sufficiently small,
there are two connected closed intervals $J_1 \subseteq r^{+}_{x}$,
$J_2 \subseteq r^{-}_{x}$ with $h_{\lambda_x}(J_1) = h_{\lambda_x}(J_2)$, 
such that $D(N_x)$ is the union of two product rectangles for which
$J_1$ is a $\Fa$-side of one of them and
$J_2$ is a $\Fa$-side of the other.
$(N_x, D(N_x))$ is modeled as one of Figure \ref{flow box figure} (c), (d), (e).
\end{rmk}

\begin{defn}\rm
Let $x$ be a singularity of $\Fa$,
and we assume that $\lambda_x$ has $2n$-prongs ($n \in \N_{\geqslant 2}$).
A closed region $P$ of $\PP$ is called a \emph{product polygon} at $x$ if
there are leaves $\lambda_1, \ldots, \lambda_{2n}$ of $\Fa$ and
leaves $\mu_1, \ldots, \mu_{2n}$ of $\Fb$ such that
\begin{enumerate}[(1)]
\item $\lambda_x$ separates $\mu_1, \ldots, \mu_{2n}$, and
$\mu_x$ separates $\lambda_1, \ldots, \lambda_{2n}$.
\item For any $i \in \{1,\ldots,2n\}$,
$\lambda_i \cap \mu_i \ne \emptyset$,
$\lambda_i \cap \mu_{i+1} \ne \emptyset$,
where $\mu_{2n+1} = \mu_1$.
\item $P$ is the closed region bounded by $\lambda_1, \ldots, \lambda_{2n}$ and
$\mu_1,\ldots,\mu_{2n}$
(see Figure \ref{product region} (b)).
\end{enumerate}
\end{defn}

\begin{defn}\rm\label{singular product pair}
Let $x$ be a singularity of $\Fa$,
and we assume that $\lambda_x$ has $2n$-prongs ($n \in \N_{\geqslant 2}$).
A \emph{singular product pair} $(C,D)$ (at $x$) consists of
a pair of subsets $C, D$ of $\PP$ with the following properties:
\begin{itemize}
\item $C$ is a product polygon at $x$,
$D$ is the union of $n$ disjoint product rectangles $D_1,\ldots,D_n$,
and
$C \cap D = \emptyset$.
\item If $y \in D$,
then there is $x \in C$ such that $y \in R_x$.
\item For each leaf $\lambda$ of $\Fa$ such that $\lambda \ne \lambda_x$,
$\lambda \cap D$ is a connected closed interval.
\item For each $i \in \{1,\ldots,n\}$,
$R_x \cap D_i$ is a connected closed interval $J_i$ in a positive half-leaf of $\lambda_x$.
And $h_{\lambda_x}(J_1) = \ldots = h_{\lambda_x}(J_n)$.
\end{itemize}
\end{defn}

See Figure \ref{flow box figure} (f) for a singular product pair of
a $6$-prong singularity of $\Fa$.

\begin{cons}\rm\label{construction of N}
\begin{enumerate}[(a)]
\item Let \[N = \{(x,y) \mid x \in \PP, y \in r_x\}.\]
\item For any product pair or singular product pair $(C,D)$ of $\PP$,
we denote by $\B_{(C,D)} = \{(x,y) \mid x \in C, y \in D \cap R_x\}$
(endowed with the subspace topology of $\PP \times \PP$).
For any $x \in \PP$,
we denote by $q_x: R_x \to r_x$ the quotient map
induced from the equivalence relation $\stackrel{\lambda_x}{\sim}$.
We denote by $q_{(C,D)}: \B_{(C,D)} \to N$ the map defined by
$q_{(C,D)}(x,y) = (x,q_x(y))$ for all $(x,y) \in \B_{(C,D)}$,
and we denote the image of $\B_{(C,D)}$ under $q_{(C,D)}$ by $B_{(C,D)}$.
We call $B_{(C,D)}$ a \emph{flow box} of $N$ if $(C,D)$ is a product pair of $\PP$ and
call $B_{(C,D)}$ a \emph{singular flow box} of $N$ if $(C,D)$ is 
a singular product pair of $\PP$,
and we endow $B_{(C,D)}$ with the quotient topology of $\B_{(C,D)}$.
We note that all flow boxes and singular flow boxes of $N$ are homeomorphic to 
$[0,1] \times [0,1] \times [0,1]$,
and so their sets of interior points are homeomorphic to $\R^{3}$.
We set
\[\{\text{Sets of interior points of flow boxes and singular flow boxes of } N\}\]
be the topology basis of $N$. 
\item Recall that for any $x \in \PP$ and $y,z \in \lambda_x$,
$y \stackrel{\lambda_x}{\sim} z$ if and only if $g(y) \stackrel{g(\lambda_x)}{\sim} g(z)$,
the transformation $g \mid_{\lambda_x}: \lambda_x \to g(\lambda_x)$ induces
a transformation $g_x: r_x \to r_{g(x)}$.
So the action of $G$ on $\PP$ induces an action of $G$ on $N$ for which
\[g(x,y) = (g(x),g_x(y)), \forall g \in G, x \in \PP, y \in r_x.\]
\item Let $\w{\phi} = \{(x,r_x) \mid x \in \PP\}$.
Then $\w{\phi}$ is a flow of $N$ equivariant under the action of $G$.
\end{enumerate}
\end{cons}

\begin{prop}
    $N \cong \R^{3}$.  
\end{prop}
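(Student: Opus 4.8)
The plan is to build a homeomorphism $N \cong \R^3$ by exhibiting $N$ as the total space of an $\R$-bundle (or rather an $(0,+\infty)$-bundle) over $\PP \cong \R^2$ which is trivial because the base is contractible, together with a careful check that the quotients along singular leaves do not disturb the local product structure. The key point already recorded in Construction \ref{construction of N}(b) is that $N$ is covered by interior points of flow boxes and singular flow boxes, each homeomorphic to $(0,1)^3$. So the strategy splits into two stages: first, set up a global ``vertical'' coordinate on the fibers $r_x \cong (0,+\infty)$ that varies continuously in $x$; second, trivialize the resulting fibration over the plane.

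First I would construct a continuous section-and-parametrization. Concretely, the goal is a homeomorphism $\Psi \colon \PP \times (0,+\infty) \to N$ of the form $\Psi(x,s) = (x, c_x(s))$, where $c_x \colon (0,+\infty) \to r_x$ is an orientation-preserving homeomorphism depending continuously on $x$. To produce the $c_x$ one can use the maps $h_\lambda \colon \lambda \to \R$ from Lemma \ref{equivalence class} on singular leaves (extended by, say, arclength-type parametrizations using an auxiliary metric on $\PP$ on regular leaves), checking that the compatibility of $h_\lambda$ with the equivalence relations $\stackrel{\lambda}{\sim}$ makes the induced parametrization of $r_x = R_x/\!\stackrel{\lambda_x}{\sim}$ well defined in Cases III and IV. Continuity in $x$ is exactly what the local models in Figure \ref{flow box figure} (a)--(f) encode: within each flow box (resp. singular flow box) $B_{(C,D)}$ the assignment $x \mapsto r_x$ together with a vertical coordinate is, by the definition of product pair and Assumption \ref{further assumption}, literally a product $(0,1) \times (0,1) \times (0,1)$ after reparametrization; one then patches these local trivializations.

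**The main obstacle** I expect is the behavior along singular leaves of $\Fa$: near a negative half-leaf (Case III) a fiber $r_x$ is a quotient $R_x / \stackrel{\lambda_x}{\sim}$ of a ``bent'' arc $r^-_x \cup r^+_x$, and near the singularity itself (Case IV) it is a quotient of a union of $m$ positive half-leaves; I must verify that these quotients glue together along the singular leaf to give a topological product, i.e. that the singular flow box really is homeomorphic to a cube as claimed in Construction \ref{construction of N}(b), and that nearby regular fibers limit onto them correctly. This is a local statement but delicate, and it is where Assumption \ref{further assumption} (the neighborhood $V$ with $h_\lambda(V) \cap h_\lambda(\{g^k(x)\}) = \{h_\lambda(x)\}$) and the equivariance Assumption \ref{equivariant} get used to rule out the quotient map $q_x \colon R_x \to r_x$ folding a neighborhood onto itself.

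Once the vertical parametrization is in place, I would finish by observing that $\Psi$ realizes $N$ as the trivial fiber bundle $\PP \times (0,+\infty)$, and since $\PP$ is homeomorphic to $\R^2$ by hypothesis, $N \cong \R^2 \times (0,+\infty) \cong \R^3$. Alternatively, and perhaps more cleanly for the write-up, one can avoid an explicit bundle argument: $N$ is a connected $3$-manifold covered by open sets homeomorphic to $\R^3$ with the property that each such chart meets the base plane $\{(x, \cdot)\}$ in a convex-like family of fibers; a direct Schoenflies-type / monotone-union argument (exhausting $\PP$ by an increasing sequence of closed disks and showing the preimage in $N$ is an increasing union of closed $3$-balls) then gives $N \cong \R^3$ by the standard fact that a monotone union of open $3$-balls, each nicely embedded in the next, is $\R^3$. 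I would present the bundle argument as the main line and relegate the local cube-structure verification to the lemmas already assembled above.
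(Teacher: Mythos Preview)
Your proposal is correct and follows essentially the same approach as the paper: show that $N$ is an $\R$-bundle over $\PP \cong \R^2$, hence trivial. The paper's proof is far terser than yours---it simply observes that every point lies in some (singular) flow box, so $N$ is a $3$-manifold and an $\R$-bundle over $\PP$, whence $N \cong \R^3$---and does not attempt the explicit global parametrization $c_x$ you outline; the local triviality supplied by the flow box charts already suffices for the bundle argument, so your detailed construction of $\Psi$ and your monotone-union alternative, while correct, go beyond what the paper records.
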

\begin{proof}
    For any point $x \in N$,
    there exists a flow box or singular flow box of $N$ containing $x$.
    So $N$ is a $3$-manifold and is a $\R$-bundle over $\PP$.
    It follows that $N \cong \R^{3}$.  
\end{proof}

Before showing that the action of $G$ on $N$ is free and discrete,
we first describe our second construction of $N$.

\subsection{The second construction}\label{subsec: the second construction}

\begin{figure}
	\centering
	\subfigure[]{
		\includegraphics[width=0.3\textwidth]{2.png}}
	\subfigure[]{
		\includegraphics[width=0.3\textwidth]{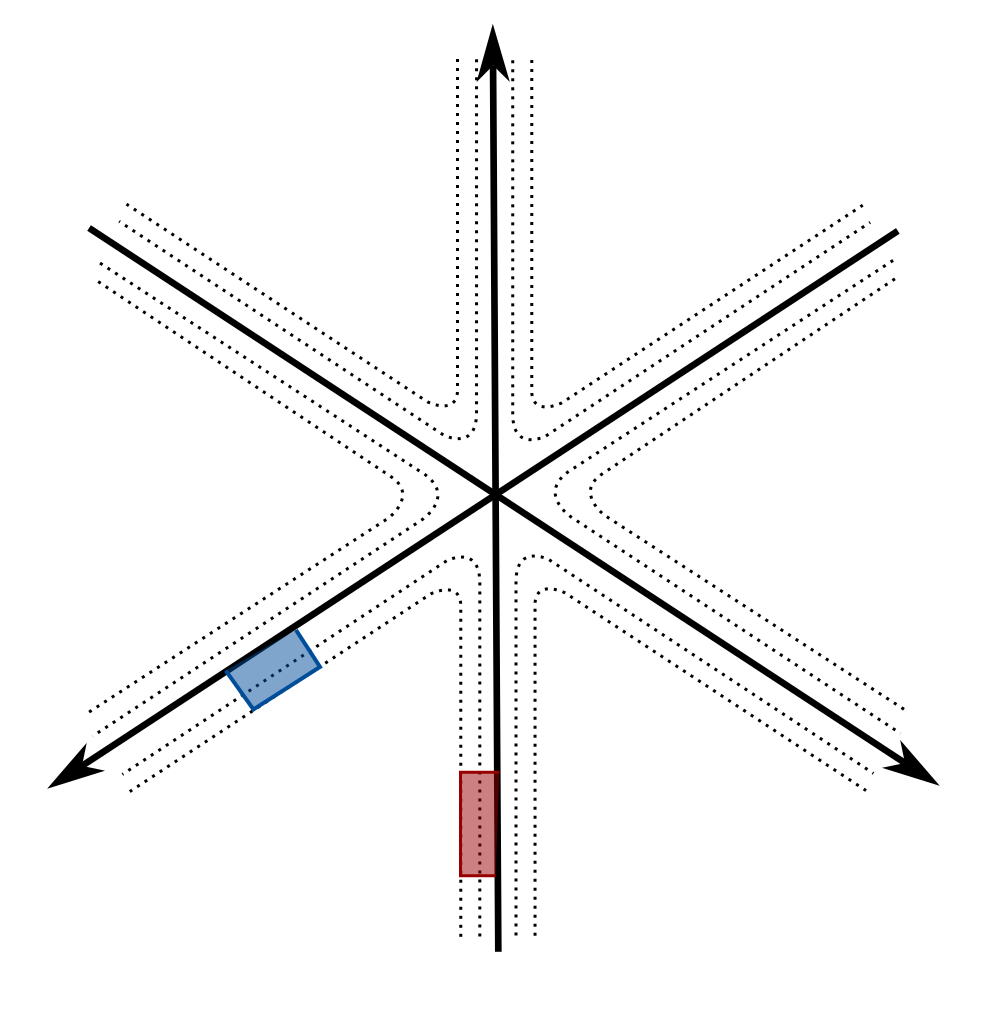}}
  \subfigure[]{
		\includegraphics[width=0.3\textwidth]{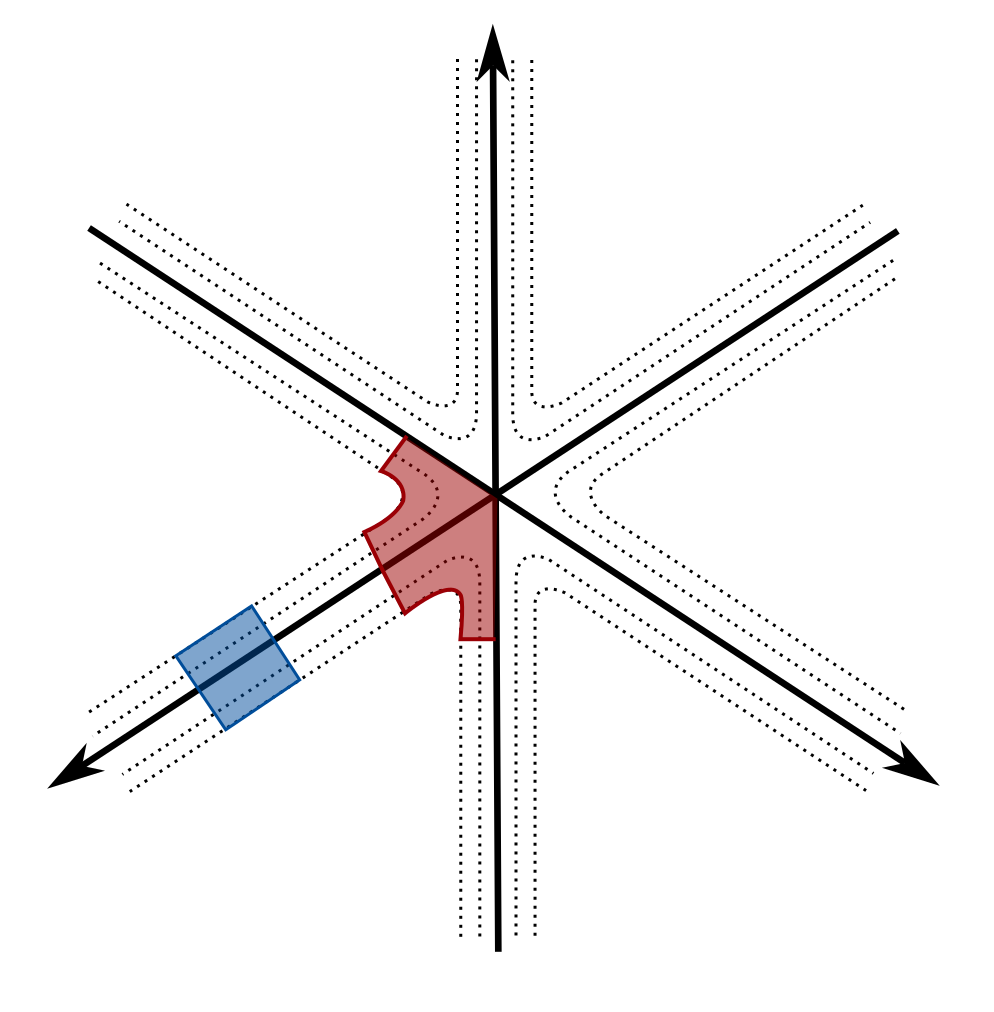}}
  \caption{Let $(x,y) \in N_2$ for which $\lambda_x$ is a singular leaf and 
  $[x,y]_{\lambda_x}$ contains the singularity of $\lambda_x$,
  we describe the local model of $(C_x,D_y)$ for which
  $\B_{(C_x,D_y)}$ is a neighborhood of $(x,y)$ in $N_2$.
  $\lambda_x$ is highlighted as the bolded leaf and other leaves of $\Fa$ are described by the dotted leaves. The red and blue regions describe $C_x, D_y$ respectively.}\label{neighborhood}
  \end{figure}

Let $N_2 = \{(x,y) \mid x \in \PP, y \in R_x\}$,
endowed with the subspace topology of $\PP \times \PP$.
Let $(x,y) \in N_2$.
Next, we describe the neighborhood of $(x,y)$ in $N_2$ as follows.

We choose a neighborhood $N_x$ of $x$ in $\PP$ and a neighborhood $N_y$ of $y$ in $\PP$ 
for which $N_x$ (resp. $N_y$) is a product rectangle if $x$ (resp. $y$) is not a singularity of $\Fa$ and is a product polygon if $x$ (resp. $y$) is a singularity of $\Fb$.
We may assume $N_x, N_y$ are sufficiently small so that $N_x \cap N_y = \emptyset$.
Then $N_x \times N_y$ is a neighborhood of $(x,y)$ in $\PP \times \PP$.
Let \[C_x = \{t \in N_x \mid R_t \cap N_y \ne \emptyset\},\]
\[D_y = \{t \in N_y \mid t \in R_s \text{ for some } s \in N_x\}.\]
It is not hard to see that
\[(N_x \times N_y) \cap N_2 =
\{(x,y) \in \PP \times \PP \mid x \in C_x, y \in R_x \cap D_y\}\]
(that is, it is $\B_{(C_x,D_y)}$ once we know that $(C_x,D_y)$ is a product pair or a singular product pair),
which is a neighborhood of $(x,y)$ in $N_2$.

Now we describe $(C_x,D_y)$ and $(N_x \times N_y) \cap N_2$ case by case.

\begin{case}\rm\label{case 1}
Assume that $\lambda_x$ is a non-singular leaf.
Then $C_x,D_y$ are product rectangles with $x \in Int(C_x), y \in Int(D_y)$.
Compare with Figure \ref{flow box figure} (a) for the picture of
$(C_x,D_y)$.
As a result, $(x,y)$ has a neighborhood
$\B_{(C_x,D_y)}$ in $N_2$ homeomorphic to $[0,1] \times [0,1] \times [0,1]$.
\end{case}

\begin{case}\rm\label{case 2}
Assume that $\lambda_x$ is a singular leaf,
$x$ is contained in a positive half-leaf of $\lambda_x$ and 
is not the singularity of $\lambda_x$.
Then $C_x,D_y$ are product rectangles with $x \in Int(C_x), y \in Int(D_y)$,
compare with Figure \ref{flow box figure} (b).
Hence $(x,y)$ has a neighborhood
$\B_{(C_x,D_y)}$ in $N_2$ homeomorphic to $[0,1] \times [0,1] \times [0,1]$.
\end{case}

\begin{case}\rm\label{case 3}
Assume that $\lambda_x$ is a singular leaf,
$x$ is contained in a negative half-leaf of $\lambda_x$ and 
is not the singularity of $\lambda_x$.
Then there are the following cases:

(a)
Assume that $y$ is also contained in this negative half-leaf and 
is not the singularity of $\lambda_x$.
Then $C_x,D_y$ are product rectangles such that
$x \in Int(C_x), y \in Int(D_y)$,
compare with Figure \ref{flow box figure} (c).
And $(x,y)$ has a neighborhood
$\B_{(C_x,D_y)}$ in $N_2$ homeomorphic to $[0,1] \times [0,1] \times [0,1]$.

(b)
Assume that $y$ is the singularity of $\lambda_x$.
Then $(C_x,D_y)$ is modeled as Figure \ref{neighborhood} (a).
The neighborhood $\B_{(C_x,D_y)}$ of $(x,y)$ in $N_2$ is homeomorphic to
the space obtained from cutting off $[0,1] \times [0,1] \times [0,1]$ along
$\{\frac{1}{2}\} \times [0,1] \times [0,\frac{1}{2}]$,
where $(x,y)$ is identified with the point $(\frac{1}{2},\frac{1}{2},\frac{1}{2})$ in it.

(c)
Assume $y$ is contained in a positive half-leaf of $\lambda_x$ and
$x$ is not the singularity of $\lambda_x$.
Then $C_x,D_y$ are product rectangles such that
$x$ is contained in a $\Fa$-side of $C_x$ and $y$ is contained in a $\Fa$-side of $D_y$,
which is modeled as Figure \ref{neighborhood} (b).
Note that $((x,y),\B_{(C_x,D_y)})$ is homeomorphic to
$((1,\frac{1}{2},\frac{1}{2}),[0,1] \times [0,1] \times [0,1])$.
\end{case}

\begin{case}\rm\label{case 4}
Assume that $\lambda_x$ is a singular leaf and $x$ is the singularity of $\lambda_x$.
Then $(C_x,D_y)$ are modeled as Figure \ref{neighborhood} (c).
In this case,
$((x,y),\B_{(C_x,D_y)})$ is also homeomorphic to
$((1,\frac{1}{2},\frac{1}{2}),[0,1] \times [0,1] \times [0,1])$.
\end{case}

Let $\stackrel{h}{\sim}$ be the equivalence relation on $N_2$ defined as follows.
$(x_1,y_1) \stackrel{h}{\sim} (x_2,y_2)$ if and only if

(1)
$x_1 = x_2$.

(2)
Suppose (1) holds.
Let $\lambda$ denote the leaf of $\Fa$ containing $x_1, x_2$.
Then
$h_\lambda(y_1) = h_\lambda(y_2)$ if $\lambda$ is a singular leaf,
and $y_1 = y_2$ if otherwise.

For each $x \in \PP$,
the image of $(x,R_x)$ under the equivalence relation $\stackrel{h}{\sim}$ is
$(x,r_x)$,
and thus the equivalence relation $\stackrel{h}{\sim}$ defines a map
$e: N_2 \to N$.
It's not hard to see that the flow boxes or singular flow boxes of $N$ pulls-back to
a union of sets in $N_2$ described as above,
and thus the topology of $N$ is consistent with the quotient topology from $N_2$.
Therefore,
$N$ is the quotient space of $N_2$ under the equivalence relation $\stackrel{h}{\sim}$
(endowed with the quotient topology).

\begin{prop}
    The action of $G$ on $N$ is free and discrete.
\end{prop}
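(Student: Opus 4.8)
The plan is to reduce the statement to Property (A7) — i.e., the \emph{flowable} hypothesis — by relating the $G$-action on $N$ to the $G$-action on the subspace $N_1 \subseteq \PP \times \PP$ that was used in Theorem \ref{topological flow}, with the extra bookkeeping needed to handle the singular leaves and the quotient $\stackrel{h}{\sim}$. First I would recall that $N$ is canonically the quotient of $N_2 = \{(x,y) \mid x\in\PP,\ y\in R_x\}$ (a subspace of $\PP\times\PP$) by the equivalence relation $\stackrel{h}{\sim}$, and that $G$ acts on $N_2$ by the restriction of the diagonal action $g\cdot(x,y) = (g(x),g(y))$, which descends to the action $g\cdot(x,y) = (g(x),g_x(y))$ on $N$ since $\stackrel{\lambda}{\sim}$ is $G$-equivariant by Assumption \ref{equivariant}. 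So it suffices to show the diagonal $G$-action on $N_2$ is free and properly discontinuous, \emph{and} that no nontrivial $g$ identifies a point of $N_2$ with a $\stackrel{h}{\sim}$-equivalent point of $N_2$ that is not already $G$-related to it in the obvious way — but in fact for freeness and discreteness of the quotient action it is enough to control the action upstairs on $N_2$ together with the fact that $e\colon N_2 \to N$ is a closed, $G$-equivariant, finite-to-one quotient map whose fibers are contained in single $\pi$-fibers.

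Second, for \textbf{freeness}: suppose $g\in G-\{1\}$ fixes a point of $N$, i.e. there is $(x,y)\in N_2$ with $g(x) = x$ and $h_{\lambda_x}(g(y)) = h_{\lambda_x}(y)$ (or $g(y)=y$ in the nonsingular case). Then $g$ fixes $x\in\PP$, so by Property (A4) the stabilizer of $x$ is infinite cyclic generated by some $g_0$ which, by Property (A1), topologically expands $\lambda_x$ and contracts $\mu_x$ (or vice versa); in particular $g = g_0^k$ for some $k\neq 0$, and $g$ properly expands or contracts the half-leaf structure of $\lambda_x$. In the nonsingular case $g(y)=y$ with $y$ in a half-leaf on which $g$ acts by a nontrivial topological expansion/contraction fixing only $x$ forces $y=x$, which is impossible since $y\in r_x$ and $x\notin r_x$. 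In the singular case one uses Assumption \ref{further assumption}: the $\langle g_0\rangle$-orbit of $y$ meets each $h_{\lambda_x}$-level set in at most the single point $y$, so $h_{\lambda_x}(g(y)) = h_{\lambda_x}(y)$ forces $g(y) = y$, and again the contraction/expansion dynamics of $g$ on $\lambda_x - \{s\}$ (with unique fixed point $s$) forces $y = s$, contradicting $y\in r_x$ (since $r_x$ never contains the singularity — it was obtained by a quotient that collapses the relevant half-leaf tips but $s$ itself is the basepoint $0$, not a point of the open ray $(0,+\infty)$). So $g = 1$.

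Third, for \textbf{discreteness (proper discontinuity)}: I would argue that for each $(x,y)\in N$ there is a neighborhood $W$ with $g(W)\cap W = \emptyset$ for all but finitely many $g$, and then combine with freeness. Upstairs in $N_2\subseteq \PP\times\PP$, a neighborhood of $(x,y)$ has the form $\B_{(C_x,D_y)}$, so it projects to a product rectangle/polygon $C_x$ around $x$ and a product rectangle/polygon $D_y$ around $y$, with $C_x$ and $D_y$ lying on a common leaf of $\Fa$ and $D_y\subseteq \bigcup_{t\in C_x}R_t$. Pick $x'\in C_x$ and $y'\in D_y$ on the \emph{same} leaf $\lambda$ of $\Fa$ with $x'\neq y'$ (possible after shrinking, unless $C_x,D_y$ are degenerate, which they are not since $y\in r_x$ means $x,y$ are genuinely distinct points of a single leaf). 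If $g(W)\cap W\neq\emptyset$ for infinitely many $g$, then infinitely many $g$ satisfy $g(x')\in C_x$, $g(y')\in D_y$, so both $g(x')$ and $g(y')$ are confined to a fixed pair of small product neighborhoods; taking $U = \operatorname{(proj of} C_x)$ and $V = \operatorname{(proj of} D_y)$ to be the neighborhoods in Property (A7), this directly contradicts Property (A7) applied to the two distinct points $x',y'$ on the leaf $\lambda$, unless all but finitely many such $g$ are trivial. I expect the \textbf{main obstacle} to be the singular case bookkeeping: one must check that the local model pictures in Figures \ref{flow box figure} and \ref{neighborhood} genuinely let one pick two distinct points on a common $\Fa$-leaf inside $C_x\cup D_y$ whose $G$-orbits are simultaneously trapped, and that the passage from "the diagonal action on $N_2$ is proper and free" to "the induced action on $N = N_2/\!\!\stackrel{h}{\sim}$ is proper and free" is legitimate — this uses that $e\colon N_2\to N$ is a $G$-equivariant quotient map with compact (indeed finite) point-preimages lying over single points of $\PP$, so local finiteness and freeness pass to the quotient without loss. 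Finally, freeness plus proper discontinuity of a group action on the manifold $N\cong\R^3$ gives that $N/G$ is a manifold with fundamental group $G$, completing the proof.
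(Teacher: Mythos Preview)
Your overall strategy---reduce to Property~(A7) via the diagonal action on $N_2$---matches the paper's, and your freeness argument is essentially right (though the appeal to Assumption~\ref{further assumption} is misplaced: the clean reason $h_{\lambda_x}(g(y)) = h_{\lambda_x}(y)$ forces $y$ to be the singularity is that, by Lemma~\ref{equivalence class}(iii), $g$ descends to a homeomorphism $\bar g$ of $\R$ fixing $0$ and topologically expanding/contracting each half-line, so $\bar g$ has no other fixed point).

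There is, however, a genuine gap in the discreteness step. Your claim that proper discontinuity on $N_2$ passes to $N = N_2/\!\stackrel{h}{\sim}$ ``because $e$ is a finite-to-one $G$-equivariant quotient with fibers in single $\pi$-fibers'' is not valid as stated. When the fiber over $(x,y)$ is $\{(x_1,y_1),\dots,(x_1,y_n)\}$ with $n\ge 2$, the condition $g\cdot(x,y)\in W$ unpacks to ``$g(x_1)$ near $x_1$ and $g(y_j)$ near $y_i$'' for \emph{some} pair $(i,j)$, and Property~(A7) applied to $(x_1,y_j)$ only rules out $i=j$. The cross-terms $i\ne j$ are exactly what the paper's Cases~3(c) and~4 address with separate arguments: in Case~3(c) one traps the singularity $g(s)$ in a product rectangle $\Sigma_1$ and uses that the $G$-orbit of $s$ is discrete (since $s$ is periodic by (A4)); in Case~4 one first confines $g$ to the stabilizer $H=\langle f\rangle$ of $x_1$, and then Assumption~\ref{further assumption} is precisely what rules out $f^k(y_j)$ landing near $y_i$ for $i\ne j$. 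These are the ``singular case bookkeeping'' steps you flag as the main obstacle, but they are not merely bookkeeping---they require the extra inputs (periodicity of singularities and Assumption~\ref{further assumption}) beyond (A7), and without them the argument does not close.
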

\begin{proof}
    Let $(x,y) \in N$. We prove that $(x,y)$ has a neighborhood $U_{(x,y)}$ so that
    $g(x,y) \notin U_{(x,y)}$ for all $g \in G - \{1\}$.

    Let $(x_1,y_1) \in N_2$ for which $e(x_1,y_1) = (x,y)$.
    If $(x_1,y_1)$ is in either of Cases \ref{case 1}, \ref{case 2} and \ref{case 3} (a),
    then $(x_1,y_1)$ is uniquely determined by $(x,y)$,
    and there is a neighborhood $\B_{(C_{x_1},D_{y_1})}$ of $(x_1,y_1)$ in $N_2$ such that
    the restriction of $e$ on $\B_{(C_{x_1},D_{y_1})}$ is a homeomorphism.
    Note that $\B_{(C_{x_1},D_{y_1})}$ is a neighborhood of $(x_1,y_1)$ in $\PP \times \PP$.
    It follows from Property A.7 directly that $g(x_1,y_1) \notin \B_{(C_{x_1},D_{y_1})}$ 
    for all $g \in G - \{1\}$ when $\B_{(C_{x_1},D_{y_1})}$ is sufficiently small.

    Now assume that $(x_1,y_1)$ is in Case \ref{case 3} (b).
    Then $(x_1,y_1)$ is also uniquely determined by $(x,y)$.
    Let $\B_{(C_{x_1},D_{y_1})}$ be the neighborhood of $(x_1,y_1)$ in $N_2$ 
    as given in Case \ref{case 3} (b),
    then $e(\B_{(C_{x_1},D_{y_1})})$ is a neighborhood of $(x,y)$ in $N$.
    It also follows from Property A.7 directly that we can choose 
    a sufficiently small neighborhood $U$ of $(x_1,y_1)$ in $\PP \times \PP$ such that
    $g(x_1,y_1) \notin U$ for all $g \in G - \{1\}$,
    and we can choose $\B_{(C_{x_1},D_{y_1})}$ sufficiently small so that
    it is contained in $U$.

    \begin{figure}
	\centering
	\subfigure[]{
		\includegraphics[width=0.3\textwidth]{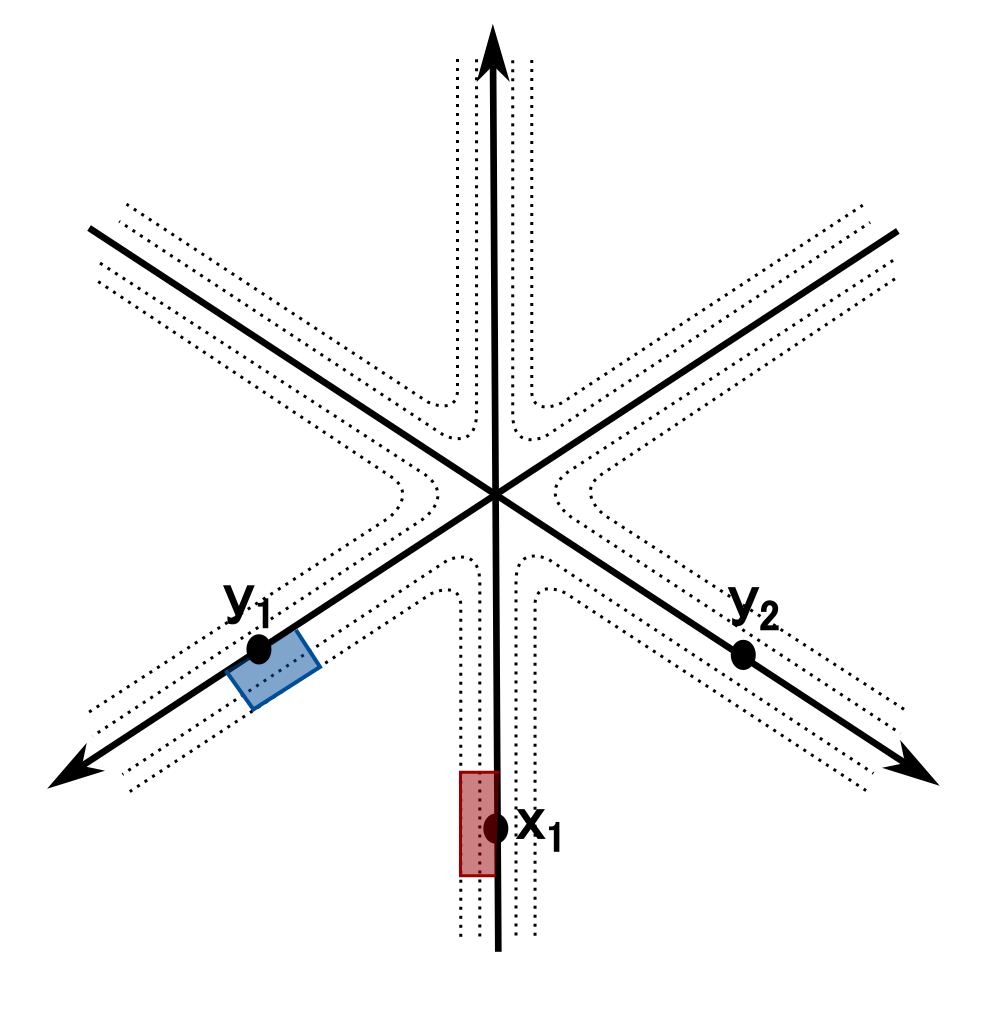}}
	\subfigure[]{
		\includegraphics[width=0.3\textwidth]{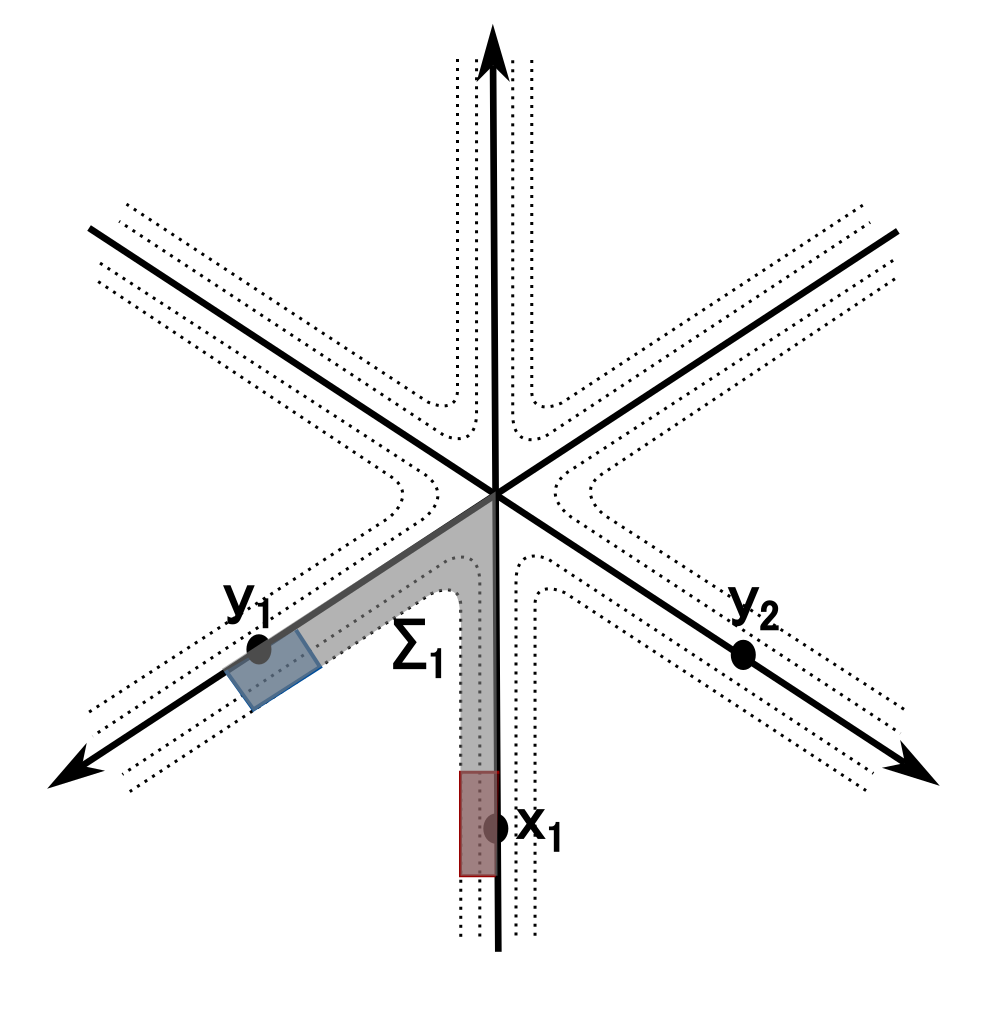}}
  \caption{\label{Sigma_1}Suppose that $(x_1,y_1)$ is in Case \ref{case 3} (c).
  (a) is the picture of $x_1, y_1$ and $y_2 \in R_{x_1} - \{y_1\}$ for which
  $e(x_1,y_2) = e(x_1,y_1)$.
  The shadow region of (b) describes the product rectangle $\Sigma_1$.}
  \end{figure}
  
    Now assume that $(x_1,y_1)$ is in Case \ref{case 3} (c).
    Then there is a unique $(x_2,y_2) \in N_2 - \{(x_1,y_1)\}$ such that
    $e(x_1,y_1) = e(x_2,y_2)$.
    Note that $x_2 = x_1$.
    Let $\B_{(C_{x_1},D_{y_1})}$ be the neighborhood of $(x_1,y_1)$ in $N_2$ 
    as given in Case \ref{case 3} (c),
    and let $\Sigma_1$ be the product rectangle such that
    (1) the $\Fa$-sides of $\Sigma_1$ are same as the $\Fa$-sides of $C_{x_1}$ and $D_{y_1}$,
    (2) the $\Fb$-sides consist of a $\Fb$-side of $C_{x_1}$ and a $\Fb$-side of $D_{y_1}$,
    (3) $C_{x_1}, D_{y_1} \subseteq \Sigma_1$.
    See Figure \ref{Sigma_1} (b) for a picture of $\Sigma_1$.
    Let $s$ denote the singularity of $\lambda_{x_1}$.
    For any $g \in G$ with $g(x_i) \in C_{x_1}, g(y_i) \in D_{y_1}$ ($i \in \{1,2\}$),
    we have $g(s) \in \Sigma_1$.
    As $s$ is a singularity of $\Fa$,
    the orbit of $s$ in $\PP$ is discrete,
    so we can choose $C_{x_1}, D_{y_1}$ sufficiently small so that
    $\Sigma_1$ contains no $\{g(s) \mid g \in G, g(s) \ne s\}$,
    and hence $g(x_i,y_i) \in \B_{(C_{x_1},D_{y_1})}$ ($i \in \{1,2\}$) implies $g = 1$.
    Similarly,
    we can choose a sufficiently small neighborhood of $(x_2,y_2)$ in $N_2$ that
    contains no image of $(x_i,y_i)$ ($i \in \{1,2\}$) under non-identity elements of $G$.
    
    Now assume that $(x_1,y_1)$ is in Case \ref{case 4}.
    Then $x_1$ is the singularity of $\lambda_{x_1}$.
    Let $(x_1,y_2), \ldots, (x_1,y_n)$ denote the other preimages of $(x,y)$ under $e$
    (where $y_2,\ldots,y_n \in \lambda_{x_1}$).
    Let $H$ denote the stabilizer of $\lambda_{x_1}$ under $G$,
    and let $f$ be a generator of $H$.
    We can choose a neighborhood $U$ of $x_1$ in $\PP$ that 
    contains no image of $x_1$ under $G - H$.
    By Assumption \ref{further assumption}, 
    we can choose a neighborhood $V_1$ of $y_1$ in $\PP$ that contains 
    no image of $y_2, \ldots,y_n$ under $H$.
    Similarly,
    for each $i \in \{2,\ldots,n\}$,
    we can choose a neighborhood $V_i$ of $y_i$ in $\PP$ that contains 
    no image of $\{y_1,\ldots,y_n\} - \{y_i\}$ under $H$.
    We can choose a neighborhood $W$ of $(x,y)$ in $N$ contained in
    the image of $\{(u,v) \mid u \in U, v \in (\bigcup_{i=1}^{n}V_i) \cap R_u\}$ under $e$.
    Then no element of $G - \{1\}$ takes $(x,y)$ into $W$.
    \end{proof}

Thus,
the action of $G$ on $N$ is free and discrete.
So $N / G$ is a $3$-manifold,
and $\w{\phi}$ induces a flow $\phi = \w{\phi} / G$ of $M$.

\begin{defn}\rm\label{invariant foliation}
(a)
    For each leaf $\lambda$ of $\Fa$,
    we define $l^{s}_{\lambda} = \bigcup_{x \in \lambda} (x,r_x)$.
    Then \[\{l^{s}_{\lambda} \mid \lambda \text{ is a leaf of } \Fa\}\] is a singular foliation of $N$
    equivariant under the action of $G$.
    We denote this foliation by $\w{\Fs}$,
    and let $\Fs = \w{\Fs} / G$.

    (b)
    For each leaf $\mu$ of $\Fb$,
    we define $l^{u}_{\mu} = \bigcup_{x \in \mu} (x,r_x)$.
    Then \[\{l^{u}_{\mu} \mid \mu \text{ is a leaf of } \Fb\}\] is also a singular foliation of $N$
    equivariant under the action of $G$.
    We denote this foliation by $\w{\Fu}$,
    and let $\Fu = \w{\Fu} / G$.
    \end{defn}

\begin{rmk}
Before constructing the manifold $N$ with the flow $\w{\phi}$,
we have fixed the monotone maps 
$\{h_\lambda \mid \lambda \text{ is a singular leaf of } \Fa\}$,
so $N, \w{\phi}$ are determined by these monotone maps.

Now suppose that the action of $G$ on $\PP$ is conjugate to 
the $\pi_1$-action of a transitive pseudo-Anosov flow $\phi_0$ in 
a closed $3$-manifold $M_0$.
By Proposition \ref{equivalent},
$M$ is homeomorphic to $M_0$ and $\phi$ is orbitally equivalent to $\phi_0$.
It follows that $(M,\phi)$ is uniquely determined by 
$\{h_\lambda \mid \lambda \text{ is a singular leaf of } \Fa\}$ up to 
self-homeomorphisms on $M$.
However,
we don't know if $(M,\phi)$ is uniquely determined by these maps 
(up to self-homeomorphisms on $M$) in other cases.
\end{rmk}

\subsection{Taut foliations transverse to $\phi$}\label{subsec: foliation}

Now we establish Theorem \ref{transverse foliation} in our setting. Let $(C,D)$ be a product pair or singular product pair of $\PP$.
Let $\Fb_D$ denote the set of leaves of $\Fb$ that has nonempty intersection with $D$.
For each leaf $\mu$ of $\Fb$ with $\mu \cap D \ne \emptyset$,
let 
\[l^{\mu}_{(C,D)} = \{(x,y) \mid y \in D \cap \mu, x \in C \cap \lambda_y\}.\]
Then $\{l^{\mu}_{(C,D)} \mid \mu \in \Fb_D\}$ is a partition of $\B_{(C,D)} \subseteq \PP \times \PP$
into $2$-dimensional surfaces.
The quotient map $q_{(C,D)}: \B_{(C,D)} \to B_{(C,D)}$ 
(defined in Construction \ref{construction of N} (b)) descends 
the partition $\{l^{\mu}_{(C,D)} \mid \mu \in \Fb_D\}$ of $\B_{(C,D)}$ to
a two-dimensional foliation (denoted $\F_{(C,D)}$) of $B_{(C,D)}$.

For all (singular) flow boxes $B_{(C,D)}$ in $N$,
their partitions $\F_{(C,D)}$ agree and 
form a $2$-dimensional foliation of $N$, denoted $\w{\F}$.
It is not hard to see that all $g: N \to N$ identifies 
the foliation $\F_{(C,D)}$ of $B_{(C,D)}$ with the foliation
$\F_{(g(C),g(D))}$ of $B_{(g(C),g(D))}$.
So $\w{\F}$ is equivariant under the action of $G$ on $N$.

Set $\F = \w{\F} / G$. Then $\F$ is a foliation of $M$.
It is clear that $\w{\phi}$ is transverse to $\w{\F}$
(and thus $\phi$ is transverse to $\F$).
Hence $\F$ is a co-orientable foliation.


\begin{prop}\label{taut}
    If $M$ is closed,
    then $\F$ is a taut foliation.
\end{prop}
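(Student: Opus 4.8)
The plan is to use Novikov's theorem: a codimension-one foliation $\F$ in a closed orientable $3$-manifold $M$ is taut if and only if it has no Reeb component, equivalently (since $\w{\F}$ pulls back to a foliation of $\w{M} = N$) every leaf of $\w{\F}$ is a plane or, more to the point, $\F$ has no null-homotopic transversal and no vanishing cycle. Since we have already shown $\F$ is transverse to the flow $\phi$, the flow $\phi$ gives a closed transversal through every point once $M$ is closed, provided that flow lines actually close up or can be closed up along $\F$; so the cleanest route is: (1) show $\F$ is transverse to a flow (done), hence co-orientable; (2) produce, through every point of $M$, a closed loop transverse to $\F$. By a standard argument (see Goodman, or Calegari's \emph{Foliations and the geometry of 3-manifolds}, Thm.\ 4.38), a co-orientable codimension-one foliation in a closed $3$-manifold that admits a closed transversal through every leaf is taut. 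So the real content is step (2).

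For step (2), I would argue as follows. Fix a point of $M$ and lift it to $(x_0,t_0)\in N$. Recall $N$ is an $\R$-bundle over $\PP$ with fibers the flow lines $\rho_x = \{x\}\times r_x$, and $\w{\F}$ is the foliation whose leaves are $l^s_\mu = \bigcup_{x\in\mu}(x,r_x)$ for $\mu$ a leaf of $\Fb$ — i.e.\ $\w{\F}$ is the ``unstable'' foliation $\w{\Fu}$ of Definition \ref{invariant foliation}(b). The flow $\w{\phi}$ is transverse to $\w{\F}$ because moving along a fiber $\rho_x$ is transverse to each $l^u_\mu$. Now, $M$ being closed means (by the discussion after Definition \ref{cptprop}, in the relevant cases, or simply by compactness of $M$) that the $G$-action on $N$ is cocompact; pick a compact fundamental domain. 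Starting from $(x_0,t_0)$, flow forward along $\w{\phi}$: the image in $M$ is a forward flow line. Because $M$ is compact, this forward orbit returns to any neighborhood of its start infinitely often; choosing the neighborhood inside a flow box $B_{(C,D)}$ around the image of $(x_0,t_0)$, we get a flow segment in $M$ from the point back into the same flow box, and we can close it up by a short arc inside $B_{(C,D)}$ running along the $\Ir$-direction of the box — this arc is transverse to $\F$ since inside a flow box the leaves of $\F$ are the $l^\mu_{(C,D)}$ which are graphs over the $(C,D)$-base, transverse to the $r$-fibers. Hence we obtain a closed loop in $M$ transverse to $\F$ through (a point arbitrarily close to, hence by isotopy through) the chosen point.

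The main obstacle is step (2) for points lying on \emph{singular} orbits: there the fiber $r_x$ is a quotient of several half-leaves glued along $\stackrel{\lambda_x}{\sim}$, the local model of $N$ near such a point is a cut-open cube rather than a genuine cube (Cases \ref{case 3}(b), \ref{case 4} in Subsection \ref{subsec: the second construction}), and one must check that the closing-up arc can still be chosen transverse to $\w{\F}$ and that the returning flow segment genuinely re-enters the singular flow box in a way compatible with the gluing. I would handle this by working slightly off the singular orbit — a singular orbit is approached by regular orbits, and transversality of $\F$ to $\phi$ is an open condition — so a closed transversal through a nearby regular point, together with the fact that a leaf of $\F$ through the singular point also meets regular points, suffices to conclude tautness; alternatively one invokes that $\F$ has no Reeb component directly, since a Reeb component would force a null-homotopic closed transversal, impossible as $\w{\phi}$ (being an $\R$-bundle projection fiber) has all its flow lines properly embedded lines in $N\cong\R^3$. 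Finally, I record that co-orientability was already established in the text, so by Novikov–Goodman, $\F$ is taut, completing the proof.

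\begin{proof}
Since $\F$ is transverse to the flow $\phi$ and $M$ is closed, $\F$ is a co-orientable codimension-one foliation, and it suffices to exhibit, through every point of $M$, a closed loop transverse to $\F$; tautness then follows from the standard criterion (e.g.\ \cite[Theorem 4.38]{Gab92b}-type arguments, cf.\ Goodman).

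Lift a given point to $(x_0,t_0)\in N$. By Definition \ref{invariant foliation}, $\w{\F}=\w{\Fu}$ has leaves $l^u_\mu=\bigcup_{x\in\mu}(x,r_x)$, and inside any (singular) flow box $B_{(C,D)}=(\Ia\times\Ib)\times\Ir$ the plaques of $\w{\F}$ are the sets $l^\mu_{(C,D)}$, each a graph over the base in the $\Ir$-direction; hence the fiber arcs $\{x\}\times r_x$, i.e.\ the flow lines of $\w{\phi}$, are transverse to $\w{\F}$. Since $M$ is closed, the $G$-action on $N$ is cocompact. Flowing forward from $(x_0,t_0)$ along $\phi$ in $M$, compactness forces the forward orbit to return to any chosen neighborhood of its starting point; take this neighborhood inside a flow box $B_{(C,D)}$ containing (the image of) $(x_0,t_0)$. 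We thus get a flow segment in $M$ from $B_{(C,D)}$ back into $B_{(C,D)}$, which we close up by a short arc inside $B_{(C,D)}$ running in the $\Ir$-direction; this arc is transverse to $\F$. The resulting closed loop in $M$ is transverse to $\F$ and passes through a point of $B_{(C,D)}$ arbitrarily close to the given one. When the given point lies on a singular orbit, the local model of $N$ is a cut-open cube (Cases \ref{case 3}(b), \ref{case 4}); as transversality of $\F$ to $\phi$ is an open condition and singular orbits are limits of regular ones, a transverse closed loop through a nearby regular point lies in the interior of a leaf of $\F$ that also meets the singular point, which is enough.

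Therefore $\F$ admits a closed transversal through every point, and being co-orientable in a closed orientable $3$-manifold, $\F$ is taut.
\end{proof}
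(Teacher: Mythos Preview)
There are two problems, one cosmetic and one genuine.

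First, you misidentify $\w{\F}$. You write ``$\w{\F}=\w{\Fu}$ has leaves $l^u_\mu=\bigcup_{x\in\mu}(x,r_x)$'', but $\w{\Fu}$ (Definition~\ref{invariant foliation}(b)) is the foliation whose leaves \emph{contain} flowlines, so $\w{\phi}$ is tangent to $\w{\Fu}$, not transverse. The foliation $\w{\F}$ of Subsection~\ref{subsec: foliation} is built from the plaques $l^{\mu}_{(C,D)} = \{(x,y) : y \in D\cap\mu,\ x \in C\cap\lambda_y\}$, i.e.\ its local leaves are level sets of the \emph{second} coordinate, not the first. Your later description via plaques is in fact correct, so this may be a slip of the pen, but as written the formula and the transversality claim contradict each other.

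Second, and this is the real gap: the assertion ``compactness forces the forward orbit to return to any chosen neighborhood of its starting point'' is false without further hypotheses. Poincar\'e recurrence needs an invariant measure (or transitivity, or nonwandering), none of which is assumed here; Proposition~\ref{taut} only assumes $M$ is closed. Indeed, a codimension-one foliation transverse to a nonsingular flow on a closed $3$-manifold need not be taut: the Reeb foliation on $S^2\times S^1$ is transverse to a nonsingular flow with one attracting and one repelling core circle, yet the torus leaf meets no closed transversal. So ``transverse to a flow $+$ compact'' does not imply taut, and your closing-up argument does not go through. The alternative you sketch at the end (``a Reeb component would force a null-homotopic closed transversal'') is also not a valid implication.

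The paper's proof supplies exactly the missing dynamical input, by bootstrapping from a \emph{different} foliation already known to be taut. By Property~(A4) the leaves of $\Fu$ have stabilizer $\Z$ or trivial, so $\Fu$ has no compact leaf; splitting along singular leaves gives a co-orientable foliation $\Fu_*$ with no compact leaf, hence taut. A closed transversal to $\Fu_*$ projects to a path $\gamma$ in $\PP$ positively transverse to $\Fb$ from $y$ to $g(y)$ for some $g\in G\setminus\{1\}$. After modifying $\gamma$ to a broken path compatible with the $\stackrel{\lambda}{\sim}$ identifications (Figure~\ref{gamma modification}), Lemmas~\ref{l_y} and~\ref{transversal} promote it to a positively transverse path in $N$ from $l_y$ to $l_{g(y)}=g(l_y)$, which then descends to the desired closed transversal to $\F$ in $M$. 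The point is that the existence of a closed $\Fb$-transversal downstairs is what replaces your unavailable recurrence.
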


Prior to prove this proposition,
we first prepare some properties of $\w{\F}$ in the following several paragraphs.
Recall that $e: N_2 \to N$ is the quotient map defined in the last subsection.

\begin{LEM}\label{l_y}
    Let $y \in \PP$.
    Let $E_y = \{x \in \lambda_y \mid y \in R_x\}$.
    For any component $E$ of $E_y$,
    there is a leaf $l$ of $\w{\F}$ such that
\[e(\{(x,y) \mid x \in E\}) \subseteq l\].
\end{LEM}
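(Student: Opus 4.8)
The statement is local: $\w{\F}$ is defined by gluing the foliations $\F_{(C,D)}$ of the flow boxes $B_{(C,D)}$, so a leaf of $\w{\F}$ is obtained by continuing a plaque $l^\mu_{(C,D)}$ from one flow box to an overlapping one. Thus the plan is to show that for each component $E$ of $E_y$, the set $e(\{(x,y)\mid x\in E\})$ stays inside a single plaque-path, i.e. it never ``jumps'' to a different leaf of $\w{\F}$ when we pass between flow boxes. First I would fix $y$ and analyze $E_y=\{x\in\lambda_y\mid y\in R_x\}$. Using the case-by-case description of $R_x$ (Cases I--IV in the definition of $R_x$, and the local models in Figures \ref{flow box figure}, \ref{neighborhood}), I would check that $E_y$ is a union of (relatively open or half-open) subintervals of $\lambda_y$: when $\lambda_y$ is non-singular, $E_y$ is a single interval (the negative side of $y$ in $\lambda_y$, suitably truncated); when $\lambda_y$ is singular, $E_y$ may have several components, one ``coming into'' each half-leaf that can reach $y$ through the identification $\stackrel{\lambda_y}{\sim}$. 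The key point is that within one component $E$, the leaf $\lambda_x=\lambda_y$ is constant, so all the points $(x,y)$ with $x\in E$ lie on the {\em same} leaf $\mu_y$ of $\Fb$ in the second coordinate.

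Next I would produce the leaf $l$. Cover $E$ (which is connected) by flow boxes: for each $x\in E$ choose a product pair or singular product pair $(C_x,D_x)$ with $x\in\mathrm{Int}(C_x)$ (or $x$ in an $\Fa$-side, in the singular cases) and $y\in D_x$, as in Subsection \ref{subsec: the second construction}. Inside $B_{(C_x,D_x)}$ the point $e(x,y)$ lies on the plaque $l^{\mu_y}_{(C_x,D_x)}$, because $y$ lies on the leaf $\mu_y$ of $\Fb$ and the plaques of $\F_{(C_x,D_x)}$ are indexed precisely by the $\Fb$-leaves meeting $D_x$. Since the plaque is connected and the construction of $\w{\F}$ glues $\F_{(C,D)}$ to $\F_{(C',D')}$ compatibly on overlaps (as noted right before Proposition \ref{taut}), the plaques $l^{\mu_y}_{(C_x,D_x)}$ over all $x\in E$ are contained in one leaf $l$ of $\w{\F}$; a connectedness/chain argument along $E$ (each point has a flow-box neighborhood, finitely or countably many suffice by local compactness of $E$) then gives $e(\{(x,y)\mid x\in E\})\subseteq l$.

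The only subtlety — and the step I expect to require the most care — is the behavior when $[x,y]_{\lambda_x}$ contains the singularity of a singular leaf of $\Fa$, i.e. Case \ref{case 3}(b),(c) and Case \ref{case 4}, where the map $e$ identifies several points of $N_2$ and the local model of the flow box is the ``cut cube'' of Figure \ref{neighborhood}. Here I must check that the plaque $l^{\mu_y}_{(C_x,D_x)}$ of the singular flow box, after applying $e$, is genuinely a piece of a single leaf of $\w{\F}$ through $e(x,y)$, using that $h_{\lambda_x}$ is constant on the identified $\Fa$-sides $J_1,\dots,J_n$ (the last bullet of Definition \ref{singular product pair}) so the $\stackrel{h}{\sim}$-identification respects the partition into the $l^\mu$'s. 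Once this compatibility at singularities is established, the general case follows by the chain argument, and the proof of the lemma is complete; it will then feed directly into the proof of Proposition \ref{taut} (tautness of $\F$).
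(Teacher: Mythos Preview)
Your proposal is correct and follows essentially the same approach as the paper: cover $E$ by product pairs (or singular product pairs) $(C,D)$ with $x\in\mathrm{Int}(C)$ and $y\in D$, observe that $(x,y)$ lies in the plaque $l^{\mu_y}_{(C,D)}$, and conclude by a connectedness/chain argument along $E$. The paper's proof is slightly leaner in that it does not single out the singular cases for special treatment --- the compatibility of the $\F_{(C,D)}$'s across overlaps (stated just before Proposition~\ref{taut}) already absorbs the issue you flag, so your extra care there is not wrong but is not needed as a separate step.
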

\begin{proof}
    We first prove that,
    any $x \in E$ has a neighborhood $U_x$ in $E$ such that,
    if $t_1, t_2 \in U_x$,
    then $e(t_1,y), e(t_2,y)$ are contained in the same leaf of $\w{\F}$.    

    Let $x \in E$.
    There is a product pair or singular product pair $(C,D)$ such that
    $x \in Int(C)$ and $y \in D$.
    Note that for all $t_1, t_2 \in \lambda_y \cap C$,
    $(t_1,y), (t_2,y)$ are both contained in the set $l^{\mu_y}_{(C,D)}$ 
    in the partition $\{l^{\mu}_{(C,D)} \mid \mu \in \Fb_D\}$ of $\B_{(C,D)}$,
    which implies that $e(t_1,y), e(t_2,y)$ are contained in the same leaf of $\w{\F}$.
    Set $U_x = \lambda_x \cap C$.

    As $E$ is a connected set,
    for any $t_1, t_2 \in E$,
    there are finitely many of $\{U_x \mid x \in E\}$ that
    cover the path in $E$ from $t_1$ to $t_2$.
    The above discussion implies that
    $e(t_1,y), e(t_2,y)$ are contained in the same leaf of $\w{\F}$.
    This completes the proof.
\end{proof}

Under the assumption of Lemma \ref{l_y},
if $y$ is not a singularity of $\Fb$,
then $E_y$ is connected,
and thus there is a leaf of $\w{\F}$ containing
$\{(x,y) \mid y \in R_x\}$.
We will always denote this leaf by $l_y$.
Now we assume that $y$ is the singularity of 
a $2n$-prong ($n \geqslant 2$) singular leaf of $\Fb$.
Let $q_1, \ldots, q_n$ denote the $n$ negative half-leaves of $\mu_y$.
Then $E_y$ has $n$ components which are exactly
$q_1 - \{y\}, \ldots, q_n - \{y\}$.
In fact, 
there are $n$ distinct leaves $l$ satisfying that $e(\{(x,y) \mid y \in R_x\}) \subseteq l$.
We leave this case here,
and we will discuss it in Proposition \ref{asymptotic}.
Note that the term $l_y$ is only well-defined when $y$ is not a singularity of $\Fb$.

For a path $\alpha$ in $\PP$,
we will always abuse the notation and say that $\alpha$ is \emph{transverse} to $\Fa$ if 
$\alpha$ intersects each leaf of $\Fa$ at a single point.
This allows $\alpha$ to pass the singularities of $\Fa$.
Recall that $\Fa, \Fb$ have well-defined co-orientations induced from their orientations.
In addition,
the orientation on $\Fb$ induces continuously varying orientations on 
the flowlines of $\w{\phi}$,
which induce a co-orientation on $\w{\F}$.

\begin{LEM}\label{transversal}
    Let $\gamma: I \to \PP$ be a positively oriented transversal of $\Fb$ that
    doesn't intersect the singularities of $\Fb$.
    Then each $t \in I$ has a neighborhood $[a,b]$ in $I$ such that
    there is transversal from $l_{\gamma(a)}$ to $l_{\gamma(b)}$.
\end{LEM}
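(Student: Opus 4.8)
\textbf{Proof proposal for Lemma \ref{transversal}.}

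The plan is to work locally in a single (singular) flow box and show that a short enough subinterval of $\gamma$ around $t$ maps into such a flow box in a way that produces the desired transversal between the leaves $l_{\gamma(a)}$ and $l_{\gamma(b)}$. First I would fix $t \in I$ and consider the point $x = \gamma(t) \in \PP$. Since $\gamma$ avoids the singularities of $\Fb$, the leaf $\mu_x$ is regular, so $x$ is either a regular point of $\PP$ or a singularity of $\Fa$ lying on a regular leaf of $\Fb$. Either way, I can choose a product pair or singular product pair $(C,D)$ with $x \in \operatorname{Int}(C)$ and with $C$ chosen so that $\mu_x$ crosses $C$ properly; equivalently, $x$ has a neighborhood that is the base of a (singular) flow box $B_{(C,D)}$ of $N$. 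By continuity of $\gamma$ and the fact that $\operatorname{Int}(C)$ is open in $\PP$, there is a closed neighborhood $[a,b]$ of $t$ in $I$ with $\gamma([a,b]) \subseteq \operatorname{Int}(C)$.

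Next I would exploit the product structure of the flow box. Inside $\B_{(C,D)} \subseteq \PP \times \PP$ we have the partition $\{l^{\mu}_{(C,D)} \mid \mu \in \Fb_D\}$ into $2$-dimensional pieces, and under the quotient $q_{(C,D)}$ this descends to the foliation $\F_{(C,D)}$ of $B_{(C,D)}$, which is the restriction of $\w{\F}$. For each $s \in [a,b]$, the point $\gamma(s) \in C$ determines, via Lemma \ref{l_y}, the leaf $l_{\gamma(s)}$ of $\w{\F}$ — well-defined because $\mu_{\gamma(s)}$ is a regular leaf of $\Fb$ — and the portion of $l_{\gamma(s)}$ lying in $B_{(C,D)}$ is exactly the image under $q_{(C,D)}$ of $\{(x', y') \mid y' \in D \cap \mu_{\gamma(s)}, x' \in C \cap \lambda_{y'}\}$. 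Now I would build the transversal explicitly: pick any point $y_0 \in R_x \cap D$ (possible since $(C,D)$ is a product pair and $x \in C$), and consider the curve $s \mapsto e(\,(\gamma(s), y_s)\,)$ where $y_s$ is the unique point of $D$ in the leaf of $\Fb$ through $y_0$ that also lies on $R_{\gamma(s)}$ — here I use the product-rectangle structure of (the pieces of) $D$ to see that sliding the second coordinate along a fixed leaf of $\Fb$ keeps us inside $D \cap R_{\gamma(s)}$. This curve hits each leaf $l^{\mu}_{(C,D)}$ in exactly the points with $\Fb$-coordinate on a fixed leaf, so after applying $q_{(C,D)}$ it is a transversal to $\w{\F}$ within $B_{(C,D)}$ running from $l_{\gamma(a)}$ to $l_{\gamma(b)}$. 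The fact that it is genuinely transverse (not tangent) to $\w{\F}$, and positively oriented, follows because $\gamma$ is a positively oriented transversal of $\Fb$, hence the first coordinate $\gamma(s)$ moves across leaves of $\Fb$ monotonically, which is precisely the direction complementary to the leaves $l^{\mu}_{(C,D)}$ of the partition.

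The main obstacle I expect is the bookkeeping when $x = \gamma(t)$ is a singularity of $\Fa$ (the singular flow box case), or more subtly when $[x,y]_{\lambda_x}$ passes through a singularity of $\Fa$ for the relevant points $y \in D$ — i.e. the situation of Figure \ref{neighborhood}, where $R_x$ is a quotient $R_x / \!\stackrel{\lambda_x}{\sim}$ and the flow box $B_{(C,D)}$ is only homeomorphic to a cube after a cut-and-paste. In that case one has to check that the curve $s \mapsto e(\gamma(s), y_s)$ is still well-defined and single-valued after the identification $\stackrel{h}{\sim}$, and that it still meets each leaf of $\w{\F}$ transversally across the seam. I would handle this by choosing $y_0$ on a positive half-leaf of $\lambda_x$ (so that no identification is triggered along the curve) and shrinking $[a,b]$ so that $y_s$ stays off the singular leaves of $\Fb$ and off the branch locus of $R_{\gamma(s)}$; the explicit local models of Cases \ref{case 1}--\ref{case 4} then reduce everything to the cube picture. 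The orientation/transversality claims are then inherited from the product structure just as in the regular case.
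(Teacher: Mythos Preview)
Your argument has a genuine gap: you have placed $\gamma(t)$ in the wrong factor of the product pair. Recall that the leaf $l_y$ of $\w{\F}$ is defined (Lemma \ref{l_y}) as the leaf containing the points $e((x,y))$ with $x \in E_y$, i.e.\ $y$ sits in the \emph{second} coordinate. The plaques of $\F_{(C,D)}$ in the flow box $B_{(C,D)}$ are indexed by leaves $\mu$ of $\Fb$ meeting $D$, not $C$. With your choice $\gamma([a,b]) \subseteq \operatorname{Int}(C)$, there is no reason for $\mu_{\gamma(s)}$ to intersect $D$ at all, so the description you wrote down for $l_{\gamma(s)} \cap B_{(C,D)}$ is typically empty; the leaves $l_{\gamma(a)}$ and $l_{\gamma(b)}$ need not meet this flow box.

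Moreover, the curve you build, $s \mapsto e((\gamma(s), y_s))$ with $y_s \in \mu_{y_0} \cap R_{\gamma(s)}$, keeps the second coordinate on the \emph{fixed} leaf $\mu_{y_0}$ of $\Fb$. Hence every point of this curve lies on the single plaque $l^{\mu_{y_0}}_{(C,D)}$: it is tangent to $\w{\F}$, not transverse. The fix is exactly what the paper does: choose the product pair so that $\gamma(t) \in \operatorname{Int}(D)$ (possible because $\gamma(t)$ is not a singularity of $\Fb$), take $[a,b]$ with $\gamma([a,b]) \subseteq D$, and then a curve in $B_{(C,D)}$ whose projection to $D$ is $\gamma([a,b])$ is automatically a transversal of $\F_{(C,D)}$ from $l_{\gamma(a)}$ to $l_{\gamma(b)}$, since $\gamma$ is transverse to $\Fb$ and the plaques are parametrized by the $\Fb$-leaf of the second coordinate. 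All the bookkeeping about singular flow boxes that you anticipated becomes irrelevant once the roles of $C$ and $D$ are set correctly.
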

\begin{proof}
    For each $t \in I$,
    there is a product pair $(C,D)$ such that $\gamma(t) \in Int(D)$
    (as $\gamma(t)$ is not a singularity of $\Fb$).
    Then there is a closed interval $[a,b] \subseteq I$ such that $t \in (a,b)$ and
    $\gamma([a,b]) \subseteq D$.
    As $\gamma([a,b]) \subseteq D$ and $\gamma([a,b])$ is transverse to $\Fb$,
    there is a transversal $\tau$ of $\F_{(C,D)}$ in $B_{(C,D)}$ such that
    $\tau$ is projected to $\gamma([a,b])$ under the projection
    $B_{(C,D)} \cong \B_{(C,D)} \to D$.
\end{proof}

\begin{proof}[The proof of Proposition \ref{taut}]
By Property (A4) of Definition \ref{Anosov-like},
for any leaf $\mu$ of $\Fb$,
the stabilizer of $\mu$ is isomorphic to either $\Z$ or the identity.
Thus $\Fu$ contains no compact leaf.
We can split $\Fu$ along the singular leaves and filling with monkey saddles 
to obtain a co-orientable foliation $\Fu_*$ \cite{Gab92b}.
Note that $\Fu_*$ also contains no compact leaf and thus is a taut foliation.
We assign $\Fu$ a co-orientation induced from the co-orientation on $\Fb$,
and we assign $\Fu_*$ a co-orientation induced from the co-orientation on $\Fu$.

Because $M$ is a closed $3$-manifold,
there are finitely many product charts $\{U_\alpha\}_{\alpha \in \Psi}$ of $\F$ that
cover $M$ (where $\Psi$ is an index set).
Now we show that for each $\alpha \in \Psi$,
there is a simple closed curve in $M$ transverse to $\F$ that
intersects all leaves of $\F \mid_{U_\alpha}$.

Let $\w{U_\alpha}$ be a lift of $U_\alpha$ in $N$,
and let $(x,y) \in \w{U_\alpha}$.
We may assume that $y$ is not a singularity of $\Fa$.
Because $\Fu_*$ is a co-orientable taut foliation,
there is a simple closed curve $\eta^{'}$ in $M$ positively transverse to $\Fu_*$ that 
intersects all leaves of $\Fu_*$,
which induces a simple closed curve $\eta$ in $M$ positively transverse to $\Fu$
in the following sense: 
$\eta$ can be decomposed to
finitely many oriented subsegments $\eta_1,\ldots,\eta_k$ so that
each $\eta_i$ is positively transverse to $\Fu$ and $Int(\eta_i)$ doesn't pass through
the singular sets of all singular leaves of $\Fu$
(however, $\eta$ may pass through the singular sets of singular leaves of $\Fu$).

The curve $\eta$ induces a path $\gamma: I \to \PP$ positively transverse to $\Fb$ that
starts at $y$ and ends at $g(y)$ for some $g \in G - \{1\}$.
We first modify $\gamma$ to a broken path $\gamma_*$ in $\PP$,
which is the union of finitely many paths $\gamma_1,\ldots,\gamma_m: I \to \PP$ such that
(1)
each $\gamma_i$ is positive transverse to $\Fb$,
and $Int(\gamma_i)$ contains no singularity of $\Fb$,
(2)
$\gamma_1(0) = y, \gamma_m(1) = g(y)$,
(3)
for each $i \in \{1,\ldots,m-1\}$),
$\gamma_i(1), \gamma_{i+1}(0)$ are contained in 
two positive half-leaves of same singular leaf $\lambda_i$ of $\Fa$ such that
$\gamma_i(1) \stackrel{\lambda_i}{\sim} \gamma_{i+1}(0)$.

\begin{figure}
	\centering
	\subfigure[]{
		\includegraphics[width=0.3\textwidth]{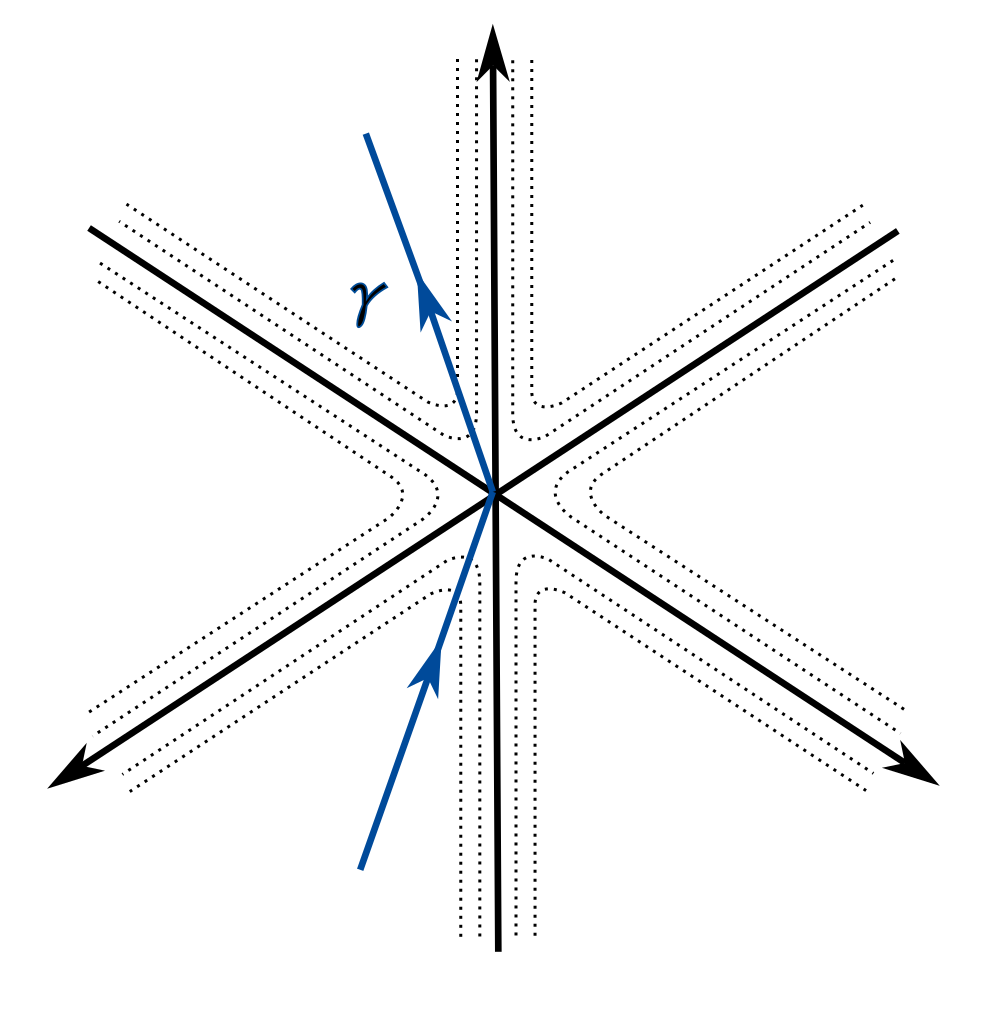}}
	\subfigure[]{
		\includegraphics[width=0.3\textwidth]{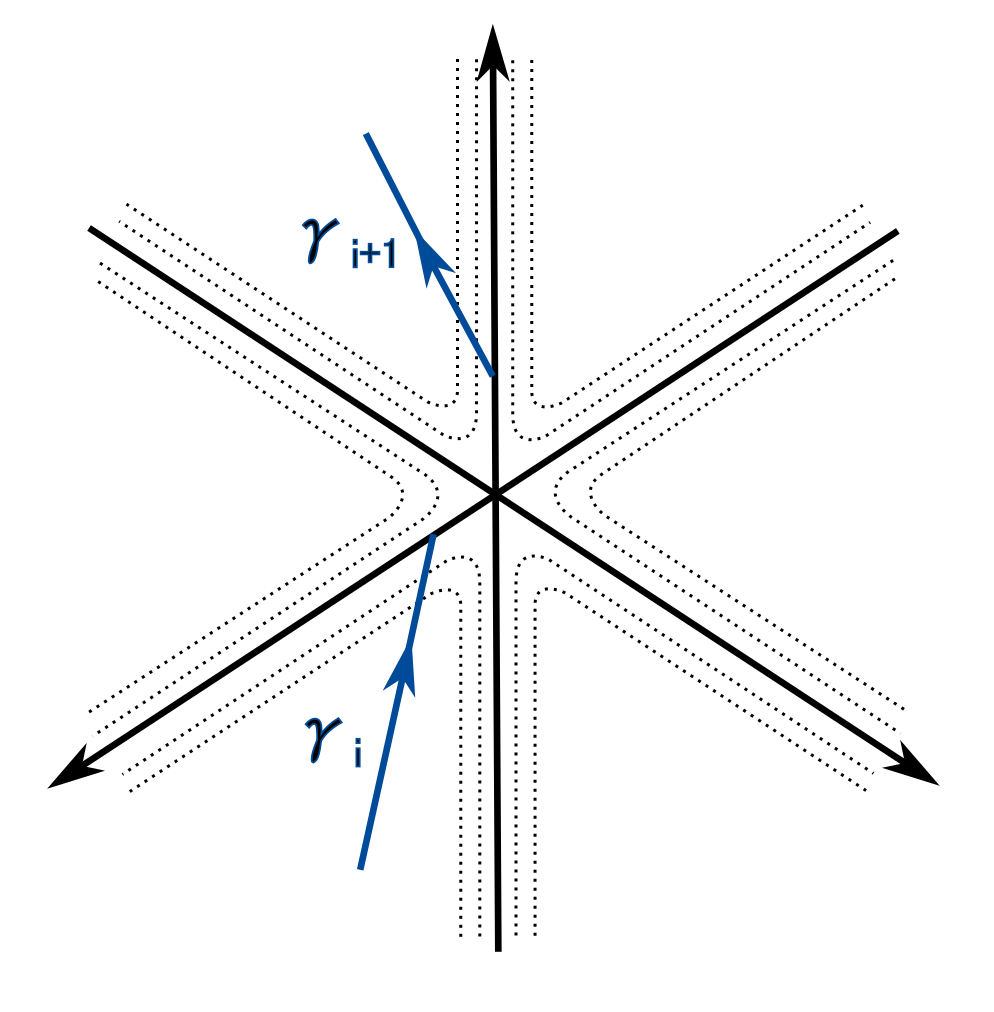}}
  \caption{The leaves in the figures are leaves of $\Fb$.
  In Figure (a),
  $\gamma$ is a transversal of $\Fb$.
  Figure (b) describes the broken path $\gamma_* = \bigcup_{i=1}^{m} \gamma_i$ 
  modified from $\gamma$, such that,
  for any $i \in \{1,\ldots,m-1\}$,
  $\gamma_i(1) \stackrel{\lambda_i}{\sim} \gamma_{i+1}(0)$ for some singular leaf 
  $\lambda_i$ containing both of $\gamma_i(0), \gamma_i(1)$,
  and $\gamma_i(0), \gamma_i(1)$ are contained in 
  two positive half-leaves of $\lambda_i$.}\label{gamma modification}
  \end{figure}

By Lemma \ref{transversal},
for each $i \in \{1,\ldots,m\}$,
there are $0 = t^{i}_{0} < t^{i}_{1} < \ldots < t^{i}_{s_i} = 1$ so that
there is a transversal from
$l_{\gamma_i(t^{i}_{j})}$ to $l_{\gamma_i(t^{i}_{j+1})}$ for each $j \in \{0,\ldots,s_i-1\}$.

Let $i \in \{1,\ldots,m-1\}$,
and we denote by $s_i$ the singularity of $\lambda_i$.
Note that $e(s_i,\gamma_i(1))$ is contained in $l_{\gamma_i(1)}$,
$e(s_i,\gamma_{i+1}(0))$ is contained in $l_{\gamma_{i+1}(0)}$,
and $e(s_i,\gamma_i(1)), e(s_i,\gamma_{i+1}(0))$ are the same point in $N$.
It follows that $l_{\gamma_i(1)} = l_{\gamma_{i+1}(0)}$.
Combined with the above discussion,
there is a sequence of points $y_0, y_1, \ldots, y_n \in \PP$ with
$y_0 = y, y_n = g(y)$ such that,
there is a transversal $\tau_i: I \to N$ of $\w{\F}$ with 
$\tau_i(0) \in l_{y_i}, \tau_i(1) \in l_{y_{i+1}}$, 
for each $i \in \{0,\ldots,n-1\}$.
We can connect every $\tau_i(1), \tau_{i+1}(0)$ through $l_{y_{i+1}}$ to
obtain a path $\gamma_1$ from $l_{y_0} = l_y$ to $l_{y_n} = l_{g(y)}$ that
is either positively transverse to $\w{\F}$ or tangent to $\w{\F}$.
Then we can isotope $\gamma_1$ to obtain a path $\gamma_2$ such that
$\gamma_2$ starts at $(x,y) \in l_y$ and ends at $(g(x),g(y)) \in l_{g(y)}$ and
$\gamma_2$ is positively transverse to $\w{\F}$.
Then $\gamma_2$ descends to a path in $M$ transverse to $\F$ that
starts and ends at the same point in $U_\alpha$,
which necessarily intersects all leaves of $\F \mid_{U_\alpha}$.
This completes the proof.
\end{proof}

In the remainder of this subsection,
we will discuss the relation between $\phi$ and $\F$.
We first recall the concept ``regulating'' for flows transverse to $\R$-covered foliations.

Let $X$ be a closed $3$-manifold,
let $\mathcal{E}$ be a co-orientable taut foliation of $X$,
and let $\varphi$ be an oriented flow in $X$ transverse to $\mathcal{E}$.
We denote by $\w{\mathcal{E}},\w{\varphi}$ 
the pull-backs of $\mathcal{E},\varphi$ in the universal cover of $X$.
In the case where $\mathcal{E}$ is an $\R$-covered foliation
(i.e. the leaf space of $\w{\mathcal{E}}$ is homeomorphic to $\R$),
$\varphi$ is said to be \emph{regulating} for an $\mathcal{E}$ if 
every flowline of $\w{\varphi}$ intersects every leaf of $\w{\mathcal{E}}$
\cite[Definition 9.4]{Cal07}.
It's known that for any $\R$-covered foliation in an atoroidal $3$-manifold,
there exists a transverse pseudo-Anosov flow regulating for it \cite{Cal00}, \cite{Fen02}.

We say that a flowline $\rho$ of $\w{\varphi}$ is \emph{asymptotic} to
a leaf $l$ of $\w{\mathcal{E}}$ if 
there is a sequence of flowlines $\{\rho_n\}_{n \in \N}$ converging to $\rho$ in 
the orbit space of $\w{\varphi}$ such that
$\rho_n \cap l \ne \emptyset$ for all $n \in \N$,
but $\rho \cap l = \emptyset$.
When $\mathcal{E}$ is $\R$-covered,
it's not hard to observe that $\varphi$ is regulating for $\mathcal{E}$ if and only if
no flowline of $\w{\varphi}$ is asymptotic to some leaf of $\w{\mathcal{E}}$.
From this point of view,
we could define the ``nowhere-regulating'' property for flows transverse to foliations.

\begin{defn}\rm
    $\varphi$ is said to be \emph{nowhere-regulating} for $\mathcal{E}$ if
    every flowline of $\w{\varphi}$ is asymptotic to some leaf of $\w{\mathcal{E}}$.
\end{defn}

Now we show that

\begin{prop}
    $\phi$ is nowhere-regulating for $\F$.
\end{prop}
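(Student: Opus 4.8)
The plan is to identify, for each flowline $\rho$ of $\w\phi$, an explicit leaf of $\w\F$ to which it is asymptotic, namely a leaf lying over the $\Fb$-leaf through the orbit of $\rho$. Recall that the orbit space of $\w\phi$ is canonically $\PP$, so $\rho=\rho_x$ for a unique $x\in\PP$, where $\rho_x$ is the fibre $\{(x,y):y\in r_x\}$. The first step is to record which leaves of $\w\F$ the flowline $\rho_x$ meets. From the construction of $\w\F$ inside flow boxes, the ``downstream $\Fb$-coordinate'' $(x',y')\mapsto\mu_{y'}$ is locally constant along leaves of $\w\F$, hence constant on each leaf; so every leaf of $\w\F$ lies over a well-defined leaf of $\Fb$, leaves over distinct $\Fb$-leaves are disjoint, and over a regular leaf $\mu$ of $\Fb$ there is exactly one leaf of $\w\F$, which I will call $l_\mu$. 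A point $(x,y)$ of $\rho_x$ lies on the leaf of $\w\F$ over $\mu_y$; since $y\in r_x\subseteq\lambda_x$ and $\lambda_x$ and $\mu_x$ meet only at $x\notin r_x$ (distinct leaves of a bifoliated plane meet at most once), no $y\in r_x$ lies on $\mu_x$, so $\rho_x$ is disjoint from every leaf of $\w\F$ lying over $\mu_x$.

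The second, and main, step is to produce a sequence of flowlines approaching $\rho_x$ that do meet the leaf $l_{\mu_x}$ (or, in the singular case, a chosen leaf over $\mu_x$). Suppose first $x$ is a regular point and fix a product chart $\Ia\times\Ib$ of $\PP$ containing $x$. Pick points $x_n\to x$ inside the chart lying strictly on the negative side of $\mu_x$ with respect to the co-orientation of $\Fb$ determined by the positive direction of $\Fa$. Within the chart $\lambda_{x_n}$ crosses $\mu_x$ at a point $p_n$, and because $x_n$ lies on the negative side of $\mu_x$, the point $p_n$ lies on the positive side of $x_n$ along $\lambda_{x_n}$, i.e.\ $p_n\in r_{x_n}$. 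Hence $\mu_x$ meets $r_{x_n}$, so $(x_n,p_n)\in N$ lies both on $\rho_{x_n}$ and on $l_{\mu_x}$; together with $\rho_{x_n}\to\rho_x$ in the orbit space (as $x_n\to x$) and $\rho_x\cap l_{\mu_x}=\emptyset$ from the first step, this exhibits $\rho_x$ as asymptotic to $l_{\mu_x}$.

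It remains to treat flowlines through the singularities of the bifoliation. There $\mu_x$ is a singular leaf and, by the discussion following Lemma~\ref{l_y}, $\w\F$ has one leaf over each negative half-leaf of $\mu_x$. I would fix one negative half-leaf $q$ of $\mu_x$, let $l$ be the corresponding leaf of $\w\F$, and run the argument of the previous paragraph with $x_n\to x$ chosen inside the sector of $\PP$ bounded by the two $\Fa$-prongs adjacent to $q$ and lying on the negative side of $q$; the same ``crossing point lands on the positive ray'' computation then shows $\rho_{x_n}$ meets $l$ while $\rho_x$ does not. Finally, passing to $G$-quotients, $l$ descends to a leaf of $\F$ and the $\rho_{x_n}$ to flowlines of $\phi$ realizing the asymptoticity in $M$, which proves the proposition. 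The step I expect to require the most care is the singular case: near a singularity there is no genuine product chart, so one must track the interleaved prong structures of $\Fa$ and $\Fb$ together with the orientation conventions to be sure the chosen approximating orbits have $r_{x_n}$ — and not the negative ray — crossing the half-leaf $q$.
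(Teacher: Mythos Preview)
Your approach is essentially the same as the paper's: show that $\rho_x$ misses every leaf of $\w\F$ lying over $\mu_x$, while some sequence $\rho_{x_n}\to\rho_x$ meets such a leaf. The paper's implementation is slightly tighter: it moves along $\lambda_x$ itself, picking $x_n\in E_x=\{t\in\lambda_x:x\in R_t\}$, so that the witness point is $e(x_n,x)$ with second coordinate equal to $x$ for every $n$; Lemma~\ref{l_y} then immediately furnishes a single leaf $l_x$ containing all of them. Your version allows $x_n$ to wander off $\lambda_x$, so the crossing points $p_n$ vary along $\mu_x$, and to conclude that all $(x_n,p_n)$ lie on a common leaf you assert that over a regular $\mu$ there is exactly one leaf of $\w\F$. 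That statement is true---the paper establishes it in the course of proving Proposition~\ref{asymptotic}---but it does not follow from the flow-box description you invoke: local constancy of $\mu_y$ along $\w\F$-leaves gives a well-defined projection from leaves of $\w\F$ to leaves of $\Fb$, not its injectivity. If you simply restrict your $x_n$ to lie on $\lambda_x$ (so $p_n=x$ for all $n$), the uniqueness issue evaporates and your argument becomes the paper's. The final ``passing to $G$-quotients'' step is also unnecessary, since nowhere-regulating is defined in the universal cover.
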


\begin{proof}
For any $t \in \PP$,
we denote by $\rho_t$ the flowline $(t,r_t)$ of $\w{\phi}$.

Let $y \in \PP$ and
let $E_y = \{x \in \lambda_y \mid y \in R_x\}$.
We choose a component $E$ of $E_y$ and
choose a sequence of points $y_1, y_2, \ldots$ on $E$ such that 
$\lim_{n \to \infty} y_n = y$.
If $y$ is not a singularity of $\Fb$,
then $\rho_{y_n}$ intersects $l_y$ at $e(y_n,y)$ for each $n \in \N$,
but $\rho_y \cap l_y = \emptyset$.
It follows that $\rho_y$ is asymptotic to $l_y$.

Now assume that $y$ is a singularity of $\Fb$.
By Lemma \ref{l_y},
there is a leaf $l$ of $\w{\F}$ such that $e(x,y) \in l$ for all $x \in E$.
Then for any $n \in \N$, $\rho_{y_n}$ intersects $l$ at $e(y_n,y)$,
but $\rho_y \cap l = \emptyset$.
Hence $\rho_y$ is asymptotic to $l$.
\end{proof}

We generalize the asymptotic behavior to a relation between 
leaves of $\w{\Fs}$ and $\w{\F}$ as follows.
Let $l^{u}$ be a leaf of $\w{\Fu}$ and let $l$ be a leaf of $\w{\F}$.
If $l^{u}$ is a non-singular leaf,
we say that $l$ is asymptotic to $l^{u}$ if all flowlines of $\w{\phi}$ contained in $l^{u}$
are asymptotic to $l$.
Now assume that $l^{u}$ is a singular leaf and $l \cap l^{u} = \emptyset$.
Let $J$ be the component of $N - l^{u}$ containing $l$.
We say that $l$ is asymptotic to $l^{u}$ if,
for each flowline $\rho$ of $\w{\phi}$ contained in $l^{u} \cap \overline{J}$,
$\rho$ is asymptotic to $l$.
Now we prove that

\begin{prop}\label{asymptotic}
    (a)
    $\Fs, \Fu$ are transverse to $\F$.

    (b)
    For any leaf $l^{u}$ of $\w{\Fu}$,
    there is a leaf of $\w{\F}$ asymptotic to $l^{u}$.
    In particular,
    if $l^{u}$ is a $2n$-index singular leaf of $\w{\Fu}$ ($n \geqslant 2$),
    i.e. the projection of $l^{u}$ to the orbit space of $\w{\phi}$ is a $2n$-prong,
    then there are $n$ distinct leaves of $\w{\F}$ asymptotic to $l^{u}$ simultaneously.
\end{prop}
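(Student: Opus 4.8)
The plan is to prove (a) and (b) together by analyzing the local picture of $\w{\F}$ in a (singular) flow box and tracking how leaves of $\w{\F}$ meet leaves of $\w{\Fs}$ and $\w{\Fu}$. For part (a): recall that in a product pair or singular product pair $(C,D)$, the foliation $\F_{(C,D)}$ of $B_{(C,D)}$ is exactly the descent of the partition $\{l^{\mu}_{(C,D)} \mid \mu \in \Fb_D\}$ of $\B_{(C,D)}$, where $l^{\mu}_{(C,D)} = \{(x,y) \mid y \in D\cap\mu,\ x\in C\cap\lambda_y\}$. On the other hand, $\w{\Fs}$ has local leaves of the form $l^{s}_{\lambda} = \bigcup_{x\in\lambda}(x,r_x)$, which inside $B_{(C,D)}$ cut out the ``vertical strips'' $\{(x,y)\mid x\in C\cap\lambda\}$, and $\w{\Fu}$ has local leaves $l^{u}_{\mu} = \bigcup_{x\in\mu}(x,r_x)$. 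In the model coordinates $B_{(C,D)} \cong \Ia\times\Ib\times\Ir$ (after quotienting by $\stackrel{\lambda}{\sim}$ on the singular $\Fa$-sides), one checks directly that the three partitions correspond to three pairwise-transverse families of coordinate slices: $\w{\F}$ slices along the $\Ir$-direction (a leaf of $\w{\F}$ is given by fixing the $\Fb$-leaf of the second coordinate $y$), while $\w{\Fs}$ and $\w{\Fu}$ contain the flow direction and slice along $\Ia$ resp. $\Ib$. Since $\w{\phi}$ is transverse to $\w{\F}$ and $\Fa,\Fb$ are transverse in $\PP$ (away from singularities), these three families are pairwise transverse in each flow box, including at singular flow boxes where the monkey-saddle structure is compatible; this gives transversality of $\Fs,\Fu$ with $\F$ after descending to $M$.

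For part (b), the key is Lemma \ref{l_y}: for $y\in\PP$ and each component $E$ of $E_y = \{x\in\lambda_y \mid y\in R_x\}$, there is a leaf $l$ of $\w{\F}$ with $e(\{(x,y)\mid x\in E\})\subseteq l$. When $l^{u}$ is a non-singular leaf of $\w{\Fu}$, pick $y$ to be the point of $\PP$ with $l^{u}$ the component of $\bigcup_{x\in\mu_y}(x,r_x)$ corresponding to a chosen half of $\mu_y$; then $E_y$ is connected, so there is a single leaf $l_y$ of $\w{\F}$ containing all $e(x,y)$ with $y\in R_x$, and I claim every flowline $\rho_t\subseteq l^{u}$ is asymptotic to $l_y$. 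Indeed, for $t$ on the relevant side of $\mu_y$, approximate $t$ by a sequence $y_n\in E_y$ converging to $y$ along $\lambda_y$; then $\rho_{y_n}$ meets $l_y$ at $e(y_n,y)$ while $\rho_y\cap l_y=\emptyset$ (since $y\notin R_y$), and transversality/openness of flow boxes shows the same asymptotics persists for every flowline in $l^{u}$ as one varies along $\mu_y$. For the singular case: if $l^{u}$ is a $2n$-prong singular leaf, then $E_y$ has exactly $n$ connected components $q_1-\{y\},\dots,q_n-\{y\}$ (the $n$ negative half-leaves of $\mu_y$), so Lemma \ref{l_y} produces $n$ candidate leaves $l^{(1)},\dots,l^{(n)}$ of $\w{\F}$; one must check these are pairwise distinct (they lie in distinct components of $N - l^{u}$, since the $q_j$ sit in distinct prongs, so the points $e(x,y)$ for $x$ ranging over different $q_j$ land on opposite sides of $l^{u}$) and that each $l^{(j)}$ is asymptotic to $l^{u}$ in the sense just defined, i.e. every flowline in $l^{u}\cap\overline{J_j}$ (where $J_j$ is the component of $N-l^{u}$ containing $l^{(j)}$) is asymptotic to $l^{(j)}$ — this again follows by approximating from within the prong.

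The main obstacle I anticipate is the singular flow-box bookkeeping in part (b): carefully matching up which component of $N - l^{u}$ contains which leaf $l^{(j)}$, and verifying that the approximating sequences $y_n$ can be chosen so that $\rho_{y_n}$ genuinely intersects $l^{(j)}$ (not merely comes close) for flowlines $\rho$ filling out the appropriate half-open prong $l^{u}\cap\overline{J_j}$. This requires unwinding the quotient $R_x \to r_x = R_x/\stackrel{\lambda_x}{\sim}$ at Case III/IV points together with the structure of $D$ in a singular product pair (Definition \ref{singular product pair}), and using Assumption \ref{further assumption} to rule out unwanted identifications near the singular orbit. The transversality claim (a) and the non-singular case of (b) should be routine given the local models in Figure \ref{flow box figure} and the preceding propositions.
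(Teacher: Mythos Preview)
Your treatment of (a) is fine and matches the paper: transversality is a local check in each (singular) flow box.

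For (b), however, there is a genuine gap. You establish only that $\rho_y$ is asymptotic to $l_y$: your sequence $y_n \in E_y$ converges to $y$, not to an arbitrary point $t \in \mu$, so the conclusion $\rho_{y_n} \cap l_y \ne \emptyset$, $\rho_y \cap l_y = \emptyset$ concerns only the single flowline $\rho_y$. To prove that $l_y$ is asymptotic to the \emph{whole} leaf $l^u_\mu$ you must show that \emph{every} $\rho_z$ with $z \in \mu$ is asymptotic to the \emph{same} leaf $l_y$; equivalently, that $l_z = l_y$ for all $z \in \mu$. Your sentence ``transversality/openness of flow boxes shows the same asymptotics persists for every flowline in $l^u$ as one varies along $\mu_y$'' does not supply this. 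The paper's missing ingredient is a connecting argument: for each $z \in \mu$ and $k$ large enough that $\mu_k \cap E_z \ne \emptyset$ (say $z_k = \mu_k \cap E_z$), the assignment $t \mapsto e(\mu_k \cap E_t,\, t)$ for $t \in [y,z]_\mu$ traces a path contained in a single leaf of $\w{\F}$ (seen by covering with finitely many flow boxes). Since this path starts at $e(y_k,y) \in l_y$ and ends at $e(z_k,z) \in l_z$, one concludes $l_z = l_y$. The singular case needs the same connecting step along $[y,z]_\mu$ for $z$ in the closure of the chosen prong, and this is where the real work lies --- not in the component-matching or Assumption~\ref{further assumption}, which plays no role here.

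A couple of smaller confusions in your setup: in the non-singular case $l^u_\mu = \bigcup_{x\in\mu}(x,r_x)$ is the entire leaf, not a ``component corresponding to a chosen half of $\mu_y$'', and $y$ is an arbitrary point of $\mu$, not a distinguished one; there is also no ``relevant side of $\mu_y$'' for $t$ to sit on. These are easily fixed once the connecting argument is in place.
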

\begin{proof}
    (a) can be known from the local structure of $\w{\F}, \w{\Fs}, \w{\Fu}$ 
    in all flow boxes and singular flow boxes.
    We prove (b) as follows.
    For any leaf $\mu$ of $\Fb$,
    we will always denote by $l^{u}_{\mu}$ the leaf of $\w{\Fu}$ projected to $\mu$.
    For any $t \in \PP$,
    we still denote by $\rho_t$ the flowline $(t,r_t)$ of $\w{\phi}$ and
    denote by $E_t = \{x \in \lambda_t \mid t \in R_x\}$.
    Let $\mu$ be a leaf of $\Fb$,
    and we prove (b) for $l^{u}_{\mu}$ as follows.

    We first assume that $\mu$ is a non-singular leaf.
    Let $y \in \mu$,
    and we choose an infinite sequence of points $y_1, y_2, \ldots$ on $E_y$ such that 
    $\lim_{n \to \infty} y_n = y$.
    Let $\mu_1,\mu_2,\ldots$ be the collection of leaves of $\Fb$ for which $y_i \in \mu_i$.

    Now choose any $z \in \mu$.
    There is a sufficiently large $n \in \N$ such that
    $\mu_k \cap E_z \ne \emptyset$ for all $k \geqslant n$.
    We denote $\mu_k \cap E_z$ by $z_k$ for all $k \geqslant n$.
    Now fix $k \in \Z_{\geqslant n}$.
    We can connect $e(z_k,z)$ with $e(y_k,y)$ by
    finitely many flow boxes that cover $e(\mu_k \cap E_t,t)$ for all $t \in [y,z]_{\mu}$,
    then we can ensure that $e(\mu_k \cap E_t,t)$ is contained in $l_y$ 
    for any $t \in [y,z]_{\mu}$.    
    Hence $e(z_k,z) \in l_y$.
    This implies that $l_z = l_y$ and $l_y \cap \rho_{z_k} \ne \emptyset$.
    However,
    $\rho_z \cap l_y = \emptyset$ as $l_z = l_y$.
    It follows that $\rho_z$ is asymptotic to $l_z = l_y$.
    Therefore,
    $l_y$ is asymptotic to $l^{u}_{\mu}$.
    
    Now we consider the case that $\mu$ is a singular leaf.
    Let $y$ be the singularity of $\mu$,
    and let $E$ be a component of $E_y$.
    Let $l_E$ be the leaf of $\w{\F}$ for which $e(x,y) \in l_E$ for all $x \in E$.
    We choose an infinite sequence of points $y_1, y_2, \ldots$ on $E$ with
    $\lim_{n \to \infty} y_n = y$,
    and let $\mu_n$ denote the leaf of $\Fb$ for which $y_n \in \mu_n$ for each $n \in \N$.    

    Let $J$ be the component of $\PP - \mu$ containing $E$,
    and we choose $z \in (\mu \cap \overline{J}) - \{y\}$.
    Then there is a sufficiently large $n \in \N$ such that
    $\mu_k \cap E_z \ne \emptyset$ for all $k \geqslant n$.    
    We denote $\mu_k \cap E_z$ by $z_k$ for all $k \geqslant n$.
    Similar to the discussion above, we can confirm that $\rho_{z_k}$ intersects $l_E$ at $e(z_k,z)$ and 
    that $l_E = l_z$ does not intersect $\rho_z$.
    It follows that $l_E$ is asymptotic to $l^{u}_{\mu}$.

    Note that $N - l^{u}_{\mu}$ has $2n$ distinct components,
    and for any two distinct components $E_1, E_2$ of $E$,
    $l_{E_1}, l_{E_2}$ are contained in two distinct components of $N - l^{u}_{\mu}$.
    Hence there are $n$ distinct leaves of $\w{\F}$ asymptotic to $l^{u}_{\mu}$
    simultaneously.
\end{proof}

From the singular foliation $\Fu$ (or $\Fu$),
we can also construct a co-orientable taut foliation as in \cite{Gab92b}.
Here we describe the difference between our foliation $\F$ and
the foliation constructed from \cite{Gab92b}.

\begin{rmk}\label{comparison}
    For convenience,
    we pull-back the two constructions to the universal cover $N$ of $M$ and
    compare them near the singular leaves of $\w{\Fu}$ as follows.
    Let $l^{u}$ be a $2n$-index singular leaf of $\w{\Fu}$.
    In Gabai's construction,
    $l^{u}$ is split to $2n$ boundary leaves of a solid torus gut,
    where $n$ of them are positively oriented and 
    the other $n$ of them are negatively oriented 
    (with respect to the co-orientation on $\w{\Fu}$),
    then this solid torus gut is filled with monkey saddles.
    Note that these $n$ positively oriented leaves are accumulated by monkey saddles simultaneously from their negative sides.

    In our construction,
    there are $n$ distinct leaves of $\w{\F}$ asymptotic to 
    the $2n$-index singular leaf $l^{u}$ simultaneously,
    denoted $F_1,\ldots,F_n$,
    which are accumulated by some leaves of $\w{\F}$ simultaneously from their positive sides.
    However, those leaves converging to $F_1,\ldots,F_n$ do not have a structure similar to the monkey saddles in Gabai's construction; we cannot always find $n$ non-separated leaves accumulated by them simultaneously from the negative sides.
\end{rmk}

We note that not all (pseudo-)Anosov flows are transverse to taut foliations.
So we cannot generalize the construction of $\F$ to (pseudo-)Anosov flows without
orientable stable foliations.

\begin{exmp}\label{pretzel example}
Let $K$ be the $(-2,3,7)$-pretzel knot in $S^{3}$.
Then $K$ is a hyperbolic fibered knot \cite{O84} with 
degeneracy slope $18$ \cite{FS80},
where the degeneracy locus on $K$ has multiplicity $1$.
In addition,
$K$ is an L-space knot \cite{LM16},
which implies that $S^{3}_{K}(s)$ (the surgery on $K$ with slope $s \in \Q$) 
admits no co-orientable taut foliation for all $s \geqslant 9$
(\cite{OS04}, \cite{KMOS07}).
Let $M = S^{3}_{K}(20)$.
As the distance between the slopes $18,20$ is $2$,
$M$ admits an Anosov flow $\varphi$ \cite{Fri83}.
However,
$M$ admits no co-orientable taut foliation.
Thus, 
$\phi$ is necessarily a non-orientable Anosov flow.
As any taut foliation in $M$ transverse to $\varphi$ 
has a well-defined co-orientation induced from 
the orientations on the orbits of $\varphi$,
there is no taut foliation in $M$ transverse to $\varphi$.
\end{exmp}

We finish this section with remark that by Agol-Tsang \cite{AT22}, we can give a sufficient and necessary condition for smooth pseudo-Anosov flow by using Markov partitions. 

\begin{rmk}\label{sufficient and necessary condition}
    In Section \ref{sec:char_anosov}, we give a necessary and sufficient condition for
    $M$ being closed and $\phi$ being topological Anosov.
    The Compactness property (Definition \ref{cptprop}), Convergence property (Definition \ref{convprop}) and Divergence Property (Definition \ref{divprop}) 
    can be generalized to the context in this section,
    allowing us to derive a necessary and sufficient condition for $M$ to be closed and $\phi$ to be topological pseudo-Anosov.    
    Note that a flow box or singular flow box defined in Construction \ref{construction of N} is a generalization of the flow box of Section \ref{sec:char_anosov}.

    By Agol-Tsang \cite[Definition 5.9, Theorem 5.11]{AT22},
    if $M$ is closed, $\phi$ is transitive, and $\phi$ satisfies \cite[Definition 5.9]{AT22} (i.e. $\phi$ satisfies Definition \ref{topological pseudo-Anosov flow} except (d), and $\phi$ has a Markov partition),
    then $\phi$ is orbitally equivalent to a transitive smooth pseudo-Anosov flow.
    Since a Markov decomposition of $\phi$ lifts to a $\pi_1$-equivariant decomposition of $\w{\phi}$, we can describe such a decomposition from $\PP \times \PP$ and thus
    give a sufficient and necessary condition for the existence of a Markov partition of $\phi$.
    Therefore, we can obtain a necessary and sufficient condition so that
    $M$ is closed and $\phi$ is orbitally equivalent to a transitive smooth pseudo-Anosov flow.
    \end{rmk}
   
\section{Characterization as laminar groups} \label{sec:circle-lamination}

   A group that acts faithfully on the circle by orientation-preserving homeomorphisms is called a laminar group if it preserves a circle lamination. We will abuse the terminology and call a group a laminar group even if it preserves an almost lamination and not a genuine lamination. The goal of this section is to show that Anosov-like actions on bifoliated planes can be recovered from certain laminar groups and we will do this on the top of Section \ref{sec:BBM}. 

   Recall that a lamination is said to be quite full if every gap is either a finite-sided polygon or a crown. Here, a crown is a gap with countably infinite number of vertices with a single accumulation point which is accumulated by the vertices from both sides. The unique accumulation point is called the pivot of the crown. 

    We generalize the notion of quite full laminations here by allowing two more types of gaps, blown-up crowns and half-planes. A gap is a blown-up crown if one replaces the pivot of a crown by an arc and adds a new leaf connecting the endpoints of this newly added arc. This new leaf is called the pivotal side of the blown-up crown. In case of a half-plane gap, the only boundary leaf of the gap is called the pivotal side of the half -plane. A circle lamination is called a generalized quite full lamination if each of its gaps is either a finite-sided polygon, a crown, a blown-up crown, or a half-plane.

    \begin{figure}[htb]
\begin{tikzpicture}[scale=1]
    \draw[thick] (0,0) circle (2 cm);
    
    \definecolor{purple}{rgb}{0.5, 0, 0.5}
    
    \draw[thick, purple] (1.732050808, 1) arc (300:160:0.7279404685);
    \draw[thick, purple] (0.6840402867, 1.879385242) arc (340:200:0.7279404685);
    \draw[thick, purple] (-0.6840402867, 1.879385242) arc (20:-120:0.7279404685);
    \draw[thick, purple] (-1.732050808, 1) arc (60:-80:0.7279404685);
    
    \draw[thick, purple] (-1.969615506, -0.3472963553) arc (100:-50:0.5358983849);
    \draw[thick, purple] (-1.532088886, -1.285575219) arc (130:-25:0.4433893253);
    \draw[thick, purple] (-0.8452365235, -1.812615574) arc (155:-10:0.2633049952);
    \draw[thick, purple] (-0.3472963553, -1.969615506) arc (170:-5:0.08732188582);
    \draw[thick, purple] (-0.1743114855, -1.992389396) arc (175:-2:0.05237184314);
    
    \draw[thick, purple] (0.06979899341, -1.998781654) arc (182:5:0.05237184314);
    \draw[thick, purple] (0.1743114855, -1.992389396) arc (185:10:0.08732188582);
    \draw[thick, purple] (0.3472963553, -1.969615506) arc (190:25:0.2633049952);
    \draw[thick, purple] (0.8452365235, -1.812615574) arc (205:50:0.4433893253);
    \draw[thick, purple] (1.532088886, -1.285575219) arc (230:80:0.5358983849);
    \draw[thick, purple] (1.732050808, 1) arc (120:260:0.7279404685);
    
    \draw (0,-2) node{$\bullet$} node[below]{$p$};

    \draw[thick] (5,0) circle (2 cm);
    
    \definecolor{purple}{rgb}{0.5, 0, 0.5}
    \definecolor{red}{rgb}{1, 0, 0}
    
    \draw[thick, purple] (6.732050808, 1) arc (300:160:0.7279404685);
    \draw[thick, purple] (5.6840402867, 1.879385242) arc (340:200:0.7279404685);
    \draw[thick, purple] (5-0.6840402867, 1.879385242) arc (20:-120:0.7279404685);
    \draw[thick, purple] (5-1.732050808, 1) arc (60:-80:0.7279404685);
    \draw[thick, purple] (5-1.969615506, -0.3472963553) arc (100:-50:0.5358983849);
    \draw[thick, purple] (5-1.532088886, -1.285575219) arc (130:-25:0.3433893253);
    \draw[thick, purple] (5-0.9952365235, -1.712615574) arc (155:-20:0.1633049952);
    \draw[thick, purple](5-0.69, -1.86) arc (160:-15:0.05);
    
    \draw[thick, purple] (5.9952365235, -1.712615574) arc (205:50:0.3433893253);
    \draw[thick, purple] (5.69, -1.86) arc (200:35:0.1633049952);
    \draw[thick, purple](5.6, -1.88) arc (195:-20:0.05);
    \draw[thick, purple] (6.532088886, -1.285575219) arc (230:80:0.5358983849);
    \draw[thick, purple] (6.732050808, 1) arc (120:260:0.7279404685);
    \draw[thick, red] (5-0.5, -1.95) arc(170:10:0.52);
    
    \draw (5-0.5,-1.95) node{$\bullet$} node[below left]{$p$};
    \draw (5.5,-1.95) node{$\bullet$} node[below right]{$q$};
\end{tikzpicture}
    \caption{Left: a crown with the pivot p. Right: a blown-up crown where the pivotal side is colored in red.}
    \label{Fig:crownexample}
\end{figure}
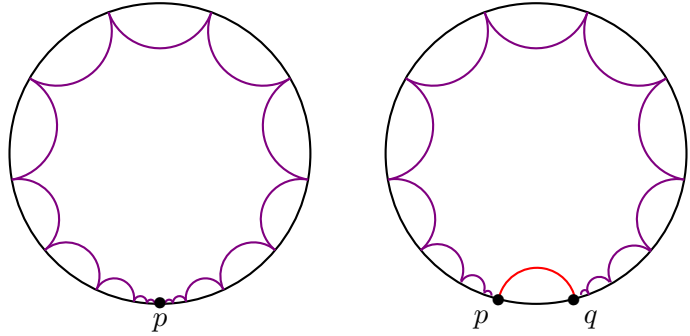

  An almost lamination $\Lambda$ is called quite full if it is obtained from a generalized quite full lamination with no crowns (i.e., all gaps are ideal polygons and blown-up crowns) by removing the pivotal side of each blown-up crown or a half-plane and removing at most one side of each ideal polygon (so all blown-up crowns, all half-planes and some polygons become cataclysms). 
  
  From now on, we will consider transverse bifoliar pairs of quite full almost laminations. Theorem \ref{thm:BBM} of \cite{BBM} guarantees that all bifoliar pairs are induced by pA-bifoliations. But the following proposition says that considering bifoliar pairs of quite full almost laminations is enough in the study of pseudo-Anosov flows of 3-manifolds.

   \begin{prop} \label{prop:bifoliarpA}
      Let $(\Lambda^+, \Lambda^-)$ be a bifoliar pair of almost laminations. $\Lambda^\pm$ are quite full if and only if $\mathcal{F}^\pm$ is a pA-bifoliation obtained from a pseudo-Anosov flow in 3-manifold. 
  \end{prop}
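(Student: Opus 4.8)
The plan is to prove the two directions separately, using Theorem \ref{thm:BBM} to pass between bifoliar pairs and pA-bifoliations on the plane, and then translating the plane-side condition ``arises from a pseudo-Anosov flow in a $3$-manifold'' into a condition on the types of gaps that appear.

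\emph{The ``only if'' direction.} Suppose $\Lambda^\pm$ are quite full. By Theorem \ref{thm:BBM}, the bifoliar pair $(\Lambda^+,\Lambda^-)$ is induced by a pA-bifoliation $(\PP,\Fa,\Fb)$ on the plane, unique up to homeomorphism. What must be shown is that this particular $(\PP,\Fa,\Fb)$ is (orbit-space equivalent to) the orbit space of a pseudo-Anosov flow. The idea is to invoke the main construction of the paper: first upgrade the circle data to a group action. Here one notes that a single bifoliated plane does not carry an intrinsic group action, so the cleanest route is to observe that the pA-bifoliations arising from pseudo-Anosov flows are characterized intrinsically among all pA-bifoliations by a purely local/combinatorial condition on gaps. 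Concretely, I would argue: (i) the gaps of $\Lambda^+$ correspond to either ideal polygon gaps of $\Or^s$ (i.e.\ singular orbits / prong structure) or cataclysms (i.e.\ chains of non-separated leaves / lozenges-type configurations with perfect fits), and quite fullness forbids precisely the pathological infinite gap types (crowns with genuine pivot, and ``non-reduced'' accumulations) that cannot occur in an orbit space; (ii) conversely every orbit space of a pseudo-Anosov flow has gaps only of these two kinds — finite-sided ideal polygons (around singular orbits, via Definition \ref{topological pseudo-Anosov flow}(e)) and cataclysms (perfect-fit configurations, cf.\ Definition \ref{perfect fit} and the discussion of \cite{Fen12,Fen16}). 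So quite fullness is exactly the obstruction-free case. The realization of the bifoliated plane by an actual $3$-manifold flow then follows from Theorem \ref{main theorem}(a) applied to a flowable group action preserving the pair; but since the proposition as stated is about the bifoliation alone, I would phrase the conclusion as: the pA-bifoliation $(\Fa,\Fb)$ is abstractly homeomorphic to $(\Ors(\psi),\Oru(\psi))$ for a reduced pseudo-Anosov flow $\psi$ obtained by the construction of Section \ref{sec:frombifoliatedplane} (after passing to an orientable doubling if needed, via the \emph{orientable double cover} discussion), and for the ``genuine pseudo-Anosov flow in a closed $3$-manifold'' version one additionally invokes the Compactness/Convergence/Divergence machinery of Section \ref{sec:char_anosov} together with Fried's surgery description Theorem \ref{Fried's surgery theorem}.

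\emph{The ``if'' direction.} Suppose $\Fa,\Fb$ is a pA-bifoliation obtained from a pseudo-Anosov flow $\phi$ in a $3$-manifold, i.e.\ $(\PP,\Fa,\Fb)\cong(\Or(\phi),\Ors(\phi),\Oru(\phi))$. We must show $\Lambda^\pm=\Lambda^s_\infty(\phi),\Lambda^u_\infty(\phi)$ are quite full. This is where one does the actual geometric work: classify the gaps. A gap of $|\Lambda^s_\infty(\phi)|$ corresponds to a complementary region of $\Ors(\phi)$ in $\Or(\phi)$ together with its ideal-boundary leaves. The three things to rule out are: (a) a gap that is an infinite-sided ideal polygon whose vertices do \emph{not} accumulate (impossible: $\Or(\phi)$ is second countable and the leaf space is nice, any such would contradict the local product structure / Definition \ref{topological pseudo-Anosov flow}(b)); (b) a crown whose pivot is a genuine point of $S^1_\infty$ that is itself an ideal endpoint of a leaf — I would show this forces an ``infinite chain'' of perfect fits accumulating on a single leaf, which is exactly the kind of configuration excluded for orbit spaces coming from flows (one can cite the boundedness-type finiteness built into the setup, or argue directly that the leaf at the pivot would have to be non-separated from infinitely many leaves on one side, contradicting that the stabilizer structure of Anosov-like/flowable actions is infinite cyclic, property (A4)/(A5)); (c) verifying that when a gap is infinite it is always of the ``blown-up crown'' or ``half-plane'' shape before removing a pivotal side — i.e.\ that the only infinite gaps are the ones sitting over a single leaf that is a limit of the polygon's sides. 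Putting (a)–(c) together: every gap of $\Lambda^\pm$ is a finite-sided ideal polygon, a blown-up crown, or a half-plane, and since $\Lambda^\pm$ is an \emph{almost} lamination we have already removed at most one side of each (the pivotal side of each blown-up crown/half-plane, and possibly one side of each polygon when there is a perfect fit). That is precisely the definition of quite full.

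\emph{Main obstacle.} The hard part is the ``if'' direction, step (b)/(c): showing that the orbit space of a genuine pseudo-Anosov flow never produces a true crown (a gap with countably many vertices accumulating on a pivot point that is itself on $S^1_\infty$ as a limit) but only its ``blown-up'' relatives. This is really a statement that perfect fits can accumulate only in the controlled ``cataclysm'' way, which uses the finiteness/discreteness features of flows in closed $3$-manifolds — concretely the fact that non-separated leaves occur in finite collections with infinite-cyclic stabilizers (properties (A4),(A5) of Anosov-like actions, and the corresponding facts for reduced pseudo-Anosov flows). I would isolate this as a lemma: \emph{in the orbit space of a (reduced) pseudo-Anosov flow, a gap of $\Ors$ or $\Oru$ is either a finite ideal polygon or a cataclysm whose pivot is a leaf (not a point of $S^1_\infty$)}, and then quite fullness of $\Lambda^\pm$ is immediate. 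The remaining pieces — second countability to exclude (a), and the translation between ``cataclysm'' and ``blown-up crown/half-plane with pivotal side removed'' — are essentially bookkeeping against the definitions in Section \ref{sec:BBM} and Theorem \ref{thm:BBM}.
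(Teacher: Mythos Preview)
Your proposal is considerably more elaborate than the paper's actual proof, which is essentially a one-line citation: ``The only remaining thing to show is that the cataclysms of a quite full lamination are the only cataclysms that arise in the space of flowlines for a pseudo-Anosov flow. This follows from \cite[Theorem 4.9]{Fen99}.'' In other words, after Theorem~\ref{thm:BBM} handles the polygon gaps and the general bifoliation structure, the only content of the proposition is that the cataclysm types match, and Fenley's 1999 classification of non-separated leaf configurations in orbit spaces is exactly that statement. Your ``if'' direction sketch (ruling out crowns, etc.) is on the right track but reinvents Fenley's theorem; the paper simply cites it.

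Your ``only if'' direction, however, has a genuine gap. You propose to invoke Theorem~\ref{main theorem}(a) and the machinery of Sections~\ref{sec:realizingtopflow}--\ref{sec:frombifoliatedplane} to build a $3$-manifold with a flow realizing the given bifoliation. But that machinery requires a \emph{flowable group action} on $\PP$ as input, and none is given here --- the proposition is stated for a bare bifoliar pair with no group. You cannot manufacture a flowable action out of the laminations alone, and without one there is no $M(\rho)$ to speak of. Moreover, invoking the main theorem here would be circular: Proposition~\ref{prop:bifoliarpA} sits in Section~\ref{sec:circle-lamination} precisely to justify restricting to quite full laminations \emph{before} the circle-side version of the main construction is assembled. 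The paper does not actually prove this direction by constructing a flow; rather, the proposition should be read as asserting that quite full is the structural characterization of the cataclysms/gaps that can appear in flow orbit spaces (again via Fenley), so that no generality is lost by restricting to quite full pairs. If you want a literal existence statement for the ``only if'' direction, it is not available at this point in the paper and the paper does not supply one.
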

  \begin{proof}
      The only remaining thing to show is that the cataclysms of a quite full laminations are the only cataclysms that arise in the space of flowlines for a pseudo-Anosov flow. This follows from \cite[Theorem 4.9]{Fen99}.
  \end{proof}
  
  Moreover, in the case of bifoliar pair of quite full almost laminations, one can describe the construction of the corresponding bifoliated plane very explicitly using Moore's theorem (see \cite{moore1929concerning} and also \cite{timorin2010moore}). Consider geometric realizations of a bifoliar pair $\Lambda^\pm$ as geodesic (almost) laminations on the hyperbolic plane (as defined in Section \ref{subsec:fromcircle}). We first define a relation $\sim'$ on $\mathbb{H}^2$. For $x, y \in \mathbb{H}^2$, we say $x \sim' y$ if one of the followings holds:
  \begin{itemize}
      \item[(i)] $x, y \in \lambda \in |\Lambda^+|$ and $x, y$ are contained in the same component of $\lambda \setminus |\Lambda^-|$.
      \item[(ii)] $x, y \in \mu \in |\Lambda^-|$ and $x, y$ are contained in the same component of $\mu \setminus |\Lambda^+|$.
      \item[(iii)] $x \in |\Lambda^+| \cup |\Lambda^-|$, $y \in \mathbb{H}^2 \setminus (|\Lambda^+| \cup |\Lambda^-|)$, and there exists a path $\gamma$ from $x$ to $y$ in $\mathbb{H}^2$ such that $\gamma$ is contained in $(\mathbb{H}^2 \setminus (|\Lambda^+| \cup |\Lambda^-|) ) \cup \{x\}$. 
  \end{itemize}
  Let $\sim$ be the closed equivalence relation generated by $\sim'$, and we call the equivalence classes \emph{slits}. It is important to remember that the slits are closed. We collapse each slit which is contained in a compact subset of $\mathbb{H}^2$ to a single point, and simply remove the other slits. For example, we remove any half-plane gap together with its boundary. This quotienting certainly satisfies the conditions of Moore's theorem. Hence, the quotient is homeomorphic to an open disk again, call it $\mathcal{P}$. The detail of this process is completely analogous to the argument given in Baik-Jung-Kim \cite{baik2022groups}. Also, it is evident that the image of a leaf of geometric realizations under this quotient map is homeomorphic to the real line (let $q$ denote this quotient map). Hence $\Lambda^\pm$ descend to foliations $\mathcal{F}^\pm$. Hence, we get a bifoliated plane $(\mathcal{P}, \mathcal{F}^\pm)$. Verifying that the pair of foliations $\mathcal{F}^\pm$ is a pA-bifoliation requires additional work, but we omit it since it is taken care of in \cite{BBM}. 

  We remark that for a cataclysm, there exists a sequence of leaves converging to the removed leaf. By collapsing slits contained in the cataclysm, one sees that in the bifoliated plane, the image of such a sequence of leaves converges to the union of leaves which are images of the sides of the cataclysm, except the half-plane case. Hence, any two sides of a cataclysm are not separable in the leaf space of the corresponding foliation after collapsing and vice versa.

In the rest of the section, we explain how to interpret the Anosov-like actions on bifoliated planes as group actions on the circle with invariant bifoliar pair of almost laminations. 
  
  Recall that for a leaf $\ell_0$ of $\Lambda^+$, the set $\{(\ell_0, \ell): \ell \in \Lambda^- \mbox{ which is linked with } \ell_0\}$ is called a thread in $\Lambda^+$. The endpoints of $\ell_0$ are also called the endpoints of the corresponding thread. By abusing the notation, we also call the set $\{\ell: \ell \in \Lambda^- \mbox{ which is linked with } \ell_0\}$  a thread in $\Lambda^+$. We define threads in $\Lambda^-$ similarly. Note that there is a natural linear order on each thread (up to reversing the order).
 
 A nontrivial convex subset of a thread with minimal and maximal points is called an interval in $\Lambda^\pm$. An interval $I_+$ of $\Lambda^+$ and an interval $I_-$ of $\Lambda^-$ are said to be a linking pair if every element of $I_+$ is linked with every element of $I_-$ and vice versa. 

   Suppose we have points $x_1, x_2, \dots, x_{2n}$ on $S^1$ such that, as an ordered tuple, they are positively oriented according to the natural cyclic order on $S^1$. If $(x_i, x_{i+1})$ is a leaf of $\Lambda^+$ for each odd $i$ and $(x_i, x_{i+1})$ is a leaf of $\Lambda^-$ for each even $i$, then $\{x_1, \ldots, x_{2n}\}$ is called an ideal $2n-$chain of the pair $(\Lambda^+, \Lambda^-)$. 

   We consider a triple $(G, \Lambda^+, \Lambda^-)$ where $G$ is a group of orientation-preserving homeomorphisms of the circle and $\Lambda^\pm$ is a $G$-invariant bifoliar pair of quite full almost laminations. Such a triple $(G, \Lambda^+, \Lambda^-)$ is called an Anosov-like triple if 
   \begin{enumerate}
       \item [(B1)] Each nontrivial element $g$ of $G$ fixes at most one element of each thread. Such a fixed point is the intersection of a thread from $\La$ and a thread from $\Lb$. Then, $g$ topologically expands one of the threads while topologically contracting the other. 
       \item[(B2)] There exists a linked pair of leaves $\lambda_\pm \in \Lambda^\pm$ such that for every linking pair $I_\pm$ of intervals of $\Lambda^\pm$, there exists $g \in G-\{1\}$ with $g(\lambda_+) \in I_+$ and $g(\lambda_-) \in I_-$. 
       \item[(B3)] Let $I_\pm$ be a linking pair of intervals of $\Lambda^\pm$. Then there exists $g \in G-\{1\}$ such that either $g(I_+) \subset I_+, I_- \subset g(I_-)$ or $g(I_-) \subset I_-, I_+ \subset g(I_+)$. 
       \item[(B4)] For every linked pair of leaves $\lambda_\pm \in \Lambda^\pm$, the stabilizer in $G$ is either trivial or isomorphic to $\mathbb{Z}$. If both $\lambda_\pm$ are singular, then the stabilizer must be nontrivial, ie., isomorphic to $\mathbb{Z}$. 
       \item[(B5)] If $\lambda_1, \lambda_2$ are two sides of a cataclysm in $\Lambda^+$ or $\Lambda^-$, then there exists $g \in G-\{1\}$ which fixes $\lambda_1, \lambda_2$ simultaneously. 
       \item[(B6)] $(\Lambda^+, \Lambda^-)$ has no ideal $4$-chain. 
   \end{enumerate}

We say a triple $(G, \Lambda^+, \Lambda^-)$ satisfies Property (B7) if the following holds: suppose that $\ell_1 \in \Lambda^+$ and $\ell_2, \ell_3 \in \Lambda^-$ so that both $\ell_2$ and $\ell_3$ are linked with $\ell_1$ (so we are describing two elements in a single thread). Consider the product $Y$ of the leaf space of $\Lambda^+$ and the leaf space of $\Lambda^-$. Then for each $i = 2, 3$, there exists a neighborhood $U_i$ of $(\ell_1, \ell_i)$ in $Y$ so that no nontrivial element $g$ of $G$ satisfies $g((\ell_1, \ell_i)) \in U_i$ for both $i = 2, 3$. The same is true when we replace the roles of $\Lambda^+$ and $\Lambda^-$.  
This property corresponds to Property (A7) (see Definition \ref{A.7} in Section 3). Philosophically speaking, this property tells us that the action of $G$ on the space of linked pairs of leaves is in some sense acylindrical.

\begin{THM} A group $G$ acts faithfully on $S^1$ by orientation-preserving homeomorphisms with an invariant bifoliar pair of laminations $\Lambda^\pm$ so that $(G, \Lambda^+, \Lambda^-)$ is an Anosov-triple satisfying Property (B7) if and only if $G$ admits an Anosov-like action on the bifoliated plane $(\mathcal{P}, \mathcal{F}^\pm)$ which satisfies Property (A7) and induces $\Lambda^\pm$.  
\end{THM}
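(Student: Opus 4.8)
The plan is to establish the equivalence by passing through the bifoliated plane $(\mathcal{P},\mathcal{F}^\pm)$ built from the pair $(\Lambda^+,\Lambda^-)$ via the explicit Moore-theorem quotient $q\colon \mathbb{H}^2 \to \mathcal{P}$ described above (collapsing slits, removing the non-compact ones). Since Theorem \ref{thm:BBM} already guarantees that a bifoliar pair of quite full almost laminations is induced by a pA-bifoliation $\mathcal{F}^\pm$ on $\mathcal{P}$, unique up to homeomorphism, and that any group action on $S^1$ preserving $\Lambda^\pm$ extends to an action on the closed disk $\mathcal{P}\cup S^1$ preserving $\mathcal{F}^\pm$, I already have a correspondence between faithful orientation-preserving actions of $G$ on $S^1$ preserving a bifoliar pair $\Lambda^\pm$ and actions of $G$ on a pA-bifoliated plane $(\mathcal{P},\mathcal{F}^\pm)$ with $\mathcal{F}^\pm$ inducing $\Lambda^\pm$. (The backward direction, recovering $(\Lambda^+,\Lambda^-)$ from the bifoliated plane as the pair of leafwise-endpoint laminations on the ideal circle of $\mathcal{P}$, uses the ideal boundary construction of \cite{BFM22} and the fact that a pA-bifoliation coming from a pseudo-Anosov flow has only the cataclysms allowed by a quite full lamination, as in Proposition \ref{prop:bifoliarpA}.) So the whole theorem reduces to a dictionary lemma: the triple conditions (B1)--(B6)+(B7) on $(G,\Lambda^+,\Lambda^-)$ translate one-to-one, under $q$ and the extension of the action, into the plane conditions (A1)--(A6)+(A7) on the $G$-action on $(\mathcal{P},\mathcal{F}^\pm)$.

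The key steps, carried out as a sequence of translations:
\begin{enumerate}[(i)]
\item Set up the correspondence between leaves/threads. A leaf $\ell_0\in\Lambda^+$ corresponds to its geometric realization, and the image $q(|\ell_0|)$ is a leaf of $\mathcal{F}^+$, homeomorphic to $\mathbb{R}$; the thread of $\ell_0$ (the set of leaves of $\Lambda^-$ linked with $\ell_0$) maps bijectively and order-preservingly onto the set of leaves of $\mathcal{F}^-$ crossing $q(|\ell_0|)$, i.e.\ onto the leaf $q(|\ell_0|)$ itself, via $\ell\mapsto q(|\ell_0|)\cap q(|\ell|)$. A linking pair of intervals $I_\pm$ corresponds exactly to a product rectangle (or product polygon, in the singular case) in $\mathcal{P}$. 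Singular leaves of $\mathcal{F}^\pm$ correspond precisely to the (even) ideal polygon gaps of $\Lambda^\pm$, and a pair of linked singular leaves $\lambda_\pm$ corresponds to an interleaving pair of polygon gaps, i.e.\ to a singular point of $\mathcal{P}$.
\item Translate (B1)$\leftrightarrow$(A1): the ``fixes at most one element of each thread'' and ``expands one thread, contracts the other'' statement is, under step (i), exactly ``$g$ fixes exactly one point of a leaf $\lambda\in\mathcal{F}^\pm$ and expands one transverse leaf while contracting the other.''
\item Translate (B2)$\leftrightarrow$(A2) and (B3): density of an orbit in $\mathcal{P}$ is equivalent to the statement that a fixed linked pair of leaves has its $G$-orbit meeting every product rectangle/polygon, which is (B2); and (B3) says the generator of a singular-point stabilizer properly nests the rectangle, which is the standard consequence of (A1)+(A4) at singular points, also needed to identify it with the $\pi_1$-behavior of a singular orbit (this matches the ``flowable'' stretch/compress requirement). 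Actually (B2) and (B3) together are the circle-side repackaging of (A2); density of the fixed-point set (A3) is a consequence of (B2) applied to singular-leaf pairs together with (B1), since fixed points of group elements accumulate on any product region. I would state this carefully.
\item Translate (B4)$\leftrightarrow$(A4): stabilizers of linked pairs of leaves are stabilizers of points of $\mathcal{P}$; the extra clause ``if both $\lambda_\pm$ singular then stabilizer $\cong\mathbb{Z}$'' is exactly ``every singular point of $\mathcal{P}$ has nontrivial stabilizer.''
\item Translate (B5)$\leftrightarrow$(A5): by the remark after the Moore-quotient construction, two sides of a cataclysm in $\Lambda^\pm$ become two non-separated leaves of $\mathcal{F}^\pm$ and conversely; so (B5) is literally (A5).
\item Translate (B6)$\leftrightarrow$(A6): an ideal $4$-chain of $(\Lambda^+,\Lambda^-)$ is, after $q$, a totally ideal quadrilateral in $\mathcal{P}$, and vice versa.
\item Translate (B7)$\leftrightarrow$(A7): two elements $(\ell_1,\ell_2),(\ell_1,\ell_3)$ of a single thread in the product of leaf spaces $Y$ correspond to two distinct points $x,y$ on a single leaf $\lambda=q(|\ell_1|)$ of $\mathcal{F}^+$; a neighborhood in $Y$ maps to a neighborhood in $\mathcal{P}$ of each, so (B7) is exactly Definition \ref{A.7}. (One must check that $q$ restricted to a leaf is a quotient map onto $\mathbb{R}$ identifying slit-neighborhoods with interval-neighborhoods, so ``moving both $(\ell_1,\ell_i)$ slightly in $Y$'' is the same as ``moving $x,y$ slightly in $\mathcal{P}$'' — this is where the explicit Moore description pays off.)
\end{enumerate}

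Putting (ii)--(vii) together gives that $(G,\Lambda^+,\Lambda^-)$ is an Anosov-like triple satisfying (B7) iff the extended $G$-action on $(\mathcal{P},\mathcal{F}^\pm)$ is Anosov-like and satisfies (A7); combined with the existence and uniqueness of $(\mathcal{P},\mathcal{F}^\pm)$ from Theorem \ref{thm:BBM} and the recovery of $\Lambda^\pm$ from the ideal boundary of any pA-bifoliated plane, this is the claimed ``if and only if.''

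The main obstacle will be step (vii) together with the careful handling of singular gaps/leaves in steps (i) and (iii): one has to verify that the topology on the product of leaf spaces $L(\Lambda^+)\times L(\Lambda^-)$ restricted to a thread matches the topology that $\mathcal{P}$ induces on a leaf of $\mathcal{F}^\pm$, including at cataclysm pivots and at polygon gaps, so that ``arbitrarily small motion'' means the same thing on both sides. This requires knowing that the quotient map $q$, restricted to a single geodesic leaf, is not merely a continuous bijection onto $\mathbb{R}$ but an honest quotient (hence open, in this one-dimensional setting) — which follows because each slit meets a leaf in a closed sub-interval that is collapsed to a point, and because only slits contained in compact sets are collapsed while the rest are deleted. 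I would isolate this as a lemma on the structure of $q|_{|\ell|}$ before doing the dictionary, since every one of (B1), (B3), (B7) — the ``dynamical'' conditions — depends on identifying leafwise and thread-wise topologies correctly. The remaining translations (B2), (B4), (B5), (B6) are essentially combinatorial and, given the Moore-quotient description and the remark about cataclysms becoming non-separated leaves, routine.
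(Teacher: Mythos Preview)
Your approach is exactly the paper's: use Theorem~\ref{thm:BBM} to pass from the circle action to the induced action on $(\mathcal{P},\mathcal{F}^\pm)$, then translate each (B$k$) into (A$k$) through the quotient $q$. Steps (i), (ii), (iv)--(vii) match the paper's dictionary, and your caution about identifying thread-topology with leaf-topology in (vii) is fine (the paper dispatches (B7)$\leftrightarrow$(A7) in a line, since a neighborhood of $(\ell_1,\ell_i)$ in the product of leaf spaces is a product rectangle, hence a neighborhood of the corresponding point of $\mathcal{P}$).

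Step (iii), however, is wrong. You misread (B3): it says nothing about singular points or their stabilizers. It asserts that \emph{every} linking pair $I_\pm$ admits a nontrivial $g$ with $g(I_+)\subset I_+$ and $I_-\subset g(I_-)$ (or the reverse). Intersecting the nested images $\bigcap_n g^n(I_+)$ and $\bigcap_n g^{-n}(I_-)$ produces a $g$-fixed leaf in each interval, hence a fixed point of $g$ in the rectangle $I_+\times I_-$. So (B3) is the translation of (A3), density of periodic points --- and this is exactly how the paper pairs them. Your proposed alternative, deducing (A3) from (B2)+(B1), fails: a dense orbit does not force dense periodic points (irrational rotation has the first and not the second), so (B1)+(B2) cannot yield (A3). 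The correct bijection is (B2)$\leftrightarrow$(A2) and (B3)$\leftrightarrow$(A3), each on its own; once you fix this, your dictionary is complete and the proof goes through.
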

\begin{proof}
    With Proposition \ref{prop:bifoliarpA}, it is enough to show that $(G, \Lambda^\pm)$ admits Properties (B1)-(B7) for if and only if $(\mathcal{P}, \mathcal{F}^\pm)$ has Properties (A1)-(A7). We already noted above that (B7) directly translates into (A7). 

    First, the threads of $\La, \Lb$ are mapped to the leaves of $\Fa, \Fb$ under the quotient map $q$ which collapses the slits. Hence, condition (A1) directly translates into $(B1)$ and vice versa. 

    A linked pair of leaves corresponds to a point on $\mathcal{P}$ and link pair of intervals corresponds to a product neighborhood of a point in $\mathcal{P}$. Hence (B2) is equivalent to having a dense orbit in $\mathcal{P}$, i.e., the condition (A2). Again in this perspective, (B3) says that any product chart has a fixed point of some nontrivial element of $G$, hence the set of such points are dense in $\mathcal{P}$, the condition (A3). It is also evident that (A4) directly translates into (B4) and vice versa. 

    By the remark before Proposition \ref{prop:bifoliarpA}, (A5) holds if and only if (B5) holds. To be more precise, two leaves in $\Fa$ or $\Fb$ are not separated in $\mathcal{F}$ if and only if there exists a collection of leaves including these two leaves where some infinite sequence of leaves converges the union of the collection of leaves. This happens precisely when the preimages of these two leaves under $q$ are threads along two leaves that are sides of a single cataclysm. 

    Clearly, (A6) directly translates into (B6) and vice versa. 
\end{proof}

The discussion in this section shows that we can move back and forth between group actions on the circle with a bifoliar pair of almost laminations and group actions on the bifoliated plane when the actions are Anosov-like. Both frameworks carry the same amount of information. The construction of a 3-manifold with a (pseudo-)Anosov flow from the former is described in Section \ref{subsec:fromcircle}, while the construction from the latter is outlined in Section \ref{sec:realizingtopflow} and Section \ref{sec:frombifoliatedplane}. As a result of this discussion, one can directly translate between the two constructions.

In the construction given in Section \ref{sec:realizingtopflow} and \ref{sec:frombifoliatedplane}, not all conditions of the Anosov-like action are required. In fact, the properties (A1), (A4), (A7) are the ones we need in the constructions. Hence, we call an action of $G$ on the bifoliated plane satisfying the properties (A1), (A4), and (A7) \emph{flowable}. The corresponding action in the circle (i.e. $G$ action on the circle with a bifoliar pair of quite full almost laminations satisfying properties (B1), (B4), and (B7)) is also called \emph{flowable}. 

As discussed in Subsection \ref{subsec:fromcircle}, a flowable action of $G$ on circle yields a 3-manifold with (pseudo-)Anosov flow and here we describe how one can obtain this from previous sections.  

Let $(\Lambda^+, \Lambda^-)$ be a bifoliar pair of almost laminations on $S^{1}$,
and we may identify this $S^{1}$ with the ideal boundary of the Poincar\'e disk $\DD$.
Recall from Subsection \ref{subsec:fromcircle},
we denote by $|\Lambda^{+}|, |\Lambda^{-}|$ the geometric realizations of 
$\Lambda^{+}, \Lambda^{-}$ respectively,
and for any leaf $\lambda, \mu$ of $\La, \Lb$ respectively,
we denote by $|\lambda|$ (resp. $|\mu|$) the leaf of $\La$ (resp. $\Lb$) whose endpoints
are exactly $\lambda$ (resp. $\mu$).
Based on the language built in this section,
we have a well-defined quotient map $\pi: \DD \to \PP$, 
where $\PP$ is a bifoliated plane with transverse singular foliations $\Fa, \Fb$,
such that
$\pi$ takes all polygons of $\La$ (resp. $\Lb$) to singular leaves of $\Fa$ (resp. $\Fb$) and
takes other leaves of $|\La|$ (resp. $|\Lb|$) to non-singular leaves of $\Fa$ (resp. $\Fb$).
Now assume that $\La, \Lb$ are orientable,
and we consider a flowable action of a torsion-free group $G$ on $\PP$ that
preserves the orientations on the leaves of $\La$ and $\Lb$.
Then we can translate Lemma \ref{equivalence class} to the following version:
for each polygon $P$ of $|\La|$,
we pull-back the map $h_{\pi(P)}$ as given in Lemma \ref{equivalence class} to
a monotone map $h_P = h_{\pi(P)} \circ \pi: P \to \R$,
then $h_P$ is equivariant under the stabilizer of $P$ and satisfies other conditions
as given in Subsection \ref{subsec:fromcircle}.
Therefore,
we can translate
the construction given in Section \ref{sec:frombifoliatedplane} to
the construction described in Subsection \ref{subsec:fromcircle}.

\section{Reconstruction flows in hyperbolic $3$-manifolds from 
convergence group actions on $S^{2}$}\label{sec: convergence group action}

Throughout this section,
pseudo-Anosov flows will always refer to topological pseudo-Anosov flows. 

Recall from Definition \ref{flow ideal boundary} and Theorem \ref{bounded to convergence},
a quasigeodesic (topological) pseudo-Anosov flow $\phi$ in a closed $3$-manifold $M$
(with Gromov hyperbolic group) is \emph{bounded},
and it has a canonical flow ideal boundary $\RR(\phi)$ 
with an induced uniform convergence group action of $\pi_1(M)$.
For a bifoliated plane $\PP$ with certain properties,
Definition \ref{flow ideal boundary} generalizes to an \emph{abstract flow ideal boundary},
which is still homeomorphic to $S^{2}$.
According to Cannon's conjecture \cite{CS98},
if an action of $G$ on $\PP$ induces a uniform convergence group action on this $S^{2}$,
then there is a closed hyperbolic $3$-manifold with deck group $G$.
In this section, we give a partial answer to this conjecture by reconstructing the hyperbolic $3$-manifolds with quasigeodesic topological pseudo-Anosov flows from certain uniform convergence group actions on $S^2$. 

Let $G$ be a torsion-free group acting on a bifoliated plane $(\PP, \Fa, \Fb)$
satisfying Properties (A1), (A4) in Definition \ref{Anosov-like}.
We adopt Convention \ref{conv: leaves and segments} for $(\PP, \Fa, \Fb)$.
Furthermore,
for a point $x \in \PP$,
let $\lambda^{+}_{x}$ (resp. $\lambda^{-}_{x}$) be the union of components of $\lambda_x - \{x\}$ in the positive (resp. negative) side of $x$,
and let $\mu^{+}_{x}$ (resp. $\mu^{-}_{x}$) be the components of $\mu_x - \{x\}$ in the positive (resp. negative) side of $x$.
Note that $\lambda^{\pm}_{x}, \mu^{\pm}_{x}$ are disconnected if and only if
$x$ is a singularity.

We first review the concept of chain of perfect fits in \cite{Fen16}.

\begin{defn}[Chain of perfect fits]\rm
    A set of leaves
    \[\mathcal{C} = \{l_\alpha \mid l_\alpha \text{ is a leaf of } \Fa \text{ or } \Fb\}\]
is called a \emph{chain of perfect fits} if the following conditions hold:

\begin{enumerate}[(1)]
    \item For any two leaves $a,b \in \mathcal{C}$,
there are finitely many leaves 
\[a = l_0, l_1,\ldots,l_{n-1}, l_n = b \in \mathcal{C}\]
such that $l_i, l_{i+1}$ makes a perfect fit for all $0 \leqslant i \leqslant n-1$.
\item Let $l \in \mathcal{C}$.
For any leaf $s$ of $\Fa$ or $\Fb$ with $s \notin \mathcal{C}$,
$l, s$ doesn't make perfect fit.
\end{enumerate}
\end{defn}

To make $\PP$ have a well-defined $2$-sphere abstract flow ideal boundary,
we made the following assumptions on $(\PP, \Fa, \Fb)$:

\begin{assume}[Basic properties for 
the abstract flow ideal boundary]\rm\label{assume: no infinite chain}  
Any two distinct leaves in the same chain of perfect fits are disjoint, and the leaves in the same chain of perfect fits do not form 
a (finite-sided) ideal polygon.
Moreover,
there is no infinite chain of perfect fits,
    and $\Fa, \Fb$ have no product region (see an explanation below).
\end{assume}

Here, by a \emph{product region} of $\Fa$ or $\Fb$,
we mean a region described as follows.
Let $x,y \in \PP$ with $\lambda_x = \lambda_y$ such that,
for any $t \in [x,y]_{\lambda_x}$,
each leaf of $\Fb$ intersecting $\mu^{+}_{t}$ must intersect both of
$\mu^{+}_{x}, \mu^{+}_{y}$,
where the infinite region bounded by $\mu^{+}_{x}, \mu^{+}_{y}$ and $[x,y]_{\lambda_x}$ is
then a product region of $\Fb$ on the positive side.
Product regions of $\Fb$ on the negative side, 
as well as product regions of $\Fa$, 
can be defined similarly.

Assumption \ref{assume: no infinite chain} is necessary to make $\PP$ have 
an abstract flow ideal boundary homeomorphic to $S^{2}$ from 
the approach of \cite[Theorem 6.5]{Fen16}.

\begin{fact}\rm
    If $\phi$ is a bounded pseudo-Anosov flow, then Assumption \ref{assume: no infinite chain} holds for $\Or(\phi)$.
\end{fact}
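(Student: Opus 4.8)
The plan is to verify that each clause of Assumption~\ref{assume: no infinite chain} holds for the orbit space $\Or(\phi)$ of a bounded pseudo-Anosov flow, by invoking the structure theory of perfect fits and lozenges developed by Fenley. First I would recall that, by definition of boundedness, $\phi$ is not topologically conjugate to a suspension Anosov flow, no closed orbit is nontrivially freely homotopic to itself, and the number of distinct free homotopy classes in a given class is uniformly bounded. The central tool is the dictionary between chains of perfect fits in $\Or(\phi)$ and chains of \emph{lozenges} in $\Or(\phi)$ (equivalently, chains of freely homotopic closed orbits after projecting to $M$): two leaves making a perfect fit corresponds to a corner of a lozenge, and a chain of perfect fits corresponds to a chain of lozenges sharing corners. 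This is exactly the setting of \cite{Fen98,Fen99,Fen16}.

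The key steps, in order. (1) \emph{No infinite chain of perfect fits.} An infinite chain of perfect fits yields an infinite chain of lozenges; the corners of consecutive lozenges project to closed orbits of $\phi$ that are pairwise freely homotopic (up to taking powers). An infinite such chain would produce infinitely many distinct free homotopy classes collapsing together, contradicting the cardinality bound in the definition of bounded. Alternatively, if the chain closes up it produces a periodic orbit freely homotopic to itself nontrivially, again contradicting boundedness; this dichotomy is handled in \cite[Section 3--4]{Fen16}. (2) \emph{Two distinct leaves in the same chain are disjoint, and leaves in the same chain do not bound an ideal polygon.} If two leaves in a chain of perfect fits intersected, or if finitely many of them bounded a totally ideal polygon, one would again build a nontrivial self-freely-homotopic closed orbit (the polygon case corresponds to a periodic orbit with the associated deck transformation permuting the sides cyclically), which boundedness forbids; see the discussion of totally ideal polygons and the exclusion of product regions in \cite[Section 4]{Fen16}. (3) \emph{No product region.} A product region of $\Fs$ or $\Fu$ in $\Or(\phi)$ is precisely the obstruction that forces $\phi$ to be (virtually) a suspension Anosov flow, by Barbot--Fenley's classification of $\R$-covered and product-region behavior \cite{Fen98}; since $\phi$ is bounded, it is not a suspension, so no product region can occur.

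The main obstacle I expect is step~(1) combined with the edge cases in step~(2): translating ``chain of perfect fits'' into ``chain of lozenges'' requires care about singular leaves (where perfect fits can occur at prong singularities) and about whether a chain can be finite but non-embedded. The cleanest route is to cite \cite[Theorem 6.5]{Fen16} and its proof directly, since Fenley establishes there that exactly these basic properties hold for the orbit space of any bounded pseudo-Anosov flow in the course of constructing the flow ideal boundary $\RR(\phi)$ as a $2$-sphere; the fact we are asked to prove is then essentially extracted verbatim from that construction. So the proof is short: assume $\phi$ is bounded, observe that Definition~\ref{flow ideal boundary} and Theorem~\ref{bounded to convergence} already presuppose that $\RR(\phi)$ is a well-defined $2$-sphere, and note that the only way this can fail is the failure of one of the properties in Assumption~\ref{assume: no infinite chain}; hence boundedness implies all of them. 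I would write this up as a one-paragraph argument referencing \cite[Section 4, Theorem 6.5]{Fen16}.
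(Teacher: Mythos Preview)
Your three-step approach is correct and matches the paper's own proof almost exactly: the paper cites \cite[Proposition 6.3]{Fen16} for the disjointness and no-polygon clauses, \cite[Theorem C]{Fen16} for the absence of infinite chains, and \cite[Theorem 5.1]{Fen98}, \cite[Theorem 4.10]{Fen99} for the absence of product regions.

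One caveat: your proposed ``shortcut'' at the end is circular. You suggest invoking Theorem~\ref{bounded to convergence} (that $\RR(\phi)$ is a $2$-sphere) and then arguing that Assumption~\ref{assume: no infinite chain} must hold because otherwise $\RR(\phi)$ would fail to be a sphere. But in Fenley's argument (and in this paper's setup), Assumption~\ref{assume: no infinite chain} is an \emph{input} to the proof that $\RR(\phi)\cong S^2$, not a consequence of it; you would be using the theorem to justify its own hypotheses. Stick with your three-step citation approach and drop the shortcut.
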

\begin{proof}
    It is proved in \cite[Proposition 6.3]{Fen16} that 
all chains of perfect fits consist of disjoint leaves,
and that leaves in the same chain of perfect fits cannot form a finite-sided ideal polygon.
In addition,
bounded pseudo-Anosov flows cannot have infinite chains of perfect fits \cite[Theorem C]{Fen16}.
As bounded pseudo-Anosov flows 
are not topologically conjugate to suspension Anosov flows,
they cannot have product regions \cite[Theorem 5.1]{Fen98}, \cite[Theorem 4.10]{Fen99}.
Thus, Assumption \ref{assume: no infinite chain} holds.
\end{proof}

As illustrated above,
Assumption \ref{assume: no infinite chain} holds for the orbit space of
any pseudo-Anosov flow without product regions and infinite chain of perfect fits.

Let $S^{1}_{\infty}$ denote the ideal boundary of $\PP$.
The following definition is Definition \ref{flow ideal boundary} 
adapted to the setting of bifoliated planes:

\begin{defn}[Abstract flow ideal boundary]\rm\label{abstract flow ideal boundary}
    For $a,b \in S^{1}_{\infty}$,
    we define $a \sim b$ if $a,b$ are contained in two leaves $l_1, l_2$ of $\Fa$ or $\Fb$ such that $l_1, l_2$ are in the same chain of perfect fits
(it is possible that $l_1 = l_2$).
    Let $\RR = S^{1}_{\infty} / \sim$.
\end{defn}

It is guaranteed in \cite[Theorem 6.5]{Fen16} that

\begin{prop}[Fenley]\label{prop:2-sphere}
    $\RR$ is homeomorphic to a $2$-sphere,
    and $S^{1}_{\infty}$ can be canonically identified with a sphere-filling Peano curve.
\end{prop}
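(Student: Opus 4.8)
The plan is to follow Fenley's construction in \cite{Fen16} but phrased intrinsically in terms of the bifoliated plane $(\PP, \Fa, \Fb)$ together with its ideal circle $S^{1}_{\infty}$, checking that Assumption \ref{assume: no infinite chain} is exactly what is needed to make the quotient a $2$-sphere. First I would recall that $\PP \cup S^{1}_{\infty}$ is a closed disk, so $S^{1}_{\infty}$ is a circle, and that every leaf $l$ of $\Fa$ or $\Fb$ has a well-defined pair of distinct ideal endpoints in $S^{1}_{\infty}$; two leaves share an ideal endpoint precisely when they make a perfect fit (Definition \ref{perfect fit}). Thus the relation $\sim$ of Definition \ref{abstract flow ideal boundary} is exactly the relation generated by collapsing, for each chain of perfect fits $\mathcal C$, the finite set of ideal endpoints of the leaves in $\mathcal C$. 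The crucial point is that each equivalence class of $\sim$ on $S^{1}_{\infty}$ is a \emph{finite} set of points. Indeed, by Assumption \ref{assume: no infinite chain} there is no infinite chain of perfect fits, and within a finite chain the leaves are pairwise disjoint and do not form an ideal polygon, so the collection of ideal endpoints of the leaves in $\mathcal C$ is a finite subset of $S^{1}_{\infty}$; moreover distinct chains give disjoint such finite sets since a leaf belongs to a unique chain.

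Next I would verify that $\sim$ is a closed, upper semicontinuous decomposition of the circle $S^{1}_{\infty}$ into finite (hence compact, totally disconnected, nowhere-dense) subsets, none of which separates $S^{1}_{\infty}$ — here one uses again that a finite chain of perfect fits cannot bound an ideal polygon, so the endpoints of its leaves are ``nested'' intervals rather than a cyclically interleaved configuration, which is exactly the statement that collapsing them does not pinch off a complementary arc in a way that would create two nonseparating pieces. Closedness of the relation is where the hypotheses ``no infinite chain'' and ``no product region'' do real work: a sequence of leaves making perfect fits with a fixed leaf either stabilizes (finite chain) or would, by a limiting argument, produce either an infinite chain or a product region, both excluded. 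Granting this, Moore's theorem (in the one-dimensional form: an upper semicontinuous decomposition of $S^{2}$, or of the sphere obtained by doubling, into non-separating continua yields a quotient homeomorphic to $S^{2}$) — more precisely the version already invoked in the excerpt, \cite{moore1929concerning}, \cite{timorin2010moore} — gives that $\RR = S^{1}_{\infty}/\!\sim$ is homeomorphic to $S^{2}$. This is exactly the mechanism used in \cite[Theorem 6.5]{Fen16} for the orbit space of a bounded flow, and since the construction only refers to chains of perfect fits and ideal endpoints, it transfers verbatim to any $(\PP, \Fa, \Fb)$ satisfying Assumption \ref{assume: no infinite chain}.

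Finally, for the ``sphere-filling Peano curve'' statement I would observe that the quotient map $q\co S^{1}_{\infty}\to \RR$ is a continuous surjection from a circle onto a $2$-sphere, i.e.\ a Peano curve in $\RR$; the point to check is that $q$ is \emph{onto} and that its non-injectivity locus is exactly the collapsed finite sets, so that $q$ is injective off the (countable) union of endpoint-sets of chains of perfect fits. Surjectivity is immediate from the definition of the quotient, and the identification of the point-preimages follows from the description of $\sim$ above. I would then note that this $q$ is the abstract analogue of the Cannon--Thurston--Fenley sphere-filling curve and record that, as in \cite{CT07, Fen12, Fen16}, it is $G$-equivariant whenever $G$ acts on $\PP$ preserving $\Fa,\Fb$ (the action descends to $S^{1}_{\infty}$ and preserves chains of perfect fits, hence descends to $\RR$). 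The main obstacle I anticipate is the verification that the decomposition $\sim$ is genuinely upper semicontinuous / closed, since this is precisely the step where all three clauses of Assumption \ref{assume: no infinite chain} (finiteness of chains, disjointness of leaves in a chain, no product regions) must be combined; everything after that is a black-box application of Moore's theorem and a bookkeeping identification of point preimages.
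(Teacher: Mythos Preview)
The paper does not give a proof of this proposition; it simply attributes the result to \cite[Theorem 6.5]{Fen16} and moves on. So your sketch is an attempt to reconstruct Fenley's argument, and the broad outline --- reduce to Moore's theorem after verifying the decomposition hypotheses from Assumption \ref{assume: no infinite chain} --- is in the right spirit.

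However, there is a real gap in your application of Moore's theorem. The equivalence classes of $\sim$ on $S^{1}_{\infty}$ are finite sets of points (endpoints of a chain), hence disconnected whenever the class has more than one element. Moore's theorem requires the decomposition elements to be \emph{continua}. Your phrase ``the sphere obtained by doubling'' does not repair this: if one doubles $D$ to $S^{2}$ and declares the decomposition to consist of singletons on the two open hemispheres together with the finite $\sim$-classes on the equator, those equatorial classes are still disconnected and Moore does not apply. (Your assertion that no class ``separates $S^{1}_{\infty}$'' is also not literally true --- any finite set of at least two points separates a circle --- which suggests you have a different picture in mind but have not written it down.) Nor can one simply replace each finite class by its convex hull in one hemisphere, because distinct classes are linked on $S^{1}_{\infty}$: if $\lambda\in\Fa$ and $\mu\in\Fb$ intersect in $\PP$, their endpoint pairs are linked yet lie in different $\sim$-classes, so their hulls would cross.

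What Fenley actually does --- and what the paper spells out later, in the proof of Proposition \ref{prop: convergence group action}(c) --- is to take $C = D \times I$ with $D = \PP \cup S^{1}_{\infty}$ and define a decomposition of $\partial C \cong S^{2}$ whose nondegenerate elements are: closures of leaves of $\Fa$ sitting in $D\times\{1\}$, closures of leaves of $\Fb$ sitting in $D\times\{0\}$, and vertical arcs $\{p\}\times I$ for $p\in S^{1}_{\infty}$, all glued along shared ideal endpoints. Each decomposition element corresponding to a chain of perfect fits is then a finite tree of arcs, hence a genuine non-separating continuum, and Moore's theorem applies to give $\partial C/\!\sim\ \cong S^{2}$; one then checks this quotient is canonically identified with $\RR$. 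Separating $\Fa$ and $\Fb$ onto the two faces of $C$ is precisely what resolves the linking obstruction above, and this is the key construction your sketch is missing.
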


Next, we will show that

\begin{prop}\label{prop: convergence group action}
    Suppose that the induced action of $G$ on $\RR$ is a convergence group action.
\begin{enumerate}[(a)]
    \item The action of $G$ on $\PP$ is flowable.
    Thus, there is a $3$-manifold $M$ with a reduced pseudo-Anosov flow $\phi$
    such that $(M,\phi)$ realizes the action of $G$ on $\PP$.

    \item Suppose further that the action of $G$ on $\RR$ is uniform.
    Then either the resulting $3$-manifold $M$ is closed hyperbolic, or $M$ is a non-compact manifold with Gromov hyperbolic group and with no $T^2$ end.

    \item
    Under the assumption of (b), in the case $M$ is closed, 
    $\phi$ is a quasigeodesic topological pseudo-Anosov of $M$.
\end{enumerate}
\end{prop}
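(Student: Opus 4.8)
The statement to prove is Proposition \ref{prop: convergence group action}, which has three parts. The plan is to prove (a) by verifying that a convergence group action on $\RR$ forces the three flowable conditions (B1)/(A1), (B4)/(A4), (B7)/(A7) on the action on $\PP$, then invoke Theorem \ref{main theorem}(a) (equivalently the construction of Section \ref{sec:frombifoliatedplane}, whose input is exactly a flowable action) to produce $(M,\phi)$ realizing $\rho$. For (b) the plan is to use that a uniform convergence group action on the $2$-sphere $\RR$ makes $G$ Gromov hyperbolic with Gromov boundary $\partial_\infty G \cong \RR \cong S^2$, and then to feed the flow $\phi$ produced in (a) back into Fenley's machinery (Theorem \ref{bounded to convergence}, Proposition \ref{prop:2-sphere}, and the construction of $\RR(\phi)$ from the orbit space) to identify $\RR$ with the flow ideal boundary $\RR(\phi)$ and hence $\partial_\infty \pi_1(M)$; a Bestvina--Mess / Cannon-type dichotomy then separates the closed case ($M$ closed hyperbolic, by the resolved Cannon conjecture for $G$ with $\partial_\infty G = S^2$ together with geometrization, or more directly because $G$ is a closed $3$-manifold group with $2$-sphere boundary) from the non-compact case, where the hypothesis on ends (no $T^2$ end) is what one reads off from the structure of $N(\rho)/G$. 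For (c), once $M$ is closed hyperbolic with $\pi_1(M)=G$ and the $\pi_1$-action on $\RR(\phi)$ agrees with the action on $\partial_\infty G$, Theorem \ref{bounded to convergence}(b) gives that $\phi$ is quasigeodesic, since $\phi$ is automatically bounded: the abstract hypothesis Assumption \ref{assume: no infinite chain} is precisely the no-product-region plus no-infinite-chain-of-perfect-fits condition that, via \cite{Fen16, Fen98, Fen99}, is equivalent to boundedness.

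\textbf{Details of the main steps.} First I would recall that by Section \ref{sec:circle-lamination} a flowable action on $\PP$ is one satisfying (A1), (A4), (A7), and that these correspond under the quotient map $\pi:\DD\to\PP$ to (B1), (B4), (B7) on the circle side. For part (a), the heart is: a convergence group action on $\RR$ cannot fix a $\subseteq$-nested pair of intervals with both inclusions strict in a way that contradicts (A1), (A4); more precisely, if $g$ fixes a leaf $\lambda\in\Fa$ then $g$ fixes an ideal endpoint of $\lambda$ in $S^1_\infty$, hence fixes a point of $\RR$; a convergence group element fixing a point of $S^2$ is either elliptic (finite order, excluded since $G$ is torsion-free) or parabolic/loxodromic, and a torsion-free convergence group on $S^2$ with such an element forces the dynamics that translate to $g$ expanding one of the two transverse leaves at the fixed point and contracting the other --- this is (A1). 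For (A4), a point-stabilizer in a torsion-free convergence group acting on $S^2$ is virtually cyclic hence (being torsion-free) cyclic or trivial, and the second clause (nontrivial stabilizer at singular points) follows because a singular leaf's chain of perfect fits collapses to a point of $\RR$ fixed by the nontrivial element produced by (A5)-type considerations --- but here one must be careful, since we are only assuming (A1),(A4) plus the convergence hypothesis, not the full Anosov-like list; the argument should instead use that a singular leaf is itself a finite-sided ideal polygon gap of $|\La|$ whose vertices are permuted, giving a periodic point. For (A7), one argues by contradiction: if some nontrivial $g_n$ move two distinct points $x,y$ on a common leaf by arbitrarily small amounts, then passing to a subsequence the $g_n$ (or their inverses) converge in the convergence-group sense, and the limit dynamics on $\RR$ would have to fix the two distinct endpoints coming from $x$ and from $y$ while being nontrivial and non-loxodromic-with-the-right-axis, contradicting the north--south or parabolic dynamics forced by the convergence property --- this is the step I expect to require the most care.

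\textbf{Expected main obstacle.} The hardest part will be part (b): translating ``uniform convergence group action on $\RR$'' into a statement about the reconstructed manifold $M=N(\rho)/G$, in particular showing that $\pi_1(M)=G$ is Gromov hyperbolic with $\partial_\infty G$ equivariantly homeomorphic to $\RR$, and then obtaining the dichotomy. One cannot simply quote Cannon's conjecture (it is open in general); instead the plan is: $M$ is an aspherical $3$-manifold with $\pi_1(M)=G$ a uniform convergence group on $S^2$, hence $G$ is Gromov hyperbolic with $\partial_\infty G=S^2$; by Bestvina--Mess $G$ has cohomological dimension $3$ and is a PD$(3)$ group, so $M$ is closed (up to the subtlety of the non-compact case, which is exactly where the $T^2$-end alternative and the failure of $\pi_1$-finiteness enter) --- and when $M$ is closed, a closed aspherical $3$-manifold with hyperbolic fundamental group is hyperbolic by geometrization. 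The delicate point is controlling the ends of $M$ when $N(\rho)/G$ is non-compact: one must show that any end is not a torus end, which follows because a torus end would give a $\mathbb{Z}^2$ subgroup of the hyperbolic group $G$, impossible. I would organize (b) around exactly this: either $M$ is closed (and then hyperbolic), or $M$ is open with Gromov hyperbolic $\pi_1$ and hence (no $\mathbb{Z}^2$) no $T^2$ end. Finally for (c), the identification $\RR\cong\RR(\phi)$ is tautological from Definition \ref{abstract flow ideal boundary} versus Definition \ref{flow ideal boundary} once $(M,\phi)$ realizes $\rho$, so $\phi$ is bounded by Assumption \ref{assume: no infinite chain}, and Theorem \ref{bounded to convergence}(b) with $\pi_1(M)$ Gromov hyperbolic yields that $\phi$ is quasigeodesic.
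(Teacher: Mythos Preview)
Your outline for (b) matches the paper almost exactly (Bowditch's characterization gives $G$ Gromov hyperbolic with $\partial_\infty G\cong\RR\cong S^2$; no $\Z^2$ subgroup rules out $T^2$ ends; geometrization handles the closed case). The Bestvina--Mess / PD(3) digression is unnecessary but harmless.

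There are two genuine gaps elsewhere.

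\textbf{Part (a).} First, note that (A1) and (A4) are standing hypotheses in this section (see the paragraph before Assumption \ref{assume: no infinite chain}), so only (A7) needs to be established. More importantly, your contradiction argument for (A7) assumes that the points of $\RR$ coming from $\mu_x$ and $\mu_y$ are \emph{distinct}, so that the north--south dynamics of a convergence sequence separates them. But $q_-(\mu_x)=q_-(\mu_y)$ whenever $\mu_x,\mu_y$ lie in the same chain of perfect fits, and this case genuinely occurs. The paper handles (A7) via the free, properly discontinuous action of a convergence group on the space of distinct ordered triples $\Theta(\RR)$: the triple $(q_+(\lambda),q_-(\mu_x),q_-(\mu_y))$ immediately gives (A7) when all three are distinct, and the case $q_-(\mu_x)=q_-(\mu_y)$ requires a separate, rather delicate argument (finding an intermediate leaf $\lambda_z$ that intersects exactly one of $\mu_x,\mu_y$, then a three-case analysis according to whether $\lambda_z$ is singular, non-separated from another leaf, or forces a forbidden perfect-fit polygon, each time reducing to a cyclic stabilizer). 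Your sketch does not touch this case.

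\textbf{Part (c).} You propose to identify $\RR$ with $\RR(\phi)$ and then quote Theorem \ref{bounded to convergence}(b). The problem is that at this stage $\phi$ is only a \emph{reduced} pseudo-Anosov flow (Definition \ref{reduced pseudo-Anosov flow}); it is not yet known to satisfy Definition \ref{topological pseudo-Anosov flow} (c),(d), so Fenley's theorem does not apply. The paper instead builds the compactification $N\cup S^2_\infty$, identifies $S^2_\infty$ with $\partial\mathbb{H}^3$ (using that $M$ is closed hyperbolic from (b)), shows directly that flowlines of $\w\phi$ are quasigeodesics with ideal endpoints $q_+(\lambda_x),q_-(\mu_x)$, and uses this to prove $\phi$ is \emph{expansive}. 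Only then can one invoke \cite{inaba1990nonsingular,paternain1993expansive} to obtain genuine stable/unstable foliations, identify them with $\Fs,\Fu$, and verify the divergence condition, so that $\phi$ is a topological pseudo-Anosov flow; quasigeodesicity was established along the way. Your shortcut skips the entire upgrade from reduced to topological pseudo-Anosov.
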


We conjecture that the second case in part (b) cannot happen, i.e. $M$ is always a closed hyperbolic 3-manifold. 

\begin{proof}[The proof of (a)]
As the action of $G$ on $\PP$ already satisfies Properties (A1) and (A4),
it suffices to prove that Property (A7) (Definition \ref{A.7}) holds.

    Let $L_+, L_-$ denote the leaf space of $\Fa, \Fb$ respectively.
    Let $q_+: L_+ \to \RR$, $q_-: L_- \to \RR$ be 
    the canonical quotient maps from Definition \ref{abstract flow ideal boundary}.
    Let $\Theta(\RR)$ be the set of distinct triples on $\RR$,
    where the action of $G$ on $\RR$ induces 
    a free and discrete action on $\Theta(\RR)$
    \cite{Tuk94, Bow99}.

    Let $\lambda$ be a leaf of $\Fa$ and
    let $x, y$ be distinct points on $\lambda$ with $x \in \lambda^{+}_{y}$.
    By Assumption \ref{assume: no infinite chain},
    $q_+(\lambda) \ne q_-(\mu_x), q_+(\lambda) \ne q_-(\mu_y)$.

    We first assume that $q_-(\mu_x) \ne q_-(\mu_y)$.
    Because the action of $G$ on $\Theta(\RR)$ is free and discrete,
    there is a neighborhood $U$ of the distinct triple
    $(q_+(\lambda), q_-(\mu_x), q_-(\mu_y))$ in 
    $\Theta(\RR)$ such that $g(U) \cap U = \emptyset$.
    Hence, 
    there exists a neighborhood $U_1$ of $\lambda$ in $L_+$ and
    neighborhoods $U_2, U_3$ of $\mu_x, \mu_y$ in $L_-$ such that,
    for any leaf $l_1 \in U_1$ and two leaves 
    $l_2 \in U_2, l_3 \in U_3$ with $q_-(l_2) \ne q_-(l_3)$,
    we have $(q_+(l_1), q_-(l_2), q_-(l_3)) \in U$.
    This implies that any $g \in G - \{1\}$ can not send 
    $(q_+(\lambda), q_-(\mu_x), q_-(\mu_y))$ into $\{(a, b, c): a\in U_1, b\in U_2, c\in U_3\}$.
    We can choose neighborhoods $V_x, V_y$ of $x, y$ in $\PP$ such that,
    for any $t \in V_x$, $s \in V_y$,
    we have $\lambda_t, \lambda_s \in U_1$, $\mu_t \in U_2$, $\mu_s \in U_3$.
    It is clear that all $g \in G - \{1\}$ can not send $\{x,y\}$ to two points in $V_x$ and $V_y$ respectively. 

    Now assume that $q_-(\mu_x) = q_-(\mu_y)$.
    We show that the following dose not hold: any leaf of $\Fa$ 
    intersects $\mu_x$ if and only if it intersects $\mu_y$. We will show that this implies that $\PP$ either has a product region of $\Fb$ or
    has a finite-sided ideal polygon formed by
    some leaves in the chain of perfect fits containing $\mu_x$, which contradicts Assumption \ref{assume: no infinite chain}.
    We shall explain this below:

    Suppose that each leaf of $\Fa$ intersects $\mu_x$ if and only if 
    it intersects $\mu_y$.
    Let $p_x: (0,1) \to L_+$ be the embedding whose image is
    the union of those leaves of $\Fa$ that intersect $\mu_x, \mu_y$.
    It's clear that, for any $s \in [x,y]_{\lambda}$,
    $\mu_s \cap p_x(t) \ne \emptyset$ for all $t \in (0,1)$.
    Thus, both of $\lim_{t \to 0} p_x(t), \lim_{t \to 1} p_x(t)$ exist; otherwise there must exist a product region of $\Fb$.
    Let $P_- = \lim_{t \to 0} p_x(t)$,
    $P_+ = \lim_{t \to 1} p_x(t)$,
    where each of $P_-, P_+$ is
    either a unique point of $L_+$ or a set of non-separated points of $L_+$.
    One can see that
    there must be a product region of $\Fb$ if
    either of $\mu_x, \mu_y$ does not make perfect fit with some leaf of $P_-$ or $P_+$.
    Thus,
    both of $\mu_x, \mu_y$ make perfect fit with some leaves of $P_-$ and
    with some leaves of $P_+$,
    it follows that there exists a finite-sided ideal polygon formed by
    $\mu_x, \mu_y$ and some leaves in $P_-, P_+$.
    This contradicts Assumption \ref{assume: no infinite chain}.

    Hence, there is a leaf $l$ of $\Fa$ that intersects exactly one of $\mu_x, \mu_y$.
    Without loss of generality,
    we may assume that $l \cap \mu_x \ne \emptyset$ and
    $l \cap \mu_y = \emptyset$,
    and that $l \cap \mu_x \in \mu^{+}_{x}$.
    Let $J$ denote the component of $\mu^{+}_{x}$ with $l \cap \mu_x \in J$
    (where $J = \mu^{+}_{x}$ if $x$ is not a singularity).

    There is a point $z \in J$ such that (1) for each $t \in [x,z]_{\mu_x} - \{z\}$,
    $\lambda_t$ intersects $\mu_y$, and (2) for each $t \in J - [x,z]_{\mu_x}$,
$\lambda_t$ does not intersect $\mu_y$.
    Then one of the following three possibilities holds:

\begin{case1}\rm\label{case1 1}
    $\lambda_z \cap \mu_y \ne \emptyset$.
    Then $\lambda_z$ is a singular leaf of $\Fa$.
\end{case1}

Let $t_1, t_2, \ldots$ be a collection of points in $[x,z]_{\mu_x} - \{z\}$ with
$\lim_{n \to +\infty} t_i = z$,
and let $s_i = \lambda_{t_i} \cap \mu_y$ for each $i \in \N$.

\begin{case1}\rm\label{case1 2}
$\lambda_z \cap \mu_y = \emptyset$ and
    $\lim_{n \to +\infty} s_i = s$ for some $s \in \mu_y$.
    Then $\lambda_s, \lambda_t$ are a pair of distinct non-separated leaves.
\end{case1}

\begin{case1}\rm\label{case1 3}
$\lambda_z \cap \mu_y = \emptyset$ and
    $\lim_{n \to +\infty} s_i$ is an end of $\mu_y$.
    Then $\mu_y$ makes perfect fit with some leaf of $\Fa$ non-separated with $\lambda_z$.
\end{case1}

\begin{figure}
	\centering
	\subfigure[]{
		\includegraphics[width=0.3\textwidth]{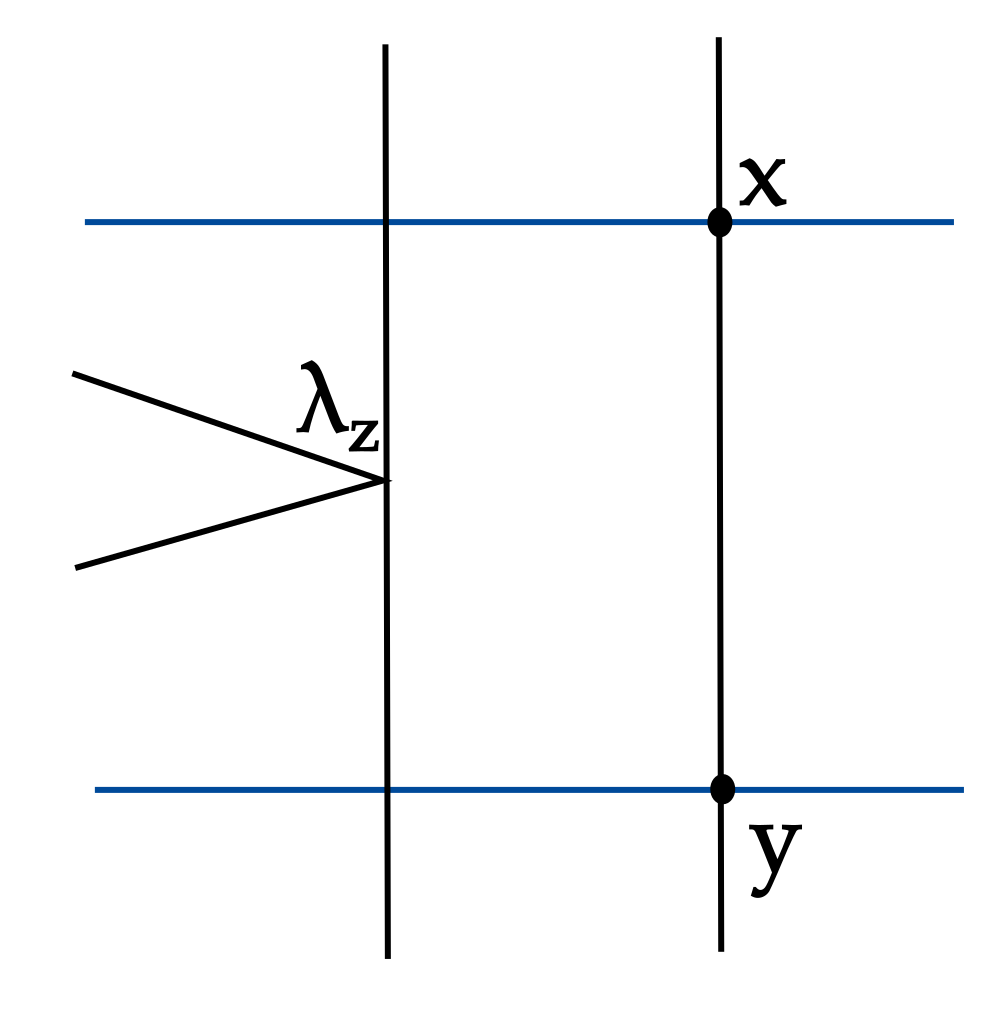}}
	\subfigure[]{
		\includegraphics[width=0.3\textwidth]{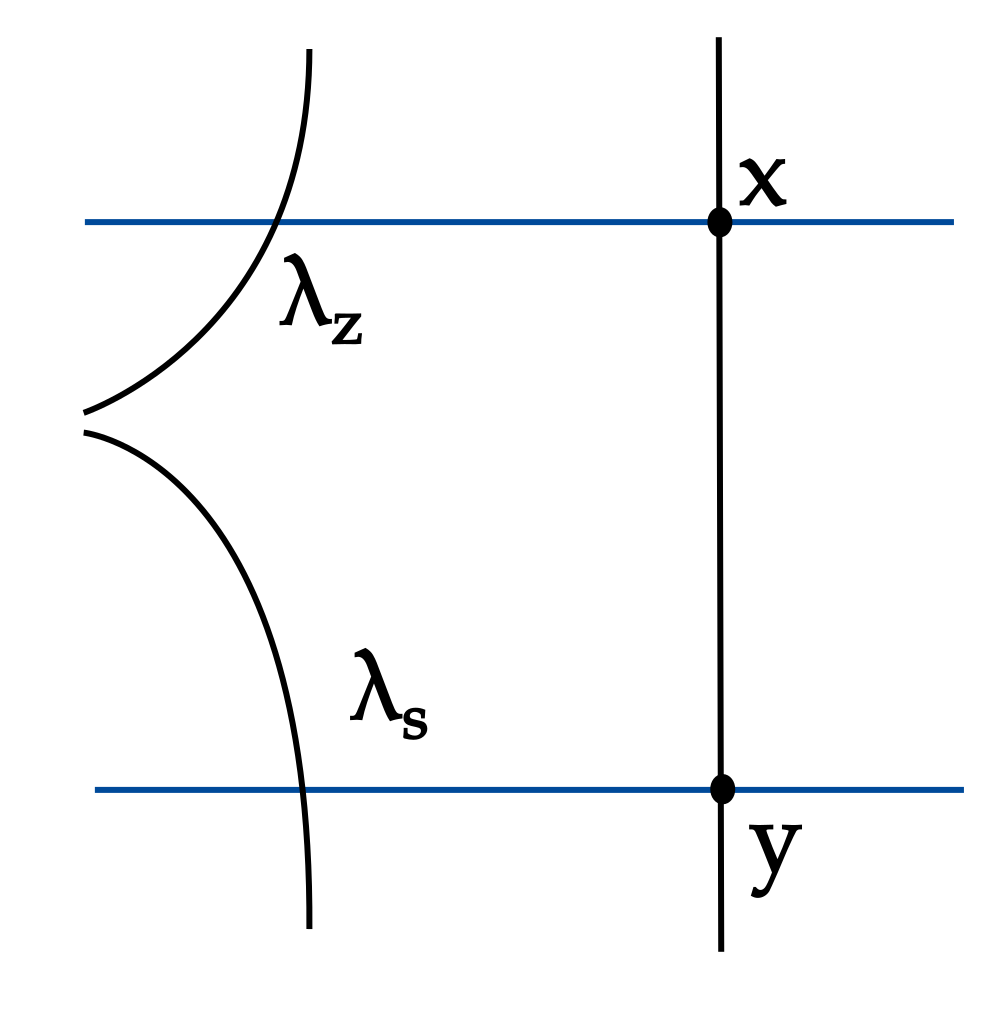}}
  \subfigure[]{
		\includegraphics[width=0.3\textwidth]{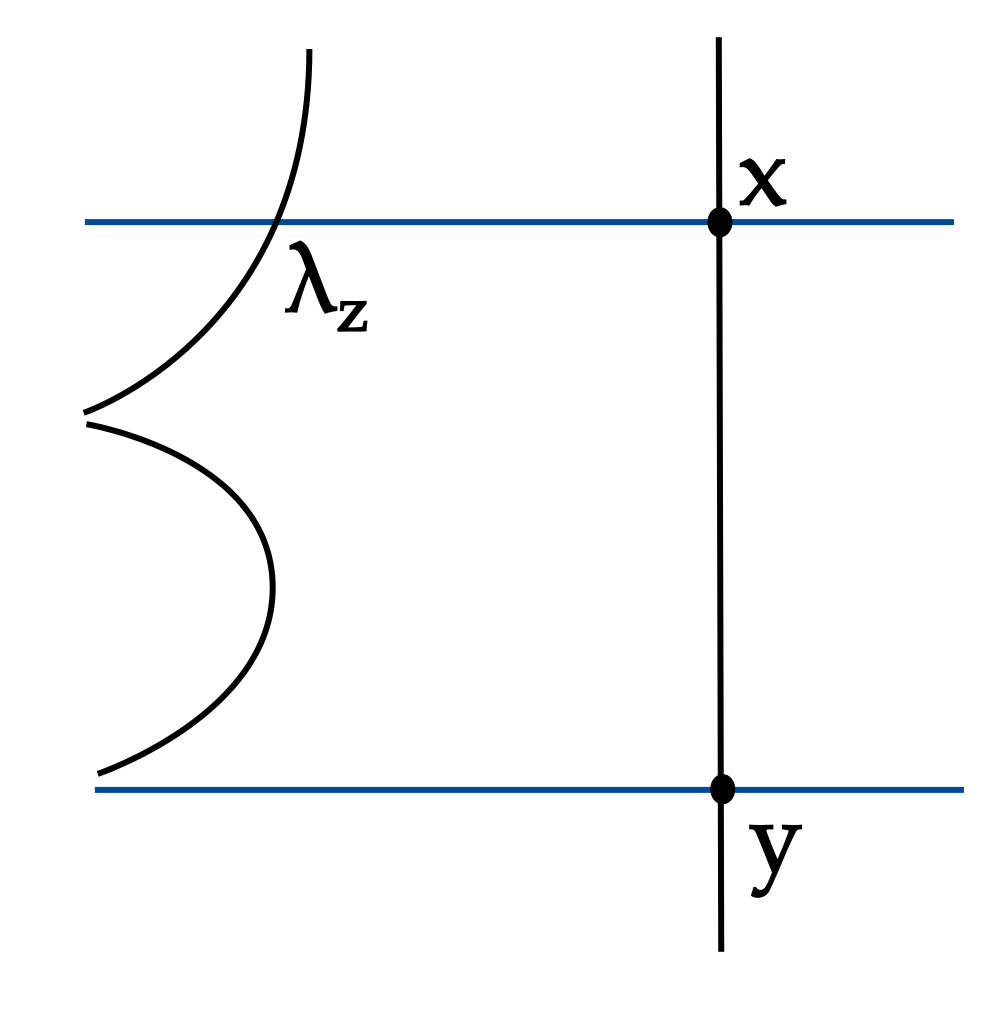}}
  \caption{This figure describes the leaf $\lambda_z$ in Cases \ref{case1 1}$\sim$\ref{case1 3}. (a) illustrates Case \ref{case1 1}, where $\lambda_z$ is a singular leaf. (b) illustrates Case \ref{case1 2}: $\lambda_z$ is non-separated $\lambda_s$ for some $s \in \mu_y$. (c) illustrates Case \ref{case1 3}; in this case, $\lambda_z, \mu_y$ are contained in the same chain of perfect fits.}\label{fig: three cases}
  \end{figure}

We first consider Case \ref{case1 1}.
Because the union of singularities of $\Fa$ is a dense subset of $\PP$,
there exists a neighborhood $U_x$ of $x$ and a neighborhood $U_y$ of $y$ such that,
for any $t_1 \in U_x$, $t_2 \in U_y$,
$\lambda_{t_1} \cap \mu_z, \lambda_{t_2} \cap \mu_z \ne \emptyset$.
If $g \in G - \{1\}$ satisfies that $g(x) \in U_x$ and $g(y) \in U_y$,
then $g(\mu_z) = \mu_z$.
It suffices to show that the elements in the stabilizer of $\mu_z$ cannot take $x, y$ sufficiently close to themselves.
Let $h$ be a generator of the stabilizer of $\mu_z$.
We can choose a neighborhood $V_x \subseteq U_x$ of $x$ and
a neighborhood $V_y \subseteq U_y$ of $y$ such that
$\lambda_{h(x)}, \lambda_{h^{-1}(x)} \cap V_x = \emptyset$,
$\lambda_{h(y)}, \lambda_{h^{-1}(y)} \cap V_y = \emptyset$.
Recall that each element of the stabilizer of $\mu_z$ 
either topologically expands all half-leaves of $\mu_z$ or
topologically contracts them,
we can ensure that $\lambda_{h^{k}(x)} \cap V_x = \emptyset$ for all $k \in \N$.
Therefore,
for any $g \in G - \{1\}$,
at least one of $g(x) \notin V_x$, $g(y) \notin V_y$ holds.

Now we consider Case \ref{case1 2}.
In this case, there exists $s \in \mu_y$ such that
$(\lambda_s, \lambda_z)$ is a pair of distinct non-separated leaves.
We still can choose a neighborhood $U_x$ of $x$ and a neighborhood $U_y$ of $y$ such that,
for any $t_1 \in U_x$, $t_2 \in U_y$,
$\lambda_{t_1} \cap \mu_z, \lambda_{t_2} \cap \mu_s \ne \emptyset$.
For $g \in G - \{1\}$ with $g(x) \in U_x$ and $g(y) \in U_y$,
we must have $g(\mu_z) = \mu_z$ and $g(\mu_s) = \mu_s$.
It follows that
$g$ is contained in the stabilizers of $\mu_z$ and $\mu_s$.
Let $\mu_0$ denote the leaf of $\Fb$ that makes perfect fit with $\mu_z$
(at the negative end of $\mu_z$).
Then $g$ is also contained in the stabilizer of $\mu_0$
(and thus $\mu_0$ is a periodic leaf).
Let $h$ be a generator of the stabilizer of $\mu_z$.
We can also choose a neighborhood $V_x \subseteq U_x$ of $x$ and 
a neighborhood $V_y \subseteq U_y$ of $y$ such that,
$\lambda_{h(x)}, \lambda_{h^{-1}(x)} \cap V_x = \emptyset$,
$\lambda_{h(y)}, \lambda_{h^{-1}(y)} \cap V_y = \emptyset$.
Similar to the above case,
we can ensure that $g(x) \notin V_x$, $g(y) \notin V_y$ for all $g \in G - \{1\}$.

Finally we consider Case \ref{case1 3}.
The conditions of Case \ref{case1 3} implies that
$\lambda_z, \mu_y$ are contained in the same chain of perfect fits.
However,
$\mu_x, \mu_y$ are also contained in the same chain of perfect fits,
and $\lambda_z \cap \mu_x \ne \emptyset$.
This contradicts Assumption \ref{assume: no infinite chain}.

Therefore,
the action of $G$ on $\PP$ is flowable.
Recall that $N = \{(x,y) \mid x \in \PP, y \in r_x\}$ as in Construction \ref{construction of N} (a),
and let $\w{\phi}$ be the flow constructed in Construction \ref{construction of N} (b).
As shown above,
the action of $G$ on $\PP$ is flowable,
and thus $G$ acts on $N$ freely and discretely.
Let $M = N / G$ be the quotient $3$-manifold and
let $\phi = \w{\phi} / G$ be the induced flow in $M$.
Then $(M, \phi)$ realizes the action of $G$ on $\PP$.
This completes the proof of (a).
\end{proof}

\begin{proof}[The proof of (b)]


If the action of $G$ on $\RR$ is a uniform convergence action, by Bowditch \cite{bowditch1998topological}, $G$ is Gromov hyperbolic and $\mathcal{R}$ is $G$-equivariantly homeomorphic to the Gromov boundary of $G$. Since $G$ is Gromov hyperbolic, $G$ does not contain any $\mathbb{Z}^2$ subgroup. So $M$ is either closed or non-compact with no $T^2$ end. 

By Thurston-Perelman geometrization theorem, a closed 3-manifold with a Gromov hyperbolic fundamental group is a hyperbolic 3-manifold. Now, Part (b) follows. 


\end{proof}

\begin{proof}[The proof of (c)]
    Recall that $S^{1}_{\infty}$ denotes the ideal boundary of $\PP$.
    Let $D = S^{1}_{\infty} \cup \PP$ and $C = D \times I$.
    Then $\partial C$ is exactly $(D \times \{0,1\}) \cup (S^{1}_{\infty} \times I)$.
    Following \cite{Fen12}, \cite{Fen16},
    let $\sim$ be the closed equivalence relation on $C$ generated by 
    the following relations,
    (1)
    $(x,0) \sim (y,0)$ when there exists a leaf $\mu$ of $\Fb$ such that
    each of $x,y$ is either contained in $\mu$ or is the ideal endpoint of $\mu$
    in $S^{1}_{\infty}$,
    (2)
    $(x,1) \sim (y,1)$ when there exists a leaf $\lambda$ of $\Fa$ such that
    each of $x,y$ is either contained in $\lambda$ or is the ideal endpoint of $\lambda$
    in $S^{1}_{\infty}$,
    (3)
    For $(x,t_1), (y,t_2) \in S^{1}_{\infty} \times I$,
    $(x,t_1) \sim (y,t_2)$ if and only if $x = y$.
    Let $S^{2}_{\infty}$ be the quotient of $\partial C$ under the above equivalence relation,
    and let $q: \partial C \to S^{2}_{\infty}$ be the quotient map.
    Because $N = \w{M}$ can be identified with $Int(C) \cong \PP \times (0,1)$,
    $N \cup S^{2}_{\infty}$ is a compactification of $N$.

    Note that we can canonically identify the abstract flow ideal boundary $\RR$ with $S^{2}_{\infty}$.
    For any $\lambda \in L_+$,
    $q_+(\lambda) \in \RR$ is canonically identified with $q(\lambda,1)$.
    For any $\mu \in L_-$,
    $q_-(\mu) \in \RR$ is canonically identified with $q(\mu,0)$.
    Then $S^{2}_{\infty}$ is homeomorphic to $\RR$ and thus is a $2$-sphere.

    Because $M$ is a closed hyperbolic $3$-manifold, $N$ can be identified with $\mathbb{H}^{3}$ and the action of $G$ on $N$ would be a co-compact Klein group acting on 
    $\mathbb{H}^{3}$, and by the same argument as in Part (b) above, $S^{2}_{\infty}$, which is homeomorphic to $\RR$, can be identified with the ideal $2$-sphere of $N \cong \mathbb{H}^{3}$.

    For any $x \in \PP$,
    we denote by $\rho_x$ the flowline of $\w{\phi}$ projected to $x$.
    Then $\rho_x$ is a quasigeodesic with ideal endpoints 
    $q_+(\lambda_x), q_-(\mu_x) \in S^{2}_{\infty}$.
    We show that $\rho$ is an expansive flow of $M$ as follows.
    
    Let $d$ be a metric on $M$,
    which induces a metric on $N$.
    We choose a collection of sufficiently small product charts
    $\{U_\alpha\}_{\alpha \in \Psi}$ (where $\Psi$ is an index set) such that,
    (1) their interiors cover $M$,
    (2) the lift of each $U_\alpha$ ($\alpha \in \Psi$) to $\w{M}$ projects onto
    a product rectangle or a product polygon of $\PP$.
    There exists $\epsilon > 0$ 
    the $\epsilon$-neighborhood any point in $M$ is contained in some $U_\alpha$
    ($\alpha \in \Psi$).
    And there exists $R > 0$ sufficiently large so that, 
    for any $t_1, t_2 \in M$,
    if $t_1 \in U_\alpha$ for some $\alpha \in \Psi$ and $d(t_1,t_2) > R$,
    then $t_2 \notin U_\alpha$.

    Let $x,y \in \PP$ distinct.
    We claim that,
    for any orientation-preserving homeomorphism $h: \rho_x \to \rho_y$,
    there is $t \in \rho_x$ with $d(t,h(t)) > \epsilon$.
    It's clear that 
    the claim holds if $\rho_x, \rho_y$ have distinct positive or negative ideal endpoints.
    Hence we can confirm this claim when
    $q_+(\lambda_x) \ne q_+(\lambda_y)$ or $q_-(\mu_x) \ne q_-(\mu_y)$.

    Now suppose that $q_+(\lambda_x) = q_+(\lambda_y)$ and
    $q_-(\mu_x) = q_-(\mu_y)$.
    We can choose $z \in \PP$ such that
    (1) $\lambda_z$ separates $\lambda_x, \lambda_y$ (when $\lambda_x = \lambda_y$, we allow $\lambda_z = \lambda_x, \lambda_y$),
    (2) $\mu_z$ separates $\mu_x, \mu_y$ (when $\mu_x = \mu_y$, we allow $\mu_z = \mu_x, \mu_y$),
    (3) $\{q_+(\lambda_x), q_-(\mu_x)\} \ne \{q_+(\lambda_z), q_-(\mu_z)\}$.
    Then there exists $t \in \rho_x$ with $d(t,\rho_z) > R$.
    Therefore,
    for each lift $\w{U_\alpha}$ of some $U_\alpha$ ($\alpha \in \Psi$) containing $t$,
    we have $\rho_z \cap \w{U_\alpha} = \emptyset$.
    Since $\w{U_\alpha}$ projects to a product rectangle or a product polygon in $\PP$,
    we can ensure that $\rho_y \cap \w{U_\alpha} = \emptyset$.
    Therefore, $d(t,\rho_y) > \epsilon$.
    
    Now the claim is confirmed,
    and hence $\phi$ is an expansive flow.
    It's proved in \cite[Theorem 1.5]{inaba1990nonsingular},
    \cite[Lemma 7]{paternain1993expansive} that
    $\phi$ preserves a pair of singular foliations $\mathcal{F}_1, \mathcal{F}_2$;
    in addition,
    for any $x, y \in \PP$,
    $\rho_x, \rho_y$ are contained in the same leaf of $\mathcal{F}_1$ 
    (resp. $\mathcal{F}_2$) if and only if
    there are orientation-preserving homeomorphisms 
    $h_x: \R \to \rho_x$, $h_y: \R \to \rho_y$ such that
    $\lim_{t \to +\infty}d(h_x(t), h_y(t)) = 0$
    (resp. $lim_{t \to -\infty}d(h_x(t), h_y(t)) = 0$),
    see \cite[Page 8]{Bru92} for an explanation.
    
    We now explain that $\F_1 = \Fs, \F_2 = \Fu$ is the only possible case.
    Let $x \in \PP$ and let $l_x$ be the leaf of $\w{\F_1}$ containing $\rho_x$,
    where $\w{\F_1}$ is the pull-back of $\F_1$ in $\w{M}$.
    Let $U$ be a sufficiently small neighborhood of $x$ in $\PP$ such that
    $q_+(\lambda) \ne q_+(\lambda_y), q_-(x) \ne q_-(\lambda_y)$ for
    all $y \in U - \{x\}$,
    and let $E_U = \bigcup_{t \in U} \rho_t$.
    We first assume that $x$ is not a singularity of $\Fa$.
    For any $t \in U$,
    $\rho_x, \rho_t$ share a positive ideal endpoint if and only if $t \in \lambda_x$,
    hence $l_x \cap E_U$ can only be $\bigcup_{t \in \lambda_x \cap U} \rho_t$.
    Now assume that $x$ is a singularity of $\Fa$.
    By the above discussion,
    for each component $L$ of $\lambda_x - \{x\}$,
    $\bigcup_{t \in L} \rho_t$ is contained in a leaf of $\w{\F_1}$,
    and we can ensure that $\bigcup_{t \in L} \rho_t \subseteq l_x$ by
    the local structure of $\w{\F_1}$.
    Therefore,
    $\w{F_1} = \w{\Fs}$ and hence $\F_1 = \Fs$.
    We can confirm that $\F_2 = \Fu$ similarly.

    For any distinct $x,y \in \PP$ in the same leaf $\lambda$ of $\Fa$,
    there is $z \in \lambda$ separating $x,y$ such that
    $q_-(\mu_z) \ne q_-(\mu_x), q_-(\mu_y)$.
    Let $l$ denote the leaf of $\w{\Fs}$ containing $\rho_x, \rho_y, \rho_z$,
    and let $d_l$ denote the path metric on $l$ induced from the metric $d$.
    For any $K > 0$,
    there is $t \in \rho_x$ such that 
    $d_l(t,\rho_z) > K$,
    which implies $d_l(t,\rho_y) > K$.
    Therefore,
    for any orientation-preserving homeomorphism $h_x: \R \to \rho_x$,
    we have $\lim_{t \to -\infty}d(h_x(t), \rho_y) = +\infty$.
    Similarly,
    we can ensure that flowlines in the same leaf of $\Fu$ have this property.
    Hence $\phi$ is a topological pseudo-Anosov flow.
\end{proof}

\begin{rmk}\label{rmk:adding compactness}
    A $G$-action on $\PP$ is said to have the \emph{compactness property} if
    there are finitely many flow boxes or singular flow boxes of $N$ (see Construction \ref{construction of N}) whose images under the $G$-action cover $N$.
    This is a generalization of Definition \ref{cptprop} to the context of Section \ref{sec:frombifoliatedplane}, which characterizes when $M$ is compact. If the $G$-action on $\PP$ is exactly the $\pi_1$-action of a smooth pseudo-Anosov flows in a closed hyperbolic $3$-manifold, then it satisfies the compactness property, as the flow is transitive and thus this $3$-manifold is homeomorphic to $M = N/G$ by Proposition \ref{equivalent}.

    Combined with Proposition \ref{prop: convergence group action} (b), (c), if $G$ acts on $\RR$ as a uniform convergence group action and $G$ satisfies the compactness property, then the action of $G$ on $\PP$ is realized by a closed hyperbolic $3$-manifold with a quasigeodesic pseudo-Anosov flow. 
\end{rmk}




Our reconstruction of $M$ provides a description of $M$ in terms of 
the Gromov boundary $\RR$,
which can be identified with the ideal boundary of $\mathbb{H}^{3}$ in the first case of Proposition \ref{prop: convergence group action} (b);
in fact, $M$ can be described from $\RR \times \RR \times \RR$.

\begin{rmk}\label{rem: unordered triple}
Assume that $\rho$ induces a uniform convergence group action on $\RR$.
    Recall from Subsection \ref{subsec: the second construction},
    the universal cover $N$ of $M$ is a quotient space of 
    $N_2 = \{(x,y) \mid x \in \PP, y \in R_x\}$,
    and each point $(x,y) \in N_2$ can be expressed as
    an ordered triple $(\lambda_x, \mu_x, \mu_y)$,
    which can be identified to a point 
    \[(q_+(\lambda_x), q_-(\mu_x), q_-(\mu_y)) \in \RR \times \RR \times \RR.\]
    Thus,
    $N_2$ can be canonically identified with a subspace of 
    $\RR \times \RR \times \RR$.

    We assume further that no leaves of $\Fa, \Fb$ make perfect fit.
    Then there exists an injective map
    $q: L_+ \cup L_- \to \RR$ with $q \mid_{L_+} = q_+$, $q \mid_{L_-} = q_-$.
    Hence any ordered triple $(\lambda,\mu_1,\mu_2)$ with
    $\lambda \in L_+$, $\mu_1, \mu_2 \in L_-$ can be uniquely identified with
    a point of the distinct ordered triple $\Theta(\RR)$ of $\RR$.
    Recall that $G$ acts on $\Theta(\RR)$ co-compactly,
    as $G$ is a uniform convergence group action.
    In this case, $N_2$ is a subspace of $\Theta(\RR)$.
    Since $N$ is a quotient space of $N_2$ under 
    a $G$-equivariant equivalence relation and $M = N / G$,
    $M$ is a quotient space of $N_2 / G$ and $N_2 / G$ is a subspace of $\Theta(\RR) / G$.
    This gives a description of $M$ from $\Theta(\RR) / G$.
    \end{rmk}

This leaves some interested questions to be investigated in the future.
For example,
can we have a straightforward description for 
the relationship between $M, \Theta(\RR) / G$,
independent of their universal covers $N, \Theta(\RR)$?
See Subsection \ref{geometric reconstruct} for more information.

\section{Further questions} \label{sec:questions}

\subsection{Dictionary between laminar group theory and bifoliated planes}
\cite{baik2022groups}, \cite{BBM}, and our paper have some common aspect which is to develop a dictionary between the theory of (pre or almost) circle laminations and bifoliated planes. In general, it would be an interesting project to expand or even complete this dictionary. 

For an example of possible direction along this line, first note that Section \ref{sec:circle-lamination} describes what a single almost lamination should look like to be the endpoint data of the stable or unstable foliation on the orbit space for a pseudo-Anosov flow. On the other hand, it is not completely clear how a pair of almost laminations exactly looks like to be the endpoint data of the pair of stable/unstable foliations on the orbit space. We know the ideal polygon gaps come in interleaving pairs, but how about cataclysms (see \cite[Theorem 4.9]{Fen99} for the complete picture of cataclysms)? What are exact configurations of the other types of gaps for the pair of almost laminations? 

\begin{que}
    Give a precise description of bifoliar pairs of almost laminations induced by the orbit spaces of (pseudo-)Anosov flows by analyzing precedent work on (pseudo-)Anosov flows such as \cite{Bar95}, \cite{Fen99}. 
\end{que}

\subsection{More investigation on geometric reconstruction}\label{geometric reconstruct}

As mentioned in the introduction, \cite{tukia1988homeomorphic}, \cite{gabai1992convergence}, \cite{casson1994convergence}, \cite{baik2015fuchsian}, \cite{baik2021characterization} are all geometric reconstruction results in the sense that they reconstruct the hyperbolic surface not just a topological surface from the group action at infinity. 

In $3$-dimension, \cite{baik2022groups} is partially a geometric reconstruction in the sense that if the resulting 3-manifold is compact, then it is shown to be hyperbolic. 

Our approach offers a reconstruction from the infinity. 
However, it remains unclear how this reconstruction of a flow in a hyperbolic $3$-manifold can be promoted to a reconstruction in terms of the hyperbolic metric.

Here we briefly review what is known about the geometric structure associated with the ambient $3$-manifold of a pseudo-Anosov flow. Let $\phi$ be a pseudo-Anosov flow in a closed $3$-manifold $M$. $\Or(\phi)$ completely characterizes when $M$ is a toroidal manifold \cite{BF13}. It is known that $\Or(\phi)$ can only be (trivial or skewed) $\R$-covered when $M$ is Seifert fibered \cite{Bar95, BF13}; in this case, $\phi$ is orbitally equivalent to a geodesic flow on $T^{1}S$ for some closed hyperbolic surface $S$, up to finite covers, where $T^{1}S$ is the tangent bundle of $S$. Therefore, by Perelman-Thurston geometrization theorem, $\Or(\phi)$ characterizes when $M$ is hyperbolic if $\Or(\phi)$ is non-$\R$-covered. However, there also exist infinitely many hyperbolic $3$-manifolds with $\R$-covered Anosov flows \cite{Fen94}.

Fenley's work \cite{Fen12, Fen16} provides a description on the hyperbolicity of $3$-manifolds with quasigeodesic pseudo-Anosov flows, in terms of the large scale geometry \cite{Bow99}. Cannon's conjecture provides a framework for relating the hyperbolicity of closed $3$-manifolds from the group actions on the infinity. Theorem \ref{thm: convergence} offers a reconstruction of the $3$-manifolds from the $S^{2}$ ideal boundary, which we hope will contribute to a deeper understanding of the connections between group actions on $\partial \mathbb{H}^3$, hyperbolic $3$-manifolds and their quasi-geodesic pseudo-Anosov flows. This motivates the following questions:

$\bullet$
Can we explicitly construct a complete hyperbolic structure of the $3$-manifold from the constructed flow without appealing to the geometrization? 

$\bullet$
As in Remark \ref{rem: unordered triple},
orientable quasigeodesic pseudo-Anosov flows in hyperbolic $3$-manifolds can be described from $S^{2}_{\infty} \times S^{2}_{\infty} \times S^{2}_{\infty}$ (or $\Theta(S^{2}_{\infty})$ when the flow have no perfect fit), where $S^{2}_{\infty}$ is the ideal $2$-sphere of $\mathbb{H}^{3}$.
How to relate this structure with the hyperbolic metrics of $3$-manifolds?

$\bullet$
For a closed hyperbolic $3$-manifolds with orientable quasigeodesic pseudo-Anosov flow, Theorem \ref{thm: convergence} provides a reconstruction from a sphere-filling Peano curve on $\partial \mathbb{H}^{3}$. Can we generalize this approach to reconstruct the 3-manifold without relying on the given flow and sphere-filling curve?

$\bullet$
More generally, to what extent does the reconstruction in Theorem \ref{thm: convergence} generalize to the problem of reconstructing the (closed) hyperbolic $3$-manifold from the group action on $\partial \mathbb{H}^{3}$?

\subsection{Geometry models for (pseudo-)Anosv flows}

The Perelman-Thurston geometrization theorem says that every closed oriented prime 3-manifold can be cut along tori, so that the interior of each piece has a geometric structure with finite volume modeled on one of the eight geometries. Among those eight model geometries, the hyperbolic geometry is the most central one. As in \cite{Fen12}, the relation between pseudo-Anosov flows and hyperbolic geometry/large scale geometry has been extensively studied. 

In the current work, we consider a new type of geometry models for (pseudo-)Anosov flows.
They are covered by the quotient of subspaces of $\DD \times \DD$ (or $\PP \times \PP$),
where $\DD$ is the Poincar\'e disk and $\PP$ is certain bifoliated plane.

\begin{defn}\rm
Let $\rho: G \to \text{Homeo}(S^{1})$ be a faithful action of $G$ on $S^{1}$ that
preserves a bi-foliar pair of circle laminations $\La,\Lb$.
Then $\rho$ induces an action of $G$ on a bifoliated plane $\PP$,
as well as an action of $G$ on $\PP \times \PP$.

(a)
Call a manifold $M$ \emph{fundamental} if $\PP \times \PP$ has
a $G$-equivariant subspace $N^{'}$ that
has a quotient space $N$ (via a $G$-equivariant equivalence relation on $N^{'}$) and
$M = N / G$.
Let $e: N^{'} \to N$ denote the quotient map.

(b)
Under the assumption of (a),
let $\rho_x = e(\{(x,y) \mid (x,y) \in N^{'}\})$ for each $x \in \PP$.
Suppose that $\{\rho_x \mid x \in \PP\}$ is a $G$-equivariant flow of $N$,
denoted $\w{\phi}$.
Define $\phi = \w{\phi} / G$,
then we call $\phi$ a \emph{fundamental} flow of $M$,

(c)
Under the assumption of (b),
for each leaf $\mu$ of $\Fb$,
define $l_\mu = e(\{(x,y) \mid (x,y) \in N^{'}, y \in \mu\})$.
Suppose that $l_\mu$ is a collection of $2$-planes in $N^{'}$,
and the components of 
$\{l_\mu \mid \mu \text{ is a leaf of } \Fb\}$ is a $G$-equivariant foliation of $N$
(denoted $\w{\F}$).
Let $\F = \w{\F} / G$,
and we call $\F$ a \emph{fundamental foliation} of $M$.
\end{defn}



The motivation for this definition is that our geometry model provides more structural results such as in Subsection \ref{subsec 1.2}. We pose the following questions. 

$\bullet$
We showed the orientable pseudo-Anosov flows are fundamental in the above sense. How to generalize this structure to non-orientable pseudo-Anosov flows?

$\bullet$
Do quasi-geodesic flows in hyperbolic $3$-manifolds have similar properties?

Assume that $\Fa, \Fb$ don't have orientations preserved by the action of $G$.
It's known that our reconstruction can be generalized to this setting when 
$\Fa, \Fb$ have no singularity,
as noted in a personal communication between the last named author and Jonathan Zung.
Below, we provide a sketch of this.

\begin{rmk}[{\cite{ZZ}}]\label{personal communication}
Recall from Definition \ref{A.7},
the action of $G$ on $\PP$ satisfies Property (A7) if
any two distinct points of $\PP$, contained in the same leaf of $\Fa$ (or $\Fb$), 
are not moved by a sufficiently small amount simultaneously under 
the action of any nontrivial element of $G$.
We now introduce a generalization of this property,
called Property (A7$_*$),
which holds if, for any two distinct points $x, y \in \PP$ 
contained in the same leaf of $\Fa$ (or $\Fb$),
there are neighborhoods $U,V$ of $x,y$ respectively, such that
for any $g \in G - \{1\}$,
if $g(x)$ is contained in one of $U,V$,
then $g(y)$ is not contained in the other.
It's not hard to see that Property (A7$_*$) is equivalent to Property (A7) when
$\Fa, \Fb$ have orientations preserved by the action of $G$.
In addition,
both of our proofs of Lemma \ref{property 1} implicitly imply that smooth pseudo-Anosov flows satisfy Property (A7$_*$).

Now assume that the action of $G$ on $\PP$ has Property (A7$_*$).
Because $\Fa, \Fb$ have no singularity,
we can assign them leafwise continuously varying orientations.
As the $G$-action on $\PP$ doesn't preserve these orientations,
$G$ has an index $2$ subgroup $H$ that acts on $\PP$ while preserving
    the orientations on $\Fa, \Fb$.
    For each $x \in \PP$,
    we denote by $\lambda_x, \mu_x$ the leaves of $\Fa, \Fb$ containing $x$ and
    denote by $\lambda^{+}_{x}, \lambda^{-}_{x}$ 
    the two components of $\lambda_x - \{x\}$ in the positive and negative sides of $\lambda_x - \{x\}$.

    Let $N = \{(x,y) \mid x \in \PP, y \in \lambda^{+}_{x}\}$ be
    the space constructed in Theorem \ref{topological flow}.
    We define $g(x,y) = (g(x),g(y))$ if $g \in H$ and
    $g(x,y) = (g(y), g(x))$ if $g \in G - H$.
    By Property (A7$_*$),
    the action of $G$ on $N$ is free and discrete,
    which implies that $N / G$ is a $3$-manifold.
    Denoted this $3$-manifold by $M$.
    We note that $N$ can also be considered as the set of unordered distinct pairs $(x,y)$
    with $y \in \lambda_x$.
    In this setting, $G$ acts on $N$ via the natural induced action.

    Clearly, the flow $\w{\phi} = \{(x,\lambda^{+}_{x}) \mid x \in \PP\}$ 
    is equivariant under the action of $H$.
    However,
    $\w{\phi}$ is not equivariant under elements of $G$;
    for any $g \in G - H$, $g(\w{\phi})$ is the flow 
    $\w{\phi_*} = \{(\lambda^{-}_{x},x) \mid x \in \PP\}$.
    To construct a $G$-equivariant flow of $N$,
    we define a $G$-equivariant projection $e_*: N \to \PP$ such that
    $\{e^{-1}_{*}(x) \mid x \in \PP\}$ is a $G$-equivariant flow of $N$,
    which descends to a flow of $M$.
    In addition,
    $\{e^{-1}_{*}(\lambda) \mid \lambda \text{ is a leaf of } \Fa\}$,
    $\{e^{-1}_{*}(\mu) \mid \mu \text{ is a leaf of } \Fb\}$ are
    a pair of foliations equivariant under the $G$-action,
    which descends to a pair of transverse foliations of $M$.
\end{rmk}

\subsection{Connection to other objects related to pseudo-Anosov flows}

First of all, we can ask if it is possible to construct the foliation as given in subsection \ref{subsec: foliation} without using our geometry model (see the definition of the fundamental foliation in the previous section) but from the based manifold $M$ directly?

Also, what other objects can be translated into our geometry model? For example, by Eliashberg-Thurston \cite{ET98},
all co-orientable taut foliations have a pair of associated contact structures (see \cite{KR17}, \cite{Bow16} for $C^{0}$-foliations). We pose the following questions. 

\begin{que}
    (a) In the case of Anosov flows, how can we describe the associated contact structures of the stable foliations in a fundamental way?

    (b) How can we describe the associated contact structure of the foliation constructed in
    Subsection \ref{subsec: foliation} in a fundamental way?
\end{que}

\subsection{Reconstructions from group equivariant sphere-filling curves}

Theorem \ref{thm: convergence} provides a reconstruction of the manifold and flow from
uniform convergence group actions on $S^{2}$ with an additional assumption that
certain group equivariant sphere-filling Peano curve exists.
Such sphere-filling Peano curves are the images of embedded curves under
the quotient map $S^{2} \to S^{2} / \mathcal{U}$ induced from certain cellular decomposition $\mathcal{U}$ of $S^{2}$.

An important question arises, first introduced to the authors by Danny Calegari:

\begin{que}
    For hyperbolic $3$-manifolds with circular orderable group,
    do they have essential laminations?
\end{que}

Any (reduced) pseudo-Anosov flow in a $3$-manifold gives rise to
a pair of transverse (possibly singular) foliations,
which can be split open to a pair of essential laminations with solid torus guts.
In light of this,
we pose a series of questions regarding this question,
as related to the Cannon's conjecture for circular orderable groups:

\begin{que}
    (a) Let $G$ be a circular orderable group acting on $S^{2}$ 
    as a uniform convergence group action.
    Are there a $G$-equivariant sphere-filling Peano curve on $S^{2}$
    as described above?

    (b) Can we reconstruct a bifoliated plane from this sphere-filling Peano curve?
    If so, can our approach be generalized to reconstructing
    a $3$-manifold with a (quasigeodesic) pseudo-Anosov flow?
\end{que}

We note that not all closed hyperbolic $3$-manifolds have circular orderable fundamental group \cite[Theorem 9.2]{CD03} or admits pseudo-Anosov flows \cite{Fen07}

\bibliographystyle{spmpsci}
\bibliography{refs}

\end{document}